\newtheorem{theorem}{Theorem}
\newtheorem{lemma}{Lemma}
\theoremstyle{remark}
\newtheorem{corollary}{Corollary}
\newtheorem{proposition}{Proposition}
\def\bs{\boldsymbol}
\def\dotsim{\dot{\sim}}
\newcommand{\E}{\mathbb{E}}
\newcommand{\PP}{\mathbb{P}}
\def\cov{\textrm{Cov}}
\def\var{\textrm{Var}}
\def\wy{y}
\def\cC{\mathcal{C}}
\newtheorem{condition}{Condition}
\def\converge{\rightarrow}
\def\convergep{\stackrel{\PP}{\longrightarrow}}
\def\convergeas{\stackrel{a.s.}{\longrightarrow}}
\def\converged{\stackrel{d}{\longrightarrow}}
\def\op{\textrm{op}}
\def\kron{\mathbbm{1}}
\def\I{\mathbbm{1}}
\def\st{\textit{s.t.}}
\newcommand{\R}{\mathbb{R}}
\def\tr{\textrm{tr}}
\newcommand{\deri}{\text{d}}
\begin{document}

\begin{frontmatter}
\title{Rerandomization with Diminishing Covariate Imbalance and Diverging Number of Covariates}
\runtitle{Rerandomization}

\begin{aug}
\author[A]{\fnms{Yuhao} \snm{Wang}\ead[label=e1]{yuhaow@tsinghua.edu.cn}}
\and
\author[B]{\fnms{Xinran} \snm{Li}\ead[label=e2]{xinranli@illinois.edu}}
\address[A]{Institute for Interdisciplinary Information Sciences, Tsinghua University,
\printead{e1}}

\address[B]{Department of Statistics, University of Illinois at Urbana-Champaign,
\printead{e2}}
\end{aug}

\begin{abstract}
Completely randomized experiments have been the gold standard for drawing causal inference because they can balance all potential confounding on average. 
However, they 
may 
suffer from unbalanced covariates for realized treatment assignments. 
Rerandomization, a design that 
rerandomizes the treatment assignment 
until a prespecified covariate balance criterion is met, 
has recently got attention due to its easy implementation, improved covariate balance and more efficient inference.  
Researchers have then suggested to 
use 
the treatment assignments that minimize the covariate imbalance, namely the optimally balanced design.  
This has 
caused again the long-time controversy between two philosophies for designing experiments: randomization versus optimal and thus almost deterministic designs. 
Existing literature argued that rerandomization with overly balanced observed covariates can lead to highly imbalanced unobserved covariates, making it vulnerable  to model misspecification. 
On the contrary, rerandomization with 
properly balanced 
covariates can provide robust inference for treatment effects while sacrificing %
some
efficiency compared to the %
ideally
optimal design. 
In this paper, we show it is possible that, by making the covariate imbalance diminishing at a proper rate as the sample size increases, 
rerandomization can achieve its ideally optimal precision that one can expect with perfectly balanced covariates, while still maintaining its robustness. 
We further 
investigate conditions 
on the number of covariates 
for achieving the desired optimality.
Our results rely on a more delicate asymptotic analysis for rerandomization, allowing both diminishing covariate imbalance  threshold (or equivalently the acceptance probability) and diverging number of covariates. 
The derived theory for rerandomization provides a deeper understanding of its large-sample property and can better guide its practical implementation. Furthermore,  it  also helps  reconcile the controversy between 
randomized and optimal designs in an asymptotic sense. 
\end{abstract}

\begin{keyword}[class=MSC2020]
\kwd[Primary ]{62K99}
\kwd[; secondary ]{62K05}
\end{keyword}

\begin{keyword}
\kwd{causal inference}
\kwd{optimal rerandomization}
\kwd{Mahalanobis distance}
\kwd{high-dimensional covariates}
\kwd{Berry--Esseen bound}
\end{keyword}

\end{frontmatter}

\section{Introduction}

Since the seminal work of \citet{Fisher:1935}, randomized experiments have become the gold standard for drawing causal inference, since they can balance all potential confounding factors, no matter observed or unobserved, on average. 
Moreover, they allow assumption-free inference for causal effects that uses only the randomization of the treatment assignment as the reasoned basis, without imposing any model or distributional assumption on the experimental units, such as independent and identically distributed (i.i.d.) sampling from some (often hypothetical) superpopulation or some model assumptions for the dependence of potential outcomes on covariates. 
This is often called randomization-based or design-based inference, as well as the finite population inference emphasizing its focus on the finite population of experimental units in hand; 
see  \citet{Fisher:1935} and \citet{N23} for origins of this 
inferential framework.

However, as pointed out by \citet{MR12}, the covariate distribution between two treatment groups are likely to be imbalanced for a realized treatment assignment, and a practical remedy hinted by Fisher is to simply rerandomize. 
The idea of rerandomization is intuitive and has a long history in the literature, traced back to Fisher \citep[see][Page 88]{S62}, \citet{S38} and \citet{C82}; see also \citet{MR12} and references therein.   
Besides, it is often used implicitly in the design of experiments when the allocated treated and control groups exhibit undesired imbalances \citep[see, e.g.,][]{BM09, Heckman2021}, although it is often not well-documented.  
Nevertheless, the rerandomization design was formally proposed, analyzed and advocated until recently by \citet{MR12}. 
As noted by the authors, one main explanation for the less popularity of rerandomization is that it brings additional difficulty in analyzing the experiments, 
and in practice people often ignore rerandomization and analyze the experiments as if they were, say, completely randomized experiments. 
\citet{MR12} then proposed randomization tests for sharp null hypotheses (e.g., the treatment has no effect for any unit) taking into account rerandomization. 
More recently, 
\citet{LDR18}  studied the repeated sampling property of the difference-in-means estimator under rerandomization, which exhibits a non-Gaussian distribution, 
and further demonstrated that the estimator can be more precise under rerandomization than that under complete randomization. 
Importantly, rerandomization still allows assumption-free randomization-based inference as the complete randomization, 
and moreover it provides more efficient difference-in-means estimator and shorter confidence intervals for the average treatment effect. 

Researchers, e.g., \citet{K16} and \citet{K18}, have then suggested 
rerandomization with as small threshold as possible for the covariate imbalance, i.e., an optimally balanced design, 
whose idea can be traced back to \citet{S38}, \citet{K59} and \citet{T74}. 
With general continuous covariates, 
there is likely 
only one acceptable treatment assignment or two if the two treatment groups have equal sizes, resulting in an almost deterministic design.  
Obviously, due to the lack of randomization in the treatment assignment, randomization-based inference is no longer applicable or becomes powerless, since it is generally impossible to asymptotically approximate the randomization distribution of a certain estimator 
(which is a discrete distribution whose support consists of one or two points) and the minimum possible $p$-value from a randomization test is either 1 or $0.5$ \citep{MR12,JRS21}.
Therefore, the statistical inference for an optimally balanced design is often driven by additional distributional assumptions on the experimental units, such as i.i.d.\ sampling of units from some superpopulation that is usually hypothetical \citep{JRS21}, and the criteria for choosing the optimal assignments are often based on some model assumptions for the dependence of potential outcomes on covariates. 

Not surprisingly, there is a long-time debate between these two philosophies, randomized versus optimal (and thus almost deterministic) designs, for conducting experiments. 
Intuitively, it is similar in spirit to the 
classical
trade-off between efficiency and  robustness. 
Randomized design allows assumption-free and robust inference for treatment effects, 
while 
the optimal design tries to maximize the inference efficiency 
under 
some hypothesized data-generating models. 
Specifically, randomized design and its inference can use only the randomization of treatment assignments as the ``reasoned basis'' \citep{Fisher:1935}, with all the potential outcomes being conditioned on or equivalently viewed as fixed constants. 
The optimal design often imposes some probabilistic models on the potential outcomes, and its efficiency and inference relies crucially on 
the randomness in the potential outcomes. 
Thus, these two designs use quite different sources of randomness. 
The randomized design has the advantage that the randomness in the treatment assignment is fully controlled by the experimenter and can be readily available for analysis. 
However, the optimal design relies on the randomness of potential outcomes as well as their dependence on covariates, which may be misspecified in practice. 
For example, 
\citet{KKSSA20} demonstrated that the ``perfect'' allocation with minimum covariate imbalance can endanger the estimation precision because unobserved covariates can be highly imbalanced, 
and 
\citet{BCMS20} suggested that targeting a fixed quantile of balance is safer than targeting an absolute balance objective from an ambiguity-averse decision-making perspective.
Indeed, rerandomization can be viewed as a design standing between the completely randomized design and the optimally balanced design. 
More precisely, 
instead of ignoring covariate imbalance as in the completely randomized design or pursuing the minimum possible covariate imbalance as in the optimally balanced design, rerandomization repeatedly randomize treatment assignments until the induced covariate imbalance is below a certain threshold, which is 
chosen carefully
to ensure there is sufficient randomness in the treatment assignment. 
As demonstrated in \citet{LDR18}, 
under rerandomization 
with a fixed and positive covariate imbalance threshold, 
we can still conduct large-sample randomization-based inference; moreover, the difference-in-means estimator will be more precise (at least asymptotically) than that under complete randomization, which can further lead to shorter confidence intervals for the average treatment effect.

Nevertheless, there is still a gap in the current theory of rerandomization. 
Specifically, \citet{LDR18} showed that, 
the smaller the covariate imbalance threshold or equivalently the 
\textit{acceptance probability} (namely the probability that the covariate imbalance for a completely randomized treatment assignment is below the threshold) is, 
the more precise the difference-in-means estimator will be under rerandomization. 
However, this does not mean we should use 
as small threshold as possible, 
since it essentially leads to the optimal design for which randomization-based inference is not feasible or powerless. 
Technically, this is because the current asymptotic theory for rerandomization in  \citet{LDR18} requires a fixed and positive covariate imbalance threshold that does not change with the sample size. This then raises the theoretical question that if we can conduct asymptotic analysis for rerandomization allowing a sample-size dependent covariate imbalance threshold, especially with the acceptance probability diminishing towards zero. 
Philosophically, we are interested in whether, by diminishing the acceptance probability to zero as sample size increases,  
we can asymptotically achieve the \emph{ideally optimal precision} that one would expect with perfectly balanced covariates while still allowing robust randomization-based inference.

To answer the above questions, 
we 
will
conduct more delicate finite population asymptotic analysis for rerandomization, allowing 
the covariate balance criterion including both the threshold and number of involved covariates
to vary with the sample size. 
Our asymptotic analysis relies on a Berry-Essen-type bound for the finite population central limit theorem. 
We derive the asymptotic distribution of the difference-in-means estimator under rerandomization and construct large-sample confidence intervals for the average treatment effect, 
which 
extends \citet{LDR18} that requires a fixed positive threshold and a fixed number of covariates for rerandomization. 
Moreover, we investigate
whether rerandomization can achieve the ideally optimal precision. 
Specifically, we demonstrate that, 
when the number of covariates satisfies certain conditions (generally being a smaller order of the logarithm of the sample size), 
we can diminish the covariate imbalance threshold such that the corresponding acceptance probability converges to zero at a proper rate and the resulting 
difference-in-means estimator achieves the ideally optimal precision and becomes asymptotically Gaussian distributed, under which we can use the usual Wald-type confidence intervals.
Note that this does not contradicts with the general asymptotic non-Gaussianity for rerandomization established in \citet{LDR18}; it is because the non-Gaussian part in the limiting distribution can be asymptotically ignorable when we diminish the acceptance probability as the sample size increases. 

The paper proceeds as follows. 
Section \ref{sec:framework} introduces the framework and reviews existing results. 
Section \ref{sec:mul_berry_bound} studies the multivariate Berry--Esseen-type bound for the finite population central limit theorem under complete randomization, which serves as the basis for studying the asymptotic properties of rerandomization in Section \ref{sec:asymptotic}. 
Section \ref{sec:optimal} studies 
whether rerandomization can achieve the ideally optimal precision that we can expect with perfectly balanced covariates. 
Section \ref{sec:ci} constructs large-sample confidence intervals for the average treatment effect under rerandomization. 
Section \ref{sec:reg_cond} investigates all the involved regularity conditions and discusses their practical implications, including both the covariate trimming and computational cost.
Section \ref{sec:discuss} concludes with a short discussion.

\section{Framework, Notation and Literature Review}\label{sec:framework}

\subsection{Potential outcomes and treatment assignment}
We consider an experiment with two treatment arms (labeled as treatment and control) and $n$ units, among which $n_1$ units will be assigned to the treatment group and the remaining $n_0 = n-n_1$ units will be assigned to the control group, where $n_1$ and $n_0$ are predetermined fixed integers. 
We invoke the potential outcome framework \citep{N23,R74} to define treatment effects, where each unit $i$ has two potential outcomes $Y_i(1)$ and $Y_i(0)$ depending on its treatment assignment. 
The individual treatment effect for unit $i$ is then $\tau_i = Y_i(1) - Y_i(0)$, 
and the corresponding average treatment effect for all units is $\tau = n^{-1} \sum_{i=1}^n \tau_i$, which is our estimand of interest.  
We use $\bs{X}_i\in \mathbb{R}^K$ to denote the available $K$-dimensional covariate vector for each unit $i$, 
and $Z_i$ to denote the treatment assignment indicator, 
where $Z_i=1$ if the unit receives treatment and $Z_i=0$ otherwise. 
The observed outcome for unit $i$ is then one of its two potential outcomes depending on its treatment assignment, i.e., 
$Y_i = Z_i Y_i(1) + (1-Z_i) Y_i(0)$. 

Throughout the paper, we will conduct the finite population inference where all potential outcomes and covariates are viewed as fixed constants and the randomness in the observed data (e.g., $Y_i$'s) comes solely from the random treatment assignments $Z_i$'s. 
This is 
equivalent to conditional inference conditioning on all potential outcomes and covariates; see \citet{LD17} for a review of finite population inference with emphasis on applications to causal inference. 
The finite population inference has the advantage of imposing no model or distributional assumptions on the potential outcomes or covariates. 
Consequently, the distribution of the treatment assignments for all units, namely the treatment assignment mechanism, plays a crucial role for statistical inference. 
In a completely randomized experiment (CRE), 
the probability that 
the treatment assignment vector $\bs{Z}\equiv (Z_1, Z_2, \ldots, Z_n)^\top$ takes a particular value $\bs{z}\equiv (z_1, z_2, \ldots, z_n)^\top$ is $n_1!n_0!/n!$ if $z_i\in \{0,1\}$ for all $i$ and $\sum_{i=1}^n z_i = n_1$, and zero otherwise.  

For descriptive convenience, we introduce several finite population quantities. 
For $z=0,1$,
let $\bar{Y}(z)$ and $\bar{\bs{X}}$ be the finite population averages of potential outcome and covariates, 
$S_z^2 = (n-1)^{-1} \sum_{i=1}^n \{Y_i(z) - \bar{Y}(z)\}^2$, $\bs{S}^2_{\bs{X}} = (n-1)^{-1} \sum_{i=1}^n (\bs{X}_i - \bar{\bs{X}}) (\bs{X}_i - \bar{\bs{X}})^\top$ 
and 
$\bs{S}_{z, \bs{X}} = \bs{S}_{\bs{X}, z}^\top = (n-1)^{-1} \sum_{i=1}^n  \{Y_i(z) - \bar{Y}(z)\} (\bs{X}_i - \bar{\bs{X}})^\top$
be the finite population variance and covariances for potential outcomes and covariates.
For the individual treatment effect, 
we define analogously its finite population variance $S^2_{\tau} = (n-1)^{-1} \sum_{i=1}^n (\tau_i-\tau)^2$ and its finite population covariance with covariates $\bs{S}_{\tau, \bs{X}} = \bs{S}_{\bs{X}, \tau}^\top = (n-1)^{-1} \sum_{i=1}^n  (\tau_i-\tau) (\bs{X}_i - \bar{\bs{X}})^\top$. 

\subsection{Covariate imbalance and rerandomization}\label{sec:cov_imb}

Under the CRE, the units are completely randomized into the two treatment arms, which guarantees that all pretreatment covariates, no matter observed or unobserved, are balanced \textit{on average} between the two treatment groups. 
However, as pointed out by \citet{MR12}, the covariate imbalance is likely to occur for a realized treatment assignment. 
The classical literature in experimental design \citep[see, e.g.,][]{BHH05} suggests blocking on pretreatment covariates, which, however, is not obvious to implement when the covariates are continuous. 
Recently, \citet{MR12} formally proposed a design called rerandomization to actively avoid the unlucky covariate imbalance, 
by discarding those treatment assignments with unacceptable covariate imbalance. 
A general rerandomization design consists of the following steps. 
\begin{enumerate}
	\item Collect the covariate data for the experimental units, and specify a covariate balance criterion. 
	\item Randomly assign $n_1$ units to treatment group and the remaining $n_0$ units to control group.
	\item Check the covariate balance for the treatment assignment from Step 2. 
	If the balance criterion is satisfied, proceed to Step 4; otherwise return to Step 2.
	\item Conduct the experiment using the acceptable treatment assignment from Step 3. 
\end{enumerate}
The balance criterion in Step 1 is an accept-reject function of the treatment assignment vector $\bs{Z}$ and the pretreatment covariates $\bs{X}_i$'s. 
\citet{MR12} suggested to use the Mahalanobis distance between covariate means in two treatment groups as the covariate balance criterion, which enjoys the affinely invariant property. 
Specifically, 
the difference-in-means of covariates between the two treatment groups is 
\begin{align}\label{eq:tau_hat_X}
	\hat{\bs{\tau}}_{\bs{X}} & \equiv \bar{\bs{X}}_1 - \bar{\bs{X}}_0  =  \frac{1}{n_1} \sum_{i=1}^n Z_i \bs{X}_i - \frac{1}{n_0} \sum_{i=1}^n (1-Z_i) \bs{X}_i, 
\end{align}
where $\bar{\bs{X}}_z$ denotes the covariate mean for units under treatment arm $z$, 
and the corresponding Mahalanobis distance for measuring covariate imbalance is 
\begin{align*}
	M & = \hat{\bs{\tau}}_{\bs{X}}^\top   \cov^{-1}\left( \hat{\bs{\tau}}_{\bs{X}}\right)  \hat{\bs{\tau}}_{\bs{X}}
	=  \hat{\bs{\tau}}_{\bs{X}}^\top 
	\left( \frac{n}{n_1 n_0} \bs{S}^2_{\bs{X}} \right)^{-1}
	\hat{\bs{\tau}}_{\bs{X}}
	\\
	& = 
	\frac{n_1n_0}{n} 
	\left(\bar{\bs{X}}_1 - \bar{\bs{X}}_0\right)^\top 
	\left( \bs{S}^2_{\bs{X}} \right)^{-1}
	\left(\bar{\bs{X}}_1 - \bar{\bs{X}}_0\right). 
\end{align*}
Under rerandomization using the Mahalanobis distance (ReM) with a predetermined threshold $a$, a treatment assignment $\bs{Z}$ is acceptable if and only if the corresponding Mahalanobis distance is less than or equal to the threshold $a$, i.e., $M\le a$. 
Throughout the paper, we will focus on ReM to illustrate our theory. 
Our results can be generalized to other covariate balance criteria as well.

\subsection{Recent results and  challenges}\label{sec:recent_result}

Rerandomization has a long history
and has been utilized a lot in practice, although often implicitly.  
A formal proposition of rerandomization does not appear 
until 
\citet{MR12}, 
likely due to the critique that the classical Gaussian distribution theory is no longer valid for rerandomization; see \citet{MR12} and references therein. 
Recently, \citet{LDR18} demonstrated that the usual difference-in-means estimator, 
\begin{align}\label{eq:tau_hat_Y}
	\hat{\tau} & \equiv \bar{Y}_1 - \bar{Y}_0 = \frac{1}{n_1} \sum_{i=1}^n Z_i Y_i - \frac{1}{n_0}  \sum_{i=1}^n (1-Z_i) Y_i, 
\end{align}
is indeed asymptotically non-Gaussian distributed, 
where $\bar{Y}_z$ denotes the average observed outcome for units under treatment arm $z$.
Specifically, they proved that, with a fixed positive threshold $a$ and a fixed number of covariates $K$ that do not vary with the sample size $n$, 
the asymptotic distribution for $\hat{\tau}$ under ReM has the following form: 
\begin{align}\label{eq:rem_ldr}
	\sqrt{n} (\hat{\tau} - \tau) \mid M\le a 
	\ \dotsim \ 
	\sqrt{n V_{\tau\tau}}\left( \sqrt{1 - R^2} \cdot \varepsilon + R \cdot L_{K, a} \right), 
\end{align}
where $\dotsim$ means that the distributions on both sides of  \eqref{eq:rem_ldr} converge weakly to the same distribution. 
In \eqref{eq:rem_ldr}, 
$\varepsilon\sim \mathcal{N}(0,1)$ follows a standard Gaussian distribution, 
$L_{K, a} \sim D_1 \mid \bs{D}^\top \bs{D} \leq a$ follows a constrained Gaussian distribution with $\bs{D} = (D_1, D_2, \ldots, D_K)^\top \sim \mathcal{N}(\bs{0}, \bs{I}_K)$, 
and $\varepsilon$ and $L_{K, a}$ are mutually independent. 
Besides, 
$V_{\tau\tau}$ is the variance of $\hat{\tau}$ under the CRE, 
$R^2$ is the squared multiple correlation between $\hat{\tau}$ and  $\hat{\bs{\tau}}_{\bs{X}}$ under the CRE, 
and 
we defer the explicit expression for $V_{\tau\tau}$ and $R^2$ to Section \ref{sec:berry_motivation}. 
From \eqref{eq:rem_ldr}, 
the difference-in-means estimator  under ReM is asymptotically distributed as the convolution of a Gaussian and a constrained Gaussian random variables. 
Intuitively, the $\varepsilon$ component represents the part of $\hat{\tau}$ that cannot be explained by $\hat{\bs{\tau}}_{\bs{X}}$, 
and the $L_{K,a}$ component represents the part that can be explained by $\hat{\bs{\tau}}_{\bs{X}}$ and thus it depends on both the threshold $a$ for balance criterion and  the number $K$ of involved covariates.

The asymptotic derivation for \eqref{eq:rem_ldr} requires that both the threshold $a$ and number of covariates $K$ for the Mahalanobis distance criterion are fixed and do not change as the sample  size $n$ increases. 
However, both requirements are likely to be violated in practice. 
We first consider the choice of threshold for rerandomization. 
Generally, smaller threshold can provide us better covariate balance and more precise treatment effect estimator as indicated by \eqref{eq:rem_ldr}; see \citet[Theorem 2]{LDR18}. 
Therefore,  
researchers \citep{K16, K18}
have suggested to use as small threshold as possible, say the minimum Mahalanobis distance between covariate means in the two treatment groups. 
However, as argued by \citet{MR12} and \citet{LDR18}, 
too small threshold can lead to powerless randomization tests and inaccurate asymptotic approximations. 
For example, with general continuous covariates and using the minimum Mahalanobis distance as the covariate imbalance threshold, 
very likely there is  only one (or two when $n_1 = n_0$) acceptable treatment assignment, and the corresponding minimum $p$-value that we can get from randomization tests is 1 (or $0.5$ when $n_1=n_0$), indicating no power to reject any hypothesis at a reasonable significance level. 
Besides, the resulting difference-in-means estimator is either deterministic or having only two possible values, under which it is impossible for the estimator to converge to any continuous distribution, and thus the asymptotic approximation derived in \eqref{eq:rem_ldr} no longer holds. 
Based on these observations, \citet{LDR18} suggested  to use small, but not overly small threshold, for conducting rerandomization, which not  only provides better covariate balance but also allows large-sample valid inference for the average treatment effect that bases only on the randomization of the treatment assignments.

Nevertheless, there is still a theoretical gap for the choice of  the rerandomization threshold. 
The existing study assumes a fixed threshold $a$ that does not vary with the sample size $n$. It is then natural to ask: can we decrease the threshold with the sample size such that the difference-in-means estimator under ReM converges weakly to a Gaussian distribution as the right-hand side of \eqref{eq:rem_ldr} with $a=0$, 
the ideally optimal precision we expect when the covariates are perfectly balanced? 
This 
essentially requires a theoretical understanding of rerandomization when the threshold $a$ (or the acceptance probability) varies and especially converges to zero as the sample size goes to infinity. 

We then consider the number of covariates for rerandomization. 
With the rapidly growing ability for collecting data, it is common to have a large number of covariates for the experimental units. 
For example, 
\cite{BLZ16}, \cite{WDTT2016} and \citet{LeiD20} studied regression adjustment for the CRE in the analysis stage when the experiments were completed. 
However, only a few studies have considered a large number of covariates in the design stage of an experiment; two examples are \citet{BS21} and \citet{ZYR21} where the authors proposed ridge and PCA rerandomizations to deal with collinearity among covariates, an issue that becomes increasingly serious as the number of covariates increases with the sample size. 
There is even fewer studies on the theoretical property of rerandomization when the amount of covariate information increases as the sample size grows. 
Note that  
practitioner often tends to balance as many covariates as possible with the hope to get more precise estimator. This is also hinted by the previous asymptotic result \eqref{eq:rem_ldr} in which the asymptotic distribution becomes more concentrated around zero as $R^2$ (a measure for the association between covariates and potential outcomes) increases. 
Therefore, it is important to establish a theory for rerandomization allowing 
diverging number of covariates, which can also provide guidelines on 
how to choose 
covariates 
for 
rerandomization in practice.

\section{A Multivariate Berry--Esseen-type Bound for the  Finite Population Central Limit Theorem}\label{sec:mul_berry_bound}

\subsection{Motivation and finite population central limit theorem for a fixed dimension} \label{sec:berry_motivation}

The key for deriving the asymptotic property of ReM in \eqref{eq:rem_ldr} includes the following facts. 
First, the distribution of the difference-in-means estimator under ReM is essentially the same as its conditional distribution under the CRE given that the treatment assignment satisfies the Mahalanobis distance criterion, as indicated by the left-hand side of \eqref{eq:rem_ldr}. 
This then motivates us to investigate the joint distribution of the difference-in-means of the outcome and covariates in \eqref{eq:tau_hat_X} and \eqref{eq:tau_hat_Y} under the CRE. 
Second, 
by the finite population central limit theorem~\citep{Hajek60, LD17}, 
the joint distribution of  $(\hat{\tau}, \hat{\bs{\tau}}_{\bs{X}}^\top)^\top$ under the CRE is asymptotically Gaussian with mean and covariance matrix the same as its sampling mean and covariance matrix under the CRE: 
$
\E(\hat{\tau}, \hat{\bs{\tau}}_{\bs{X}}^\top)^\top = 
(\tau, \bs{0}_K^\top)^\top
$
and 
\begin{align}\label{eq:V}
	\cov
	\begin{pmatrix}
		\hat{\tau}\\
		\hat{\bs{\tau}}_{\bs{X}}
	\end{pmatrix}
	= 
	\begin{pmatrix}
		n_1^{-1} S^2_1 + n_0^{-1} S^2_0 - n^{-1} S^2_{\tau} & n_1^{-1} \bs{S}_{1, \bs{X}} + n_0^{-1} \bs{S}_{0, \bs{X}}\\
		n_1^{-1} \bs{S}_{\bs{X}, 1} + n_0^{-1} \bs{S}_{\bs{X}, 0} & 
		n/(n_1n_0) \cdot \bs{S}^2_{\bs{X}}
	\end{pmatrix}
	\equiv 
	\bs{V}
	\equiv
	\begin{pmatrix}
		V_{\tau\tau} & \bs{V}_{\tau\bs{x}}\\
		\bs{V}_{\bs{x}\tau} & \bs{V}_{\bs{x}\bs{x}}
	\end{pmatrix}, 
\end{align}
where we introduce $\bs{V}$ to denote  the covariance matrix of $(\hat{\tau}, \hat{\bs{\tau}}_{\bs{X}}^\top)^\top$ under the CRE and $V_{\tau\tau}$ to denote the variance of $\hat{\tau}$ 
used in \eqref{eq:rem_ldr}. 
Specifically, 
$
\sqrt{n} (\hat{\tau} - \tau, \hat{\bs{\tau}}_{\bs{X}}^\top)^\top 
\  \dotsim \ 
\mathcal{N}(\bs{0}_{K+1}, n\bs{V}), 
$
recalling that $\dotsim$ means that the two distributions have the same  weak limits. 
Based  on these observations, 
\citet{LDR18} demonstrated that  the asymptotic distribution of the difference-in-means estimator under ReM is essentially a conditional distribution from a multivariate Gaussian  distribution, which simplifies to \eqref{eq:rem_ldr} and depends crucially on the squared multiple correlation between potential outcomes and covariates (or more precisely between $\hat{\tau}$ and  $\hat{\bs{\tau}}_{\bs{X}}$ under the CRE): 
\begin{align}\label{eq:R2}
	R^2 & = 
	\text{Corr}^2(\hat{\tau}, \hat{\bs{\tau}}_{\bs{X}}) 
	= 
	\frac{\bs{V}_{\tau\bs{x}} \bs{V}_{\bs{x}\bs{x}}^{-1} \bs{V}_{\bs{x}\tau}}{V_{\tau\tau}}
	= 
	\frac{n_1^{-1} S^2_{1\mid \bs{X}} +  n_0^{-1} S^2_{0\mid \bs{X}} - n^{-1} S^2_{\tau \mid \bs{X}}}{n_1^{-1} S^2_{1} +  n_0^{-1} S^2_{0} - n^{-1} S^2_{\tau}}, 
\end{align}
where $S^2_{z\mid \bs{X}} = \bs{S}_{z, \bs{X}} \bs{S}_{\bs{X}}^{-2} \bs{S}_{\bs{X}, z}$ and $S^2_{\tau\mid \bs{X}} = \bs{S}_{\tau, \bs{X}} \bs{S}_{\bs{X}}^{-2} \bs{S}_{\bs{X}, \tau}$ are the finite population variances of the linear projections of potential outcomes $Y_i(z)$'s and individual effects $\tau_i$'s on the covariates $\bs{X}_i$'s. 

Apparently, the above arguments require a fixed number of covariates $K$. Moreover, the weak convergence from the joint distribution to the conditional distribution requires that the probability of the conditioning event $\PP(M \leq a)$ has a positive limit, which implies a positive and non-diminishing threshold $a$ for the Mahalanobis distance criterion. Otherwise, the original derivation in \cite{LDR18} will involve ratios between terms of order $o(1)$, e.g., $\PP\{ \sqrt{n}(\hat{\tau} - \tau) \le c \mid M\le a\} = \PP\{\sqrt{n}(\hat{\tau} - \tau) \le c, M\le a\}/\PP(M\le a)$, of which the limits are unclear. 

From the above discussion, it is obvious that the original form of finite population central limit theorem is not enough for studying the asymptotic property of rerandomization with %
a diminishing threshold and a diverging number of covariates. 
Furthermore, it lefts the question that whether rerandomization with  
threshold or acceptance probability diminishing
at a certain rate can lead to difference-in-means estimator with the ideally optimal precision. 
We will address these concerns in the remaining of the paper.

\subsection{Gaussian approximation under the completely randomized experiment}\label{sec:berry_rate}

We first study the convergence rate for the finite population central limit theorem under the CRE. 
More precisely, we will focus on  the convergence rate for the Gaussian approximation of the joint distribution of the difference-in-means of the outcome and covariates under the CRE, and investigate explicitly how the convergence rate depends on the finite population including the dimension of the covariates. 

Let $r_1 = n_1/n$ and $r_0 = n_0/n$ be the proportions of treated and control units, 
and for each unit $1\le i \le n$, let 
$\bs{u}_i = (r_0 Y_i(1) + r_1 Y_i(0), \bs{X}_i^\top)^\top \in \mathbb{R}^{K+1}$ be a vector consisting of a weighted average of the two potential outcomes and the covariates. 
By the definitions in \eqref{eq:tau_hat_X} and \eqref{eq:tau_hat_Y}, 
we can verify that 
the difference-in-means vector 
$(\hat{\tau}, \hat{\bs{\tau}}_{\bs{X}}^\top)^\top$ 
has the following equivalent form:
\begin{align}\label{eq:did_srs_u}
	\begin{pmatrix}
		\hat{\tau}\\
		\hat{\bs{\tau}}_{\bs{X}}
	\end{pmatrix}
	=
	\frac{n}{n_1n_0}
	\sum_{i=1}^n Z_i 
	\bs{u}_i
	- 
	\frac{n}{n_0}
	\begin{pmatrix}
		\bar{Y}(0)\\
		\bar{\bs{X}}
	\end{pmatrix}, 
\end{align}
which, up to a linear transformation, is essentially the summation of a simple random sample of size $n_1$ from the finite population $\mathcal{U}_n \equiv \{\bs{u}_i: i=1,2,\ldots,n\}$. 
Thus, the sampling property of the difference-in-means $(\hat{\tau}, \hat{\bs{\tau}}_{\bs{X}}^\top)^\top$ can be fully characterized by the population $\mathcal{U}_n$. 
Let $\bar{\bs{u}} = n^{-1} \sum_{i=1}^n \bs{u}_i$ and $\bs{S}_{\bs{u}}^2 = (n-1)^{-1} \sum_{i=1}^n (\bs{u}_i - \bar{\bs{u}}) (\bs{u}_i - \bar{\bs{u}})^\top$ be the finite population mean and covariance matrix for $\mathcal{U}_n$, 
and let $\bs{S}_{\bs{u}}^{-1}$ denote the inverse of the positive semidefinite square root of $\bs{S}_{\bs{u}}^2$.
Define 
\begin{align}\label{eq:gamma_n}
	\gamma_n \equiv 
	\frac{(K+1)^{1/4}}{\sqrt{n r_1r_0}} \frac{1}{n} \sum_{i=1}^n \left\| \bs{S}_{\bs{u}}^{-1} (\bs{u}_i-\bar{\bs{u}}) \right\|_2^3, 
\end{align}
which is the third moment of the standardized finite population $\{\bs{S}_{\bs{u}}^{-1} (\bs{u}_i-\bar{\bs{u}}): i=1,2,\ldots,n\}$ up to a certain scale. 
For descriptive convenience, 
we define $\gamma_n$ to be infinity when $r_1$ or $r_0$ equals zero or $\bs{S}^2_{\bs{u}}$ is singular. 
In \eqref{eq:gamma_n}, we use the subscript $n$ to emphasize the dependence of $\gamma_n$ on the finite population $\mathcal{U}_n$ of size $n$. 
Note that 
$\gamma_n$ is uniquely determined by $r_1, r_0$ and the potential outcomes and covariates of the $n$ experimental units.

Below we consider the Berry--Esseen-type bound for the Gaussian approximation of the difference-in-means vector in \eqref{eq:did_srs_u} under the CRE. 
Note that, under the CRE, $(\hat{\tau}, \hat{\bs{\tau}}_{\bs{X}}^\top)^\top$ has mean $(\tau, \bs{0}_{1\times K})^\top$ and covariance matrix $\bs{V}$ as in \eqref{eq:V}. 
Let $\cC_{K+1}$ denote the collection of all measurable convex sets in $\mathbb{R}^{K+1}$. 
We then focus on bounding the supremum of the absolute difference between the probabilities of being in any measurable convex set for the standardized difference-in-means vector and the standard Gaussian random vector: 
\begin{align}\label{eq:Delta_n}
	\Delta_n & \equiv 
	\sup_{\mathcal{Q} \in \cC_{K+1}}
	\left|
	\PP
	\left\{\bs{V}^{-1/2}
	\begin{pmatrix}
		\hat{\tau} - \tau\\
		\hat{\bs{\tau}}_{\bs{X}}
	\end{pmatrix}
	\in \mathcal{Q}
	\right\}
	- 
	\PP \left( \bs{\varepsilon} \in \mathcal{Q} \right)
	\right|.
\end{align}
By some algebra, $\bs{V} = (nr_1r_0)^{-1} \bs{S}^2_{\bs{u}}$, and thus $\Delta_n$ is well-defined as long as $\gamma_n < \infty$. 
For descriptive convenience, we define $\Delta_n$ to be 1 when $r_1$ or $r_0 $ equals zero or $\bs{S}^2_{\bs{u}}$ is singular. 
The bound for \eqref{eq:Delta_n} is a natural multivariate extension of the classical univariate Berry--Esseen bound for the absolute difference between two distribution functions. 
More importantly, it suffices for our asymptotic analysis of rerandomization, noticing that the acceptance region for $\hat{\bs{\tau}}_{\bs{X}}$ under the Mahalanobis distance criterion is indeed a convex set in $\mathbb{R}^K$. 
From \eqref{eq:did_srs_u}, 
we essentially need to 
understand the Berry--Esseen-type bound for the central limit theorem under simple random sampling, which itself is also a special case of the combinatorial central limit theorem. 
Below we give a brief literature review.

\citet{berry1941accuracy} and \citet{esseen1942liapunov} independently discovered the original Berry--Esseen theorem when studying the convergence rate for Gaussian approximation of summations of independent univariate random variables. 
\citet{bentkus2003dependence,B05} and \citet{R19} then extended it to the multivariate case, considering the Gaussian approximation for probabilities of being in any measurable convex sets. 
Recently, \citet{chernozhukov2017central}, \citet{chernozhukov2020nearly} and  \citet{fang2021high} achieved tighter bounds by focusing only on Gaussian approximation for probabilities of being in hyperrectangles (or more generally sparsely convex sets), where the bounds can vanish even when the dimension of random vectors is much larger than the sample size in the summation. 
Note that all of these results are for independent summands. 

In the context of combinatorial central limit theorem (including the central limit theorem for simple random sampling as a special case), 
the summands become weakly dependent. 
\citet{B69} and \citet{H78} studied the corresponding Berry--Esseen-type bound in the univariate case. 
However, there has been much less study for the multivariate case, in contrast to the rich literature for independent summands. 
One exception is \citet{BG93}, who established the Berry--Esseen-type bound for the multivariate combinatorial central limit theorem. 
Based on their results, 
we can show that there exists an absolute constant $C_K$ that depends only on the dimension $K$ such that $\Delta_n \le C_K \gamma_n$, with $\gamma_n$ and $\Delta_n$ defined in \eqref{eq:gamma_n} and \eqref{eq:Delta_n}. 
However, the authors did not characterize how the constant $C_K$ may increase with the dimension $K$ and thus the bound is not sufficient for studying rerandomization with diverging number of covariates.
To the best of our knowledge, 
there has not been any formal result of the Berry--Esseen-type bound for the combinatorial central limit theorem with explicit dependence on the dimension, except for an informal result presented by \citet{R15} 
at a workshop. 
Based on the result in \citet{R15}, we can show that there exists an absolute constant $C$ such that $\Delta_n \le C \gamma_n$, 
noting that the definition of $\gamma_n$ in \eqref{eq:gamma_n} involves a term of $(K+1)^{1/4}$ that depends explicitly on the dimension of the difference-in-means vector in \eqref{eq:did_srs_u}.

Since the result in \citet{R15} has not been proved yet,
we also derive a Berry--Esseen-type bound for the central limit theorem  under simple random sampling ourselves. Our proof makes use of the multivariate Berry--Esseen-type bound for sum of independent random vectors \citep[see, e.g.,][]{R19} and the coupling between simple random sampling and Bernoulli independent sampling utilized by~\citet{Hajek60}. 
Based on our proof, we can derive that $\Delta_n \le 174\gamma_n + 7 \gamma_n^{1/3}$, 
where the first term of $\gamma_n$ is from the Bernoulli independent sampling or more generally the Berry--Esseen-type bound for sum of independent random vectors, and the additional term of $\gamma_n^{1/3}$ comes from the coupling between simple random sampling and Bernoulli independent sampling. There is actually a tighter bound than $\gamma_n^{1/3}$ for the coupling, but we present the bound $\gamma_n^{1/3}$ for the ease of understanding; see the Supplementary Material \citep{WLSupp2022} for more  details. 
Obviously, our rate of convergence is slower than that conjectured in \citet{R15}. 
Nevertheless, 
it is still able to reveal the interesting property of rerandomization with diminishing threshold for covariate imbalance and diverging number of covariates, as studied in detail shortly. 
We summarize these results for bounding $\Delta_n$ in the following theorem. 

\begin{theorem}\label{thm:berry_esseen_clt}
	For any $n\ge 2, K\ge 0$, $r_1, r_0\in (0,1)$, 
	and any finite population $\Pi_n \equiv \{(Y_i(1), Y_i(0), \bs{X}_i): i=1,2,\ldots, n\}$ with nonsingular $\bs{V}$ defined as in \eqref{eq:V}, 
	define $\gamma_n$ and $\Delta_n$ as in \eqref{eq:gamma_n} and \eqref{eq:Delta_n}. 
	Then 
	\begin{itemize}
		\item[(i)] 
		there exists an absolute constant $C_K$ that depends only on $K$ such that $\Delta_n \le C_K \gamma_n$; 
		\item[(ii)] if the conjecture in \citet{R15} hold, then there exists a universal constant $C$ such that $\Delta_n \le C \gamma_n$; 
		\item[(iii)] 
		$\Delta_n \le 174\gamma_n + 7 \gamma_n^{1/3}$. 
	\end{itemize}
\end{theorem}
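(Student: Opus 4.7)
The first step is to recast everything as a statement about a simple-random-sample sum. By \eqref{eq:did_srs_u}, after an affine change of variables, $\bs{V}^{-1/2}(\hat{\tau}-\tau, \hat{\bs{\tau}}_{\bs{X}}^\top)^\top$ equals $\bs{S} := \sum_{i=1}^n (Z_i - r_1)\bs{w}_i$, where $\bs{w}_i := (nr_1r_0)^{-1/2}(\bs{S}^2_{\bs{u}})^{-1/2}(\bs{u}_i - \bar{\bs{u}})$ and $(Z_1,\ldots,Z_n)$ is a uniformly random $0/1$ vector with $\sum_i Z_i = n_1$. A direct computation shows this centered combinatorial sum has covariance $\bs{I}_{K+1}$, and the quantity $\gamma_n$ in \eqref{eq:gamma_n} is just $(K+1)^{1/4} r_1 r_0 \sum_i \|\bs{w}_i\|^3$. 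With this reduction in hand, parts (i) and (ii) are immediate: the multivariate combinatorial Berry-Esseen bound of \citet{BG93} gives a dimension-dependent constant $C_K$, and the sharper bound of \citet{R15} gives a universal constant once the $(K+1)^{1/4}$ factor is absorbed into $\gamma_n$.

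\textbf{Main work: part (iii).} For (iii) I would couple $(Z_1,\ldots,Z_n)$ with i.i.d.\ Bernoulli$(r_1)$ variables $(\tilde Z_1,\ldots,\tilde Z_n)$ via the construction of \citet{Hajek60}, and split
\[
\Delta_n \le \sup_{\mathcal{Q}\in\cC_{K+1}}\bigl|\PP(\tilde{\bs{S}}\in\mathcal{Q}) - \PP(\bs{\varepsilon}\in\mathcal{Q})\bigr| + \sup_{\mathcal{Q}\in\cC_{K+1}}\bigl|\PP(\bs{S}\in\mathcal{Q}) - \PP(\tilde{\bs{S}}\in\mathcal{Q})\bigr|,
\]
with $\tilde{\bs{S}} := \sum_i (\tilde Z_i - r_1)\bs{w}_i$. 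The first (independent) term is handled by the convex-set multivariate Berry-Esseen inequality for sums of independent random vectors due to \citet{R19}, which yields an upper bound proportional to $(K+1)^{1/4}\sum_i \E\|(\tilde Z_i - r_1)\bs{w}_i\|^3$. Using $\E|\tilde Z_i - r_1|^3 \le r_1 r_0$ this collapses exactly to a universal constant times $\gamma_n$, producing the reported $174\gamma_n$ term. A minor subtlety is that $\mathrm{Cov}(\tilde{\bs{S}}) = \tfrac{n-1}{n}\bs{I}_{K+1}$ rather than $\bs{I}_{K+1}$, but this discrepancy is absorbed into the same order of error by a trivial rescaling of the reference Gaussian.

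\textbf{Main obstacle: the coupling error.} The hard part is the second term. The naive strategy of conditioning $\tilde{Z}$ on $\sum_i \tilde Z_i = n_1$ loses an unacceptable factor of $\sqrt{nr_1r_0}$ and is useless. Instead I would use the explicit Hajek coupling, in which $\bs{S}$ and $\tilde{\bs{S}}$ differ only through a random exchange of $|N|$ indices, where $N := \sum_i \tilde Z_i - n_1$ satisfies $\E N^2 = nr_1r_0$. Combined with the Gaussian anti-concentration inequality on convex sets, namely $\PP(\bs{\varepsilon}\in\mathcal{Q}^\delta\setminus\mathcal{Q})\lesssim \delta$ for the $\delta$-enlargement $\mathcal{Q}^\delta$ of any convex $\mathcal{Q}$, the coupling error is bounded above by $\delta + \PP(\|\bs{S}-\tilde{\bs{S}}\|>\delta)$ for every $\delta>0$. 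Estimating the tail $\PP(\|\bs{S}-\tilde{\bs{S}}\|>\delta)$ via a Chebyshev/Markov-type moment bound that extracts the third-moment information in $\gamma_n$ from the $|N|$ exchanged indices, and then optimizing $\delta$, yields a contribution of order $\gamma_n^{1/3}$ with explicit constant $7$. The delicate bookkeeping is to restrict the moment estimate to the $O(\sqrt{nr_1r_0})$ exchanged indices (rather than all $n$ units) so that the bound scales in $\gamma_n$ rather than in some cruder quantity; as the authors note, this is also the step where a more careful argument can replace $\gamma_n^{1/3}$ by a tighter but less transparent rate.
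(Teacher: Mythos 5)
Your proposal follows essentially the same route as the paper: the reduction to a standardized simple-random-sample sum, the use of \citet{BG93} and \citet{R15} for parts (i) and (ii), and for part (iii) the H\'ajek coupling with Bernoulli sampling, the convex-set Berry--Esseen bound of \citet{R19} for the independent sum, and Gaussian anti-concentration on convex sets combined with a tail bound on the coupling difference optimized over $\delta$. The only substantive difference in detail is that the paper's $\gamma_n^{1/3}$ term comes from a \emph{second}-moment Chebyshev bound on the exchanged-index sum (whose optimized value $\asymp (K+1)^{1/2}(nr_1r_0)^{-1/6}$ is then dominated by $7\gamma_n^{1/3}$ via the universal lower bound $\gamma_n \ge 2^{-3/2}(nr_1r_0)^{-1/2}(K_n+1)^{7/4}$), rather than from a third-moment tail estimate on the exchanged indices as you describe.
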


\subsection{Gaussian approximation with stronger moment conditions}\label{sec:clt_higher_order}

Theorem \ref{thm:berry_esseen_clt} provides Berry--Esseen-type bounds for the finite population central limit theorem under complete randomization, with (ii) and (iii) characterizing explicit dependence on the dimension of covariates $K$ and thus crucial for studying rerandomization with diverging number of covariates. 
Compared to that conjectured by \citet{R15}, 
our derived bound in Theorem \ref{thm:berry_esseen_clt}(iii) has an additional term of order $\gamma_n^{1/3}$. 
As discussed before, this additional term is due to the coupling between simple random sampling and Bernoulli independent sampling. 
Intuitively, under simple random sampling (or equivalently complete randomization), the treatment indicators for all units are dependent, because the total number of units assigned to the active treatment is constrained to be $n_1$. 
Such a dependence among these indicators makes it more challenging to bound the error for Gaussian approximation.  
Ignoring the dependence on $K$, 
$\gamma_n$ is of order $n^{-1/2}$, making $\gamma_n^{1/3}$ of order $n^{-1/6}$ and thus the bound in Theorem \ref{thm:berry_esseen_clt}(iii) larger than usual Berry-Essen-type bounds. 

Below we also provide more accurate bounds for the coupling between simple random sampling and Bernoulli independent sampling and consequently improve the Berry--Esseen-type bound for the Gaussian approximation in Theorem \ref{thm:berry_esseen_clt}(iii), at least in terms of the explicit dependence on the sample size $n$, 
with stronger moment conditions on the centered finite population $\{\bs{S}_{\bs{u}}^{-1} (\bs{u}_i-\bar{\bs{u}}): 1\le i \le n\}$. 
We summarize the results in the following theorem.

\begin{theorem}\label{thm:Berry--Esseen-higher-order}
	Under the same setting as Theorem~\ref{thm:berry_esseen_clt}, 
	\begin{align*}
	    \Delta_n
			\le 
			180 \gamma_n + \frac{3  (\log n)^{3/4} (K+1)^{3/4}}{ n^{1/4} \sqrt{r_1r_0}} \cdot \max_{1 \le i \le n} \left\| \bs{S}_{\bs{u}}^{-1} (\bs{u}_i-\bar{\bs{u}})\right\|_\infty,
	\end{align*}
	and, for any $\iota\ge 2$, there exists a universal constant $C_{\iota}$ depending only on $\iota$ such that 
	\begin{align*}
		\Delta_n
		\le 
		174 \gamma_n 
        +  
        \frac{ C_{\iota} (K+1)^{3\iota/\{4(\iota+1)\}}}{n^{\iota/\{4(\iota+1)\}} \{r_1r_0\}^{\iota/2}} \cdot \frac{1}{n}\sum_{i = 1}^n 
        \left\| \bs{S}_{\bs{u}}^{-1} (\bs{u}_i-\bar{\bs{u}}) \right\|_{\iota}^\iota.
		\end{align*}
\end{theorem}

From Theorem \ref{thm:Berry--Esseen-higher-order} and ignoring the dependence on $K$, 
if all coordinates of the centered finite population have bounded $\iota$th moments for some $\iota\ge 2$, 
then the additional term in the Berry--Esseen-type bound is of order $n^{- \iota/\{4(\iota+1)\}}$; 
if further they are bounded, then the additional term becomes of order $n^{-1/4}$ except for a $\log n$ term. 
These bounds are more accurate than that of order $n^{-1/6}$ in Theorem \ref{thm:berry_esseen_clt}(iii), but there are still gaps between them and that of order $n^{-1/2}$ conjectured by \citet{R15}, which requires future work. 
Nevertheless, the derived bounds are already sufficient to discover interesting properties of rerandomization and provide almost the same quantitative message for the design of rerandomization (see, e.g., Table \ref{tab:rate} and its discussion).

It is worth mentioning that there are other approaches for deriving Berry--Esseen-type bound, such as Stein's method that was actually used by \citet{BG93} and helps justify Theorem \ref{thm:berry_esseen_clt}(i); see also the recent work by \citet{shi2022berry}. 
Here we use the coupling approach, mainly because we can utilize the recent results on Berry--Esseen-type bounds for Gaussian approximations of summations of independent random vectors \citep{B05, R19}. 
It will be interesting to investigate whether other approaches can give tighter Berry--Esseen-type bounds or even prove the conjectured rate in \citet{R15}.
In addition, the coupling approach also justifies central limit theorems for stratified randomized experiments \citep{Bickel1984, liu2021factorial}, and our results can be useful for deriving the corresponding Berry--Esseen-type bounds. 
We leave these for future work.

\section{Asymptotic Property of Rerandomization with Sample-size Dependent Mahalanobis Distance Criterion}\label{sec:asymptotic}

Throughout the paper, we conduct finite population asymptotic analysis for rerandomization. 
Specifically, we embed the finite population of size $n$ into a  sequence of finite populations with increasing sizes. 
Importantly, we allow both the threshold $a$ and dimension of covariates $K$ for the Mahalanobis distance criterion to depend on the sample size $n$, 
and will write them explicitly as $a_n$ and $K_n$, using the subscript $n$ to emphasize such dependence. 
We further define $p_n \equiv  \PP(\chi^2_{K_n} \le a_n)$, 
where $\chi^2_{K_n}$ denotes a random variable following the chi-square distribution with degrees of freedom $K_n$. 
By the definition of $\Delta_n$ in \eqref{eq:Delta_n}, 
we can derive that the acceptance probability of a completely randomized treatment assignment under ReM, $\PP(M \le a_n)$, is bounded between $p_n - \Delta_n$ and $p_n + \Delta_n$. Thus, we can intuitively understand $p_n$ as the approximate acceptance probability for rerandomization; 
specifically, $\PP(M\le a_n) / p_n = 1 + o(1)$ when $p_n \gg \Delta_n$. 
In practice, given the number of covariates $K_n$, the choice of the threshold $a_n$ is often based on the approximate acceptance probability $p_n$, 
i.e., $a_n$ is the $p_n$th quantile of the 
chi-square distribution 
with degrees of freedom $K_n$. 
For example, 
\citet{MR12} and \citet{LDR18} suggested to choose small but not overly small approximate acceptance probablity, e.g., $p_n = 0.001$. 
Therefore, in the remaining discussion, we will mainly focus on the approximate acceptance probability $p_n$ and number of covariates $K_n$, since they are more relevant for the practical implementation of ReM, and view the threshold $a_n$ as a deterministic function of $p_n$ and $K_n$. 
For descriptive convenience, we sometimes call $p_n$ simply as the acceptance probability, while emphasizing $\PP(M \le a_n)$ as the actual acceptance probability.

\subsection{Asymptotic distribution under ReM}

We first invoke the following regularity condition on the sequence of finite populations, which, by Theorem \ref{thm:berry_esseen_clt}, implies the Gaussian approximation for the difference-in-means of the outcome and covariates under the CRE.  

\begin{condition}\label{cond:gamma_n}
	As $n\rightarrow \infty$, 
	the sequence of finite populations satisfies that 
	$\gamma_n \rightarrow 0$. 
\end{condition} 
Recall the definition of $\gamma_n$ in \eqref{eq:gamma_n}.  
Condition \ref{cond:gamma_n} requires that, for sufficiently large sample size $n$,  
there are positive proportions of units in both treatment and control groups (i.e., $r_1 > 0$ and $r_0 > 0$), 
and the covariance matrix $\bs{V}$ in \eqref{eq:V} for the difference-in-means vector $(\hat{\tau}, \hat{\bs{\tau}}_{\bs{X}}^\top)^\top$ is nonsingular. 
The latter essentially requires that the covariates are not collinear, which can be guaranteed by our design, and that the potential outcomes cannot be pefectly explained by covariates, in the sense that the corresponding $R^2$ in \eqref{eq:R2} is strictly less than 1, which is likely to hold in most applications. 
Besides, 
as demonstrated in the Supplementary Material \citep{WLSupp2022}, 
\begin{align}\label{eq:gamma_n_lower}
	\gamma_n \ge 2^{-3/2} (nr_1r_0)^{-1/2} (K_n+1)^{7/4}. 
\end{align}
Thus, a necessary condition for Condition \ref{cond:gamma_n} is $K_n = o((nr_1r_0)^{2/7}) = o(n^{2/7})$, 
i.e., the number of covariates increases at most a polynomial rate of the sample size as $n$ goes to infinity. 
If the standardized finite population $\{\bs{S}_{\bs{u}}^{-1} (\bs{u}_i-\bar{\bs{u}}):1\le i\le n\}$ is coordinate-wise bounded, and the proportions of treated and control units $r_1$ and $r_0$ are bounded away from zero, 
	then $K_n = o(n^{2/7})$ is also sufficient for Condition \ref{cond:gamma_n}; see the Supplementary Material \citep{WLSupp2022} for details. 
We defer more detailed discussions about Condition \ref{cond:gamma_n} to Section \ref{sec:reg_cond}.

We then invoke the following regularity condition on the choice of the acceptable probability, 
which is coherent with 
our intuition that too small threshold can prevent asymptotic approximation for ReM based on Gaussian and constrained Gaussian distributions, as discussed in Section \ref{sec:recent_result}. 

\begin{condition}\label{cond:p_n}
	As $n\rightarrow \infty$, $p_n/\Delta_n \rightarrow \infty$. 
\end{condition}

We are now ready to present our formal result for the asymptotic distribution of the difference-in-means estimator $\hat{\tau}$ under ReM. 
Recall that $V_{\tau\tau}$ in \eqref{eq:V} is the variance of  $\hat{\tau}$ under the CRE,  $R^2$ in \eqref{eq:R2} is the squared multiple correlation between the difference-in-means of the outcome and covariates under the CRE, and $\varepsilon_0$ and $L_{K_n,a_n}$ are independent standard Gaussian and constrained Gaussian random variables defined as in Section \ref{sec:recent_result}. 
To emphasize its dependence on the sample size $n$,  we will write $R^2$ explicitly as  $R_n^2$. 

\begin{theorem}\label{thm:dim_rem}
	Under ReM and Conditions \ref{cond:gamma_n} and \ref{cond:p_n}, 
	as $n\rightarrow \infty$, 
	\begin{align}\label{eq:tau_hat_rem}
		\sup_{c\in \mathbb{R}}\left| \PP \big\{ V_{\tau\tau}^{-1/2}( \hat{\tau} - \tau) \le c \mid M \le a_n \big\} - \PP\big( \sqrt{1-R^2_n}\ \varepsilon_0  + \sqrt{R^2_n} \ L_{K_n, a_n} \le c \big)
		\right| 
		\rightarrow 0.
	\end{align}
\end{theorem}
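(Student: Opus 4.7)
The core idea is that under ReM the distribution of $\hat{\tau}$ equals the conditional distribution of $\hat{\tau}$ under the CRE given $\{M \leq a_n\}$, so the argument reduces to a careful Gaussian approximation using Theorem \ref{thm:berry_esseen_clt} with the convex set $\mathcal{Q}$ chosen to encode both the half-space $\{V_{\tau\tau}^{-1/2}(\hat{\tau}-\tau)\le c\}$ and the Mahalanobis ellipsoid $\{M\le a_n\}$ simultaneously. First, I would note that for any fixed $c\in\mathbb{R}$, the set
\[
\mathcal{Q}_c \;\equiv\; \bs{V}^{1/2}\Big(\big\{ (t,\bs{x})\in\mathbb{R}\times\mathbb{R}^{K_n} : t \le c\sqrt{V_{\tau\tau}},\ \bs{x}^\top \bs{V}_{\bs{x}\bs{x}}^{-1}\bs{x}\le a_n \big\}\Big)
\]
(i.e.\ the pullback into the standardized coordinates via $\bs V^{-1/2}$) is an intersection of a half-space with an elliptic cylinder, hence convex; the complementary set $\mathcal{Q}_c' \equiv \mathcal{Q}_\infty \setminus \mathcal{Q}_c$ (corresponding to $\{M\le a_n\}$ alone) is also convex. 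Applying Theorem \ref{thm:berry_esseen_clt} separately to $\mathcal{Q}_c$ and to $\mathcal{Q}_\infty$, the joint and marginal probabilities under the CRE are within $\Delta_n$ of their Gaussian counterparts, uniformly over $c$.

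Next, I would compute the Gaussian counterpart exactly. If $(\tilde\tau,\tilde{\bs\tau}_{\bs X}^\top)^\top \sim \mathcal{N}(\bs 0, \bs V)$, then a standard regression decomposition gives
\[
\tilde\tau \;=\; \bs V_{\tau\bs x}\bs V_{\bs x\bs x}^{-1}\tilde{\bs\tau}_{\bs X} + \tilde e,
\]
with $\tilde e$ independent of $\tilde{\bs\tau}_{\bs X}$ and $\mathrm{Var}(\tilde e)=V_{\tau\tau}(1-R_n^2)$. Since $M$ depends only on $\tilde{\bs\tau}_{\bs X}$, conditioning on $\{M\le a_n\}$ leaves $\tilde e$ untouched, and the projection term, when normalized by its standard deviation $\sqrt{V_{\tau\tau} R_n^2}$, becomes (by affine invariance of the Mahalanobis distance under the Gaussian) distributed as $L_{K_n,a_n}$. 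Hence
\[
\PP_{\mathrm{Gauss}}\!\left\{V_{\tau\tau}^{-1/2}\tilde\tau \le c,\ M\le a_n\right\}
\;=\; p_n\cdot \PP\!\left(\sqrt{1-R_n^2}\,\varepsilon_0 + \sqrt{R_n^2}\,L_{K_n,a_n} \le c\right),
\]
and $\PP_{\mathrm{Gauss}}(M\le a_n)=p_n$ exactly.

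For the third step, I would combine these pieces via the standard ratio inequality: writing $A_n,B_n$ for the numerator and denominator on the CRE side and $C_n,D_n=p_n$ for the Gaussian side, we have $|A_n-C_n|\le \Delta_n$, $|B_n-D_n|\le \Delta_n$, and $C_n\le D_n$, so
\[
\Big|\tfrac{A_n}{B_n}-\tfrac{C_n}{D_n}\Big| \;\le\; \tfrac{|A_n-C_n|}{B_n} + \tfrac{C_n}{D_n}\cdot\tfrac{|B_n-D_n|}{B_n} \;\le\; \tfrac{2\Delta_n}{B_n}.
\]
Condition \ref{cond:p_n} together with $B_n\ge p_n - \Delta_n = p_n(1-o(1))$ makes the right-hand side $O(\Delta_n/p_n)=o(1)$ uniformly in $c$, which is exactly the claim in \eqref{eq:tau_hat_rem}.

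The main obstacle, in my view, is verifying that the Berry--Esseen bound from Theorem \ref{thm:berry_esseen_clt} is genuinely strong enough when $K_n\to\infty$ and $p_n\to 0$ simultaneously: one must confirm that the convex-set class $\cC_{K_n+1}$ covers the specific sets $\mathcal{Q}_c$ with no dimension-dependent penalty beyond what is already encoded in $\gamma_n$ via the $(K+1)^{1/4}$ factor, and that applying Theorem \ref{thm:berry_esseen_clt} in the pre-transformed coordinates is legitimate since $\bs V^{-1/2}$ maps the convex sets in question to convex sets in standardized coordinates. A minor subsidiary point is that $\cC_{K+1}$ must be taken to allow unbounded convex sets (half-spaces, cylinders); this is standard for Gaussian Berry--Esseen-type bounds but should be stated. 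Aside from this, the uniformity in $c$ follows for free because the bound $2\Delta_n/B_n$ has no dependence on $c$.
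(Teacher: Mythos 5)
Your proposal is correct and follows essentially the same route as the paper's proof: apply the Berry--Esseen bound of Theorem \ref{thm:berry_esseen_clt} to the convex sets encoding $\{V_{\tau\tau}^{-1/2}(\hat\tau-\tau)\le c,\ M\le a_n\}$ and $\{M\le a_n\}$, identify the Gaussian conditional law as $\sqrt{1-R_n^2}\,\varepsilon_0+\sqrt{R_n^2}\,L_{K_n,a_n}$, and control the ratio of conditional probabilities by $O(\Delta_n/p_n)=o(1)$ uniformly in $c$ under Condition \ref{cond:p_n}. The only cosmetic difference is that you derive the Gaussian conditional distribution explicitly via the regression decomposition, whereas the paper cites the corresponding computation from \citet{LDR18}.
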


From Theorem \ref{thm:dim_rem}, 
under ReM, the difference between the difference-in-means estimator and the true average treatment effect, $\hat{\tau} - \tau$, follows asymptotically the distribution of $V_{\tau\tau}^{1/2}(\sqrt{1-R^2_n}\varepsilon_0  + \sqrt{R^2_n} L_{K_n, a_n} )$, a convolution of standard Gaussian and constrained Gaussian random variables, with coefficients depending on $V_{\tau\tau}$ and $R^2_n$.
Compared to \eqref{eq:rem_ldr}, 
the asymptotic distribution in Theorem \ref{thm:dim_rem} has the same form as that in \citet[][Theorem 1]{LDR18}, 
which is not surprising given that both theorems focus on the same estimator $\hat{\tau}$ and the same design ReM. 
However, Theorem \ref{thm:dim_rem} is more general 
and it covers  \citet[][Theorem 1]{LDR18} as a special case. 
Specifically, when both $K_n$ and $a_n > 0$ are fixed and do not change with $n$, 
$p_n$ is a fixed positive constant 
and Condition \ref{cond:p_n} holds immediately from Condition \ref{cond:gamma_n} and Theorem \ref{thm:berry_esseen_clt}. 
Besides, Condition \ref{cond:gamma_n} is 
almost implied by
the regularity condition involved in \citet{LDR18}; see the Supplementary Material \citep{WLSupp2022} for more details.

More importantly, Theorem \ref{thm:dim_rem}  allows the dimension of covariates and the acceptance probability, as well as the rerandomization threshold, to vary with the sample size. 
Note that from Theorem \ref{thm:berry_esseen_clt}, $\gamma_n \to 0$ implies $\Delta_n \to 0$. 
Thus, Condition \ref{cond:p_n} holds naturally if we choose $p_n$ to be any fixed positive number. 
Moreover, we can also let $p_n$ decrease with $n$ and eventually converge to zero as $n\rightarrow \infty$, while maintaining that $p_n \gg \Delta_n$, under which the actual acceptance probability $\PP(M\le a_n) = p_n (1+o(1))$ also converges to zero as  $n\rightarrow \infty$. 
In simple words, 
Theorem \ref{thm:dim_rem}  allows the acceptance probability to converge to zero as the sample size goes to infinity. 

Note that all potential confounding factors, no matter observed or unobserved, can always be viewed as potential outcomes unaffected by the treatment. Therefore, Theorem \ref{thm:dim_rem} also implies that any potential confounding factor is asymptotically balanced between the two treatment groups.

\subsection{Asymptotic improvement from ReM}

We now investigate the improvement from ReM compared to the CRE, and in particular how such improvement depends on the acceptance probability and the covariate information. %
Note that the CRE can be viewed as a special case of ReM with $\bs{X} = \emptyset$ and $a_n = \infty$. 
By the same logic as Theorem \ref{thm:dim_rem}, we can derive that
\begin{align}\label{eq:tau_hat_cre}
	\sup_{c\in \mathbb{R}}\left| \PP \big\{ V_{\tau\tau}^{-1/2}( \hat{\tau} - \tau) \le c 
	\big\} - \PP\big( \varepsilon_0  \le c \big)
	\right|
	\converge 0, 
\end{align}
i.e., $\hat{\tau} - \tau$ is asymptotically Gaussian distributed with mean zero and variance $V_{\tau\tau}$ under the CRE. 
Obviously, the asymptotic distribution in \eqref{eq:tau_hat_cre} is a special form of that in \eqref{eq:tau_hat_rem} with $R_n^2=0$, 
which is intuitive in the sense that ReM with irrelevant covariates is asymptotically equivalent to the CRE. 
However, when $R_n^2>0$, which is likely to hold in practice, 
we expect ReM to provide more precise difference-in-means estimator than the CRE, 
as discussed in detail below. 

From Theorem \ref{thm:dim_rem} and by the same logic as \citet[][Corollaries 1--3]{LDR18}, we can derive the following asymptotic properties of ReM, demonstrating its advantage over the CRE. 
For $\alpha\in (0,1)$, 
let $\nu_{\alpha, K, a}(R^2)$ denote the $\alpha$th quantile of the distribution of $\sqrt{1-R^2}\varepsilon_0  + \sqrt{R^2} L_{K, a}$, 
and $z_\alpha$ denote the $\alpha$th quantile of the standard Gaussian distribution. 
We further introduce $v_{K, a} = \var(L_{K,a})$ to denote the variance of the constrained Gaussian random variable. 
From \citet{MR12}, $v_{K,a} = \PP(\chi^2_{K+2}\le a)/\PP(\chi^2_{K}\le a)$, 
where $\chi^2_{K+2}$ and $\chi^2_{K}$ follow chi-square distributions with degrees of freedom $K+2$ and $K$, respectively.

\begin{corollary}\label{cor:improve}
	Under ReM and Conditions \ref{cond:gamma_n} and \ref{cond:p_n}, 
	the asymptotic distribution of $V_{\tau\tau}^{-1/2}(\hat{\tau} - \tau)$, $\sqrt{1-R^2_n}\varepsilon_0  + \sqrt{R^2_n} L_{K_n, a_n}$, as shown in \eqref{eq:tau_hat_rem} is symmetric and unimodal around zero. 
	Compared to the asymptotic distribution 
	in \eqref{eq:tau_hat_cre}  under the CRE, 
	the percentage reductions in asymptotic variance and length of asymptotic $1-\alpha$ symmetric quantile range for $\alpha \in (0,1)$ are, respectively, 
	\begin{align}\label{eq:percentage}
		(1-v_{K_n, a_n}) R^2_n
		\quad \text{and} \quad 
		1 - \frac{\nu_{1-\alpha/2, K_n, a_n}(R^2_n)}{z_{1-\alpha/2}}. 
	\end{align}
	Both percentage reductions in \eqref{eq:percentage} are nonnegative and are uniquely determined by $(R^2_n, p_n, K_n)$, and they are nondecreasing in $R^2_n$ and nonincreasing in $p_n$ and $K_n$. 
\end{corollary}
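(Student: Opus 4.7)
My plan is to prove the three parts of the corollary in sequence: symmetry and unimodality of the limiting law; the explicit formulas for the two reductions; and their nonnegativity, unique parametrization by $(R_n^2, p_n, K_n)$, and monotonicity.

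For symmetry and unimodality, $\varepsilon_0$ is trivially symmetric and unimodal around $0$, and $L_{K_n, a_n}$ is the first coordinate of $\bs{D} \sim \mathcal{N}(\bs{0}, \bs{I}_{K_n})$ conditioned on the centered ball $\{\bs{D}^\top \bs{D} \le a_n\}$; the Gaussian density and this ball are both invariant under sign flips of $D_1$, so $L_{K_n, a_n}$ is symmetric around $0$. Its unimodality follows from the density being the marginal of the restriction of a log-concave density to a convex set, which is log-concave by Prékopa--Leindler, and log-concave densities on $\R$ are unimodal. Independence of $\varepsilon_0$ and $L_{K_n, a_n}$ together with the classical convolution result for symmetric unimodal distributions then gives symmetry and unimodality of $\sqrt{1-R_n^2}\,\varepsilon_0 + \sqrt{R_n^2}\,L_{K_n, a_n}$. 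A direct variance calculation using independence yields $\var\bigl(\sqrt{1-R_n^2}\,\varepsilon_0 + \sqrt{R_n^2}\,L_{K_n, a_n}\bigr) = (1-R_n^2) + R_n^2 v_{K_n, a_n}$, which, compared against the CRE limit of unit variance, gives the variance reduction formula in \eqref{eq:percentage}; symmetry of the limiting law gives the quantile-range formula.

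For nonnegativity, $v_{K,a} = \PP(\chi^2_{K+2} \le a)/\PP(\chi^2_K \le a) \le 1$ using the stochastic dominance $\chi^2_{K+2} \succeq \chi^2_K$, and an application of Anderson's theorem (equivalently, a monotone likelihood ratio argument on the conditional density of $D_1$) yields $\nu_{1-\alpha/2, K_n, a_n}(R_n^2) \le z_{1-\alpha/2}$. The dependence on only $(R_n^2, p_n, K_n)$ is immediate since $a_n$ is the $p_n$-quantile of $\chi^2_{K_n}$. Monotonicity in $R_n^2$ is transparent for the variance formula (linear with nonnegative slope $1 - v_{K_n, a_n}$) and, for the quantile range, follows from a coupling in which increasing $R_n^2$ shifts more mass onto the more concentrated component $L_{K_n, a_n}$. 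Monotonicity in $p_n$ for fixed $K_n$ is easy because both $v_{K, a}$ and $\nu_{1-\alpha/2, K, a}(R^2)$ are nondecreasing in $a$, and $a_n$ is nondecreasing in $p_n$.

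The main obstacle is monotonicity in $K_n$ for fixed $p_n$, where the threshold $a_n$ itself varies with $K_n$. I would follow the approach of \citet[Corollaries 1--3]{LDR18}: for the variance reduction, the claim reduces to showing $K \mapsto v_{K, q_{K,p}} = \PP(\chi^2_{K+2} \le q_{K,p})/p$ is nondecreasing (with $q_{K,p}$ the $p$-quantile of $\chi^2_K$), which I would handle via the independent-sum coupling $\chi^2_{K+2} \stackrel{d}{=} \chi^2_K + \chi^2_2$; for the quantile reduction, the claim reduces to showing $K \mapsto \nu_{1-\alpha/2, K, q_{K,p}}(R^2)$ is nondecreasing, which I would obtain via a monotone-likelihood-ratio comparison of the densities of $L_{K, q_{K,p}}$ across $K$ combined with an Anderson-type convolution argument.
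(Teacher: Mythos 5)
Your proposal is correct and follows essentially the same route as the paper, whose entire proof of this corollary is to invoke \citet[][Corollaries 1--3]{LDR18}; the ingredients you spell out (sign-flip symmetry and log-concavity of $L_{K,a}$, the variance decomposition $(1-R_n^2)+R_n^2 v_{K_n,a_n}$, the peakedness comparison of $|L_{K,a}|$ with $|\varepsilon_0|$ via an Anderson-type argument, and the $\chi^2_{K+2}\stackrel{d}{=}\chi^2_K+\chi^2_2$ coupling for monotonicity in $K$ at fixed $p$) are exactly the ones used there. No gaps.
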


First, from Corollary \ref{cor:improve}, the difference-in-means estimator $\hat{\tau}$ is asymptotically unbiased for the average treatment effect $\tau$. 
As pointed out by \citet{MR12}, when the treated and control groups have different sizes (i.e., $n_1 \ne n_0$), the difference-in-means estimator is generally biased. Corollary \ref{cor:improve} shows that the bias goes away as the sample size goes to infinity when Condition \ref{cond:gamma_n} holds, which requires that $r_1$ and $r_0$ are not too close to zero as implied by the definition in \eqref{eq:gamma_n}. 
Second, the improvement from ReM on estimation precision is nondecreasing in the strength of the association between potential outcomes and covariates measured by $R^2_n$. 
Generally, the more covariates involved in rerandomization, the larger the $R^2_n$ will be. 
However, this does not mean that we should use as many covariates as possible. 
If the additional covariates provide little increment for $R^2_n$, the gain from ReM will deteriorate since the percentage reductions in \eqref{eq:percentage} are nonincreasing in $K_n$ with fixed $R^2_n$ and $p_n$. 
Third, 
both percentage reductions in \eqref{eq:percentage} are nonincreasing in 
the acceptance probability $p_n$, 
and approach their maximum values $R^2_n$ and $1-\sqrt{1-R^2_n}$ when $p_n$ equals $0$. 
Again, this does not mean that we should use as small threshold as possible, 
since the asymptotic derivation in Corollary \ref{cor:improve} requires that $p_n/\Delta_n \rightarrow \infty$ as $n \rightarrow \infty$. 
This then raises the question that if we can choose $p_n$ 
such that both percentage reductions in \eqref{eq:percentage} achieve their maximum values 
as $n\rightarrow \infty$ or the asymptotic distribution of $V_{\tau\tau}^{-1/2}(\hat{\tau} - \tau)$ under ReM becomes essentially Gaussian with mean zero and variance $1-R^2_n$ (i.e., the asymptotic distribution in \eqref{eq:tau_hat_rem} with $L_{K_n, a_n}$ replaced by zero), while still maintaining $p_n / \Delta_n \to \infty$. 
In other words, can we choose the acceptable probability such that rerandomization achieves its ideally optimal precision?
We will answer this question in the next section. 

\section{Optimal Rerandomization with Diminishing Acceptance Probability}\label{sec:optimal}

In this section, we investigate whether rerandomization can achieve its ideally optimal precision by diminishing the acceptance probability to zero at a proper rate as the sample size increases. 
Specifically, we wonder if the asymptotic approximation in \eqref{eq:tau_hat_rem} can hold with $L_{K_n, a_n}$ replaced by zero, and what conditions we need to impose on the sequence of finite populations as well as the choice of acceptance probability. 
These questions rely crucially on the 
asymptotic behavior of the constrained Gaussian random variable $L_{K_n, a_n}$, and in particular its dependence on the acceptance probability $p_n$. 
Below we will first study the asymptotic property of $L_{K_n, a_n}$, and then investigate whether we are able to achieve the ideally optimal rerandomization. 

\subsection{Asymptotic properties of the constrained Gaussian random variable}\label{sec:asym_con_Gauss}

In this subsection, we will mainly focus on the asymptotic behavior of the variance of $L_{K_n, a_n}$, which as mentioned earlier has the following equivalent form: $v_{K_n, a_n} \equiv \var(L_{K_n, a_n}) = \PP(\chi^2_{K_n+2}\le a_n)/\PP(\chi^2_{K_n}\le a_n)$, due to the following two reasons. 
First, as $n\rightarrow \infty$, 
$L_{K_n, a_n} \stackrel{\PP}{\longrightarrow} 0$ if and only if $v_{K_n, a_n} \rightarrow 0$. 
This is because the random variables $L^2_{K_n,a_n}$ for all $n$ are always uniformly integrable, regardless of how $K_n$ and $p_n$ vary with $n$, as demonstrated in the Supplementary Material \citep[Proposition~A2]{WLSupp2022}. 
Second, as shown in Corollary  \ref{cor:improve}, the percentage reduction in asymptotic variance under ReM is $(1-v_{K_n, a_n})R^2_n$, and its relative difference from the ideally optimal percentage reduction is $1 - (1-v_{K_n, a_n})R^2_n/R^2_n = v_{K_n, a_n}$. 
Thus, the variance of  $L_{K_n, a_n}$ characterizes how different ReM is from the ideally optimal one in terms of the improvement on estimation precision, 
and such a difference will become asymptotically negligible if and only if $v_{K_n, a_n} \rightarrow 0$ as $n\rightarrow \infty$.

The following theorem shows the asymptotic behavior of $v_{K_n, a_n}$, which depends crucially on the asymptotic behavior of the  ratio between $\log(p_n^{-1})$ and $K_n$. 

\begin{theorem}\label{thm:v_Ka}
	As $n\rightarrow \infty$, 
	\begin{itemize}
		\item[(i)] if $\log(p_n^{-1}) / K_n \rightarrow \infty$, then $v_{K_n, a_n} \rightarrow 0$; 
		\item[(ii)] if $\limsup_{n\rightarrow \infty}\log(p_n^{-1}) / K_n < \infty$, then $\liminf_{n\rightarrow \infty} v_{K_n, a_n}  > 0$; 
		\item[(iii)] if $\liminf_{n\rightarrow \infty}\log(p_n^{-1}) / K_n >0$, then $\limsup_{n\rightarrow \infty} v_{K_n, a_n}  < 1$; 
		\item[(iv)] if $\log(p_n^{-1}) / K_n \rightarrow 0$, then $v_{K_n, a_n} \rightarrow 1$. 
	\end{itemize}
\end{theorem}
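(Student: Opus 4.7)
I would first derive a tractable closed form for $v_{K,a}$ and then analyze the interplay between the acceptance probability $p_n=F_{K_n}(a_n)$ (with $F_K$ denoting the chi-square CDF with $K$ degrees of freedom) and the ratio $a_n/K_n$. The entry point is the recurrence
\begin{align*}
F_{K+2}(a)=F_K(a)-\frac{(a/2)^{K/2}e^{-a/2}}{\Gamma(K/2+1)},
\end{align*}
which follows from a single integration by parts on $\int_0^a x^{K/2}e^{-x/2}\du x$ and immediately yields the clean identity $v_{K,a}=\E[\chi^2_K\mid\chi^2_K\le a]/K$. Iterating the recurrence gives the series $F_K(a)=\sum_{j\ge 0}(a/2)^{K/2+j}e^{-a/2}/\Gamma(K/2+j+1)$; the consecutive-term ratios $a/(K+2j+2)$ then produce, for $a<K$, the two-sided Mills-ratio-type envelope
\begin{align*}
\frac{(a/2)^{K/2}e^{-a/2}}{\Gamma(K/2+1)} \le F_K(a) \le \frac{K}{K-a}\cdot\frac{(a/2)^{K/2}e^{-a/2}}{\Gamma(K/2+1)}.
\end{align*}
Combined with Stirling's formula, this gives $\log F_K(a)=-K\,I(a/K)+O\bigl(\log K+\log(K/(K-a))\bigr)$, where $I(\alpha):=\tfrac{1}{2}(\alpha-1-\log\alpha)$ is the chi-square rate function, continuous and strictly decreasing from $+\infty$ to $0$ on $(0,1]$.

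Two ``linking'' facts then bridge $a_n/K_n$ and $v_{K_n,a_n}$. First, $v_{K,a}$ is nondecreasing in $a$ for fixed $K$ and satisfies $v_{K,a}\le\min(1,a/K)$. Second, the series representation plus dominated convergence gives, for every fixed $\alpha\in(0,1)$, $v_{K,\alpha K}\to\alpha$ as $K\to\infty$; and the chi-square CLT gives $v_{K,\alpha K}\to 1$ for $\alpha\ge 1$. Together these imply that $v_{K_n,a_n}$ and $a_n/K_n$ track each other along subsequences with $K_n\to\infty$. Subsequences with $K_n$ bounded are handled separately by continuity of $F_K$ in $a$: the hypothesis on $\log(p_n^{-1})/K_n$ pins down the limit of $p_n\in[0,1]$, and hence of $a_n$ and of $v_{K_n,a_n}$.

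With these tools each of (i)--(iv) becomes a short subsequence contradiction. For (i), if $a_n/K_n\ge\epsilon>0$ along a subsequence with $K_n\to\infty$, the lower Mills bound plus Stirling force $\log(p_n^{-1})/K_n\le I(\epsilon)+o(1)$, contradicting the hypothesis; hence $a_n/K_n\to 0$ and $v_{K_n,a_n}\le a_n/K_n\to 0$. For (ii), $v_{K_n,a_n}\to 0$ along a subsequence forces $a_n/K_n\to 0$ (by monotonicity and $v_{K,\alpha K}\to\alpha$), after which the upper Mills bound drives $\log(p_n^{-1})/K_n\to\infty$, contradicting the hypothesis. (iii) and (iv) are the mirror arguments at $\alpha\uparrow 1$: in (iii), if $a_n/K_n\ge 1-\epsilon$ along a subsequence, the lower Mills bound gives $\log(p_n^{-1})/K_n\le I(1-\epsilon)+o(1)$, and sending $\epsilon\downarrow 0$ (using $I(1^-)=0$) contradicts (iii); in (iv), $\log(p_n^{-1})/K_n\to 0$ forces $a_n/K_n\ge 1-\epsilon$ eventually for every $\epsilon>0$, and then monotonicity combined with $v_{K,(1-\epsilon)K}\to 1-\epsilon$ yields $\liminf v_{K_n,a_n}\ge 1-\epsilon$.

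The main obstacle will be the regime $a_n/K_n\to 1$, where the rate function $I$ vanishes: the Mills-ratio correction $\log(K/(K-a))$ in the upper bound can no longer be absorbed into the first-order $-K\,I(a/K)$ term, so cases (iii) and (iv) require choosing the comparison threshold $\alpha=1-\epsilon$ with $\epsilon$ fixed first and only afterwards sending $\epsilon\downarrow 0$, rather than letting $\epsilon$ depend on $n$. A secondary subtlety is that $a_n/K_n$ need not converge, so every step must be formulated along arbitrary subsequences, with the bounded-$K_n$ regime peeled off as a separate (easier) case.
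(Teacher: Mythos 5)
Your proposal is correct, and its skeleton coincides with the paper's: the identity $v_{K,a}=\PP(\chi^2_{K+2}\le a)/\PP(\chi^2_K\le a)=K^{-1}\E(\chi^2_K\mid\chi^2_K\le a)$ via integration by parts, the two-sided bound $\frac{(a/2)^{K/2}e^{-a/2}}{\Gamma(K/2+1)}\le F_K(a)\le\frac{K}{K-a}\cdot\frac{(a/2)^{K/2}e^{-a/2}}{\Gamma(K/2+1)}$ combined with Stirling to get $\log(p^{-1})/K\approx\frac{1}{2}\{a/K-1-\log(a/K)\}$, the upper bound $v_{K,a}\le a/K$, monotonicity in $a$, and subsequence contradictions with the bounded-$K_n$ regime peeled off (these are exactly the paper's Lemmas A8--A11 and A14--A18). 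Where you genuinely diverge is in the lower-bound mechanism for $v_{K_n,a_n}$ in parts (ii)--(iv). The paper uses the explicit finite-sample bound $v_{K,a}\ge\min\{a/(4K),(K-2)/(4K)\}$ for (ii), and for (iii)--(iv) it lower-bounds $-\log(1-v_{K,a})$ directly (Lemmas A12--A13), with the delicate regime $a_n/K_n\to 1$ handled by a carefully tuned sequence $\zeta_n=\min\{\Delta_n^{-1/2}/K_n,K_n^{-3/4}\}$. You instead establish $v_{K,\alpha K}\to\alpha$ for each fixed $\alpha\in(0,1)$ from the series $F_K(a)/\bigl[(a/2)^{K/2}e^{-a/2}/\Gamma(K/2+1)\bigr]=\sum_{j\ge 0}\prod_{i=1}^{j}\frac{a}{K+2i}\to(1-\alpha)^{-1}$ by dominated convergence, and then sandwich $v_{K_n,a_n}$ between $v_{K_n,(1-\epsilon)K_n}$ and $a_n/K_n$ using monotonicity, sending $\epsilon\downarrow 0$ only at the end. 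This buys you a cleaner treatment of the hardest case (the paper's fourth case in Lemma A18), at the cost of an asymptotic rather than finite-sample lower bound on $v_{K,a}$; your insistence on fixing $\epsilon$ before taking $n\to\infty$ is precisely what makes the argument close, and your separate handling of bounded-$K_n$ subsequences (via continuity of $F_K$ and $v_{K,\cdot}$ in $a$) fills the remaining gap just as the paper does.
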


From Theorem \ref{thm:v_Ka},
the smaller the $p_n$ and $K_n$, 
the larger the ratio $\log(p_n^{-1}) / K_n$, and the smaller $v_{K_n, a_n}$ tends to be. 
This is intuitive noting that Corollary \ref{cor:improve} implicitly implies that $v_{K_n, a_n}$, viewed as a function of $(p_n, K_n)$, is nondecreasing in $p_n$ and $K_n$. 
Theorem \ref{thm:v_Ka} has the following implications. 
First, $v_{K_n, a_n}$ or equivalently $L_{K_n, a_n}$ becomes asymptotically negligible if and only if $\log(p_n^{-1})/K_n \rightarrow \infty$ as $n\rightarrow \infty$. 
Intuitively, this means that the constrained Gaussian term in the asymptotic distribution in \eqref{eq:tau_hat_rem} becomes asymptotically negligible if and only if the acceptance probability $p_n$ decreases super-exponentially with respect to the dimension of covariates, i.e., $p_n = \exp(-c_n K_n)$ with $c_n \rightarrow \infty$ as $n\rightarrow \infty$. 
Second, if $\log(p_n^{-1}) / K_n \rightarrow 0$, then the variance of the constrained Gaussian variable $L_{K_n, a_n}$ becomes asymptotically equivalent to that of the unconstrained standard Gaussian random variable, and, from Corollary \ref{cor:improve}, ReM asymptotically provides no gain compared to the CRE in the sense that the percentage reduction in asymptotic variance converges to zero as $n\rightarrow \infty$. 
Thus, when the acceptance probability is too large and in particular decreases  sub-exponentially with respect to  the dimension of the covariates, i.e., $p_n = \exp(-o(1)\cdot K_n)$, then ReM is essentially equivalent to the CRE in large samples. 
Third, 
when the acceptance probability decreases exponentially with respect to the dimension of covariates, i.e., $p_n = \exp(-c_n K_n)$ with $c_n$ being of constant order, 
and assume that the squared multiple correlation $R^2_n$ is of constant order and thus does not diminish to zero, 
asymptotically, ReM provides strictly more precise difference-in-means estimator than the CRE, although there is 
still a gap 
from 
the ideally optimal one. 

From the above, the performance of ReM, in particular its asymptotic improvement over the CRE, depends crucially on the ratio between $\log (p_n^{-1})$ and $K_n$, 
as well as $R^2_n$ measuring the association between potential outcomes and covariates. 
To minimize $v_{K_n, a_n}$, 
Corollary \ref{cor:improve} and Theorem \ref{thm:v_Ka} suggest to use as small acceptance probability and number of covariates as possible. 
However, there is trade-off for the choice of both of them. 
First, although smaller $K_n$ implies smaller $v_{K_n, a_n}$, it at the same time reduces the outcome-covariates association $R^2_n$. 
Second, although smaller $p_n$ decreases $v_{K_n, a_n}$, it at the same time renders the asymptotic approximation inaccurate and may eventually
invalidate the asymptotic approximation in~\eqref{eq:tau_hat_rem}; see Condition \ref{cond:p_n}. 
While the former trade-off for $K_n$ involves more subjective judgments concerning the unknown outcome-covariates dependence structure, the latter trade-off for $p_n$ lies more on the technical side and will be studied in more detail in the next subsection.

\subsection{Optimal rerandomization and its implication} 

From Theorem \ref{thm:v_Ka}, to achieve the ideally optimal rerandomization with given number of covariates, we want to choose the acceptance probability such that the following condition holds. 

\begin{condition}\label{cond:k_np_n}
	As $n \to \infty$, $\log(p_n^{-1})/K_n \rightarrow \infty$.
\end{condition}

We further assume that the outcome-covariates association $R^2_n$ is bounded away from 1, under which the first term $\sqrt{1-R^2_n}\ $ in the asymptotic approximation in  \eqref{eq:tau_hat_rem} is not negligible as $n\rightarrow \infty$.
This condition on $R^2_n$ is likely to hold 
in practice, 
since we generally do not expect that the covariates can perfectly explain the potential outcomes, which is too ideal 
for most applications. 
Moreover, if $R^2_n$ indeed converges to 1 as $n\rightarrow \infty$, then the asymptotic approximation in \eqref{eq:tau_hat_rem} can be of $o_{\PP}(1)$ itself, implying that 
$\hat{\tau}-\tau$
can converge to zero faster than the usual $n^{-1/2}$ rate, under which the causal effect estimation becomes much simpler. 

\begin{condition}\label{cond:rsup}
	As $n\rightarrow \infty$, $\limsup_{n \to \infty} R^2_n < 1$.
\end{condition}

The following theorem shows that, under certain regularity conditions, ReM can achieve its ideally optimal precision. 

\begin{theorem}\label{thm:rem_gaussian}
	Under ReM and Conditions \ref{cond:gamma_n}--\ref{cond:rsup}, 
	\begin{align}\label{eq:tau_dim_hat_rem}
		\sup_{c\in \mathbb{R}} \Big| \PP \big\{ V_{\tau\tau}^{-1/2}( \hat{\tau} - \tau) \le c \mid M \le a_n \big\} - \PP\big( \sqrt{1-R^2_n}\ \varepsilon_0 \le c \big)
		\Big|
		\converge 0. 
	\end{align}
\end{theorem}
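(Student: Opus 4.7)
The plan is to reduce the claim to a Slutsky-type argument layered on top of the already-established Theorem~\ref{thm:dim_rem}. By the triangle inequality in Kolmogorov distance, it suffices to prove
\begin{align*}
\sup_{c\in\mathbb{R}} \bigl| \PP( W_n \le c) - \PP\bigl( \sqrt{1-R^2_n}\, \varepsilon_0 \le c\bigr) \bigr| \longrightarrow 0,
\end{align*}
where $W_n \equiv \sqrt{1-R^2_n}\, \varepsilon_0 + \sqrt{R^2_n}\, L_{K_n, a_n}$ is the convolution appearing on the right-hand side of \eqref{eq:tau_hat_rem}, with $\varepsilon_0$ and $L_{K_n, a_n}$ independent as in Section~\ref{sec:recent_result}.

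The bridge is Theorem~\ref{thm:v_Ka}(i): Condition~\ref{cond:k_np_n} is precisely its hypothesis and yields $v_{K_n, a_n} = \var(L_{K_n, a_n}) \to 0$. Since $R^2_n \le 1$ and $L_{K_n, a_n}$ has mean zero by the symmetry of the Mahalanobis constraint under $\bs{D}\mapsto -\bs{D}$, Markov's inequality gives
\begin{align*}
\PP\bigl(|\sqrt{R^2_n}\, L_{K_n,a_n}| > \delta\bigr) \le \frac{R^2_n\, v_{K_n, a_n}}{\delta^2} \le \frac{v_{K_n, a_n}}{\delta^2} \longrightarrow 0
\end{align*}
for every fixed $\delta>0$, so $\sqrt{R^2_n}\, L_{K_n, a_n} \convergep 0$.

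I would then upgrade this convergence in probability to uniform (Kolmogorov) convergence. For any $\delta>0$ and any $c\in\mathbb{R}$,
\begin{align*}
\PP(W_n \le c)
\le \PP\bigl(\sqrt{1-R^2_n}\, \varepsilon_0 \le c+\delta\bigr) + \PP\bigl(|\sqrt{R^2_n}\, L_{K_n,a_n}| > \delta\bigr),
\end{align*}
and a symmetric lower bound holds. Condition~\ref{cond:rsup} ensures $1-R^2_n \ge 1 - \limsup_n R^2_n > 0$ for all $n$ large enough, so the density of $\sqrt{1-R^2_n}\, \varepsilon_0$ is bounded uniformly in $n$ by $1/\sqrt{2\pi(1-\limsup_n R^2_n)}$. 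Consequently the first term on the right differs from $\PP(\sqrt{1-R^2_n}\, \varepsilon_0 \le c)$ by at most a constant multiple of $\delta$ uniformly in $c$. Choosing $\delta = \delta_n \to 0$ slowly enough that $\delta_n^2 / v_{K_n, a_n} \to \infty$ then drives both terms to zero, yielding the required uniform bound.

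There is no serious obstacle: the theorem is essentially the combination of Theorems~\ref{thm:dim_rem} and \ref{thm:v_Ka}(i), with Condition~\ref{cond:rsup} supplying exactly the uniform lower bound on the limiting Gaussian variance that allows a Slutsky-type argument to deliver Kolmogorov (rather than merely weak) convergence. The only care needed is to verify that $L_{K_n, a_n}$ is mean-zero, and that the density bound is uniform in $n$; both are immediate from the constructions in Section~\ref{sec:recent_result} and Condition~\ref{cond:rsup}.
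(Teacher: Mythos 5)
Your proposal is correct and follows essentially the same route as the paper: a triangle inequality reducing the claim to the Kolmogorov distance between $\sqrt{1-R^2_n}\,\varepsilon_0 + \sqrt{R^2_n}\,L_{K_n,a_n}$ and $\sqrt{1-R^2_n}\,\varepsilon_0$, then Theorem~\ref{thm:v_Ka}(i) plus Chebyshev to get $\sqrt{R^2_n}\,L_{K_n,a_n}=o_{\PP}(\sqrt{1-R^2_n})$ under Conditions~\ref{cond:k_np_n} and~\ref{cond:rsup}, and finally a Slutsky-type upgrade to uniform convergence using the bounded density of the Gaussian component. The paper merely packages these two steps as Proposition~\ref{prop:convpvar} and Lemma~\ref{lemma:diminish_inf_norm_two_rv}; the substance is identical.
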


From Theorem \ref{thm:rem_gaussian}, $\hat{\tau}$ can be asymptotically Gaussian distributed under ReM. 
Moreover, its asymptotic distribution is the same as that of the regression-adjusted estimator under the CRE \citep{Lin13, LeiD20}. 
Therefore, rerandomization and regression adjustment are essentially dual of each other \citep{LD20}, 
and Theorem \ref{thm:rem_gaussian} closes the previous gap between them that is due to the constrained Gaussian random variable $L_{K_n, a_n}$. 
This new insight is important for practitioners who may worry about efficiency loss of rerandomization compared to regression adjustment.
Moreover, compared to regression adjustment in the analysis stage, 
rerandomization in the design stage is blind of outcomes and has the advantage of avoiding data snooping \citep{Lin13}. 
Besides, the difference-in-means estimator is simpler and provides more transparent analysis for treatment effects \citep{cox2007, Freedman2008chance, Rosenbaum:2010}.

Theorem \ref{thm:rem_gaussian} has important implications. 
First, it shows that, by diminishing the imbalance threshold as the sample size grows, rerandomization can achieve its ideally optimal precision that we can expect from an optimal design with even perfectly balanced covariates. 
Second, we should not choose too small rerandomization threshold (or acceptance probability), as implied by Condition \ref{cond:p_n}, so that it is still possible to conduct robust randomization-based inference as completely randomized experiments, without imposing any distributional assumptions on potential outcomes and covariates. 
These imply that rerandomization with properly diminishing covariate imbalance threshold can achieve optimal efficiency as an ideal optimal design while maintaining robustness as a randomized design, i.e., such an optimal rerandomization can enjoy advantages from both optimal and randomized designs. 
Therefore, Theorem \ref{thm:rem_gaussian} helps reconcile the long-time controversies between the two philosophies (randomized versus optimal) for conducting experiments in an asymptotic sense.

Theorem \ref{thm:rem_gaussian} also provides important insights for practitioners that are more used to optimal designs. 
First, 
the usual optimal designs and the corresponding inference are often sensitive to their model assumptions. 
Our theory shows that the optimal rerandomization can always achieve the ideally optimal precision, while still being robust to model misspecification. 
Second, it 
helps mitigate the computation burden for conducting optimal designs. 
In particular, Theorem \ref{thm:rem_gaussian} suggests that we should not pursue the best allocation minimizing the covariate imbalance between the two treatment groups, which is generally NP-hard. 
Instead, we only need to randomly choose one from the best approximately $p_n$ proportion of all assignments, with $p_n$ satisfying both Conditions \ref{cond:p_n} and \ref{cond:k_np_n}. 
As discussed later in Section \ref{sec:reg_cond}, $p_n$ can often decrease at a polynomial order of the sample size $n$. This indicates that, in expectation, the computational complexity for getting one acceptable assignment is often of polynomial order of the sample size; see Section \ref{sec:comp_cost} for the more explicit rate. 
In sum, the optimal rerandomization can maintain the efficiency gain as an ideal optimal design, while being more robust and 
requiring
less computation. In the Supplementary Material \citep{WLSupp2022}, we provide more discussions on its connection with optimal designs
under certain model assumptions.

The validity of the asymptotic Gaussian approximation for rerandomization depends crucially on Conditions  \ref{cond:gamma_n}--\ref{cond:rsup}, among which Conditions \ref{cond:p_n} and \ref{cond:k_np_n} involves the choice of the acceptance probability. 
Below we assume that the number of covariates $K_n$ involved in rerandomization has been given and that Conditions \ref{cond:gamma_n} and \ref{cond:rsup} hold, and focus on  investigating the existence of choice of acceptance probability $p_n$ such that both Conditions \ref{cond:p_n} and \ref{cond:k_np_n} hold, or equivalently such that rerandomization can achieve its ideally optimal precision 
as in \eqref{eq:tau_dim_hat_rem}. 
It turns out the existence of such a choice of $p_n$ relies crucially on the ratio between $K_n$ and $\log(\Delta_n^{-1})$, recalling the definition of $\Delta_n$ in \eqref{eq:Delta_n}.

\begin{theorem}\label{thm:K_n_Delta_n}
	Under ReM and Conditions \ref{cond:gamma_n} and \ref{cond:rsup}, 
	\begin{itemize}
		\item[(i)] if and only if $\log(\Delta_n^{-1})/K_n \rightarrow \infty$, there exists a sequence $\{p_n\}$ such that both Conditions \ref{cond:p_n} and \ref{cond:k_np_n} hold, under which ReM achieves its ideally optimal precision and the asymptotic Gaussian approximation in \eqref{eq:tau_dim_hat_rem} holds; 
		\item[(iii)] if $\limsup_{n \to \infty} \log(\Delta_n^{-1})/K_n < \infty$, then 
		for any sequence $\{p_n\}$ satisfying Condition \ref{cond:p_n} such that the asymptotic approximation in \eqref{eq:tau_hat_rem} holds, $\liminf_{n \to \infty} \nu_{K_n, a_n} > 0$;
		\item[(iii)] if $\liminf_{n \to \infty} \log(\Delta_n^{-1})/K_n > 0$, then there exists a sequence $\{p_n\}$ satisfying Condition \ref{cond:p_n} such that~\eqref{eq:tau_hat_rem} holds
		and $\limsup_{n \to \infty} \nu_{K_n, a_n} < 1$;
		\item[(iv)] 
		if $\log(\Delta_n^{-1})/K_n \rightarrow 0$, then 
		for any sequence $\{p_n\}$ satisfying 
		Condition \ref{cond:p_n} such that~\eqref{eq:tau_hat_rem} holds, 
		the corresponding 
		$v_{K_n, a_n} \rightarrow 1$ as $n\rightarrow \infty$, 
		under which ReM asymptotically provides no gain on estimation precision compared to the CRE. 
	\end{itemize}
\end{theorem}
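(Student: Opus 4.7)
The plan is to exploit the elementary inequality $\log(p_n^{-1}) \le \log(\Delta_n^{-1})$, which follows from $p_n > \Delta_n$ (enforced for large $n$ by Condition \ref{cond:p_n}), and feed the resulting control on $\log(p_n^{-1})/K_n$ into Theorem \ref{thm:v_Ka}. I first observe that Condition \ref{cond:gamma_n} together with Theorem \ref{thm:berry_esseen_clt}(iii) forces $\Delta_n \to 0$, so $\log(\Delta_n^{-1}) \to \infty$; and Theorem \ref{thm:dim_rem} ensures that once Conditions \ref{cond:gamma_n} and \ref{cond:p_n} both hold, the approximation \eqref{eq:tau_hat_rem} is automatic.

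For part (i), the ``if'' direction is handled by the concrete choice $p_n = \Delta_n^{1/2}$. Then $p_n/\Delta_n = \Delta_n^{-1/2} \to \infty$ gives Condition \ref{cond:p_n}, and $\log(p_n^{-1})/K_n = \tfrac{1}{2}\log(\Delta_n^{-1})/K_n \to \infty$ gives Condition \ref{cond:k_np_n}; Theorem \ref{thm:v_Ka}(i) then yields $v_{K_n,a_n} \to 0$, and since $L_{K_n,a_n}$ is mean-zero, uniform integrability of $L_{K_n,a_n}^2$ (noted in Section \ref{sec:asym_con_Gauss}) gives $L_{K_n,a_n} \to 0$ in probability, which combined with Condition \ref{cond:rsup} and \eqref{eq:tau_hat_rem} delivers \eqref{eq:tau_dim_hat_rem}. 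For the ``only if'' direction I argue contrapositively: if $\log(\Delta_n^{-1})/K_n \not\to \infty$, extract a subsequence on which it is bounded by some $M$; for any $\{p_n\}$ satisfying Condition \ref{cond:p_n} one has $p_n > \Delta_n$ eventually, so $\log(p_n^{-1})/K_n \le \log(\Delta_n^{-1})/K_n \le M$ along that subsequence, contradicting Condition \ref{cond:k_np_n}.

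Parts (ii)--(iv) follow the same template with different hypotheses. In (ii), Condition \ref{cond:p_n} forces $\limsup_n \log(p_n^{-1})/K_n \le \limsup_n \log(\Delta_n^{-1})/K_n < \infty$, and Theorem \ref{thm:v_Ka}(ii) returns $\liminf_n v_{K_n,a_n} > 0$. In (iii), the same explicit choice $p_n = \Delta_n^{1/2}$ verifies Condition \ref{cond:p_n} and yields $\liminf_n \log(p_n^{-1})/K_n = \tfrac{1}{2}\liminf_n \log(\Delta_n^{-1})/K_n > 0$, so Theorem \ref{thm:v_Ka}(iii) provides $\limsup_n v_{K_n,a_n} < 1$ while Theorem \ref{thm:dim_rem} provides \eqref{eq:tau_hat_rem}. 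In (iv), the squeeze $0 \le \log(p_n^{-1})/K_n \le \log(\Delta_n^{-1})/K_n \to 0$ (using $p_n \le 1$) combined with Theorem \ref{thm:v_Ka}(iv) forces $v_{K_n,a_n} \to 1$; Corollary \ref{cor:improve} then shows the percentage reduction in asymptotic variance $(1-v_{K_n,a_n})R^2_n$ vanishes, so ReM asymptotically offers no gain over CRE.

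There is no single hard obstacle: the technical content is already packaged into Theorems \ref{thm:berry_esseen_clt}, \ref{thm:dim_rem}, \ref{thm:v_Ka} and Corollary \ref{cor:improve}, and the theorem is essentially a bookkeeping corollary tying them together via the sandwich $\log(\Delta_n^{-1}) \ge \log(p_n^{-1}) \ge 0$. The small points requiring care are the subsequence argument in the ``only if'' half of (i) and the verification that the explicit choice $p_n = \Delta_n^{1/2}$ used in both (i) and (iii) simultaneously meets Condition \ref{cond:p_n} and drives $\log(p_n^{-1})/K_n$ to the correct regime.
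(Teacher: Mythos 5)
Your proposal is correct and follows essentially the same route as the paper's proof: both reduce everything to the sandwich $\log(p_n^{-1})\le\log(\Delta_n^{-1})$ implied by Condition \ref{cond:p_n} and then invoke Theorem \ref{thm:v_Ka}, with the existence parts handled by exhibiting a suitable $\{p_n\}$. The only cosmetic difference is that you use the concrete choice $p_n=\Delta_n^{1/2}$ where the paper constructs the sequence abstractly; this is a clean special case of the paper's construction and changes nothing substantive.
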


From Theorem \ref{thm:K_n_Delta_n}, the optimal precision that ReM can achieve 
depends crucially on the asymptotic behavior of the ratio $\log(\Delta_n^{-1})/K_n$.
For any fixed $n$, in general,
	as $K_n$ increases, $\Delta_n$ will increase, and thus the ratio $\log(\Delta_n^{-1})/K_n$ will decrease. Therefore,
intuitively, the smaller the number of involved covariates for rerandomization, 
the more likely we are able to 
satisfy the condition in Theorem \ref{thm:K_n_Delta_n}(i), and consequently, to achieve the ideally optimal precision.
Moreover, when ReM involves more and more covariates, it will eventually lose its advantage over the CRE. 
Therefore, 
Theorem \ref{thm:K_n_Delta_n} suggests that we should not use too  many covariates when performing rerandomization. 
In practice, we should try to use a moderate number of covariates that are most relevant for the potential outcomes of interest, as measured by the corresponding $R^2_n$. 
For example, when $\gamma_n$ has the same order as its lower bound in \eqref{eq:gamma_n_lower}, and $r_1$ and $r_0$ are bounded away from zero, 
then we can choose $K_n = O(\log n)$ number of covariates, under which $\liminf_{n \to \infty} \log(\Delta_n^{-1})/K_n$ must be positive and ReM can provide non-negligible gain over the CRE as implied by Theorem \ref{thm:K_n_Delta_n}(iii). 
As $\gamma_n$ becomes further from its lower bound, we generally want to use fewer covariates. 
We defer more detailed discussion on the rate of $\log(\Delta_n^{-1})/K_n$ to Section \ref{sec:reg_cond}.

\section{Large-sample Inference under Rerandomization}\label{sec:ci}

Theorems \ref{thm:dim_rem} and \ref{thm:rem_gaussian} provide asymptotic approximations for the distribution of the  difference-in-means estimator $\hat{\tau}$ under ReM, based on which we can construct large-sample confidence intervals for the average treatment effect $\tau$. 
From the asymptotic approximations in \eqref{eq:tau_hat_rem} and \eqref{eq:tau_dim_hat_rem}, the asymptotic distribution of $\hat{\tau}$ under ReM depends on the variance $V_{\tau\tau}$ for the CRE and the squared multiple correlation $R_n^2$, both of which are determined by the finite population variances of the potential outcomes and individual effects as well as their linear projections on covariates. 
For each treatment group $z\in \{0,1\}$, 
recall that $\bar{Y}_z$ and $\bar{\bs{X}}_z$ are the average observed outcome and covariates, 
and define 
$s_z^2 = (n_z-1)^{-1} \sum_{i:Z_i=z} (Y_i - \bar{Y}_z)^2$ 
and 
$
\bs{s}_{z, \bs{X}} = \bs{s}_{\bs{X},z}^\top =  (n_z-1)^{-1} \sum_{i:Z_i=z} (Y_i - \bar{Y}_z) (\bs{X}_i - \bar{\bs{X}}_z )^\top
$
as the sample variance and covariance for the observed outcome and covariates. 
We further introduce 
$s^2_{z \setminus \bs{X}} = s_z^2 - \bs{s}_{z, \bs{X}} \bs{S}_{\bs{X}}^{-2} \bs{s}_{\bs{X}, z}$ 
and 
$s_{\tau \mid \bs{X}}^2 =(\bs{s}_{1, \bs{X}} - \bs{s}_{0, \bs{X}}) \bs{S}_{\bs{X}}^{-2} (\bs{s}_{\bs{X}, 1} - \bs{s}_{\bs{X}, 0})$. 
We can then estimate $V_{\tau\tau}$ and $R^2_n$ in \eqref{eq:V} and \eqref{eq:R2} through replacing the finite population variances in their definitions by the corresponding sample analogues: 
\begin{align}\label{eq:VR2_hat}
	\hat{V}_{\tau\tau} = n_1^{-1} s_1^2 + n_0^{-1} s_0^2 - n^{-1} s_{\tau \mid \bs{X}}^2, 
	\qquad 
	\hat{R}^2_n =  
	1 - 
	\hat{V}_{\tau\tau}^{-1} \big(n_1^{-1} s_{1 \setminus \bs{X}}^2 + n_0^{-1} s_{0 \setminus \bs{X}}^2 \big).
\end{align}
Note that the finite population variance of individual effects $S_\tau^2$ is generally not identifiable, since we can never observe the individual effect for any unit.
Thus, 
we do not expect any consistent estimator for it as well as $V_{\tau\tau}$ \citep{N23}. 
However, because  $S_\tau^2$ is bounded below by $S_{\tau \mid \bs{X}}^2$ that has sample analogue  $s_{\tau \mid \bs{X}}^2$, we can still estimate $V_{\tau\tau}$  conservatively as in \eqref{eq:VR2_hat}.

Based on the asymptotic approximation established in Theorem \ref{thm:dim_rem}  and the estimators in \eqref{eq:VR2_hat}, 
for any $\alpha\in (0,1)$, 
we can then construct the following $1-\alpha$ confidence interval for $\tau$: 
\begin{align}\label{eq:C_hat}
	\hat{\mathcal{C}}_\alpha =  \big[\hat\tau - \hat{V}_{\tau\tau}^{1/2} \cdot \nu_{1-\alpha / 2, K_n, a_n} (\hat{R}_n^2), \ \ \hat\tau + \hat{V}_{\tau\tau}^{1/2} \cdot \nu_{1-\alpha/2, K_n, a_n} (\hat{R}_n^2)\big],
\end{align}
recalling that $\nu_{1-\alpha/2, K, a} (R^2)$ is the $(1-\alpha/2)$th quantile of the 
distribution of $\sqrt{1 - R^2} \ \varepsilon_0 + \sqrt{R^2} \ L_{K, a}$.

To ensure the asymptotic validity of the confidence interval in \eqref{eq:C_hat}, we invoke the following regularity condition. We defer more detailed discussion of the condition to Section \ref{sec:reg_cond}.  
For $z=0,1$, 
let $S^2_{z\setminus \bs{X}} = S^2_{z} - S^2_{z\mid \bs{X}}$ denote the finite population variance of the residuals from the linear projection of potential outcomes $Y_i(z)$'s on covariates $\bs{X}_i$'s.

\begin{condition}\label{cond:infer}
	As $n\rightarrow \infty$, 
	\begin{align}\label{eq:infer_cond}
		\frac{\max_{z\in \{0,1\}}\max_{1\le i \le n}\{Y_i(z) - \bar{Y}(z)\}^2}{r_0 S^2_{1\setminus \bs{X}} + r_1 S^2_{0\setminus \bs{X}}}
		\cdot 
		\frac{\max\{K_n, 1\}}{r_1r_0}
		\cdot 
		\sqrt{ \frac{\max\{1, \log K_n, - \log p_n\} }{n} } 
		\converge 0. 
	\end{align}
\end{condition}

The following theorem shows that the confidence interval in \eqref{eq:C_hat} is asymptotically conservative, and becomes asymptotically exact when $S^2_{\tau\setminus \bs{X}} \equiv S^2_{\tau} - S^2_{\tau\mid \bs{X}}$ is asymptotically negligible.

\begin{theorem}\label{thm:inf}
	Under ReM and Conditions \ref{cond:gamma_n}, \ref{cond:p_n} and \ref{cond:infer}, as $n\rightarrow \infty$, 
	\begin{itemize}
		\item[(i)] the estimators in \eqref{eq:VR2_hat} are asymptotically conservative in the sense that 
		\begin{align*}
			& \quad 
			\max\big\{ 
			|\hat{V}_{\tau\tau}(1-\hat{R}_n^2) - 
			V_{\tau\tau}(1-R^2_n) - S^2_{\tau \setminus \bs{X}}/n|, 
			\ 
			|\hat{V}_{\tau\tau} \hat{R}^2_n - 
			V_{\tau\tau} R^2_n|
			\big\}
			\\ & 
			= 
			o_{\PP}\left( V_{\tau\tau}(1-R_n^2) + S^2_{\tau\setminus \bs{X}} /n \right); 
		\end{align*}
		\item[(ii)] for any $\alpha\in (0,1)$, 
		the resulting $1-\alpha$ confidence interval in \eqref{eq:C_hat} is asymptotically conservative, 
		in the sense that 
		$
		\liminf_{n\rightarrow \infty}
		\PP
		(
		\tau \in \hat{\mathcal{C}}_{\alpha} \mid M \le a_n
		)
		\ge 
		1 - \alpha; 
		$
		\item[(iii)] if further $S^2_{\tau \setminus \bs{X}} = n V_{\tau\tau}(1-R_n^2)\cdot o(1),$ the $1-\alpha$ confidence interval in \eqref{eq:C_hat} becomes asymptotically exact, 
		in the sense that 
		$
		\lim_{n\rightarrow \infty}
		\PP
		(
		\tau \in \hat{\mathcal{C}}_{\alpha} \mid M \le a_n
		)
		= 
		1 - \alpha. 
		$
	\end{itemize}
\end{theorem}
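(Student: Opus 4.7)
The plan is to address the three parts sequentially, with the bulk of the work in part~(i); parts~(ii) and~(iii) then follow from (i) combined with Theorem~\ref{thm:dim_rem} and straightforward monotonicity arguments.

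For part~(i), the first move is an algebraic reduction. Using the definitions in \eqref{eq:VR2_hat} together with the definition of $R_n^2$ in \eqref{eq:R2}, one verifies
\begin{align*}
\hat V_{\tau\tau}(1-\hat R_n^2) = \frac{s_{1\setminus \bs{X}}^2}{n_1} + \frac{s_{0\setminus \bs{X}}^2}{n_0}, \qquad
V_{\tau\tau}(1-R_n^2) + \frac{S_{\tau\setminus \bs{X}}^2}{n} = \frac{S_{1\setminus \bs{X}}^2}{n_1} + \frac{S_{0\setminus \bs{X}}^2}{n_0},
\end{align*}
where $S_{z\setminus\bs{X}}^2 = S_z^2 - S_{z\mid\bs{X}}^2$. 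Consequently, the first statement in (i) reduces to showing $\sum_{z\in\{0,1\}} n_z^{-1}|s_{z\setminus\bs{X}}^2 - S_{z\setminus\bs{X}}^2| = o_{\PP}(\sum_z n_z^{-1} S_{z\setminus \bs{X}}^2)$ under ReM. An analogous decomposition $\hat V_{\tau\tau}\hat R_n^2 = \sum_z n_z^{-1} s_{z\mid\bs{X}}^2 - n^{-1} s_{\tau\mid\bs{X}}^2$ versus $V_{\tau\tau} R_n^2 = \sum_z n_z^{-1} S_{z\mid\bs{X}}^2 - n^{-1} S_{\tau\mid\bs{X}}^2$ handles the second statement. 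Each sample quantity $s_z^2$, $\bs{s}_{z,\bs{X}}$, $s_{z\mid\bs{X}}^2$, $s_{z\setminus\bs{X}}^2$, $s_{\tau\mid\bs{X}}^2$ is a smooth function of simple random samples of size $n_z$ from the finite population, so under the CRE each is unbiased (or nearly so) for its population counterpart, with variance controlled by finite-population moment inequalities.

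The core technical ingredient is then a Chebyshev- or Bernstein-type concentration bound under the CRE: I will show that for each fixed $\varepsilon > 0$,
\begin{align*}
\PP\bigl( |s_{z\setminus\bs{X}}^2 - S_{z\setminus\bs{X}}^2| > \varepsilon S_{z\setminus\bs{X}}^2 \bigr) = o(p_n)
\end{align*}
and analogously for the other sample quantities. The factor $\max_i\{Y_i(z)-\bar Y(z)\}^2$ in \eqref{eq:infer_cond} enters through bounding fourth moments of outcome residuals, while the $K_n/(r_1 r_0)$ factor reflects the dimension and sample-size scaling of covariate covariance estimation. To translate these CRE bounds to ReM, I invoke the elementary inequality $\PP(\mathcal{A}\mid M\le a_n) \le \PP(\mathcal{A})/\PP(M \le a_n)$, together with Theorem~\ref{thm:berry_esseen_clt} and Condition~\ref{cond:p_n}, which yield $\PP(M \le a_n) \ge p_n - \Delta_n = p_n(1-o(1))$. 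Condition~\ref{cond:infer} is calibrated exactly so that the CRE tail probabilities are $o(p_n)$, hence $o(1)$ conditional on $M\le a_n$, which establishes (i).

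For part~(ii), I combine (i) with Theorem~\ref{thm:dim_rem}. Writing the coverage probability as
\begin{align*}
\PP\bigl(\bigl|V_{\tau\tau}^{-1/2}(\hat\tau-\tau)\bigr| \le (\hat V_{\tau\tau}/V_{\tau\tau})^{1/2}\, \nu_{1-\alpha/2, K_n, a_n}(\hat R_n^2) \,\big|\, M \le a_n\bigr),
\end{align*}
I use (i) to conclude $\hat V_{\tau\tau}/V_{\tau\tau} \ge 1 + S_{\tau\setminus\bs{X}}^2/(nV_{\tau\tau}) + o_{\PP}(1)$ while $\hat R_n^2$ tracks $R_n^2$ closely enough that the joint $(\hat V_{\tau\tau}/V_{\tau\tau},\hat R_n^2)$ limit produces a conservative scaling. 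The key monotonicity fact is that the variance $1-(1-v_{K_n,a_n})R^2$ of the limit $\sqrt{1-R^2}\varepsilon_0 + \sqrt{R^2} L_{K_n, a_n}$ is nonincreasing in $R^2$, which combined with the symmetric-unimodal structure from Corollary~\ref{cor:improve} ensures that $\hat V_{\tau\tau}^{1/2}\nu_{1-\alpha/2,K_n,a_n}(\hat R_n^2)$ is asymptotically no smaller than the true $(1-\alpha/2)$-quantile of $|\hat\tau-\tau|$. A Slutsky-type argument closes (ii). Part~(iii) is a sharpening of the same argument: when $S_{\tau\setminus\bs{X}}^2 = o(nV_{\tau\tau}(1-R_n^2))$, part~(i) gives $\hat V_{\tau\tau}(1-\hat R_n^2) = V_{\tau\tau}(1-R_n^2)(1+o_{\PP}(1))$ and $\hat V_{\tau\tau}\hat R_n^2 = V_{\tau\tau}R_n^2 + o_{\PP}(V_{\tau\tau}(1-R_n^2))$, so $\hat V_{\tau\tau}/V_{\tau\tau}\to 1$ and $\hat R_n^2-R_n^2 \to 0$ in probability, making the coverage inequality an asymptotic equality.

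The main technical obstacle is the concentration step above: establishing a finite-population tail bound for sample variances and covariances that decays faster than $p_n$. The rate must explicitly carry the dimension $K_n$ and the extreme outcome factor $\max_i\{Y_i(z)-\bar Y(z)\}^2$, since Condition~\ref{cond:infer} only permits a $\sqrt{\max\{1,\log K_n,-\log p_n\}/n}$ slack. Getting the logarithmic dependence on $p_n^{-1}$ out of the tail bound (rather than just the polynomial decay a naive Chebyshev argument would yield) requires a truncation or Bernstein-type device adapted to sampling without replacement; this is the most delicate piece of bookkeeping in the proof.
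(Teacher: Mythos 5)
Your proposal follows essentially the same route as the paper's proof: the identity $V_{\tau\tau}(1-R_n^2)+S^2_{\tau\setminus\bs{X}}/n = n_1^{-1}S^2_{1\setminus\bs{X}}+n_0^{-1}S^2_{0\setminus\bs{X}}$, exponential (Hoeffding/Bernstein-type) concentration for sample variances and covariances under simple random sampling with tails of order $o(p_n)$ transferred to ReM via $\PP(\cdot\mid M\le a_n)\le \PP(\cdot)/\PP(M\le a_n)$, and then a quantile-comparison argument in which the inflated Gaussian coefficient $V_{\tau\tau}(1-R_n^2)+S^2_{\tau\setminus\bs{X}}/n$ (with the constrained-Gaussian coefficient held at $V_{\tau\tau}R_n^2$) yields conservative coverage, sharpening to exactness when $S^2_{\tau\setminus\bs{X}}=o(nV_{\tau\tau}(1-R_n^2))$. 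The only cosmetic difference is that the paper formalizes the monotonicity step in (ii) via a stochastic-dominance lemma for convolutions $\sqrt{A}\,\varepsilon_0+\sqrt{B}\,L_{K,a}$ in the coefficient $A$ (together with subsequence arguments since $R_n^2$ need not converge), rather than the variance formula $1-(1-v_{K_n,a_n})R^2$ you cite, but this does not change the substance.
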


Note that in Theorem \ref{thm:inf}, we do not make the typical assumption that $R_n^2$ and $V_{\tau\tau}$ have limiting values as the sample size goes to infinity \citep[see, e.g.,][]{LDR18}, 
since such an assumption may not be very satisfying given that we allow the number of covariates $K_n$ to vary with the sample size (which consequently affects $R_n^2$). 
This brings additional challenge to the proof of Theorem \ref{thm:inf}. 
Theorem \ref{thm:inf} also highlights the conservativeness in the finite population inference that dates back to \citet{N23}'s analysis for the CRE. 
The conservativeness of the confidence interval comes mainly from $S^2_{\tau \setminus \bs{X}}$ as indicated in Theorem \ref{thm:inf}(i), which characterizes the individual effect heterogeneity after taking into account the covariates. 
The intervals become asymptotically exact when the individual effect heterogeneity is asymptotically linearly explained by the covariates, as shown in Theorem \ref{thm:inf}(iii).

If further Conditions \ref{cond:k_np_n} and \ref{cond:rsup} hold as in Theorem \ref{thm:rem_gaussian}, 
not surprisingly, we can then use the usual Wald-type confidence intervals based on Gaussian quantiles, ignoring the term involving the constrained Gaussian random variable. 
Specifically, for any $\alpha\in (0,1)$, let 
\begin{align}\label{eq:C_tilde}
	\tilde{\mathcal{C}}_\alpha = \Big[\hat\tau - \sqrt{\hat{V}_{\tau\tau} (1 - \hat{R}_n^2)} \cdot z_{1-\alpha / 2}, \ \  \hat\tau + \sqrt{\hat{V}_{\tau\tau} (1 - \hat{R}_n^2)} \cdot z_{1-\alpha/2}\Big].
\end{align}
The following theorem shows the asymptotic validity of the above Wald-type confidence intervals. 

\begin{theorem}\label{thm:inf_gaussian}
	Under ReM and Conditions \ref{cond:gamma_n}--\ref{cond:infer},
	Theorem \ref{thm:inf}(i)--(iii) still hold with $\hat{\mathcal{C}}_{\alpha}$ in \eqref{eq:C_hat} replaced by $\tilde{\mathcal{C}}_\alpha$ in \eqref{eq:C_tilde}. 
\end{theorem}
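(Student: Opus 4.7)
Proof proposal for Theorem \ref{thm:inf_gaussian}. My plan is to observe that Part (i) is literally the same conclusion as Theorem \ref{thm:inf}(i), which is already proved under Conditions \ref{cond:gamma_n}, \ref{cond:p_n} and \ref{cond:infer}; the two extra assumptions, Conditions \ref{cond:k_np_n} and \ref{cond:rsup}, play no role in that statement because it concerns only the estimators $\hat{V}_{\tau\tau}$ and $\hat{R}_n^2$, not the confidence interval. Hence Part (i) is immediate. The additional conditions come in only through Theorem \ref{thm:rem_gaussian}, which upgrades the limiting distribution from a convolution of a Gaussian and a constrained Gaussian to a pure Gaussian, thereby justifying the Wald-type critical value $z_{1-\alpha/2}$ in $\tilde{\mathcal{C}}_\alpha$.

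For Parts (ii) and (iii), the strategy is to combine Theorem \ref{thm:rem_gaussian} with Part (i) via a Slutsky-type argument. Set $W_n = V_{\tau\tau}^{-1/2}(\hat{\tau}-\tau)/\sqrt{1-R_n^2}$ and $A_n^2 = \hat{V}_{\tau\tau}(1-\hat{R}_n^2)/[V_{\tau\tau}(1-R_n^2)]$; Condition \ref{cond:rsup} ensures $1-R_n^2$ is bounded away from $0$, so these are well-defined and the uniform Kolmogorov convergence in Theorem \ref{thm:rem_gaussian} transfers to $\sup_{c}|\PP(W_n \le c \mid M \le a_n) - \Phi(c)| \to 0$. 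Writing $u_n = 1-R_n^2$ and $v_n = S^2_{\tau \setminus \bs X}/(nV_{\tau\tau})$, Part (i) yields $A_n^2 = 1 + v_n/u_n + o_{\PP}(1 + v_n/u_n)$; since $v_n/u_n \ge 0$, this implies that for any $\delta > 0$, $\PP(A_n \ge 1-\delta \mid M \le a_n) \to 1$. The coverage of $\tilde{\mathcal{C}}_\alpha$ equals $\PP(|W_n| \le A_n z_{1-\alpha/2} \mid M \le a_n)$, and on the event $\{A_n \ge 1-\delta\}$ it is bounded below by $\PP(|W_n| \le (1-\delta)z_{1-\alpha/2} \mid M\le a_n)$. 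A union-bound argument then gives
\begin{align*}
\liminf_{n\to\infty} \PP(\tau \in \tilde{\mathcal{C}}_\alpha \mid M \le a_n) \ge 2\Phi((1-\delta)z_{1-\alpha/2}) - 1,
\end{align*}
and letting $\delta \to 0$ yields the asymptotic conservativeness in Part (ii).

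For Part (iii), the extra assumption $S^2_{\tau \setminus \bs X} = nV_{\tau\tau}(1-R_n^2)\cdot o(1)$ means $v_n/u_n = o(1)$, so Part (i) upgrades to $A_n^2 = 1 + o_{\PP}(1)$, hence $A_n \to_{\PP} 1$ under the rerandomization law. A standard Slutsky-type argument (using that $W_n$ converges uniformly to $N(0,1)$ in Kolmogorov distance) then gives $\PP(|W_n| \le A_n z_{1-\alpha/2}\mid M\le a_n) \to 2\Phi(z_{1-\alpha/2})-1 = 1-\alpha$, establishing asymptotic exactness.

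The main obstacle will be handling the triangular-array nature of the problem: the target limiting law $N(0, V_{\tau\tau}(1-R_n^2))$ changes with $n$ (and even $V_{\tau\tau} \to 0$ at rate $n^{-1}$), so ordinary weak convergence is not available and I must consistently rely on Kolmogorov-uniform statements. Additionally, the $o_{\PP}$ error in Part (i) is multiplicative in $V_{\tau\tau}(1-R_n^2) + S^2_{\tau \setminus \bs X}/n$ rather than additive, so care is needed to turn it into a bound on $A_n$ that is uniform in $n$; the trick is to divide through by $V_{\tau\tau}(1-R_n^2)$, whose lower bound is controlled by Condition \ref{cond:rsup}, and then exploit the nonnegativity of $v_n/u_n$ so that no upper control on $v_n/u_n$ is required. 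Finally, all probability statements are under the rerandomization design, i.e., conditional on $M \le a_n$, which is consistent with how Theorems \ref{thm:rem_gaussian} and \ref{thm:inf} are formulated, so no additional passage between conditional and unconditional probabilities is needed.
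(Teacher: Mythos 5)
Your argument is correct, but it takes a genuinely different route from the paper's. The paper never invokes Theorem \ref{thm:rem_gaussian} in its proof of this result: it instead recycles the quantile-comparison machinery from the proof of Theorem \ref{thm:inf}, defining $\check{\theta}_n = \sqrt{\smash[b]{\hat{V}_{\tau\tau}(1-\hat{R}_n^2)}}\;\varepsilon_0 + 0\cdot L_{K_n,a_n}$ so that the Wald critical value is a quantile of $\check{\theta}_n$, verifying $|\check{A}_n-\tilde{A}_n| = o_{\PP}(\tilde{A}_n)$ and $|\check{B}_n-\tilde{B}_n| = V_{\tau\tau}R_n^2 = O(\tilde{A}_n)$ (the latter using Condition \ref{cond:rsup}), and then applying the quantile-ordering lemma (Lemma \ref{lemma:non_Gaussian_quantile_diminish}), whose hypotheses only need $L_{K_n,a_n}=o_{\PP}(1)$ rather than the full Gaussian limit for $\hat{\tau}$; the coverage bounds then follow verbatim from the Theorem \ref{thm:inf}(ii)--(iii) argument with $\hat{q}_{n,\alpha}$ replaced by $\check{q}_{n,\alpha}$. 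You instead standardize $\hat{\tau}-\tau$ by the population scale $\sqrt{V_{\tau\tau}(1-R_n^2)}$, import the Kolmogorov-uniform Gaussian limit from Theorem \ref{thm:rem_gaussian} directly, and absorb the estimated scale $A_n$ through a one-sided (for conservativeness) or two-sided (for exactness) Slutsky/union-bound step. Both routes are valid under the stated conditions. Your version is more self-contained and makes explicit where Conditions \ref{cond:k_np_n} and \ref{cond:rsup} enter --- through the Gaussian limit and through bounding $1-R_n^2$ away from zero so the multiplicative $o_{\PP}$ bound from part (i) can be divided through --- and your observation that only a lower bound on $A_n$ is needed for part (ii), so that no upper control on $S^2_{\tau\setminus\bs{X}}/\{nV_{\tau\tau}(1-R_n^2)\}$ is required, is exactly the right point; the paper's version buys uniformity with the proof of Theorem \ref{thm:inf} and avoids restating the limit theorem, at the cost of routing everything through the quantile lemmas. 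Your reading of part (i) as literally unchanged is also correct, since that statement does not involve the confidence interval.
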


Theorem \ref{thm:inf_gaussian} shows that the usual Wald-type confidence intervals 
can be asymptotically valid for 
inferring 
the average treatment effect under ReM. 
However, we want to emphasize that it does involve additional regularity conditions, in particular Condition \ref{cond:k_np_n} on the choice of acceptance probability. 
While Theorem \ref{thm:inf_gaussian} provides confidence intervals of more convenient forms, 
we still recommend constructing confidence intervals based on Theorem  \ref{thm:inf}, which takes into account explicitly the constrained Gaussian random variable that is determined by the acceptance probability and the number of covariates for rerandomization. 
The intervals from Theorem \ref{thm:inf} not only requires 
fewer
regularity conditions, but also becomes asymptotically equivalent to the ones from Theorem \ref{thm:inf_gaussian} when the additional Conditions \ref{cond:k_np_n} and \ref{cond:rsup} hold, under which the constrained Gaussian random variable $L_{K_n, a_n}$ is of order $o_{\PP}(1)$ and is thus negligible asymptotically.

Below we give a brief remark on improving the finite-sample performance of the confidence intervals. 
Note that $s_{1 \setminus \bs{X}}^2$ and $s_{0 \setminus \bs{X}}^2$ are almost equivalent to the sample variances of the residuals from the linear projection of observed outcome on covariates in treated and control groups, respectively. 
Inspired by the regression analysis \citep[see, e.g.,][]{mackinnon2013thirty, LeiD20}, we can consider rescaling the residuals to improve their finite-sample performance. 
For example, 
letting $\hat{e}_i$ denote the residual from the linear projection of observed outcome on covariates for unit $i$,  
we can rescale $\hat{e}_i$ to be $\kappa_i\hat{e}_i$, with 
$\kappa_i = 1$ for HC0, 
$\kappa_i = \sqrt{(n_{z_i}-1)/(n_{z_i}-K_n-1)}$ for HC1, 
$\kappa_i = 1/\sqrt{1-H_{z_i,ii}}$ for HC2, 
and 
$\kappa_i = 1/(1-H_{z_i,ii})$ for HC3, 
where $H_{z,ii}$ is the leverage of unit $i$ for the covariate matrix consisting of the intercept and $K_n$ covariates under treatment arm $z$.

Finally, we 
remark that throughout Sections~\ref{sec:asymptotic} to~\ref{sec:ci} we assume Conditions~\ref{cond:gamma_n} and~\ref{cond:p_n}, which are stronger than that required by completely randomized experiments. When these conditions fail, the asymptotic properties of rerandomization may fail, making it challenging to conduct robust randomization-based inference for rerandomization; 
see 
Appendix A1 in the Supplementary Material \citep{WLSupp2022}
for a finite-sample worst-case analysis and its application for practical diagnosis of rerandomization.

\section{Regularity Conditions and  Practical Implications}\label{sec:reg_cond}

From the previous discussion, the asymptotic approximation of the difference-in-mean estimator $\hat{\tau}$ under ReM relies on Conditions \ref{cond:gamma_n} and \ref{cond:p_n}. Once 
further  Conditions \ref{cond:k_np_n} and \ref{cond:rsup} hold, 
we 
can achieve 
the ``optimal'' rerandomization, 
under which $\hat{\tau}$ becomes asymptotically Gaussian distributed. 
The 
large-sample inference for the average treatment effect $\tau$
under ReM relies additionally on Condition \ref{cond:infer}.
These regularity conditions depend crucially on the convergence rate of $\Delta_n$ or its upper bound $\gamma_n$. 
In the following, we first investigate the convergence rate of $\gamma_n$ when units are i.i.d.\ samples from some superpopulation, and discuss its implication on the validity of these regularity conditions. 
We then consider practical strategies that can make these regularity conditions more likely to hold, 
utilizing the advantage of finite population inference that requires no model or distributional assumptions on the potential outcomes and covariates. 
Finally, we discuss the computational cost of (optimal) rerandomization.

\subsection{Regularity conditions under i.i.d. sampling and their implications} \label{sec:iidconditions}

Throughout this subsection, we assume that the potential outcomes and covariates $(Y_i(1), Y_i(0), \bs{X}_i)$, for $i=1,2,\ldots, n$, are i.i.d.\  from a superpopulation that depends implicitly on the sample size $n$, 
noting that the dimension of covariates is allowed to vary with $n$. 
Recall that $\bs{u}_i = (r_0 Y_i(1) + r_1 Y_i(0), \bs{X}_i^\top)^\top \in \mathbb{R}^{K_n+1}$, and $\gamma_n$ in \eqref{eq:gamma_n} is uniquely determined by the treatment group proportions $r_1, r_0$, the dimension of covariates $K_n$, the sample size $n$, and the finite population $\{\bs{u}_1, \bs{u}_2, \ldots, \bs{u}_n\}$, which is further assumed to consist of i.i.d.\ draws from a superpopulation. 
Below we impose some moment conditions on the sequence of superpopulations as $n\rightarrow \infty$.

\begin{condition}\label{cond:moment}
	For each sample size $n$, $\bs{u}_1, \bs{u}_2, \ldots, \bs{u}_n$ are i.i.d.\ random vectors in $\mathbb{R}^{K_n+1}$ with finite and nonsingular covariance matrix. 
	The standardized random vector $\bs{\xi}_i = \cov(\bs{u}_i)^{-1/2} (\bs{u}_i - \E\bs{u}_i) \in \mathbb{R}^{K_n+1}$ satisfies that 
	$\sup_{\bs{\nu}\in \mathbb{R}^{K_n+1}: \bs{\nu}^\top \bs{\nu} = 1} \E|\bs{\nu}^\top \bs{\xi}_i|^\delta = O(1)$ for some $\delta > 2$, i.e., there exists an absolute constant $C<\infty$ such that $\E|\bs{\nu}^\top \bs{\xi}_i|^\delta \le C$ for all $n$ and all unit vector $\bs{\nu}$ in $\mathbb{R}^{K_n+1}$. 
\end{condition}

From \citet[][Proposition F.1]{LeiD20}, a sufficient condition for the uniform boundedness of $\sup_{\bs{\nu}\in \mathbb{R}^{K_n+1}: \bs{\nu}^\top \bs{\nu} = 1} \E|\bs{\nu}^\top \bs{\xi}_i|^\delta$ in Condition \ref{cond:moment} is that $\bs{\xi}_i$ has independent coordinates whose absolute $\delta$-th moment is uniformly bounded by a certain finite constant. 
The following proposition, which is a direct corollary of \citet[][Lemma H.1]{LeiD20}, gives a stochastic upper bound of $\gamma_n$ under Condition \ref{cond:moment}.

\begin{proposition}\label{prop:iidrate}
	If Condition \ref{cond:moment} holds and the dimension of covariates $K_n = O(n^{\omega})$ for some $\omega\in (0,1)$, 
	then 
	\begin{align*}
		\gamma_n 
		= 
		O_{\PP} \left( 
		\frac{1}{\sqrt{r_1r_0}} \frac{(K_n+1)^{7/4}}{n^{1/2-1/\delta}} \right). 
	\end{align*}
\end{proposition}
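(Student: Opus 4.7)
The plan is to derive the claim directly from Lemma H.1 of \citet{LeiD20}, which provides a stochastic bound on the average of cubed standardized sample norms under exactly the moment structure of Condition \ref{cond:moment}. After unpacking the definition of $\gamma_n$ in \eqref{eq:gamma_n} and separating the deterministic prefactor $(K_n+1)^{1/4}/\sqrt{nr_1r_0}$, the problem reduces to showing
$$\frac{1}{n}\sum_{i=1}^n \bigl\|(\bs{S}^2_{\bs{u}})^{-1/2}(\bs{u}_i - \bar{\bs{u}})\bigr\|_2^3 = O_\PP\bigl((K_n+1)^{3/2} n^{1/\delta}\bigr),$$
at which point multiplying by the prefactor recovers the claimed rate.

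My first step would be to compare the sample-standardized vectors $(\bs{S}^2_{\bs{u}})^{-1/2}(\bs{u}_i - \bar{\bs{u}})$ to their population-standardized analogues $\bs{\xi}_i = \bs{\Sigma}^{-1/2}(\bs{u}_i - \bs{\mu})$, where $\bs{\mu} = \E \bs{u}_i$ and $\bs{\Sigma} = \cov(\bs{u}_i)$. Writing
$$(\bs{S}^2_{\bs{u}})^{-1/2}(\bs{u}_i - \bar{\bs{u}}) = (\bs{S}^2_{\bs{u}})^{-1/2}\bs{\Sigma}^{1/2}\bs{\xi}_i + (\bs{S}^2_{\bs{u}})^{-1/2}(\bs{\mu} - \bar{\bs{u}}),$$
the substitution error is controlled by $\|(\bs{S}^2_{\bs{u}})^{-1/2}\bs{\Sigma}^{1/2} - \bs{I}\|_{\op}$ together with $\|\bar{\bs{u}} - \bs{\mu}\|_2$. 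Under Condition \ref{cond:moment}, matrix-Bernstein-type concentration adapted to $\delta$-th moment conditions shows that both errors are of smaller order than the main term, provided the dimension grows only polynomially, i.e.\ $K_n = O(n^\omega)$ with $\omega < 1$.

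My second step would be to bound $n^{-1}\sum_{i=1}^n \|\bs{\xi}_i\|_2^3$ in probability. Applying Jensen's inequality coordinate-wise gives $\E \|\bs{\xi}_i\|_2^\delta \leq C (K_n+1)^{\delta/2}$ from the one-dimensional moment bound in Condition \ref{cond:moment}. When $\delta \geq 3$, this directly yields $\E \|\bs{\xi}_i\|_2^3 \leq C'(K_n+1)^{3/2}$ and hence $n^{-1}\sum_i \|\bs{\xi}_i\|_2^3 = O_\PP((K_n+1)^{3/2})$. When $\delta \in (2, 3)$, a truncation argument at level $T_n \asymp n^{1/\delta}(K_n+1)^{1/2}$ splits the sum into a truncated part with controlled $\delta$-th moment and a tail whose event has vanishing probability, and the resulting bound is still absorbed in $O_\PP\bigl((K_n+1)^{3/2} n^{1/\delta}\bigr)$.

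The main obstacle is the interplay between the truncation level and the substitution error: we need the concentration of $\bs{S}^2_{\bs{u}}$ around $\bs{\Sigma}$ to hold at a rate that dominates the natural scale of the cubed-norm average, which under only $\delta$-th moment control is delicate when the dimension grows with $n$. Lemma H.1 of \citet{LeiD20} packages these steps together, and the assumption $K_n = O(n^\omega)$ with $\omega < 1$ is exactly what ensures the substitution step does not inflate the rate. Combining the lemma's conclusion with the deterministic prefactor then yields
$$\gamma_n = O_\PP\left(\frac{1}{\sqrt{r_1 r_0}} \cdot \frac{(K_n+1)^{7/4}}{n^{1/2 - 1/\delta}}\right),$$
which is the stated bound.
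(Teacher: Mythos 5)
Your overall strategy is close in spirit to the paper's: both arguments ultimately rest on Lemma H.1 of \citet{LeiD20}, and both target the same intermediate bound $n^{-1}\sum_{i=1}^n\|(\bs{S}^2_{\bs{u}})^{-1/2}(\bs{u}_i-\bar{\bs{u}})\|_2^3 = O_{\PP}\big((K_n+1)^{3/2}n^{1/\delta}\big)$, which multiplied by the prefactor gives the claimed rate. The moment computations in your second step (Minkowski in $L_{\delta/2}$ to get $\E\|\bs{\xi}_i\|_2^\delta = O\big((K_n+1)^{\delta/2}\big)$, then Markov or truncation) are fine. The genuine gap is in your first step. Passing from the sample-standardized vectors $(\bs{S}^2_{\bs{u}})^{-1/2}(\bs{u}_i-\bar{\bs{u}})$ to the population-standardized $\bs{\xi}_i$ requires $\|(\bs{S}^2_{\bs{u}})^{-1/2}\{\cov(\bs{u}_i)\}^{1/2}\|_{\op}^2 = \|(\bs{S}^2_{\bs{\xi}})^{-1}\|_{\op} = O_{\PP}(1)$, i.e.\ a lower bound on $\lambda_{\min}(\bs{S}^2_{\bs{\xi}})$. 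Under only the directional $\delta$-th moment bound of Condition \ref{cond:moment}, Lemma \ref{lem:lei} (fed with the max-bound of Lemma \ref{lem:max}) gives $\|\bs{S}^2_{\bs{\xi}}-\bs{I}_{K_n+1}\|_{\op} = O_{\PP}\big(K_n n^{2/\delta-1} + \cdots\big)$, which is $o_{\PP}(1)$ only when $K_n = o(n^{1-2/\delta})$. The proposition, however, allows $K_n = O(n^{\omega})$ for \emph{any} $\omega\in(0,1)$; when $\delta$ is close to $2$ and $\omega \ge 1-2/\delta$, your substitution step is not justified. Indeed, the paper invokes the operator-norm convergence only later, in the proof of Corollary \ref{cor:reg_cond}, under the stronger restriction $K_n = o(n^{2/7-4/(7\delta)})$. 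So your claim that ``$K_n=O(n^\omega)$ with $\omega<1$ is exactly what ensures the substitution step does not inflate the rate'' is the step that fails.

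The paper's proof avoids the issue entirely by never leaving the sample standardization: it bounds the cubed-norm average by the product of the squared-norm average and the maximum norm, and the squared-norm sum is the exact deterministic identity $\sum_{i=1}^n\|(\bs{S}^2_{\bs{u}})^{-1/2}(\bs{u}_i-\bar{\bs{u}})\|_2^2 = (n-1)(K_n+1)$ (a trace computation, Lemma \ref{lemma:gamma_srs_bound}(i)). Only $\max_i\|(\bs{S}^2_{\bs{\xi}})^{-1/2}(\bs{\xi}_i-\bar{\bs{\xi}})\|_2^2 = (n-1)\max_i H_{ii}$ then needs stochastic control, and leverages are self-normalizing ($H_{ii}\le 1$), so the max-leverage bound from Lemma \ref{lem:lei} holds for every $\omega<1$ with no lower eigenvalue bound ever required. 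If you replace your substitution step by this factorization, your moment bounds become unnecessary and the stated rate follows directly.
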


Note that
$\gamma_n \ge 2^{-3/2} (n r_1r_0)^{-1/2}(K_n+1)^{7/4}$ as shown in \eqref{eq:gamma_n_lower}. Therefore, the rate in Proposition \ref{prop:iidrate} is precise up to an $n^{1/\delta}$ factor;  when $\delta = \infty$, the rate matches the lower bound rate in \eqref{eq:gamma_n_lower}.
From Proposition \ref{prop:iidrate}, we can immediately derive the following implications. 
Recall that $r_1$ and $r_0$ are the proportions of treated and control units. 
In practice, both treatment groups are likely to have non-negligible proportions of units, and thus it is reasonable to assume that  both $r_1^{-1}$ and $r_0^{-1}$ are of order $O(1)$. 
Recall that $R_n^2$ in \eqref{eq:R2} denotes the finite population squared multiple correlation between potential outcomes and covariates. 
We introduce $R_{\sup, n}^2 = \text{Corr}^2(r_1Y(0)+r_0Y(1), \bs{X})$ to denote the superpopulation analogue with $(Y(1), Y(0), \bs{X})$ following the superpopulation distribution at sample size $n$. 

\begin{corollary}\label{cor:reg_cond}
	If Condition \ref{cond:moment} holds, $r_z^{-1} = O(1)$ for $z=0,1$, and 
	$K_n = o(n^{2/7 - 4/(7\delta)})$, then 
	\begin{enumerate}[label=(\roman*)]
		\item $\gamma_n = o_{\PP}(1)$, and thus $\Delta_n$ in \eqref{eq:Delta_n} is of order $o_{\PP}(1)$; 
		\item the finite population and superpopulation squared multiple correlations $R_n^2$ and $R^2_{\sup, n}$ are asymptotically equivalent, in the sense that $R_n^2 - R^2_{\sup, n} = o_{\PP}(1)$; 
		\item 
		if further the standardized potential outcomes 
		have bounded $b$th moments for 
some $b > 4$, 
		both $\var(Y(1))$ and $\var(Y(0))$ are of the same order as $\var(r_0 Y(1) + r_1 Y(0))$, 
		$\limsup_{n\rightarrow \infty} R^2_{\sup, n} < 1$, 
		and $K_n = O(n^c)$ and $-\log p_n = o(n^{1-4/b-2c})$ for some $c<1/2-2/b$, 
		then the quantity on the left hand side of  \eqref{eq:infer_cond} is of order $o_{\PP}(1)$.
	\end{enumerate}
\end{corollary}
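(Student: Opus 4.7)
The plan is to prove the three parts sequentially, leveraging Proposition \ref{prop:iidrate} and Theorem \ref{thm:berry_esseen_clt} for (i), a sample-covariance concentration argument for (ii), and a combination of moment tail bounds with variance lower bounds for (iii).

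For part (i), I will substitute $r_0^{-1}, r_1^{-1} = O(1)$ into the bound of Proposition \ref{prop:iidrate} to obtain $\gamma_n = O_{\PP}((K_n+1)^{7/4}/n^{1/2 - 1/\delta})$. The rate assumption $K_n = o(n^{2/7 - 4/(7\delta)})$ then gives $(K_n+1)^{7/4} = o(n^{1/2 - 1/\delta})$, so $\gamma_n = o_{\PP}(1)$. Theorem \ref{thm:berry_esseen_clt}(iii) then yields $\Delta_n \le 174\gamma_n + 7\gamma_n^{1/3} = o_{\PP}(1)$.

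For part (ii), I observe that $R_n^2$ and $R^2_{\sup, n}$ are the sample and population squared multiple correlations associated with $\bs{u}_i$ (recalling $\bs{V} = (nr_1r_0)^{-1}\bs{S}^2_{\bs{u}}$), so both are affine-invariant smooth functionals of the respective covariance matrices. After standardizing $\bs{u}_i$ by its population covariance square root, I would reduce the claim to showing that the sample covariance of the standardized $\bs{\xi}_i$'s converges to the identity in operator norm. Under Condition \ref{cond:moment} this follows from the same matrix concentration argument that underlies Proposition \ref{prop:iidrate} (compare \citet[Lemma H.1]{LeiD20}), with a rate vanishing under the hypothesized dimension bound. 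Continuity of $R^2$ as a rational function of the covariance matrix around a nonsingular point then gives $R_n^2 - R^2_{\sup, n} = o_{\PP}(1)$.

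For part (iii), three ingredients are needed. First, the bounded-$b$th-moment assumption on the standardized potential outcomes combined with Markov's inequality and a union bound over $i \le n$ and $z \in \{0,1\}$ yields $\max_{z, i}(Y_i(z) - \bar{Y}(z))^2 = O_{\PP}(n^{2/b}\max_{z}\var(Y(z)))$. Second, the sample residual variance $S^2_{z\setminus \bs{X}}$ concentrates around its superpopulation counterpart $(1 - R^2_{z, \sup}) \var(Y(z))$ by the covariance-concentration argument used in part (ii) applied to $(Y(z), \bs{X}^\top)^\top$; combined with the variance-equivalence hypothesis $\var(Y(z)) \asymp \var(r_0 Y(1) + r_1 Y(0))$ and $\limsup_{n\to \infty} R^2_{\sup, n} < 1$, this allows $r_0 S^2_{1\setminus \bs{X}} + r_1 S^2_{0\setminus \bs{X}}$ to be bounded below by a positive constant times $\max_z \var(Y(z))$ with probability tending to one. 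Third, the first factor of \eqref{eq:infer_cond} is therefore $O_{\PP}(n^{2/b})$, so multiplying by $K_n/(r_1r_0) = O(n^c)$ and $\sqrt{\max\{1, \log K_n, -\log p_n\}/n}$ gives $O_{\PP}(n^{2/b + c - 1/2}\sqrt{(-\log p_n) \vee \log n})$, which is $o_{\PP}(1)$ by the hypotheses $c < 1/2 - 2/b$ and $-\log p_n = o(n^{1 - 4/b - 2c})$.

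The main obstacle will be the lower bound in the second step of part (iii): translating the hypothesis $\limsup_{n\to\infty} R^2_{\sup, n} < 1$ --- which bounds the multiple correlation of the weighted average $r_0 Y(1) + r_1 Y(0)$ with $\bs{X}$ --- into a positive lower bound for the pooled residual variance of the two individual potential outcomes. This requires linking the two notions of multiple correlation through the same-order-variance hypothesis, while ensuring the covariance concentration holds uniformly over the growing dimension $K_n$; once this lower bound is in hand, the remaining bookkeeping is routine.
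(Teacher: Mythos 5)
Parts (i) and (ii) follow essentially the same route as the paper: (i) is a direct substitution into Proposition \ref{prop:iidrate} plus Theorem \ref{thm:berry_esseen_clt}, and (ii) is the operator-norm concentration $\|\bs{S}^2_{\bs{\xi}} - \bs{I}_{K_n+1}\|_{\textrm{op}} = o_{\PP}(1)$ followed by continuity of the multiple-correlation functional (the paper carries this out explicitly via an SVD of the block $\bs{B}$ of $\cov(\bs{u})^{1/2}$, but your sketch is the same argument). The moment bound $\max_{z,i}\{Y_i(z)-\bar{Y}(z)\}^2 = O_{\PP}(n^{2/b}\max_z \var(Y(z)))$ and the final rate bookkeeping in (iii) also match the paper.

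The genuine gap is the step you yourself flag as ``the main obstacle'' in part (iii): lower-bounding $r_0 S^2_{1\setminus \bs{X}} + r_1 S^2_{0\setminus \bs{X}}$. Your proposed route --- concentrating each $S^2_{z\setminus\bs{X}}$ separately around $(1-R^2_{z,\sup})\var(Y(z))$ and then ``linking the two notions of multiple correlation'' --- does not close, because the hypotheses give no control on the individual correlations $R^2_{z,\sup}$; only the multiple correlation of the weighted average $r_0Y(1)+r_1Y(0)$ with $\bs{X}$ is assumed bounded away from $1$. The resolution is not a concentration argument at all but a finite-population algebraic identity: combining \eqref{eq:denom_inf_cond_equiv} with \eqref{eq:V_tautau_equiv} gives
\begin{align*}
r_0 S^2_{1\setminus \bs{X}} + r_1 S^2_{0\setminus \bs{X}}
= n r_1 r_0 \big\{ V_{\tau\tau}(1-R_n^2) + n^{-1} S^2_{\tau\setminus\bs{X}} \big\}
\ \ge \ n r_1 r_0\, V_{\tau\tau}(1-R_n^2)
= S^2_{r_0Y(1)+r_1Y(0)}\,(1-R_n^2),
\end{align*}
i.e., the pooled residual variance dominates the residual variance of the weighted combination exactly because the discarded term $r_1 r_0 S^2_{\tau\setminus\bs{X}}$ is nonnegative. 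From there, part (ii) and the covariance concentration give $S^2_{r_0Y(1)+r_1Y(0)}(1-R_n^2) = \var(r_0Y(1)+r_1Y(0))\,(1-R^2_{\sup,n})\,(1+o_{\PP}(1))$, and the hypotheses $\limsup_n R^2_{\sup,n}<1$ and $\var(Y(z)) \asymp \var(r_0Y(1)+r_1Y(0))$ finish the lower bound. Without this identity your argument as stated does not go through; with it, the rest of your part (iii) is correct.
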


Corollary \ref{cor:reg_cond}(i) implies that Condition \ref{cond:gamma_n} holds with high probability, 
which, based on Theorem \ref{thm:dim_rem}, further implies that the asymptotic approximation for the difference-in-means estimator under ReM using Gaussian and constrained Gaussian distributions is valid with high probability, given that the acceptance probability is chosen to satisfy Condition \ref{cond:p_n} with high probability (i.e., $\Delta_n / p_n = o_{\PP}(1)$). 
With additional regularity conditions on the superpopulation variance and squared multiple correlations, 
Corollary \ref{cor:reg_cond}(iii) implies that Condition \ref{cond:infer} holds with high probability, which further implies that the large-sample inference for the average treatment effect discussed in Section \ref{sec:ci} is valid with high probability. 
From the above, 
when the experimental units are i.i.d.\ from some superpopulation satisfying the moment conditions in Condition \ref{cond:moment} and Corollary \ref{cor:reg_cond}(iii), the number of covariates is not too large, 
and the acceptance probability is not too small, 
with high probability, 
we are able to asymptotically approximate the distribution of $\hat{\tau}$ under ReM, as well as constructing asymptotically valid confidence intervals for the average treatment effect $\tau$.

Furthermore, Corollary \ref{cor:reg_cond}(ii) implies that Condition \ref{cond:rsup} holds with high probability when $R^2_{\sup, n} \le 1- c$ for some absolute constant $c>0$. 
Thus, based on Theorem \ref{thm:rem_gaussian}, 
as long as the choice of acceptance probability satisfies Condition \ref{cond:k_np_n}, with high probability, we can approximate the distribution of $\hat{\tau}$ under ReM  by a Gaussian distribution, under which ReM achieves its ideally optimal precision. 
Based on Theorem \ref{thm:rem_gaussian}, we can then derive the following corollary. 
Recall that $\mathcal{U}_n=\{\bs{u}_1, \bs{u}_2, \ldots, \bs{u}_n\}$. 
	In the corollary below, we will write the conditioning on $\mathcal{U}_n$ explicitly to emphasize that we are considering the randomization distribution of $\hat{\tau}$ under ReM.

\begin{corollary}\label{cor:iidrate}
	If Condition \ref{cond:moment} holds,  $r_z^{-1} = O(1)$ for $z=0,1$, $R^2_{\sup, n} \le 1- c$ for some absolute constant $c>0$, and $K_n = o(\log n)$, then there exists a sequence of acceptance probabilities $p_n$ (or equivalently a sequence of thresholds $a_n$) such that, with probability converging to $1$, the distribution of the difference-in-means estimator under ReM can be asymptotically approximated by a Gaussian distribution with mean zero and variance $V_{\tau\tau}(1-R^2_n)$, i.e., 
	\begin{align*}
		\sup_{c\in \mathbb{R}} \Big| \PP \big\{ V_{\tau\tau}^{-1/2}( \hat{\tau} - \tau) \le c \mid M \le a_n , \mathcal{U}_n \big\} - \PP\big( \sqrt{1-R^2_n}\ \varepsilon_0 \le c \mid \mathcal{U}_n \big)
		\Big|
		= o_{\PP}(1).
	\end{align*}
\end{corollary}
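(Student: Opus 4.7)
The plan is to reduce Corollary \ref{cor:iidrate} to Theorem \ref{thm:rem_gaussian} by exhibiting a deterministic sequence $\{p_n\}$ (equivalently $\{a_n\}$) for which Conditions \ref{cond:gamma_n}, \ref{cond:p_n}, \ref{cond:k_np_n}, and \ref{cond:rsup} all hold on an event of probability tending to $1$, and then to convert the deterministic conclusion of Theorem \ref{thm:rem_gaussian} into convergence in probability via a standard subsequence argument.

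First I verify the two conditions that concern the finite population itself. Because $K_n = o(\log n) = o(n^{2/7 - 4/(7\delta)})$, Corollary \ref{cor:reg_cond}(i) yields $\gamma_n = o_\PP(1)$, and Proposition \ref{prop:iidrate} sharpens this to $\gamma_n = O_\PP((\log n)^{7/4} n^{-1/2 + 1/\delta})$, which is $O_\PP(n^{-\beta})$ for some $\beta>0$; by Theorem \ref{thm:berry_esseen_clt}(iii) this gives $\Delta_n = O_\PP(n^{-\beta/3})$. Meanwhile Corollary \ref{cor:reg_cond}(ii) combined with the hypothesis $R^2_{\sup,n} \le 1-c$ gives $R^2_n \le 1 - c + o_\PP(1)$, so Condition \ref{cond:rsup} holds with probability tending to $1$.

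Next I choose
\[
p_n \equiv \exp\bigl(-\sqrt{K_n \log n}\bigr).
\]
Then $\log(p_n^{-1})/K_n = \sqrt{\log n / K_n} \to \infty$ because $K_n = o(\log n)$, verifying Condition \ref{cond:k_np_n}. Moreover $\log(p_n^{-1}) = \sqrt{K_n \log n} = o(\log n)$, so $p_n = n^{-o(1)}$ decays strictly more slowly than any polynomial $n^{-\beta/3}$; combined with $\Delta_n = O_\PP(n^{-\beta/3})$ this yields $p_n / \Delta_n \to \infty$ in probability, verifying Condition \ref{cond:p_n} on an event of probability tending to $1$.

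Putting these together, there is an event $A_n$ with $\PP(A_n) \to 1$ on which the realized finite population and the deterministic $p_n$ jointly satisfy Conditions \ref{cond:gamma_n}--\ref{cond:rsup}. The passage to the Gaussian conclusion in \eqref{eq:tau_dim_hat_rem} then follows by the subsequence principle: for any subsequence $\{n_k\}$ extract a further subsequence on which $\gamma_{n_k} \to 0$, $R^2_{n_k} \le 1 - c/2$, and $p_{n_k}/\Delta_{n_k} \to \infty$ all hold almost surely, apply Theorem \ref{thm:rem_gaussian} path by path, and bound the supremum trivially by $1$ on the vanishing bad event. The main obstacle is the tension between Conditions \ref{cond:p_n} and \ref{cond:k_np_n}: the first asks $p_n$ to sit well above the Berry--Esseen error $\Delta_n$, while the second asks $p_n$ to decay faster than any $e^{-c K_n}$. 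The hypothesis $K_n = o(\log n)$ is exactly what opens a gap between the polynomial rate of $\Delta_n$ and the sub-polynomial rate $e^{-c K_n} = n^{-o(1)}$, matching the dichotomy made explicit in Theorem \ref{thm:K_n_Delta_n}.
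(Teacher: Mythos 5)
Your proposal is correct and follows essentially the same route as the paper: verify Conditions \ref{cond:gamma_n} and \ref{cond:rsup} via Proposition \ref{prop:iidrate} and Corollary \ref{cor:reg_cond}(ii), exhibit an explicit $p_n$ satisfying Conditions \ref{cond:p_n} and \ref{cond:k_np_n} simultaneously, and invoke Theorem \ref{thm:rem_gaussian} together with the subsequence principle to upgrade to $o_{\PP}(1)$. The only difference is the choice of $p_n$ — the paper takes $p_n \propto n^{-h}$ with $0<h<(1/2-1/\delta)/3$ while you take $p_n = \exp(-\sqrt{K_n\log n})$ — and both sit in the same gap between the polynomial rate of $\Delta_n$ and the $e^{-cK_n}=n^{-o(1)}$ requirement, so the proofs are interchangeable.
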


Below we further consider the choice of acceptance probability under various cases for the number of covariates involved in rerandomization. 
We assume that Condition \ref{cond:moment} holds and both treatment groups have non-negligible proportions of units, i.e., $r_z^{-1} = O(1)$ for $z=0, 1$, and consider the cases where the number of covariates $K_n$ increases sub-logarithmically, logarithmically and polynomially with the sample size $n$, as shown in the first column of Table \ref{tab:rate}.  
Proposition \ref{prop:iidrate} then gives an upper bound of the stochastic rate of $\gamma_n$, as shown in the second column of Table \ref{tab:rate}, all of which are of order $o_{\PP}(1)$.  
Note that in the polynomial increase case, the rate of $K_n$ is restricted to $K_n \asymp n^{\zeta}$ with $\zeta \in (0, \frac{2}{7} - \frac{4}{7\delta})$; otherwise $\gamma_n$ may not converge to zero in probability, under which Condition \ref{cond:gamma_n} may fail and the asymptotic inference will become difficult. 
From Theorem \ref{thm:berry_esseen_clt}, 
under all the three cases, we can choose the acceptance probability $p_n$ to decrease polynomially with the sample size such that Condition \ref{cond:p_n} holds with high probability, although there are various constraints on the exact polynomial decay rate. 
This is shown in the third column of Table \ref{tab:rate}, where $\kappa$ can at least take value $1/3$ based on our result in Theorem \ref{thm:berry_esseen_clt}(iii) and it can take value 1 if \citet{R15}'s conjecture as in Theorem \ref{thm:berry_esseen_clt}(ii) holds. 
The last column of Table \ref{tab:rate} then shows the corresponding asymptotic behavior of the variance $v_{K_n, a_n}$ of the constrained Gaussian random variable. 
Recall that the gain from ReM on estimation precision depends crucially on $v_{K_n,a_n}$ as shown in Corollary \ref{cor:improve}.  
From Theorem \ref{thm:v_Ka}, 
(i) when $K_n$ increases sub-logarithmically with $n$, $v_{K_n, a_n}$ converges to zero and rerandomization achieves its ideally optimal precision; 
(ii) when $K_n$ increases logarithmically with $n$, $v_{K_n, a_n}$ is strictly between $0$ and 1 when $n$ is sufficiently large
and thus rerandomization provides gain in estimation precision compared to the CRE, although there is still a gap from the ideally optimal gain; 
(iii) when $K_n$ increases polynomially with $n$, $v_{K_n, a_n}$ converges to 1 as $n\rightarrow \infty$ and rerandomization provides no gain over the CRE. 
Therefore, in practice, we do not recommend using too many covariates, under which we will essentially lose the advantage from rerandomization. 
As a side note, these observations hold no matter which Berry-Essen bound we use for the asymptotic Gaussian approximation (either the conjectured (ii), our derived (iii) in Theorem \ref{thm:berry_esseen_clt}, or the ones in Theorem~\ref{thm:Berry--Esseen-higher-order}), except for the explicit rate of the polynomially decaying acceptance probability.

\begin{table}
	\centering
	\caption{
		Asymptotic properties for rerandomization under various rates for the number of involved covariates. 
		Column 1 shows the asymptotic rate of the number of covariates $K_n$ as $n$ increases. 
		Column 2 shows the corresponding stochastic rate of $\gamma_n$ based on Proposition \ref{prop:iidrate}. 
		Column 3 shows the choice of acceptance probability that is sufficient for Condition \ref{cond:p_n} with high probability. 
		Column 4 shows the asymptotic rate for the variance $v_{K_n, a_n}$ of the corresponding constrained Gaussian random variable. 
	}\label{tab:rate}
	\begin{tabular}{llll}
		\toprule
		$K_n$ & $\gamma_n$ & $p_n$ & $v_{K_n, a_n}$ \\
		\midrule
		$K_n = o(\log n)$ & 
		$o_{\PP}\left( \frac{(\log n)^{7/4}}{n^{1/2 - 1/\delta}} \right)$ 
		& $p_n \asymp n^{-\beta}$, $\beta \in (0, \frac{\kappa}{2} - \frac{\kappa}{\delta})$ & $o(1)$
		\\
		\midrule
		$K_n \asymp \log n$ & $O_{\PP}\left( \frac{(\log n)^{7/4}}{n^{1/2 - 1/\delta}} \right)$ & $p_n \asymp n^{-\beta}$, $\beta \in (0, \frac{\kappa}{2} - \frac{\kappa}{\delta})$ & 
		$(0, 1)$
		\\
		\midrule
		$K_n \asymp n^{\zeta}$, $\zeta \in (0, \frac{2}{7} - \frac{4}{7\delta})$ & 
		$O_{\PP}\left(n^{-(\frac{1}{2}-\frac{1}{\delta}-\frac{7\zeta}{4})}\right)$
		& 
		$p_n \asymp n^{-\beta}$, $\beta \in (0, \frac{\kappa}{2} - \frac{\kappa}{\delta} - \frac{7\zeta \kappa}{4})$ & $1-o(1)$
		\\
		\bottomrule
	\end{tabular}
\end{table}

\subsection{Practical implication}\label{sec:implication}

From the previous discussion, 
the asymptotic approximation of rerandomization depends crucially on $\gamma_n$ (which involves $K_n$) and $p_n$.  
Below we provide suggestions and guidance on how to leverage these factors to improve the performance of rerandomization in practice.

We first consider $K_n$. 
From Theorem \ref{thm:K_n_Delta_n} and the discussion in Section \ref{sec:iidconditions}, 
we suggest to choose at most $O(\log n)$ covariates, otherwise we may lose the advantage of rerandomization. 
Note that our finite population inference does not impose any model or distributional assumptions on the potential outcomes and covariates as well as their dependence structure. 
Thus, we have the flexibility to pre-process the covariates in an arbitrary way.
Note that the covariates, although do not affect our asymptotic inference as long as the corresponding regularity conditions hold, do affect the improvement from rerandomization as indicated by the outcome-covariates association $R_n^2$.  
Therefore, we suggest to choose a moderate subset of covariates or a moderate dimensional transformation of original covariates to conduct rerandomization. 
For example, we can choose a subset of covariates of size $O(\log n)$ based on our subjective knowledge about the importance of these covariates for predicting/explaining the potential outcomes or based on some pilot studies. 
Recently, 
in the presence of high-dimensional covariates, 
\citet{ZYR21} proposed to use principle component analysis to find some proper subspace of covariates to conduct rerandomization.

We then consider $\gamma_n$. Below we give an equivalent form of $\gamma_n$ that can be more informative for its practical implications. 
For $z=0,1$ and $1\le i \le n$, 
let $e_i(z) = Y_i(z) - \bar{Y}(z) - \bs{S}_{z, \bs{X}} (\bs{S}_{\bs{X}}^2)^{-1} (\bs{X}_i - \bar{\bs{X}})$ 
be the residual from the linear projection of potential outcome $Y_i(z)$ on covariates $\bs{X}_i$. 
Then $ r_0 e_i(1) + r_1 e_i(0)$ is the residual from the projection of $r_0 Y_i(1) + r_1 Y_i(0)$ on $\bs{X}_i$. 
We further introduce $e_i$ to denote the corresponding standardized residual for unit $i$, i.e.,  $r_0 e_i(1) + r_1 e_i(0)$ standardized by its finite population 
mean and 
standard deviation. 
Let $\tilde{\bs{X}} = (\bs{X}_1, \bs{X}_2 \ldots, \bs{X}_n)^\top \in \mathbb{R}^{n\times K_n}$ be the matrix consisting of the covariates for all $n$ units, $\bs{H} = \tilde{\bs{X}} (\tilde{\bs{X}}^\top \tilde{\bs{X}})^{-1} \tilde{\bs{X}}^\top \in \mathbb{R}^{n\times n}$ be the corresponding projection or hat  matrix, and $H_{ii}$ be the $i$th diagonal element of $\bs{H}$, which is usually called the leverage score for unit $i$ in regression analysis. 
As demonstrated in the Supplementary Material \citep{WLSupp2022}, $\gamma_n$ in \eqref{eq:gamma_n} can be bounded by
\begin{align*}
	\gamma_n &
	= 
	\frac{(K_n+1)^{1/4}}{\sqrt{n r_1r_0}} \frac{1}{n} \sum_{i=1}^n 
	\left( e_i^2 + (n-1) H_{ii} \right)^{3/2} 
	\in 
	\left[
	\frac{1}{4\sqrt{2}}
	\tilde{\gamma}_n,   \sqrt{2} \tilde{\gamma}_n \right], 
\end{align*}
with 
\begin{align}\label{eq:bound_gamma_n}
	\tilde{\gamma}_n = \frac{(K_n+1)^{1/4}}{\sqrt{r_1r_0 n}} \frac{1}{n}\sum_{i=1}^n |e_i|^3  + 
	\frac{(K_n+1)^{1/4}}{\sqrt{r_1r_0}}\sum_{i=1}^n H_{ii}^{3/2}. 
\end{align}
Obviously, $\gamma_n$ and $\tilde{\gamma}_n$ are of the same order, and it is thus equivalent to consider $\tilde{\gamma}_n$. 
From \eqref{eq:bound_gamma_n}, 
$\tilde{\gamma}_n$ depends crucially on the absolute third moment of the standardized residuals $n^{-1} \sum_{i=1}^n |e_i|^3$ 
and the summation of the leverage scores to the power of $3/2$ over all $n$ units. 
Note that $e_i$'s depend on the true potential outcomes and are generally unknown in the design stage of an experiment. 
When the potential outcomes $Y_i(1)$ and $Y_i(0)$, or more precisely their residuals $e_i(1)$ and $e_i(0)$ are not too heavy-tailed, we expect $n^{-1} \sum_{i=1}^n |e_i|^3$ to be of constant order, under which the first term in \eqref{eq:bound_gamma_n} is likely to be well-controlled. 
The second term in \eqref{eq:bound_gamma_n} seems to be more complicated, but fortunately it is known in the design stage since it depends only on the pretreatment covariates. 
Thus, we can and should check the leverage scores for all units before conducting rerandomization. 
If $\sum_{i=1}^n H_{ii}^{3/2}$ is small, then we expect the asymptotic approximation for rerandomization to work well;  
otherwise, we need to be careful. 
As discussed before, under the finite population inference framework, we have the flexibility to pre-process the covariates in an arbitrary way. 
In particular, we can try to shrink the leverage scores via trimming, a practical strategy that is also recommended when conducting regression adjustment for completely randomized experiments \citep{LeiD20}. 
Note that too much trimming may reduce the outcome-covariates association $R_n^2$ and thus deteriorate the improvement from rerandomization. Besides,
as demonstrated in the Supplementary Material \citep{WLSupp2022}, 
$\sum_{i=1}^n H_{ii}^{3/2}$ is always lower bounded by $K_n^{3/2}/\sqrt{n}$. %
In practice, we may consider performing the minimum possible trimming such that the resulting  $\sum_{i=1}^n H_{ii}^{3/2}$ is close to its minimum value $K_n^{3/2}/\sqrt{n}$. 
As a side note, both ridge and PCA rerandomizations recently proposed by \citet{BDR16} and \citet{ZYR21} can also help reduce leverage scores. 
Thus, our theory also 
provides some justification for these two designs. 
Moreover, compared to their pre-processing on the covariates, trimming can be more robust to outliers. 
Besides, it 
does not change the original covariates (as well as their meaning) much and may be more helpful in preserving the explainability of original covariates (more precisely, their squared multiple correlation with the potential outcomes).

Finally, we consider $p_n$. 
The discussion in Section \ref{sec:iidconditions} suggests choosing the acceptance probability $p_n$ such that it decays polynomially with the sample size $n$. However, such results may not be helpful for the choice of $p_n$ under a finite sample size. 
Below we give some practical guideline on the choice of $p_n$ with a moderate number of covariates $K_n$. 
First, from Corollary \ref{cor:improve}, the gap between rerandomization with a certain $p_n$ and the ideally optimal one is characterized by $v_{K_n, a_n}$, the variance of the constrained Gaussian random variable. Therefore, we suggest choosing $p_n$ such that the corresponding $v_{K_n, a_n}$ is small, say, $0.01$. 
Second, with a given $p_n$, we can check the asymptotic approximation by using some pseudo potential outcomes as proxies,  e.g., some linear/nonlinear combinations of covariates based on some prior knowledge. 
Third, note that, with acceptance probability $p_n$, the number of randomizations needed for getting an acceptable treatment assignment is about $1/p_n$.  
In practice, our choice of $p_n$ can also take into account this computation cost; see also Section \ref{sec:comp_cost}.

\subsection{Computational cost of rerandomization}\label{sec:comp_cost}

Below we briefly discuss the computational cost for getting one acceptable treatment assignment from ReM with $n$ units, $K$-dimensional covariates ($K\le n$) and acceptance probability $p>0$. 
To facilitate the computation, we can first standardize the covariates, i.e., getting $\bs{S}_{\bs{X}}^{-1}(\bs{X}_i-\bar{\bs{X}})$ for each $i$, which has a complexity of $O(nK^2)$. 
Then the Mahalanobis distance $M$ is equivalently the Euclidean norm of the difference-in-means of standardized covariates up to some scale, whose computation has complexity of $O(nK)$ and in expectation needs to be done approximately $p^{-1}$ times. Besides, in each iteration, one has to
do a complete randomization of experimental units 
into treatment and control groups, which has 
complexity of $O(n)$ \citep{Fan62}; see also \citet{Meng13}. %
Consequently, in expectation, the computational complexity for getting one acceptable assignment from ReM is approximately of $O(nK(K+p^{-1}))$.
From the discussion in Section \ref{sec:iidconditions} and in particular Table \ref{tab:rate}, 
to achieve the ideally optimal rerandomization, 
we will choose $K = o(\log n)$ and can choose $p^{-1} = n^{\beta}$ for sufficiently small $\beta$, 
under which
the computational complexity
for getting an acceptable assignment from ReM is in expectation approximately of order $o(n^{1+\beta}\log n)$ (i.e., polynomial in $n$ with exponent slightly greater than 1). 
This implies that the optimal rerandomization is computationally
feasible even for relatively large sample size.

\section{Conclusion and Discussion}\label{sec:discuss}
There is a long-time controversy between the two philosophies for designing an experiment: randomization versus optimal and thus often deterministic assignment. 
In the context of balancing covariates in randomized experiments, 
the optimal design tries to find the treatment assignment minimizing the covariate imbalance (say the Mahalanobis distance), 
while rerandomization tries to restrict the covariate imbalance and at the same time maintain sufficient randomness of the design, which is necessary for robust inference of treatment effects.
In this paper, we demonstrated that, by letting the acceptance probability diminish to zero at a property rate (e.g., a polynomial rate), 
rerandomization can still have sufficient randomness for robust randomization-based inference of treatment effects, 
and more importantly, it can achieve the ideally optimal precision that one can expect from the optimally balanced design.
Note that our theory also helps mitigate the computation burden for usual optimal designs. 
In particular, to achieve the ideally optimal precision, 
we only need to randomly select assignments from a small proportion (which generally decreases with the sample size polynomially) of all assignments with the best covariate balance, whose computational complexity is generally of a polynomial order of the sample size with exponent slightly greater than 1 as discussed in Section \ref{sec:comp_cost}.

The derived theory for rerandomization also allows for a diverging number of covariates. In particular, 
we found that, when the number of covariates is too large, not only will the asymptotic approximation for rerandomization become inaccurate, but also rerandomization will lose its gain on efficiency. 
Therefore, we suggest practitioners to use a moderate number of covariates, especially those important and useful for explaining the potential outcomes, and to also perform trimming  
as suggested in Section \ref{sec:reg_cond}. 
Importantly, because our finite population inference imposes no distributional assumptions on potential outcomes and covariates, we are free to adjust the covariates in an arbitrary way, such as trimming or transforming using, e.g., principle components.  
In the Supplementary Material \citep{WLSupp2022}, we also provide additional finite-sample diagnosis tools, discuss the choice of covariates and threshold for rerandomization, and conduct a simulation study.

In this paper, we mainly focused on the asymptotic properties of rerandomized treatment-control experiments using the Mahalanobis distance criterion; see the Supplementary Material \citep{WLSupp2022} for extension to regression adjustment under rerandomization. 
Beyond that, the derived theory, including both the finite population central limit theorem and the asymptotic behavior of the constrained Gaussian random variable, can also be useful for analyzing other covariate balance criteria, such as the Mahalanobis distance criterion with tiers of covariates \citep{MR15,LDR18}.
It will also be interesting to extend the theory to rerandomization in more complex experiments, such as blocked experiments \citep{Sch19,WWL20}, factorial experiments \citep{BDR16,LDR20} and sequential experiments~\citep{ZEM18}.
Besides, we mainly considered finite population inference focusing on the average treatment effect of the experimental units in hand. 
It will be interesting to also consider superpopulation inference of some population average treatment effect when the units are randomly sampled from some superpopulation \citep{SJ20}.

\begin{acks}[Acknowledgments]
We thank the Editor, the Associate Editor and three reviewers 
for constructive comments. Yuhao Wang is also affiliated with Shanghai Qi Zhi Institute.
\end{acks}

\begin{funding}
The work of Yuhao Wang was supported by Tsinghua New Faculty Start-up Fund and the 2030 Innovation Megaprojects of China (Programme on New Generation Artificial Intelligence) Grant No. 2021AAA0150000.
\end{funding}

\begin{supplement}
\stitle{Supplement to ``Rerandomization with Diminishing Covariate Imbalance and Diverging Number of Covariates''}
\sdescription{
First, we provide additional finite-sample diagnosis tools for rerandomization, and conduct a simulation study. 
Second, we extend the asymptotic theory to regression adjustment under rerandomization. 
Third, we study the Berry--Esseen-type bound for finite population central limit theorem under simple random sampling. 
Fourth, we prove all the theorems, corollaries and propositions. 
Fifth, we connect rerandomization with usual optimal designs. 
}
\end{supplement}

\bibliographystyle{imsart-nameyear}
\bibliography{reference}

\newpage

\renewcommand {\theproposition} {A\arabic{proposition}}
\renewcommand {\thefigure} {A\arabic{figure}}
\renewcommand {\thetable} {A\arabic{table}}
\renewcommand {\theequation} {A\arabic{section}.\arabic{equation}}
\renewcommand {\thelemma} {A\arabic{lemma}}
\renewcommand {\thesection} {A\arabic{section}}
\renewcommand {\thetheorem} {A\arabic{theorem}}
\renewcommand {\thecorollary} {A\arabic{corollary}}
\renewcommand {\thecondition} {A\arabic{condition}}

\renewcommand {\thepage} {A\arabic{page}}
\setcounter{page}{1}

\setcounter{equation}{0}
\setcounter{section}{0}
\setcounter{figure}{0}
\setcounter{proposition}{0}
\setcounter{corollary}{0}
\setcounter{theorem}{0}
\setcounter{lemma}{0}
\setcounter{table}{0}
\setcounter{condition}{0}

\begin{center}
	\bf \large
	\uppercase{Supplement to ``Rerandomization with Diminishing Covariate Imbalance and Diverging Number of Covariates''}
\end{center}

Appendix \ref{sec:finite_diag_simu} provides some finite-sample diagnosis tools, discusses the choice of covariates and threshold for rerandomization, and conducts a simulation study.

Appendix \ref{sec:reg_adj} studies regression adjustment after rerandomization.

Appendix \ref{sec:berry_srs} studies Berry--Esseen-type bound for finite population central limit theorem in simple random sampling.

Appendix \ref{sec:asym_cre_rerand} studies asymptotic properties for completely randomized and rerandomized experiments. 
It includes the proofs of Theorems \ref{thm:berry_esseen_clt}--\ref{thm:dim_rem}, Corollary \ref{cor:improve}; and technical details about the comments of~\eqref{eq:gamma_n_lower}.

Appendix \ref{sec:limit_constrained_Gaussian} studies the limiting behavior of the constrained Gaussian random variable. It includes the proof of Theorem \ref{thm:v_Ka}. 

Appendix \ref{sec:asym_optimal_rerand} studies asymptotics for  the optimal rerandomization. It includes the proofs of Theorems \ref{thm:rem_gaussian} and \ref{thm:K_n_Delta_n}. 

Appendix \ref{sec:asym_CI} studies large-sample inference for rerandomization. It includes the proofs of Theorems \ref{thm:inf} and \ref{thm:inf_gaussian}. 

Appendix \ref{sec:regularity_diagnosis} studies the regularity conditions and finite-sample diagnoses for rerandomization. 
It also provides the technical details for the comments on $\gamma_n$ and $\sum_{i=1}^n H_{ii}^{3/2}$ in Section \ref{sec:implication}.

Appendix~\ref{sec:proof_regrem} studies the asymptotic properties of regression adjustment under rerandomization with a diverging number of covariates. It includes the proof of Theorem~\ref{thm:regrem}.

Appendix~\ref{sec:opt_comp} studies connections to optimal design under certain hypothesized model of the potential outcomes.

\section{ Finite-sample Diagnoses and Simulation Studies}\label{sec:finite_diag_simu}

\subsection{Finite-sample diagnoses for rerandomization}\label{sec:fp_rerand}

Our theoretical results are mostly concerned with the asymptotic properties of rerandomization designs. 
In this section, 
we further provide some additional tools for the diagnosis of rerandomization in finite samples. 
Our diagnosis is based on the bias and mean squared error (MSE) of the difference-in-means estimator  $\hat{\tau}$ for estimating the average treatment effect $\tau$.
\citet{KKSSA20} and \citet{NS20} also considered the MSE of $\hat{\tau}$ under rerandomization, but they focused mainly on the case with equal treatment group sizes (i.e., $r_1 = r_0 = 1/2$). 
In the following, we consider a general design $\mathcal{D}$ that randomly assigns $r_1$ proportion of units to treatment and the remaining $r_0$ proportion to control. 
For descriptive convenience, we introduce $\E_{\mathcal{D}}(\cdot)$ and $\var_{\mathcal{D}}(\cdot)$ to denote the mean and variance under the design $\mathcal{D}$. 
The bias and MSE of the difference-in-means estimator  $\hat{\tau}$ under the design $\mathcal{D}$ can then be written as $\E_{\mathcal{D}}(\hat{\tau} - \tau)$ and  $\E_{\mathcal{D}}\{(\hat{\tau} - \tau)^2\}$.

Recall that $\bs{Z}=(Z_1, \ldots, Z_n)^\top$ is the treatment assignment vector for all $n$ units. 
Let  
$\bs{\pi} \equiv \E_{\mathcal{D}} (\bs{Z})\in \mathbb{R}^n$ and $\bs{\Omega} \equiv \cov_{\mathcal{D}}(\bs{Z})\in \mathbb{R}^{n\times n}$ be its mean and covariance matrix under the design $\mathcal{D}$. 
Let $\wy_i = r_0 Y_i(1) + r_1 Y_i(0)$ denote the weighted average of potential outcomes for unit $i$, $\bar{\wy}=n^{-1} \sum_{i=1}^n \wy_i$ be the corresponding finite population average, 
and $\tilde{\bs{\wy}} = (\wy_1 - \bar{\wy}, \ldots, \wy_n - \bar{\wy})^\top \in \mathbb{R}^n$ be the vector consisting of the centered weighted averages of potential outcomes for all units. 
As demonstrated in 
Appendix \ref{sec:regularity_diagnosis}, the bias and MSE of the  difference-in-means estimator $\hat{\tau}$ have the following forms:
\begin{align}\label{eq:bias_mse}
	\E_{\mathcal{D}}(\hat{\tau} - \tau) = \frac{ (\bs{\pi}-r_1 \bs{1}_n)^\top \tilde{\bs{\wy}}}{nr_1 r_0}, 
	\quad 
	\E_{\mathcal{D}}\{(\hat{\tau} - \tau)^2\} = \frac{\tilde{\bs{\wy}}^\top
		\big\{ \bs{\Omega} + (\bs{\pi}-r_1 \bs{1}_n) (\bs{\pi}-r_1 \bs{1}_n)^\top \big\}
		\tilde{\bs{\wy}}}{(nr_1 r_0)^2},
\end{align}
where $\bs{1}_n$ denotes an $n$-dimensional vector with all elements being 1.
In \eqref{eq:bias_mse}, $\tilde{\bs{\wy}}$ depends on the potential outcomes and is generally unknown in the design stage of an experiment. 
However, the other quantities in \eqref{eq:bias_mse} are fully determined by the design $\mathcal{D}$, and can be computed or at least approximated by Monte Carlo method before actually conducting the experiment. 
Below we consider the worst-case behavior of the design in terms of the estimation bias and MSE in \eqref{eq:bias_mse} over the unknown potential outcomes. 
Recall that $V_{\tau\tau}$ in \eqref{eq:V} is the variance of $\hat{\tau}$ under the CRE. 
As verified in 
Appendix \ref{sec:regularity_diagnosis}, we can equivalently write $V_{\tau\tau}$ as $V_{\tau\tau} = \tilde{\bs{\wy}}\tilde{\bs{\wy}}^\top/\{n (n-1) r_1 r_0\}$. 

\begin{proposition}\label{prop:bias_mse}
	For any design $\mathcal{D}$ that randomly assign $r_1$ proportion of units to treatment and the remaining $r_0$ to control,
	the maximum absolute bias and the maximum root MSE of the difference-in-means estimator $\hat{\tau}$ under $\mathcal{D}$, standardized by the corresponding standard deviation of $\hat{\tau}$ under the CRE, have the following forms:
	\begin{align}
		\label{eq:max_bias}
		\max_{\tilde{\bs{\wy}} \ne \bs{0}}V_{\tau\tau}^{-1/2} \left| \E_{\mathcal{D}}(\hat{\tau} - \tau) \right|
		& = 
		\sqrt{\frac{n-1}{n r_1 r_0}} \cdot \left\| \bs{\pi} - r_1 \bs{1}_n \right\|_2 \ge 0, 
		\\
		\label{eq:max_mse}
		\max_{\tilde{\bs{\wy}} \ne \bs{0}}V_{\tau\tau}^{-1/2}  \sqrt{\E_{\mathcal{D}}\{(\hat{\tau} - \tau)^2\} }
		& = 
		\sqrt{\frac{n-1}{n r_1 r_0}} \cdot \lambda_{\max}^{1/2}\left(  \bs{\Omega} + (\bs{\pi}-r_1 \bs{1}_n) (\bs{\pi}-r_1 \bs{1}_n)^\top \right) \ge 1, 
	\end{align}
	where $\tilde{\bs{\wy}}$, $\bs{\pi}$ and $\bs{\Omega}$ are the same as defined before, and $\lambda_{\max}(\cdot)$ denotes the largest eigenvalue of a matrix. 
\end{proposition}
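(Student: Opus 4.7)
The plan is to turn both maximizations into finite-dimensional optimization problems on the $(n-1)$-dimensional hyperplane $\mathcal{H} = \{\bs{v} \in \mathbb{R}^n : \bs{1}_n^\top \bs{v} = 0\}$, using \eqref{eq:bias_mse} together with the identity $V_{\tau\tau} = \tilde{\bs{\wy}}^\top \tilde{\bs{\wy}} / \{n(n-1) r_1 r_0\}$ asserted just before the proposition. Note that $\tilde{\bs{\wy}} \in \mathcal{H}$ by its definition, so the maximization over $\tilde{\bs{\wy}} \ne \bs{0}$ is really over $\mathcal{H} \setminus \{\bs{0}\}$. The structural observation driving everything is that $\bs{1}_n^\top \bs{Z} = n r_1$ holds almost surely under $\mathcal{D}$, which forces $\bs{1}_n^\top \bs{\pi} = n r_1$ and $\bs{\Omega}\bs{1}_n = \cov_{\mathcal{D}}(\bs{Z}, \bs{1}_n^\top \bs{Z}) = \bs{0}$. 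Consequently $\bs{\pi} - r_1 \bs{1}_n$ itself lies in $\mathcal{H}$, and $\bs{1}_n$ is in the null space of the PSD matrix $\bs{A} \equiv \bs{\Omega} + (\bs{\pi} - r_1 \bs{1}_n)(\bs{\pi} - r_1 \bs{1}_n)^\top$.

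For the bias identity \eqref{eq:max_bias}, after substitution the objective becomes $\sqrt{(n-1)/(n r_1 r_0)} \cdot |(\bs{\pi} - r_1 \bs{1}_n)^\top \tilde{\bs{\wy}}| / \|\tilde{\bs{\wy}}\|_2$. Cauchy--Schwarz gives the upper bound $\sqrt{(n-1)/(n r_1 r_0)} \cdot \|\bs{\pi} - r_1 \bs{1}_n\|_2$, and because $\bs{\pi} - r_1\bs{1}_n \in \mathcal{H}$, equality is attained by choosing $\tilde{\bs{\wy}}$ proportional to it, which is a feasible direction in $\mathcal{H}$. The lower bound $\ge 0$ is trivial.

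For the MSE identity \eqref{eq:max_mse}, the analogous substitution yields $\sqrt{(n-1)/(n r_1 r_0)}$ times the square root of the Rayleigh quotient $\tilde{\bs{\wy}}^\top \bs{A} \tilde{\bs{\wy}} / \|\tilde{\bs{\wy}}\|_2^2$. Since $\bs{A}\bs{1}_n = \bs{0}$ and $\bs{A}$ is PSD, a top eigenvector of $\bs{A}$ can be chosen in $\mathcal{H}$, so maximizing the Rayleigh quotient over $\mathcal{H} \setminus \{\bs{0}\}$ gives the same value as maximizing over $\mathbb{R}^n \setminus \{\bs{0}\}$, namely $\lambda_{\max}(\bs{A})$. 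This yields the equality in \eqref{eq:max_mse}.

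The one mildly non-obvious step, which I expect to be the main obstacle, is the lower bound $\lambda_{\max}(\bs{A}) \ge n r_1 r_0 / (n-1)$. Since $\bs{1}_n$ lies in the null space, $\bs{A}$ restricted to $\mathcal{H}$ shares its trace with $\bs{A}$ and has rank at most $n-1$. I will compute $\tr(\bs{A}) = \sum_{i=1}^n \{\pi_i(1-\pi_i) + (\pi_i - r_1)^2\} = \sum_{i=1}^n \{\pi_i(1-2r_1) + r_1^2\}$ and use $\sum_i \pi_i = n r_1$ to telescope this to $n r_1 - 2 n r_1^2 + n r_1^2 = n r_1 r_0$. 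Dividing by $\dim(\mathcal{H}) = n - 1$ bounds the average positive eigenvalue, and hence $\lambda_{\max}(\bs{A})$, from below by $n r_1 r_0/(n-1)$, which is precisely the $\ge 1$ assertion in \eqref{eq:max_mse}.
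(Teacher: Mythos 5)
Your proposal is correct and follows essentially the same route as the paper: the paper performs the identical substitution of $V_{\tau\tau} = \tilde{\bs{\wy}}^\top \tilde{\bs{\wy}}/\{n(n-1)r_1r_0\}$ into \eqref{eq:bias_mse} and then dismisses the rest as ``some linear algebra.'' Your filling-in of that linear algebra --- the observation that $\bs{1}_n^\top\bs{Z}=nr_1$ forces $\bs{\pi}-r_1\bs{1}_n\in\mathcal{H}$ and $\bs{\Omega}\bs{1}_n=\bs{0}$ so that the Cauchy--Schwarz maximizer and a top eigenvector of $\bs{A}$ are both feasible, and the trace computation $\tr(\bs{A})=nr_1r_0$ giving $\lambda_{\max}(\bs{A})\ge nr_1r_0/(n-1)$ --- is all correct and is exactly the argument the paper leaves implicit.
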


Proposition \ref{prop:bias_mse} characterizes the maximum bias and root MSE under any given design. 
It is not difficult to see that, 
when the design $\mathcal{D}$ is the CRE, 
the maximum mean in \eqref{eq:max_bias} achieves its minimum value 0, 
and the maximum root MSE in \eqref{eq:max_mse} achieves its minimum value 1. 
This implies that the CRE is minimax optimal; see also \citet{W1981}. 
However, this does not contradict with our Corollary \ref{cor:improve}, which shows that the difference-in-means estimator under ReM always has smaller or equal variance and shorter or equal symmetric quantile ranges than that under the CRE asymptotically. 
The reason is that Proposition \ref{prop:bias_mse} considers all possible configurations of potential outcomes, including the case with $R_n^2=0$, i.e., the potential outcomes are uncorrelated with the covariates. 
In this case, the asymptotic distribution of $\hat{\tau}$ under ReM reduces to that under the CRE.

More importantly, Proposition \ref{prop:bias_mse} can help us conduct some finite-sample diagnoses for rerandomization. 
Given any acceptance probability $p$ and covariates $\bs{X}$ for each unit,  
we can estimate $\bs{\pi}$ and $\bs{\Omega}$ by simulating treatment assignments from the corresponding ReM, 
based on which we can then investigate the maximum bias and root MSE in \eqref{eq:max_bias} and \eqref{eq:max_mse}. 
In practice, we may consider several choices of $p$ and $\bs{X}$, and compare them taking into account both the improvement they can bring as shown in Corollary \ref{cor:improve} and the finite-sample biases they may cause as shown in  Proposition \ref{prop:bias_mse}; see the next subsection for details.

\subsection{Choice of covariates and acceptance probability for rerandomization}\label{sec:choice_Kp}

Below we consider some practical strategy to choose the covariates and imbalance threshold for the design of rerandomization in practice. 
From Corollary \ref{cor:improve}, 
if the sample size is large and the asymptotic approximation works well, 
then rerandomization can reduce the MSE of the difference-in-means estimator by $100 (1-v_{K_n, a_n}) R_n^2$ percent, or equivalently the standardized MSE is approximately $1 - (1-v_{K_n, a_n}) R_n^2$. 
On the contrary, 
from Appendix \ref{sec:fp_rerand}, with a finite sample size, 
the worst-case MSE of the difference-in-means estimator under rerandomization is no less than that under the CRE, and their ratio is the square of the quantity in \eqref{eq:max_mse}. 
Obviously, there is a trade-off for the choice of covariates and imbalance threshold. 
First, 
when the threshold (or equivalently the acceptance probability) decreases and the covariates are fixed, the asymptotic percentage reduction in MSE will increase (due to the decreasing $v_{K_n, a_n}$), while the finite-sample worst-case MSE is likely to increase. 
When the number of covariates increases and the acceptance probability is fixed, 
the asymptotic percentage reduction in MSE may increase or decrease (due to the increasing $v_{K_n,a_n}$ and $R_n^2$), while the finite-sample worst-case MSE is likely to increase.

Inspired by the above trade-off, we propose the following measure for the choice of covariates $\bs{X}$ and acceptance probability $p$ for rerandomization, 
which takes a geometric mean of the standardized MSEs in the worst case and the best case (in which the asymptotics works well): 
\begin{align}\label{eq:measure_Xp}
    c(\bs{X}, p) \equiv \widetilde{\text{MSE}}(\bs{X}, p) \times \{1 - (1-v_{K, a}) R^2_{\bs{X}}\}; 
\end{align}
other possible measures taking into account the best- and worst-case MSEs can also be considered for practical diagnosis.
In~\eqref{eq:measure_Xp}, $\widetilde{\text{MSE}}(\bs{X}, p)$ denotes the worst-case standardized MSE under rerandomization with covariates $\bs{X}$ and acceptance probability $p$, 
$K$ is the dimension of covariates, $a$ is the $p$-th quantile of the chi-squared distribution with degrees of freedom $K$, 
and $R^2_{\bs{X}}$ denotes the squared multiple correlation between potential outcomes and covariates $\bs{X}$. 
We can then use \eqref{eq:measure_Xp} as a measure for comparing different rerandomization designs, and can choose the one with minimum value of \eqref{eq:measure_Xp} for the actual implementation of the experiment. Note that $R^2_{\bs{X}}$ in \eqref{eq:measure_Xp} depends on the potential outcomes and is thus unknown in the design stage of experiments. 
In practice, 
we can use some domain knowledge or some prior (pilot) studies to estimate $R^2_{\bs{X}}$. 

Below we illustrate the use of the measure in \eqref{eq:measure_Xp} using the dataset from the Student Achievement and Retention (STAR) Project \citep[STAR,][]{Angrist2009}, a randomized evaluation of academic services and incentives conducted at a Canadian university. 
We focus on the treatment group where the students were offered some academic support (including peer-advising service) and scholarships for meeting targeted grades, and the control group receiving neither of these. 
Similar to \citet{LDR18}, we dropped the students with missing covariates, resulting a treated group of size $n_1 = 118$ and $n_0 = 856$. 
We generate 200 covariates for each unit, where the first 
five are from the STAR project, i.e., high-school GPA, age, gender and indicators for whether lives at home and whether rarely puts off studying for tests, and the rest $195$ are drawn independently from the $t$ distribution with degrees of freedom 2;  
once generated, these covariates are kept fixed, mimicking the finite population inference. 
We consider rerandomization with the first $K=5, 10, 50, 100, 200$ covariates, and consider acceptance probability $p = 0.001, 0.05, 0.01, 0.1, 0.5$. 
The left half of Table \ref{tab:choice_Kp} shows the worst-case MSEs, standardized by that under the CRE, under various choices of $(K,p)$, where each worst-case MSE is estimated based on at least about $10^5$ randomly generated treatment assignments from each design. 
It shows that the worst-case MSE generally increases as the number of covariates increases and the acceptance probability decreases. 
We further hypothesize that $R^2_{\bs{X}}$ takes values $0.4, 0.5, 0.6, 0.7, 0.8$, respectively, when $K$ increases from $5$ to $200$. 
Intuitively, this implies that the additional gain from including more covariates decreases with the number of included covariates. 
The right half of Table \ref{tab:choice_Kp} shows the value of the measure in \eqref{eq:measure_Xp}, which suggests to use rerandomization with $K=5$ covariates and acceptance probability $p=0.01$.

\begin{table}[htb]
	\centering
	\caption{The worst-case mean squared error (standardized by that under the CRE) and the measure in \eqref{eq:measure_Xp} for choosing and diagnosing rerandomization designs under various choices of covariates (whose number is denoted by $K$) and acceptance probability (denoted by $p$). 
	}\label{tab:choice_Kp}
	\resizebox{0.9\columnwidth}{!}{%
		\begin{tabular}{ccccccccccccc}
			\toprule
			& & \multicolumn{5}{c}{worst-case mean squared error} & & \multicolumn{5}{c}{The measure in \eqref{eq:measure_Xp}}
			\\
			\diagbox[height=1.5\line]{$K$}{$p$}
			& & 0.5 & 0.1 & 0.05 & 0.01 & 0.001 & & 0.5 & 0.1 & 0.05 & 0.01 & 0.001
			\\
			\midrule
			5&& 1.012&1.025&1.033&1.068&1.216& & 0.819&0.704&0.685& {\bf 0.674} &0.744 \\
            10&& 1.015&1.095&1.147&1.264&1.414& & 0.839&0.754&0.752&0.762&0.792 \\
            50&& 1.023&1.340&1.523&1.935&2.477&& 0.925&1.083&1.188&1.411&1.676\\
            100&&1.029&1.448&1.684&2.225&2.96&& 0.948&1.212&1.367&1.702&2.114\\
            200&& 1.038&1.495&1.752&2.356&3.189&& 0.972&1.294&1.479&1.892&2.417
			\\
			\bottomrule
		\end{tabular}%
	}
\end{table}

\subsection{A simulation study}\label{sec:simu}
We now conduct a simulation study
to demonstrate the potential gain from trimming, as well as investigating the inference for rerandomization in finite samples. 
We use again the dataset from the STAR project, and
consider in total nine rerandomization designs for the $n_1 + n_0 = 974$ units, with number of covariates $K$ ranging from 0 to 200 and acceptance probability fixed at $p = 0.001$, 
where the covariates are generated in the same way as in Appendix \ref{sec:choice_Kp}. 
Note that when $K=0$, rerandomization without any covariate essentially reduces to the CRE.
We then simulate $10^5$ treatment assignments from each of these designs.

\begin{table}[htb]
	\centering 
	\caption{
		Properties of rerandomization with fixed acceptance probability $p_a = 0.001$ and varying number of covariates $K$.  
		The 1st column shows the number of covariates for each design, the 2nd column shows the corresponding value of $1-v_{K,a}$. 
		The 3rd to 6th columns show the maximum standardized bias and mean squared error in \eqref{eq:max_bias} and \eqref{eq:max_mse}, the summation of leverages to the power of $3/2$, and the maximum leverage over all units. 
		The 7th--10th columns show the analogous quantities for the design using trimmed covariates. 
		The 11th and 12th columns show the minimum possible values of $\sum_{i=1}^n H_{ii}^{3/2}$ (which equals $K^{3/2} / \sqrt{n}$) and $\max_i H_{ii}$ (which equals $K_n/n$) for each $K$.
	}\label{tab:simulation}
	\smallskip
	\resizebox{1\columnwidth}{!}{%
		\begin{tabular}{ccccccccccccccc}
			\toprule
			$K$ & $1-v_{K,a}$ & & \multicolumn{4}{c}{Covariates without trimming} & &  \multicolumn{4}{c}{Trimmed covariates} & &  \multicolumn{2}{c}{Minimal}
			\\
			& & & Bias & RMSE & $\sum_{i=1}^n H_{ii}^{3/2}$ & $\max_{i} H_{ii}$ & & Bias & RMSE & $\sum_{i=1}^n H_{ii}^{3/2}$ & $\max_{i} H_{ii}$ && $\sum_{i=1}^n H_{ii}^{3/2}$ & $\max_{i} H_{ii}$ 
			\\
			\midrule
			0&0&&0.10&1.10&NA&NA&&0.10&1.10&NA&NA&&NA&NA\\
            5&0.97&&0.13&1.10&0.39&0.02&&0.11&1.10&0.39&0.02&&0.36&0.01\\
            9&0.90&&0.76&1.18&1.78&0.66&&0.16&1.10&0.94&0.03&&0.87&0.01\\
            15&0.80&&0.91&1.25&3.48&0.66&&0.20&1.10&1.97&0.05&&1.86&0.02\\
            24&0.70&&1.28&1.46&8.00&0.98&&0.23&1.11&3.92&0.06&&3.77&0.02\\
            37&0.60&&1.33&1.52&12.71&0.98&&0.25&1.11&7.41&0.09&&7.21&0.04\\
            60&0.50&&1.47&1.62&23.45&0.98&&0.27&1.11&15.13&0.13&&14.89&0.06\\
            100&0.41&&1.56&1.72&43.87&0.98&&0.29&1.12&32.33&0.16&&32.04&0.10\\
            200&0.30&&1.62&1.79&104.57&0.99&&0.31&1.13&91.01&0.28&&90.63&0.21\\
			\bottomrule
		\end{tabular}%
	}
\end{table}

Table \ref{tab:simulation} reports the simulation results. The first column shows the number of covariates involved in the nine rerandomization designs, and the second column shows the value of $1-v_{K,a}$ under various values of $K$ and fixed acceptance probability $p = 0.001$.
From Corollary \ref{cor:improve}, if the additional covariates do not increase the squared multiple correlation $R^2$ between potential outcomes and covariates by a relatively large amount, the improvement from rerandomization may decrease as the number of covariates increases. 
The 3rd--6th columns in Table \ref{tab:simulation} show the maximum absolute bias in \eqref{eq:max_bias}, the maximum root MSE in \eqref{eq:max_mse}, 
the summation of leverages to the power of $3/2$ as in \eqref{eq:bound_gamma_n} and the maximum leverage over all units, 
where the first two are estimated based on the $10^5$ simulated assignments from each of these designs. 
Note that when $K=0$, the design is the CRE, and from the discussion after Proposition \ref{prop:bias_mse}, the maximum mean is 0 and the maximum root MSE is 1. 
Thus, there is some variability for estimating the maximum mean and MSE; in practice, we may increase the number of simulated assignments to improve the precision. 
Nevertheless, the 3rd--6th columns in Table \ref{tab:simulation} show the trend that, as the number of covariates increases, 
the maximum bias and MSE under rerandomization will increase, which may render our treatment effect estimation inaccurate, and the leverages will increase as well, which may make the asymptotic approximation less accurate
as discussed shortly. 
We further perform trimming on the covariates, as suggested in Section \ref{sec:implication}. 
Specifically, we trim each covariate at both its $2.5\%$ and  $97.5\%$  quantiles. 
From the 7th--10th columns in Table \ref{tab:simulation}, trimming significantly reduces the maximum biases, maximum MSEs and leverages. 
Moreover, 
compared to the 11th and 12th columns, the values of $\sum_{i=1}^n H_{ii}^{3/2}$ and $\max_i H_{ii}$ after trimming become quite close to their minimal possible values. 
This agrees with the suggestion we gave in Section~\ref{sec:implication}.

\begin{table}[htb]
	\centering
	\caption{
		Asymptotic approximation and coverage property under the nine rerandomization designs in Table \ref{tab:simulation}. 
		The top half uses the first-year GPA from the STAR dataset as the potential outcomes, and the bottom half use the average propensity score from the nine design without trimming, after a quantile transformation using $t$ distribution with degree of freedom 3, as the potential outcomes. 
		The $K$ column shows the number of covariates in each design, Bias column shows the absolute empirical bias standardized by $V_{\tau\tau}^{1/2}$, Ratio column shows the ratio between empirical and asymptotic mean squared errors, and HC0--3 columns show the coverage probabilities in percent of the $95\%$ confidence intervals using the methods HC0--3 described in Section \ref{sec:ci}.  
	}\label{tab:simu_approx}
	\smallskip
	\resizebox{0.9\columnwidth}{!}{%
		\begin{tabular}{ccccccccccccccc}
			\toprule
			$K$ & &  \multicolumn{6}{c}{Covariates without trimming} & & \multicolumn{6}{c}{Trimmed covariates} 
			\\
			& & Bias & Ratio & HC0 & HC1 & HC2 & HC3 & & Bias & Ratio & HC0 & HC1 & HC2 & HC3
			\\
			\midrule
			0&&0.001&1.00&97.5&97.5&97.5&97.5&&0.000&1.00&97.4&97.4&97.4&97.4\\
            5&&0.002&1.00&97.3&97.5&97.5&97.8&&0.000&1.00&97.2&97.4&97.5&97.7\\
            9&&0.012&1.01&96.9&97.3&97.3&97.8&&0.004&1.01&97.0&97.5&97.5&97.9\\
            15&&0.012&1.02&96.9&97.5&97.5&98.1&&0.011&1.02&96.7&97.3&97.3&97.9\\
            24&&0.018&1.02&96.7&97.4&97.4&98.2&&0.006&1.01&96.5&97.3&97.4&98.0\\
            37&&0.003&1.03&96.2&97.3&97.3&98.0&&0.006&1.02&96.1&97.2&97.2&97.9\\
            60&&0.012&1.04&95.5&97.1&97.2&97.7&&0.020&1.04&95.3&97.1&97.1&97.6\\
            100&&0.008&1.05&94.4&96.8&96.8&97.4&&0.014&1.04&94.3&96.8&96.8&97.3\\
            200&&0.006&1.07&93.9&94.0&94.0&94.4&&0.015&1.06&93.7&93.8&93.8&93.9
			\\
			\midrule
			0&&0.005&0.99&97.5&97.5&97.5&97.5&&0.002&1.00&97.5&97.5&97.5&97.5\\
            5&&0.013&1.01&97.2&97.3&97.3&97.4&&0.003&1.00&97.2&97.4&97.4&97.4\\
            9&&0.346&1.03&93.2&93.9&94.3&95.0&&0.021&1.01&96.6&97.0&97.1&97.5\\
            15&&0.513&1.16&89.5&90.9&91.5&92.6&&0.032&1.02&96.1&96.8&96.8&97.4\\
            24&&0.838&1.57&79.2&82.1&83.4&86.0&&0.048&1.02&95.6&96.5&96.6&97.4\\
            37&&0.923&1.73&74.9&79.0&80.4&83.1&&0.056&1.03&95.1&96.4&96.4&97.4\\
            60&&1.022&1.92&68.8&74.9&76.1&78.7&&0.062&1.03&94.6&96.4&96.5&97.3\\
            100&&1.094&2.11&62.7&70.8&71.5&74.5&&0.054&1.04&93.4&96.0&96.0&96.8\\
            200&&1.065&2.11&63.1&63.6&64.1&68.3&&0.067&1.07&92.5&92.6&92.6&92.8
			\\
			\bottomrule
		\end{tabular}%
	}
\end{table}

We then consider the asymptotic approximation and coverage probabilities of confidence intervals under these rerandomization designs with different numbers of covariates. 
We first consider the case where both potential outcomes are the same as the observed first year GPA from the STAR dataset. 
The top half of Table \ref{tab:simu_approx} shows the absolute empirical bias standardized by $V_{\tau\tau}^{1/2}$, the ratio between empirical MSE and the corresponding asymptotic variance, and the empirical coverage probabilities of $95\%$ confidence intervals using methods HC0--3 described in Section \ref{sec:ci}. 
Note that when $K=200$, the number of covariates are greater than the size of treated group, under which we can only perform HC1--3 for the control group.
From Table \ref{tab:simu_approx}, under all the nine designs, the biases are close to 0, the ratios between empirical and asymptotic MSEs are close to 1, and the coverage probabilities are close to the nominal level, all of which indicate that the asymptotic approximation for rerandomization works quite well.  
These in some sense show the robustness of rerandomization. 
To illustrate the potential drawback of rerandomization with a large number of covariates, we also consider potential outcomes constructed in the following way: 
we first estimate the propensity scores for all units under these nine designs, then calculate the average of them for each unit, and finally take a quantile transformation using the $t$ distribution with degrees of freedom 3 to get both potential outcomes. 
The bottom half of Table \ref{tab:simu_approx} shows analogously the standardized absolute empirical bias, the ratio between empirical and asymptotic MSEs and the coverage probabilities of $95\%$ confidence intervals using HC0--3. 
From Table \ref{tab:simu_approx}, as $K$ increases, the standardized biases increases, the ratio becomes further from 1, and the coverage probabilities becomes much smaller than the nominal level, all of which indicates poor asymptotic approximation under rerandomization with a large number of covariates.
Comparing Tables \ref{tab:simulation} and \ref{tab:simu_approx}, we can find that the ratio and the coverage probabilities become further off from their ideal values as the maximum standardized bias and root MSE in Table \ref{tab:simulation} get larger, which indicates that~\eqref{eq:max_bias} and~\eqref{eq:max_mse} can be used as viable tools to help assist 
the design of 
ReM 
in practice. 
Finally, we also consider rerandomization with trimmed covariates for both cases.
From the right half of Table \ref{tab:simu_approx}, trimming helps improve the finite-sample performance of rerandomization in terms of both point and interval estimates. 
Moreover, compared to HC0, HC1--3 help improve the coverage probabilities of the confidence intervals, especially when the number of covariates is relatively large.

\section{Regression adjustment under rerandomization}\label{sec:reg_adj}

Regression adjustment is a popular approach to adjusting for covariate imbalance between two treatment groups after the experiments were conducted. Below we consider linearly regression adjusted estimator after ReM, which can be particularly useful when the analyzer is able to observe more covariate information after conducting the experiment.  
Let $\bs{W}_i \in \mathbb{R}^{J_n}$ denote the available covariate vector for unit $i$ in analysis, 
and 
$\hat{\bs{\tau}}_{\bs{W}}$ denote the corresponding difference-in-means of covariates between treatment and control groups. 
Following \citet{LD20}, a general linearly regression adjusted estimator has the following form: 
\begin{align}\label{eq:reg}
    \hat{\tau}(\bs{\beta}_1, \bs{\beta}_0)
    & = 
    \frac{1}{n_1} \sum_{i=1}^n Z_i \{ Y_i - \bs{\beta}_1^\top (\bs{W}_i-\bar{\bs{W}}) \}
    - 
    \frac{1}{n_0} \sum_{i=1}^n (1-Z_i) \{ Y_i - \bs{\beta}_0^\top (\bs{W}_i - \bar{\bs{W}}) \}
    \\
    & = \hat{\tau} - ( r_0 \bs{\beta}_1 + r_1 \bs{\beta}_0 )^\top \hat{\bs{\tau}}_{\bs{W}},
    \nonumber
\end{align}
where $\bs{\beta}_1$ and $\bs{\beta}_0$ are the covariate adjustment coefficients. From \eqref{eq:reg}, 
the regression adjusted estimator $\hat{\tau}(\bs{\beta}_1, \bs{\beta}_0)$ is essentially the difference-in-means estimator with adjusted treatment and control potential outcomes $Y_i(1) - \bs{\beta}_1^\top (\bs{W}_i-\bar{\bs{W}})$'s and $Y_i(0) - \bs{\beta}_0^\top (\bs{W}_i-\bar{\bs{W}})$'s. 
Therefore, its asymptotic property can be similarly derived as Theorems \ref{thm:dim_rem} and \ref{thm:rem_gaussian}. 
Below we focus on a specific regression adjusted estimator $\hat{\tau}(\tilde{\bs{\beta}}_1, \tilde{\bs{\beta}}_0)$,
which enjoys certain optimalities \citep[see, e.g.,][]{Lin13, LD20} and uses the following least squares coefficients 
for covariate adjustment: 
\begin{align*}
    \tilde{\bs{\beta}}_z = \arg\min_{\bs{\beta}} \sum_{i=1}^n \{Y_i(z) - \bar{Y}(z) - \bs{\beta}_z^\top (\bs{W}_i-\bar{\bs{W}})\}^2 = ( \bs{S}^2_{\bs{W}} )^{-1} \bs{S}_{\bs{W}, z}, 
    \quad (z=0,1)
\end{align*}
where $\bs{S}^2_{\bs{W}}$ and $\bs{S}_{\bs{W}, z}$ denote the finite population covariance matrices for covariates and potential outcomes.

For each unit $i$ and $z=0,1$, 
let $\tilde{e}_i(z) = Y_i(z) - \tilde{\bs{\beta}}_z^\top (\bs{W}_i - \bar{\bs{W}})$ denote the adjusted potential outcome, and $\tilde{\bs{u}}_i = (r_0 \tilde{e}_i(1) + r_1 \tilde{e}_i(0), \bs{X}_i^\top)^\top$. 
Define $\tilde{\gamma}_n$ and $\tilde{\Delta}_n$ analogously as \eqref{eq:gamma_n} and \eqref{eq:Delta_n}, 
but with $Y_i(z)$ replaced by $\tilde{e}_i(z)$, $\bs{u}_i$ replaced by $\tilde{\bs{u}}_i$ and $\hat{\tau}$ replaced by $\hat{\tau}(\tilde{\bs{\beta}}_1, \tilde{\bs{\beta}}_0)$. 
Analogous to \eqref{eq:R2}, we define $\tilde{R}_n^2$ as the squared multiple correlation between adjusted potential outcomes and covariates $\bs{X}_i$'s, 
and $\rho_n^2$ as the squared multiple correlation between original (unadjusted) potential outcomes and covariates $\bs{W}_i$'s. 
We first invoke the following regularity condition, which essentially assumes Conditions \ref{cond:gamma_n} and \ref{cond:p_n} for the adjusted potential outcomes. 

\begin{condition}\label{cond:regrem_Delta}
    Conditions \ref{cond:gamma_n} and \ref{cond:p_n} hold with 
     $\gamma_n$ and $\Delta_n$ replaced by $\tilde{\gamma}_n$ and $\tilde{\Delta}_n$. 
\end{condition}

Note that both adjustment coefficients $\tilde{\bs{\beta}}_1$ and $\tilde{\bs{\beta}}_0$ depend on all potential outcomes and are thus unknown. 
In practice, we can estimate them using the sampling analogues $\hat{\bs{\beta}}_z =  (\bs{S}_{\bs{W}}^2)^{-1} \bs{s}_{z, \bs{W}}$ for $z=0,1$, 
where $\bs{s}_{z, \bs{W}}$ is the sample covariance between observed outcomes and covariates for units under treatment arm $z$, 
and use the regression adjusted estimator $\hat{\tau}(\hat{\bs{\beta}}_1, \hat{\bs{\beta}}_0)$ with the estimated coefficients. 
We then invoke the following regularity condition, which ensures that the regression adjusted estimators with true and estimated coefficients have the same asymptotic distribution.  

\begin{condition}\label{cond:regrem}
	As $n \to \infty$, 
	\begin{align*}
	    \frac{\max_{z \in \{0,1\}} \max_{1 \leq i \leq n} |Y_i(z) - \bar{Y}(z)|}{\sqrt{V_{\tau\tau} (1-\rho^2_{n}) \{ 1-\tilde{R}_n^2\}}}  \cdot J_n \cdot \frac{\max\{1, \log J_n, - \log p_n\}}{n r_1^2 r_0^2} \to 0.
	\end{align*}
\end{condition}

We summarize the asymptotic distribution of $\hat{\tau}(\hat{\bs{\beta}}_1, \hat{\bs{\beta}}_0)$ under ReM in the theorem below.

\begin{theorem}\label{thm:regrem}
	Under ReM and Conditions \ref{cond:regrem_Delta} and \ref{cond:regrem}, as $n \to \infty$,
		\begin{align*}%
		\sup_{c\in \mathbb{R}} \left| \PP \left\{ 
		\frac{\hat{\tau}(\hat{\bs{\beta}}_1, \hat{\bs{\beta}}_0) - \tau}{
		\sqrt{V_{\tau\tau}(1-\rho_n^2)}}
		\le c \mid M \le a_n \right\} 
		- \PP\left\{ \sqrt{1-\tilde{R}^2_n}\ \varepsilon_0  + \sqrt{\tilde{R}^2_n} \ L_{K_n, a_n} \le c \right\}
		\right| 
		\rightarrow 0; 
	\end{align*}
	If further 
    Condition \ref{cond:k_np_n} holds and  $\limsup_{n \to \infty} \tilde{R}_n^2 < 1$, then 
	\begin{align*}
		\sup_{c\in \mathbb{R}} \left| \PP \left\{ 
		\frac{\hat{\tau}(\hat{\bs{\beta}}_1, \hat{\bs{\beta}}_0) - \tau}{
		\sqrt{V_{\tau\tau}(1-\rho_n^2)}}
		\le c \mid M \le a_n \right\} 
		- \PP\left\{ \sqrt{1-\tilde{R}^2_n}\ \varepsilon_0  \le c \right\}
		\right| 
		\rightarrow 0. 
	\end{align*}
\end{theorem}

Theorem \ref{thm:regrem} implies that we can still perform covariate adjustment under ReM with diminishing covariate imbalance threshold as well as diverging numbers of covariates in both design and analysis, which extends the discussion in \citet{LD20} with fixed threshold and fixed numbers of covariates. 
Moreover, with covariate imbalance diminishing at a proper rate, 
the regression adjusted estimator becomes asymptotically Gaussian distributed, and its improvement over the CRE is nondecreasing in $\tilde{R}_n^2$.

\section{Berry--Esseen-type Bound for Finite Population Central Limit Theorem in Simple Random Sampling}\label{sec:berry_srs}

\subsection{Main theorem}

\begin{theorem} \label{thm:berry_esseen_clt_full}
	Consider any finite population $\{\bs{u}_1, \bs{u}_2, \ldots, \bs{u}_N\}$ with $\bs{u}_i \in \mathbb{R}^d$, with $\bar{\bs{u}} = N^{-1} \sum_{i=1}^N \bs{u}_i$ 
	and 
	$\bs{S}^2 = (N-1)^{-1} \sum_{i=1}^N (\bs{u}_i - \bar{\bs{u}})(\bs{u}_i - \bar{\bs{u}})^\top $
	denoting the finite population average and covariance matrix. 
	Let $(Z_1, Z_2, \ldots, Z_N)$ denote the indicators for a simple random sample of size $m$, 
	i.e., the probability that $\bs{Z}$ takes a particular value $\bs{z} = (z_1, \ldots, z_N)\in \{0,1\}^N$ is $m!(N-m)!/N!$ if $\sum_{i=1}^N z_i = m$ and zero otherwise, 
	$f \equiv m/N$ be the fraction of sampled units, 
	and 
	$$
	\bs{W} = 
	\frac{1}{\sqrt{Nf(1-f)}} \bs{S}^{-1}
	\left\{  \sum_{i=1}^N Z_i \bs{u}_i - m \bar{\bs{u}}  \right\}. 
	$$
	Let $\bs{\varepsilon}\sim \mathcal{N}(\bs{0}, \bs{I}_d)$ denote a $d$-dimensional standard Gaussian random vector, and 
	define 
	\begin{align*}
		\gamma \equiv \frac{1}{\sqrt{Nf(1-f)}} \frac{d^{1/4}}{N} \sum_{i=1}^N \left\| \bs{S}^{-1} (\bs{u}_i-\bar{\bs{u}}) \right\|_2^3. 
	\end{align*}
	\begin{itemize}
		\item[(i)] 
		There exists $C_d$ that depends only on $d$ such that for any $N\ge 2$, $f\in (0,1),$ any finite population $\{\bs{u}_i: 1\le i \le N\}$ with nonsingular finite population covariance $\bs{S}^2$, 
		and any measurable convex set $\mathcal{Q} \subset \mathbb{R}^d$, 
		\begin{align*}
			\left| \PP(\bs{W} \in \mathcal{Q}) - \PP(\bs{\varepsilon} \in \mathcal{Q}) \right|
			\le 
			C_d \gamma
		\end{align*}
		\item[(ii)] 
		If the theorem in \citet{R15} holds, then 
		there exists a universal constant $C$ such that for any $N\ge 2$, $d\ge 1$, $f\in (0,1)$,  any finite population $\{\bs{u}_i: 1\le i \le N\}$ with nonsingular finite population covariance $\bs{S}^2$, 
		and any measurable convex set $\mathcal{Q} \subset \mathbb{R}^d$, 
		\begin{align*}
			\left| \PP(\bs{W} \in \mathcal{Q}) - \PP(\bs{\varepsilon} \in \mathcal{Q}) \right|
			\le 
			C \gamma
		\end{align*}
		\item[(iii)]
		For any $N\ge 2$, $d\ge 1$, $f\in (0,1)$,  any finite population $\{\bs{u}_i: 1\le i \le N\}$ with nonsingular finite population covariance $\bs{S}^2$, 
		and any measurable convex set $\mathcal{Q} \subset \mathbb{R}^d$, 
		\begin{align*}
			\left| \PP(\bs{W} \in \mathcal{Q}) - \PP(\bs{\varepsilon} \in \mathcal{Q}) \right|
			\le 
			174 \gamma + 3 \cdot 2^{2/3} \frac{d^{1/2}}{\{Nf(1-f)\}^{1/6}}
			\le 174\gamma + 7 \gamma^{1/3}. 
		\end{align*}

		\item[(iv)]
		For any $N\ge 2$, $d\ge 1$, $f\in (0,1)$,  any finite population $\{\bs{u}_i: 1\le i \le N\}$ with nonsingular finite population covariance $\bs{S}^2$, 
		and any measurable convex set $\mathcal{Q} \subset \mathbb{R}^d$, 
		\begin{align*}
			\left| \PP(\bs{W} \in \mathcal{Q}) - \PP(\bs{\varepsilon} \in \mathcal{Q}) \right|
			\le 
			180 \gamma + \frac{3  (\log N)^{3/4}d^{3/4}}{ N^{1/4} \sqrt{f(1-f)}} \cdot \max_{1 \le i \le n} \left\|\left(\bs{S}_{\bs{u}}^2\right)^{-1/2} (\bs{u}_i-\bar{\bs{u}})\right\|_\infty. 
		\end{align*}
		\item[(v)]
		For any $N\ge 2$, $d\ge 1$, $f\in (0,1)$,  any finite population $\{\bs{u}_i: 1\le i \le N\}$ with nonsingular finite population covariance $\bs{S}^2$, 
		any $\iota \ge 2$, 
		and any measurable convex set $\mathcal{Q} \subset \mathbb{R}^d$, 
		\begin{align*}
			\left| \PP(\bs{W} \in \mathcal{Q}) - \PP(\bs{\varepsilon} \in \mathcal{Q}) \right|
			\le 
			174 \gamma 
        +  
        \frac{ C_{\iota} d^{3\iota/\{4(\iota+1)\}}}{N^{\iota/\{4(\iota+1)\}} \{f(1-f)\}^{\iota/2}} \frac{1}{N}\sum_{i = 1}^N \| \left(\bs{S}_{\bs{u}}^2\right)^{-1/2} (\bs{u}_i-\bar{\bs{u}}) \|_{\iota}^\iota, 
		\end{align*}
		where $C_{\iota}$ is a universal constant depending only on $\iota$. 
	\end{itemize}
\end{theorem}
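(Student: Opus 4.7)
The plan is to treat parts (i) and (ii) as direct consequences of known combinatorial CLT bounds, reserving the self-contained work for part (iii), which proceeds by coupling the simple random sample with a Bernoulli independent sample and applying a multivariate Berry--Esseen bound for independent summands. First I perform the common reduction: setting $\bs{X}_i = \bs{S}^{-1}(\bs{u}_i - \bar{\bs{u}})$ gives $\sum_i \bs{X}_i = \bs{0}$ and $\sum_i \bs{X}_i \bs{X}_i^\top = (N-1)\bs{I}_d$, and because $\sum_i Z_i = m$ we have $\bs{W} = \frac{1}{\sqrt{Nf(1-f)}}\sum_{i=1}^N Z_i \bs{X}_i$, a linear permutation statistic on the standardized population. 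Part (i) then follows from the multivariate combinatorial Berry--Esseen theorem of \citet{BG93}, which yields a bound over convex sets with constant $C_d$ depending on $d$; part (ii) from the sharper workshop result of \citet{R15} that replaces $C_d$ by an absolute constant while absorbing the $d^{1/4}$ factor into $\gamma$.

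For part (iii), I introduce independent Bernoulli$(f)$ variables $\tilde Z_1,\ldots,\tilde Z_N$ and set $\tilde{\bs{W}} = \frac{1}{\sqrt{Nf(1-f)}}\sum_i (\tilde Z_i - f)\bs{X}_i$, a sum of $N$ independent mean-zero vectors with total covariance $\frac{N-1}{N}\bs{I}_d$. Applying the multivariate Berry--Esseen inequality over convex sets for independent summands from \citet{R19} and using $\E|\tilde Z_i - f|^3 \le f(1-f)$, careful tracking of absolute constants yields
\[
\sup_{\mathcal{Q}\in \cC_d}\bigl|\PP(\tilde{\bs{W}}\in\mathcal{Q}) - \PP(\bs{\varepsilon}\in\mathcal{Q})\bigr| \le 174\,\gamma.
\]
I then couple $\bs{W}$ with $\tilde{\bs{W}}$ via H\'ajek's device: given $\tilde{\bs{Z}}$ with $\tilde m = \sum_i \tilde Z_i$, construct $\bs{Z}$ by removing $\tilde m - m$ sampled units uniformly at random when $\tilde m > m$, or adding $m - \tilde m$ unsampled units uniformly at random when $\tilde m < m$. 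A direct combinatorial check confirms that the resulting $\bs{Z}$ has the uniform law on size-$m$ subsets, and using $\sum_i \bs{X}_i = \bs{0}$ one obtains
\[
\bs{W} - \tilde{\bs{W}} = \frac{\operatorname{sgn}(m-\tilde m)}{\sqrt{Nf(1-f)}}\sum_{i\in \Delta} \bs{X}_i,
\]
where $\Delta$ has size $|\tilde m - m|$ and is conditionally uniform within the sampled or unsampled set.

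Finally, I combine the two pieces via a smoothing inequality: for any convex $\mathcal{Q}$ and $\epsilon > 0$, letting $\mathcal{Q}^{\pm\epsilon}$ denote the outer and inner $\epsilon$-neighborhoods,
\[
\bigl|\PP(\bs{W}\in\mathcal{Q}) - \PP(\tilde{\bs{W}}\in\mathcal{Q})\bigr| \le \PP(\|\bs{W}-\tilde{\bs{W}}\|_2 > \epsilon) + \PP\bigl(\tilde{\bs{W}}\in \mathcal{Q}^{+\epsilon}\setminus \mathcal{Q}^{-\epsilon}\bigr).
\]
The first term is bounded by Markov's inequality applied to a conditional second-moment estimate $\E\|\bs{W}-\tilde{\bs{W}}\|_2^2 \le C\,d/\sqrt{Nf(1-f)}$, and the second by combining the preceding Berry--Esseen bound on $\tilde{\bs{W}}$ with the Ball--Nazarov anti-concentration inequality $\PP(\bs{\varepsilon}\in \mathcal{Q}^{+\epsilon}\setminus \mathcal{Q}^{-\epsilon}) \le C\,d^{1/4}\epsilon$. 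Optimizing $\epsilon$ in the resulting sum $d/(\epsilon^2\sqrt{Nf(1-f)}) + Cd^{1/4}\epsilon$ produces the stated coupling contribution $3\cdot 2^{2/3}\,d^{1/2}/\{Nf(1-f)\}^{1/6}$. The alternative form $7\gamma^{1/3}$ then follows from the elementary lower bound $\gamma \ge 2^{-3/2}(Nf(1-f))^{-1/2} d^{7/4}$, a consequence of Jensen's inequality applied to $\|\bs{X}_i\|_2^2 \mapsto \|\bs{X}_i\|_2^3$ together with $\frac{1}{N}\sum_i\|\bs{X}_i\|_2^2 = \frac{N-1}{N}d$.

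The main obstacle is establishing the second-moment bound $\E\|\bs{W}-\tilde{\bs{W}}\|_2^2 \le C\,d/\sqrt{Nf(1-f)}$. A naive Cauchy--Schwarz bound gives only $O(d)$, which is insufficient to produce the $1/6$ exponent. The refined scaling requires decomposing $\sum_{i\in\Delta}\bs{X}_i$ into its conditional mean (which is small because $\sum_i \bs{X}_i = \bs{0}$ forces $\sum_{i\in S}\bs{X}_i = -\sum_{i\notin S}\bs{X}_i$ and the Bernoulli fluctuation has $\E\|\sum_{i\in S}\bs{X}_i\|_2^2 = f(1-f)(N-1)d$) and its centered part (bounded by the variance formula for sampling without replacement), then combining with $\E|\tilde m - m| \le \sqrt{Nf(1-f)}$. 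Maintaining the explicit constants $174$ and $3\cdot 2^{2/3}$ through both the Berry--Esseen step and the optimization also requires careful bookkeeping.
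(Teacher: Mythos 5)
Your proposal is correct and follows essentially the same route as the paper: parts (i)--(ii) by reduction to the combinatorial CLTs of Bolthausen--G\"{o}tze and Rai\v{c}, and part (iii) by the H\'ajek coupling with Bernoulli sampling, Rai\v{c}'s convex-set Berry--Esseen bound for independent sums, the $4d^{1/4}$ Gaussian perimeter bound, and the same optimization yielding $3\cdot 2^{2/3} d^{1/2}\{Nf(1-f)\}^{-1/6}$ and then $7\gamma^{1/3}$ via the lower bound on $\gamma$. The only gloss is that you fold the correction for $\cov(\tilde{\bs{W}}) = \tfrac{N-1}{N}\bs{I}_d \ne \bs{I}_d$ into the ``$174\gamma$'' without spelling it out; the paper gets $165\gamma$ from the Bernoulli step and absorbs this rescaling (via a Gaussian tail bound and a second perimeter term) into the remaining $\approx 9\gamma$, so your constant is attainable but requires that extra step.
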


\subsection{Proof of Theorem \ref{thm:berry_esseen_clt_full}(i) and (ii) based on combinatorial central limit theorem}

To prove Theorem \ref{thm:berry_esseen_clt_full}(i), we need the following lemma, which follows immediately from \citet[][Theorem 1]{BG93}. 

\begin{lemma}\label{lemma:perm_clt_fix_dim}
	Consider any integer $N\ge 1$ and any constant vector $\bs{a}(i,j)\in \mathbb{R}^d$ for all $1\le i,j\le N$ satisfying that 
	\begin{align}\label{eq:aij_constraint}
		\sum_{j=1}^N \bs{a}(i,j) = \bs{0}, 1\le i \le N 
		\ \ \text{and} \ \ 
		\frac{1}{N-1} \sum_{i=1}^N \sum_{j=1}^N \bs{a}(i,j) \bs{a}(i,j)^\top - 
		\frac{1}{N(N-1)} \sum_{j=1}^N \bs{b}(j) \bs{b}(j)^\top = \bs{I}_d, 
	\end{align}
	where $\bs{b}(j) \equiv \sum_{i=1}^N \bs{a}(i,j)$. 
	Let $\pi$ denote a uniformly distributed random permutation of $\{1,2,\ldots,N\}$, 
	$\bs{W} = \sum_{i=1}^N \bs{a}(i, \pi(i))$, and $\bs{\varepsilon}\sim \mathcal{N}(\bs{0}, \bs{I}_d)$ denote a $d$-dimensional standard Gaussian random vector. 
	Then there exists a constant $C_d$ that depends only on $d$ such that for any $N\ge 2$, any $\{\bs{a}(i,j): 1\le i,j\le N\}$ satisfying \eqref{eq:aij_constraint}, and 
	any measurable convex set $\mathcal{Q} \subset \mathbb{R}^d$, 
	\begin{align*}
		\left| \PP(\bs{W} \in \mathcal{Q}) - \PP(\bs{\varepsilon} \in \mathcal{Q}) \right|
		\le 
		C_d \frac{1}{N} \sum_{i=1}^N \sum_{j=1}^N \|\bs{a}(i,j)\|_2^3. 
	\end{align*}
\end{lemma}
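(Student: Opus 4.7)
The plan is to derive the lemma as a direct application of Bolthausen and G\"otze's multivariate combinatorial Berry-Esseen theorem. Their result bounds $|\PP(\bs{W}\in \mathcal{Q}) - \PP(\bs{\varepsilon}\in \mathcal{Q})|$ uniformly over convex sets when the array of vectors is \emph{doubly} centered and the permutation sum has identity covariance. The two things I would verify are (i) that the hypotheses here (row centering plus the variance identity) can be matched to BG's normalization after a short reduction, and (ii) that the resulting third-moment bound can be stated in terms of $\bs{a}(i,j)$ rather than the shifted array.

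For the reduction to double centering, I would set $\tilde{\bs{a}}(i,j) = \bs{a}(i,j) - N^{-1}\bs{b}(j)$. Summing the hypothesis $\sum_j \bs{a}(i,j)=\bs{0}$ over $i$ gives $\sum_j \bs{b}(j) = \bs{0}$, which together with the definition of $\tilde{\bs{a}}$ shows that both row and column sums of $\tilde{\bs{a}}$ vanish. Moreover, because $\pi$ is a bijection, $\sum_i \bs{b}(\pi(i)) = \sum_j \bs{b}(j) = \bs{0}$, so $\sum_i \tilde{\bs{a}}(i,\pi(i)) = \bs{W}$ almost surely; applying BG to the doubly centered array therefore controls the law of the original $\bs{W}$ exactly.

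Next I would verify the covariance normalization by a direct second-moment calculation using $\PP(\pi(i)=j)=1/N$ and $\PP(\pi(i)=j,\pi(i')=j')=1/[N(N-1)]$ for $i\neq i'$, which yields
\begin{equation*}
\cov(\bs{W}) = \frac{1}{N-1}\sum_{i=1}^N\sum_{j=1}^N \bs{a}(i,j)\bs{a}(i,j)^\top - \frac{1}{N(N-1)}\sum_{j=1}^N \bs{b}(j)\bs{b}(j)^\top,
\end{equation*}
which is precisely the left-hand side of the second identity in \eqref{eq:aij_constraint}. The hypothesis therefore asserts $\cov(\bs{W})=\bs{I}_d$, which is exactly BG's normalization. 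BG then delivers the bound with a third-moment term $N^{-1}\sum_{i,j}\|\tilde{\bs{a}}(i,j)\|_2^3$, and the triangle inequality together with $\|\bs{b}(j)\|_2\le \sum_i \|\bs{a}(i,j)\|_2$ and Jensen's inequality controls this by an absolute-constant multiple of $N^{-1}\sum_{i,j}\|\bs{a}(i,j)\|_2^3$, which may be absorbed into $C_d$.

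The main obstacle is genuinely bookkeeping rather than mathematics: one must keep the two centerings, the identity-covariance normalization, and BG's dimension-dependent constant consistent throughout the reduction. Since the dependence of $C_d$ on $d$ is imported directly from BG's theorem, no attempt is made at this step to exhibit the constant explicitly or to make it dimension-free; that is precisely the content of parts (ii) and (iii) of Theorem \ref{thm:berry_esseen_clt_full}, which require entirely different techniques (Raič's convex-set bound and a Hájek-type coupling, respectively).
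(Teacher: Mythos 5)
Your proposal takes exactly the same route as the paper, which disposes of this lemma in one line by declaring it an immediate consequence of Theorem 1 of \citet{BG93}; your double-centering reduction via $\tilde{\bs{a}}(i,j) = \bs{a}(i,j) - N^{-1}\bs{b}(j)$, the covariance computation confirming $\cov(\bs{W}) = \bs{I}_d$, and the Jensen-type control of the shifted third moment are precisely the bookkeeping that the paper leaves implicit, and all of it checks out. The only substantive point worth noting is that in the paper's actual application (the proof of Theorem A1(i)) the constructed array already has $\bs{b}(j)=\bs{0}$ for every $j$, so your reduction, while correct and needed for the lemma as stated in full generality, is vacuous there.
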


\begin{proof}[\bf Proof of Theorem \ref{thm:berry_esseen_clt_full}(i)]
	Let $\bs{S}$ denote the positive definite square root of $\bs{S}^2$, 
	and 
	define 
	\begin{align*}
		\bs{a}(i,j) = 
		\begin{cases}
			\{m(1-f)\}^{-1/2} \bs{S}^{-1} (1-f) ( \bs{u}_i - \bar{\bs{u}}), & \text{if } 1\le j \le m, \\
			- \{m(1-f)\}^{-1/2} \bs{S}^{-1} f ( \bs{u}_i - \bar{\bs{u}}), & \text{if } m < j \le N,
		\end{cases}
		\quad (1 \le i \le N). 
	\end{align*}
	We can then verify that 
	\begin{align*}
		\sum_{j=1}^N \bs{a}(i,j) & = 
		\{m(1-f)\}^{-1/2} \bs{S}^{-1} \cdot
		\left\{
		m(1-f)  - (N-m) f 
		\right\}( \bs{u}_i - \bar{\bs{u}})
		= \bs{0}, \quad (1\le i \le N)
		\\
		\bs{b}(j)  & \equiv \sum_{i=1}^N \bs{a}(i,j) = \bs{0}, \quad (1\le j \le N)
	\end{align*}
	and 
	\begin{align*}
		& \quad \ 
		\frac{1}{N-1} \sum_{i=1}^N \sum_{j=1}^N \bs{a}(i,j) \bs{a}(i,j)^\top - 
		\frac{1}{N(N-1)} \sum_{j=1}^N \bs{b}(j) \bs{b}(j)^\top
		\\
		& = 
		\frac{m}{N-1} \frac{(1-f)^2}{m(1-f)}
		\bs{S}^{-1} \sum_{i=1}^N ( \bs{u}_i - \bar{\bs{u}}) ( \bs{u}_i - \bar{\bs{u}})^\top \bs{S}^{-1}
		+ 
		\frac{N-m}{N-1} \frac{f^2}{m(1-f)} \bs{S}^{-1} \sum_{i=1}^N ( \bs{u}_i - \bar{\bs{u}}) ( \bs{u}_i - \bar{\bs{u}})^\top \bs{S}^{-1}
		\\
		& = (1-f) \bs{S}^{-1} \bs{S}^2 \bs{S}^{-1} + f \bs{S}^{-1} \bs{S}^2 \bs{S}^{-1}
		= \bs{I}_d, 
	\end{align*}
	i.e., $\{\bs{a}(i,j): 1\le i,j\le N\}$ satisfies the conditions in \eqref{eq:aij_constraint}. 
	Besides, $\sum_{i=1}^N \sum_{j=1}^N |\bs{a}(i,j)|^3$ simplifies to 
	\begin{align*}
		\sum_{i=1}^N \sum_{j=1}^N \|\bs{a}(i,j)\|_2^3
		& = 
		m 
		\frac{(1-f)^3}{\{m(1-f)\}^{3/2}}
		\sum_{i=1}^N \left\|\bs{S}^{-1} ( \bs{u}_i - \bar{\bs{u}}) \right\|_2^3 + 
		(N-m) 
		\frac{f^3}{\{m(1-f)\}^{3/2}}
		\sum_{i=1}^N \left\| \bs{S}^{-1} ( \bs{u}_i - \bar{\bs{u}}) \right\|_2^3 \\
		& = 
		\frac{(1-f)^2+f^2}{\sqrt{m(1-f)}} \sum_{i=1}^N \left\| \bs{S}^{-1} ( \bs{u}_i - \bar{\bs{u}}) \right\|_2^3
		\le 
		\frac{\{f+(1-f)\}^2}{\sqrt{Nf(1-f)}} \sum_{i=1}^N \left\| \bs{S}^{-1} ( \bs{u}_i - \bar{\bs{u}})\right\|_2^3
		\\
		& = 
		\frac{1}{\sqrt{Nf(1-f)}} \sum_{i=1}^N \left\| \bs{S}^{-1} ( \bs{u}_i - \bar{\bs{u}}) \right\|_2^3
		= \frac{N}{d^{1/4}} \gamma, 
	\end{align*}
	where the last equality holds by definition. 
	
	Let $\pi$ denote a uniformly distributed random permutation of $\{1,2,\ldots,N\}$, 
	and 
	$\tilde{\bs{W}} = \sum_{i=1}^N \bs{a}(i, \pi(i))$. 
	Then, by definition, 
	\begin{align*}
		\tilde{\bs{W}} & 
		= 
		\sum_{i=1}^N \kron(\pi(i)\le m)\{m(1-f)\}^{-1/2} \bs{S}^{-1} (1-f) \bs{u}_i
		-
		\sum_{i=1}^N \kron(\pi(i) > m)
		\{m(1-f)\}^{-1/2} \bs{S}^{-1} f \bs{u}_i\\
		& = 
		\{m(1-f)\}^{-1/2} \bs{S}^{-1} \sum_{i=1}^N
		\left\{  \kron(\pi(i)\le m) - f  \right\} \bs{u}_i
		= 
		\frac{1}{\sqrt{Nf(1-f)}} \bs{S}^{-1}
		\left\{  \sum_{i=1}^N \kron(\pi(i)\le m) \bs{u}_i - m \bar{\bs{u}}  \right\} 
		\\
		& \sim 
		\frac{1}{\sqrt{Nf(1-f)}} \bs{S}^{-1}
		\left\{  \sum_{i=1}^N Z_i \bs{u}_i - m \bar{\bs{u}}  \right\}
		= 
		\bs{W}, 
	\end{align*}
	where the last $\sim$ holds because $( \kron(\pi(1)\le m), \kron(\pi(2)\le m)), \ldots, \kron(\pi(N)\le m) )$ follows the same distribution as $(Z_1, Z_2, \ldots, Z_N)$. 
	Applying Lemma \ref{lemma:perm_clt_fix_dim}, we can know that there exists $C_d$ that depends only on $d$ such that, for any measurable convex set $\mathcal{Q}\subset \mathbb{R}^d$, 
	\begin{align*}
		\left| \PP(\bs{W} \in \mathcal{Q}) - \PP(\bs{\varepsilon} \in \mathcal{Q}) \right|
		\le 
		C_d \frac{1}{N} \sum_{i=1}^N \sum_{j=1}^N \|\bs{a}(i,j)\|_2^3
		= C_d d^{-1/4}  \cdot \gamma.
	\end{align*}
	This immediately implies that Theorem \ref {thm:berry_esseen_clt_full}(i) holds. 
\end{proof}

To prove Theorem \ref {thm:berry_esseen_clt_full}(ii), we need the following lemma from \citet{R15}. However, the author did not provide a formal proof there. 

\begin{lemma}\label{lemma:perm_clt_vary_dim}
	Consider the same setting as in Lemma \ref{lemma:perm_clt_fix_dim}. 
	There exists a universal constant $C$ such that for any $N\ge 2$, any $d\ge 1$, any $\{\bs{a}(i,j): 1\le i,j\le N\}$ satisfying \eqref{eq:aij_constraint}, and 
	any measurable convex set $\mathcal{Q} \subset \mathbb{R}^d$, 
	\begin{align*}
		\left| \PP(\bs{W} \in \mathcal{Q}) - \PP(\bs{\varepsilon} \in \mathcal{Q}) \right|
		\le 
		C d^{1/4} \frac{1}{N} \sum_{i=1}^N \sum_{j=1}^N |\bs{a}(i,j)|^3. 
	\end{align*}
\end{lemma}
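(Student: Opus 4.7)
The plan is to follow Stein's method of exchangeable pairs, adapted to the permutation setting, and to combine it with Bentkus-type smoothing inequalities for convex sets that retain explicit dimension dependence. The exchangeable pair is constructed as $\bs{W}' = \bs{W} - \bs{a}(I, \pi(I)) - \bs{a}(J, \pi(J)) + \bs{a}(I, \pi(J)) + \bs{a}(J, \pi(I))$, where $(I,J)$ is drawn uniformly from $\{1,\ldots,N\}^2$ independently of $\pi$, and the transposed permutation $\pi'$ obtained from $\pi$ by swapping the images of $I$ and $J$ gives $\bs{W}' = \sum_i \bs{a}(i, \pi'(i))$. A direct calculation using the constraints \eqref{eq:aij_constraint} would then show that this pair satisfies the linear regression property $\E[\bs{W}' - \bs{W} \mid \pi] = -\lambda \bs{W}$ with $\lambda = 2/N$, which is the key structural input for Stein's method.

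The next step would be to exploit the convex-set Stein equation. For any measurable convex set $\mathcal{Q} \subset \mathbb{R}^d$, one smooths the indicator $\kron_\mathcal{Q}$ by Gaussian convolution at scale $t$ to obtain a function $h_t$, then solves $\Delta f - \bs{w}^\top \nabla f = h_t(\bs{w}) - \E h_t(\bs{\varepsilon})$; the standard Bentkus--Rinott estimates give Hessian bounds of the form $\| \nabla^2 f \|_{\text{op}} \lesssim d^{1/2} t^{-1}$, combined with a smoothing error of order $d^{1/4} t$ when passing from $h_t$ back to $\kron_\mathcal{Q}$. I would expand $\E[f(\bs{W}') - f(\bs{W})]$ by Taylor's theorem to third order, substitute the exchangeable pair, and use the linear regression identity to cancel first-order terms against $\E[\bs{W}^\top \nabla f(\bs{W})]$, leaving only second-order remainder terms driven by $\E \|\bs{W}' - \bs{W}\|_2^2$ and third-order remainders driven by $\E \|\bs{W}' - \bs{W}\|_2^3$. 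Both of these telescope into weighted sums of $\|\bs{a}(i,j)\|_2^3$ after using $\lambda = 2/N$.

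Balancing the smoothing error $d^{1/4} t$ against the Stein-side contribution $d^{1/2} t^{-1} \cdot N^{-1} \sum_{i,j} \|\bs{a}(i,j)\|_2^3$ by choosing $t$ proportional to $d^{1/8} \bigl(N^{-1} \sum_{i,j} \|\bs{a}(i,j)\|_2^3\bigr)^{1/2}$ yields the claimed bound $C d^{1/4} N^{-1} \sum_{i,j} \|\bs{a}(i,j)\|_2^3$. A subtlety is that a single application of the smoothing inequality is in general insufficient to obtain the sharp $d^{1/4}$ rate, so one would iterate the argument, using the bound at scale $t$ to upgrade a weaker a priori estimate, in the style of induction over smoothing levels introduced in the univariate Stein literature.

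The main obstacle I expect is precisely the smoothing step: handling the boundary of $\mathcal{Q}$ with the correct dimension dependence. Naively, the Gaussian measure of an $\varepsilon$-neighborhood of the boundary of a convex set scales like $d^{1/2} \varepsilon$, which when combined with the $d^{1/2} t^{-1}$ Hessian bound would only give a $d^{1/2}$ final factor. Obtaining the improved $d^{1/4}$ requires the sharper isoperimetric-type estimate on Gaussian measure of convex shells together with an induction or bootstrapping that transfers an a priori Kolmogorov-like bound into the final convex-set bound — this is the technical core of Raič's argument and the place where most of the difficulty would concentrate.
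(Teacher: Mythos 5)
The first thing to note is that the paper does not prove this lemma at all: it is quoted from \citet{R15} as an unpublished workshop result, the authors state explicitly that they are not able to find a formal proof of it, and the only conclusion that depends on it (Theorem~1(ii) of the main text) is stated conditionally (``if the result in \citet{R15} holds''). The paper's actual self-contained contribution is the weaker bound $\Delta_n \le 174\gamma_n + 7\gamma_n^{1/3}$ of Theorem~1(iii), obtained by an entirely different route: the H\'ajek coupling between simple random sampling and Bernoulli sampling, the multivariate Berry--Esseen bound for sums of independent vectors from \citet{R19}, and the Ball--Bentkus--Nazarov estimate that the Gaussian measure of an $\varepsilon$-shell of a convex set is at most $4d^{1/4}\varepsilon$. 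So you are attempting something strictly harder than anything the paper does, and your proposal has to be judged as an independent proof of Rai\v{c}'s claim.

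As such an attempt it has a genuine gap, and you have in fact located it yourself. The exchangeable-pair setup is fine (the linear regression identity with $\lambda = 2/N$ follows from the row-sum constraint in \eqref{eq:aij_constraint}), but the quantitative conclusion you draw from it is not: balancing a smoothing error of order $d^{1/4}t$ against a Stein-side term of order $d^{1/2}t^{-1}\beta$, with $\beta = N^{-1}\sum_{i,j}\|\bs{a}(i,j)\|_2^3$, gives an error of order $d^{3/8}\beta^{1/2}$ at the optimal $t$, not the claimed $Cd^{1/4}\beta$; a single application of the smoothing inequality can never produce a rate linear in $\beta$. Everything therefore hinges on the recursive bootstrapping step you mention only in passing (iterating the smoothing inequality to upgrade an a priori bound), together with the sharp $d^{1/4}$ convex-shell estimate; that recursion is precisely the content of Rai\v{c}'s and Bentkus's arguments and is where all of the difficulty lives. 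Deferring it to ``the technical core of Rai\v{c}'s argument'' means the proof is not actually supplied, so the lemma remains, both in your write-up and in the paper, an externally asserted result rather than a proved one. If you want an unconditional statement at this level of effort, the paper's coupling route (Lemmas in Appendix~A1) is the realistic alternative, at the cost of the extra $\gamma_n^{1/3}$ term.
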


\begin{proof}[\bf Proof of Theorem \ref {thm:berry_esseen_clt_full}(ii)]
	Theorem \ref {thm:berry_esseen_clt_full}(ii) follows from Lemma \ref{lemma:perm_clt_vary_dim}, by almost the same logic as the proof of Theorem \ref {thm:berry_esseen_clt_full}(i). Therefore, we omit its proof here. 
\end{proof}

\subsection{Proof of Theorem \ref{thm:berry_esseen_clt_full}(iii) based on H\'ajek coupling}

\subsubsection{Technical lemmas}

To prove Theorem \ref{thm:berry_esseen_clt_full}(iii), we need the following eight lemmas. 

\begin{lemma}\label{lemma:couple_srs_bs}
	Consider a finite population $\{\bs{u}_1, \bs{u}_2, \ldots, \bs{u}_N\}$ for $N$ units, 
	where $\bs{u}_i \in \mathbb{R}^d$ for all $i$. 
	Let $\bar{\bs{u}} = N^{-1} \sum_{i=1}^N \bs{u}_i$ denote the finite population average. 
	There must exist a pair of random vectors $\bs{Z}$ and $\bs{T}$ in $\{0,1\}^N$ such that 
	\begin{itemize}
		\item[(i)] $\bs{Z} = (Z_1, Z_2, \ldots, Z_N)$ is an indicator vector for a simple random sample of size $m$, i.e., the probability that $\bs{Z}$ takes a particular value $\bs{z} = (z_1, \ldots, z_N)\in \{0,1\}^N$ is $m!(N-m)!/N!$ if $\sum_{i=1}^N z_i = m$ and zero otherwise; 
		\item[(ii)] $\bs{T} = (T_1, T_2, \ldots, T_N) \in \{0, 1\}^N$ is an indicator vector for a Bernoulli random sample with equal probability $m/N$ for all units, i.e., $T_i \overset{i.i.d.}{\sim} \text{Bern}(m/N)$;
		\item[(iii)] 
		the covariances for $\bs{A} \equiv \sum_{i=1}^N Z_i \bs{u}_i$, $\bs{B} \equiv \sum_{i=1}^N T_i ( \bs{u}_i - \bar{\bs{u}} ) + m  \bar{\bs{u}}$ and their difference satisfy 
		$\cov(\bs{B}) = 
		(1-N^{-1}) \cdot \cov(\bs{A})$ and 
		\begin{align*}
			\cov^{-1/2}(\bs{B}) \cdot \E \big\{ (\bs{B} - \bs{A}) (\bs{B} - \bs{A})^\top \big\} 
			\cdot 
			\cov^{-1/2}(\bs{B}) 
			\le \sqrt{\frac{1}{m} + \frac{1}{N-m}} \cdot \bs{I}_d. 
		\end{align*}
	\end{itemize}
\end{lemma}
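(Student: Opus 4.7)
The plan is to prove the lemma by exhibiting the classical H\'ajek coupling between Bernoulli sampling and simple random sampling, and then bounding the covariance of $\bs{B}-\bs{A}$ directly from that construction.

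\textbf{Construction and marginals.} First I would draw $\bs{T}\in\{0,1\}^N$ with $T_i$ i.i.d.\ $\mathrm{Bern}(m/N)$, set $M=\sum_{i=1}^N T_i$ and $D=M-m$, and then build $\bs{Z}$ from $\bs{T}$ by an add/remove rule: if $D\ge 0$, pick a uniformly random size-$D$ subset $R\subset\{i:T_i=1\}$ and set $Z_i=T_i-\mathbf{1}\{i\in R\}$; if $D<0$, pick a uniformly random size-$|D|$ subset $R\subset\{i:T_i=0\}$ and set $Z_i=T_i+\mathbf{1}\{i\in R\}$. Then (ii) is immediate from the construction of $\bs{T}$; for (i), the construction is invariant under permutations of $[N]$ and $\sum_i Z_i=m$ by design, so exchangeability forces $\bs{Z}$ to be uniform on the size-$m$ subsets.

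\textbf{Covariances and coupling error.} Using $\var(Z_i)=\tfrac{m(N-m)}{N^2}$ and $\cov(Z_i,Z_j)=-\tfrac{m(N-m)}{N^2(N-1)}$, a direct calculation gives $\cov(\bs{A})=\tfrac{m(N-m)}{N}\bs{S}^2$; independence of the $T_i$'s yields $\cov(\bs{B})=\tfrac{m(N-m)(N-1)}{N^2}\bs{S}^2=(1-N^{-1})\cov(\bs{A})$, which is the first half of (iii). For the coupling error, a telescoping argument shows $\bs{B}-\bs{A}=\mathrm{sgn}(D)\sum_{i\in R}(\bs{u}_i-\bar{\bs{u}})$. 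Conditioning on $M$, the set $\{i:T_i=1\}$ is uniform on size-$M$ subsets by exchangeability, so conditional on $|D|=d$, the subsampled set $R$ is a uniform random size-$d$ subset of $[N]$. Since $\sum_i(\bs{u}_i-\bar{\bs{u}})=\mathbf{0}$, the SRS variance formula then yields
\[
\E\bigl[(\bs{B}-\bs{A})(\bs{B}-\bs{A})^\top \,\big|\, |D|=d\bigr]=\frac{d(N-d)}{N}\bs{S}^2,
\]
so that $\E[(\bs{B}-\bs{A})(\bs{B}-\bs{A})^\top]=\E[|D|(N-|D|)/N]\,\bs{S}^2$.

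\textbf{Final bound and main obstacle.} Using the identity $\E[|D|(N-|D|)/N]=\E|D|-\var(M)/N$ together with $\E|D|\le\sqrt{\var M}=\sqrt{m(N-m)/N}$ by Cauchy--Schwarz, and substituting $\bs{S}^2=N^2(N-1)^{-1}(m(N-m))^{-1}\cov(\bs{B})$, sandwiching by $\cov^{-1/2}(\bs{B})$ produces a matrix inequality of the form $c\sqrt{1/m+1/(N-m)}\,\bs{I}_d$. The main technical obstacle will be tightening the constant $c$ down to $1$: the naive Cauchy--Schwarz step above leaves a spurious factor of $N/(N-1)$ arising from the $\cov(\bs{B})$-vs-$\cov(\bs{A})$ discrepancy, and removing it requires either carefully absorbing the $-\var(M)/N$ correction into the leading term or replacing Cauchy--Schwarz with a sharper estimate of $\E|D|$ for the binomial (for instance the $\sqrt{2/\pi}$ Gaussian-regime improvement, combined with elementary bounds when $m$ or $N-m$ is small).
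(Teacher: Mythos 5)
Your construction and the computation up through the exact conditional second moment $\E\{(\bs{B}-\bs{A})(\bs{B}-\bs{A})^\top \mid |D|=d\}=\frac{d(N-d)}{N}\bs{S}^2$ coincide with the paper's proof, which uses the same H\'ajek coupling and the same observation that, conditionally, $\bs{B}-\bs{A}$ is a (signed) simple-random-sample total from the centered population. But the gap you flag at the end is genuine as the argument stands: with the identity $\E\{|D|(N-|D|)/N\}=\E|D|-\var(\sum_i T_i)/N$ and Cauchy--Schwarz, sandwiching by $\cov^{-1/2}(\bs{B})$ gives the bound $\big\{\tfrac{N}{N-1}\sqrt{1/m+1/(N-m)}-\tfrac{1}{N-1}\big\}\bs{I}_d$, and absorbing the $-\tfrac{1}{N-1}$ into the leading term to recover constant $1$ requires $\sqrt{1/m+1/(N-m)}\le 1$, i.e.\ $m(N-m)\ge N$. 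This fails whenever $m(N-m)<N$ (e.g.\ $m=1$ or $m=N-1$), and your alternative remedy via a sharper mean-absolute-deviation bound for the binomial would need case-by-case estimates in exactly those extreme regimes. So the proposal does not close as written.

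The missing idea is a one-line replacement, and it is what the paper actually does: instead of expanding $|D|(N-|D|)/N=|D|-D^2/N$, use the pointwise bound $|D|(N-|D|)\le (N-1)|D|$, which holds because $N-|D|\le N-1$ whenever $|D|\ge 1$ and the product vanishes when $|D|=0$. Writing $\bs{\Sigma}_0=\sum_{i=1}^N(\bs{u}_i-\bar{\bs{u}})(\bs{u}_i-\bar{\bs{u}})^\top=(N-1)\bs{S}^2$ and $f=m/N$, this yields $\E\{(\bs{B}-\bs{A})(\bs{B}-\bs{A})^\top\}\le N^{-1}\E|D|\cdot\bs{\Sigma}_0\le N^{-1}\sqrt{Nf(1-f)}\cdot\bs{\Sigma}_0$, while $\cov(\bs{B})=f(1-f)\bs{\Sigma}_0$ exactly; the sandwich then gives $\{Nf(1-f)\}^{-1/2}\bs{I}_d=\sqrt{1/m+1/(N-m)}\,\bs{I}_d$ with constant exactly $1$ and no spurious $N/(N-1)$ factor, uniformly in $m$ and $N$. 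With that substitution your argument is complete and is essentially identical to the paper's.
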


\begin{lemma}[\citet{R19}]\label{lemma:ind_berry}
	Let $\bs{\xi}_1, \ldots, \bs{\xi}_N$ be $N$ independent $d$-dimensional random vectors, satisfying 
	$\E \bs{\xi}_i = \bs{0}$ for all $1\le i \le N$ and 
	$\sum_{i=1}^N \cov(\bs{\xi}_i)= \bs{I}_d$, 
	and $\bs{\varepsilon} \sim \mathcal{N}(\bs{0}, \bs{I}_d)$ be a $d$-dimensional standard Gaussian random vector. 
	Define $\bs{W} = \sum_{i=1}^N \bs{\xi}_i$. 
	Then for any measurable convex set $\mathcal{Q} \subset \mathbb{R}^d$, 
	\begin{align*}
		\left|
		\PP(\bs{W} \in \mathcal{Q}) - \PP(\bs{\varepsilon} \in \mathcal{Q})
		\right|
		\le 
		58 d^{1/4} \sum_{i=1}^N \E \|\bs{\xi}_i\|_2^3. 
	\end{align*}
\end{lemma}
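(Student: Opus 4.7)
The plan is to prove this multivariate Berry--Esseen bound for independent sums via Stein's method combined with a smoothing argument, following the strategy developed by Bentkus and refined by Rai\v{c}. Since the bound features the crucial $d^{1/4}$ rate, the heart of the argument will be carefully balancing Stein-type third-moment estimates against Gaussian measures of boundary tubes of convex sets.

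First, I would set up Stein's method for the standard multivariate Gaussian. The law $\mathcal{N}(\bs{0}, \bs{I}_d)$ is characterized by $\E[\Delta f(\bs{\varepsilon}) - \bs{\varepsilon}^\top \nabla f(\bs{\varepsilon})] = 0$ for sufficiently smooth $f$. For a test function $h$, the Stein equation $\Delta f(x) - x^\top \nabla f(x) = h(x) - \E h(\bs{\varepsilon})$ has an explicit solution $f_h$ via the Ornstein--Uhlenbeck semigroup, for which standard smoothing bounds give $\|\nabla^k f_h\|_\infty \le C \|\nabla^{k-1} h\|_\infty$ for $k \ge 1$ when $h$ is regular enough.

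Second, I would apply Stein's method to $\bs{W} = \sum_i \bs{\xi}_i$ using independence. Writing $\bs{W}^{(i)} = \bs{W} - \bs{\xi}_i$, Taylor expansion gives $\nabla f_h(\bs{W}) = \nabla f_h(\bs{W}^{(i)}) + \nabla^2 f_h(\bs{W}^{(i)}) \bs{\xi}_i + R_i$ with $|R_i| \le C \|\nabla^3 f_h\|_\infty \|\bs{\xi}_i\|_2^2$. Using $\E \bs{\xi}_i = \bs{0}$ and independence of $\bs{\xi}_i$ from $\bs{W}^{(i)}$,
\begin{align*}
	\E\bigl[ \bs{\xi}_i^\top \nabla f_h(\bs{W}) \bigr] = \E \tr\bigl( \cov(\bs{\xi}_i) \nabla^2 f_h(\bs{W}^{(i)}) \bigr) + O\bigl( \|\nabla^3 f_h\|_\infty \E \|\bs{\xi}_i\|_2^3 \bigr).
\end{align*}
Summing over $i$, invoking $\sum_i \cov(\bs{\xi}_i) = \bs{I}_d$, and Taylor expanding a second time to pass from $\bs{W}^{(i)}$ back to $\bs{W}$ on the trace side, the leading terms reassemble to $\E \Delta f_h(\bs{W})$ and cancel, leaving
\begin{align*}
	\bigl| \E h(\bs{W}) - \E h(\bs{\varepsilon}) \bigr| \le C \|\nabla^3 f_h\|_\infty \sum_{i=1}^N \E \|\bs{\xi}_i\|_2^3.
\end{align*}

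Third, I would specialize to $h = \kron_{\mathcal{Q}}$ by a smoothing argument. Convolving with a Gaussian of scale $\eta > 0$ yields $h_\eta$ with $\|\nabla h_\eta\|_\infty \le C/\eta$, hence $\|\nabla^3 f_{h_\eta}\|_\infty \le C/\eta$, and the Stein bound contributes $C \eta^{-1} \sum_i \E \|\bs{\xi}_i\|_2^3$. The smoothing error on the Gaussian side is controlled by Nazarov's inequality: the standard Gaussian measure of the $\eta$-boundary tube of any convex set in $\mathbb{R}^d$ is at most $C d^{1/4} \eta$. Optimizing $\eta \asymp \bigl(\sum_i \E\|\bs{\xi}_i\|_2^3 \cdot d^{-1/4}\bigr)^{1/2}$ balances the two errors and produces the $d^{1/4} \sum_i \E \|\bs{\xi}_i\|_2^3$ rate; chasing the constants through the Stein solution bounds and Nazarov's inequality eventually yields the explicit constant $58$.

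The main obstacle will be handling the smoothing error on the $\bs{W}$ side: a priori we do not know that $\bs{W}$ assigns only $O(d^{1/4}\eta)$ probability to the boundary tube of $\mathcal{Q}$, yet this is needed to bound $|\E h(\bs{W}) - \E h_\eta(\bs{W})|$. Rai\v{c}'s key technical device is a recursive bootstrap: one first establishes a rougher bound, then uses it together with Nazarov's inequality applied to $\bs{\varepsilon}$ to control the boundary probability for $\bs{W}$, and iterates to the sharp $d^{1/4}$ rate with the explicit constant. Implementing this recursion carefully, while tracking dimension-dependent prefactors in every Stein-solution estimate, is the delicate part of the argument; the Stein-smoothing-Nazarov pipeline above captures the correct shape of the bound without the finely tuned constant.
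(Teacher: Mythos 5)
The paper does not prove this lemma at all: it is stated as a quoted external result and the ``proof'' is a one-line citation of Theorem~1.1 of \citet{R19}, whose explicit constant $58$ and $d^{1/4}$ rate are taken as a black box. Your outline does correctly identify the architecture of the known proofs of such bounds (a Stein/Lindeberg third-moment estimate, smoothing of indicators of convex sets, the Gaussian perimeter bound $\le 4d^{1/4}$ for convex sets that the paper itself records as Lemma on the maximum surface area, and a self-improving recursion), so as a map of the literature it is sound.

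However, as a proof it has a concrete gap at your third step. Balancing a Stein error of order $\eta^{-1}\sum_i \E\|\bs{\xi}_i\|_2^3$ against a smoothing error of order $d^{1/4}\eta$ and optimizing over $\eta$ yields a bound of order $\bigl(d^{1/4}\sum_i \E\|\bs{\xi}_i\|_2^3\bigr)^{1/2}$, i.e.\ $d^{1/8}\beta^{1/2}$ with $\beta=\sum_i\E\|\bs{\xi}_i\|_2^3$ --- a square-root rate, not the linear rate $d^{1/4}\beta$ claimed in the lemma. The linear dependence on $\beta$ is precisely what the recursive bootstrap is for: one must show that the third-derivative contribution of the Stein solution is only charged on an $\eta$-neighborhood of $\partial\mathcal{Q}$, bound the probability that $\bs{W}^{(i)}$ lands in that tube by $Cd^{1/4}\eta$ plus the very quantity being estimated, and close the recursion. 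You acknowledge this device in your last paragraph, but it is not an optional refinement of the constant --- without it the argument as written does not reach the stated rate at all. In addition, the multivariate Stein-solution bounds you invoke ($\|\nabla^3 f_{h_\eta}\|_\infty \lesssim \eta^{-1}$) generally carry dimension-dependent or logarithmic factors that must be tracked, and producing the explicit constant $58$ requires the full quantitative bookkeeping of \citet{R19}. Given that the paper treats this as a citation, the appropriate resolution is either to do the same or to carry out the recursion in full; the sketch in its current form would not compile into a valid proof.
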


\begin{lemma}\label{lemma:bern_sample_berry}
	Let $\{\bs{u}_1, \bs{u}_2, \ldots, \bs{u}_N\}$ be a finite population of $N$ units, 
	with $\bs{u}_i \in \mathbb{R}^d$ for all $i$, 
	and 
	$\bs{T} = (T_1, T_2, \ldots, T_N) \in \{0, 1\}^N$ be an indicator vector for a Bernoulli random sample with equal probability $f \equiv m/N$ for all units, i.e., $T_i \overset{i.i.d.}{\sim} \text{Bern}(f)$. 
	Define $\bar{\bs{u}} = N^{-1} \sum_{i=1}^N \bs{u}_i$, 
	$\bs{S}^2 = (N-1)^{-1} \sum_{i=1}^N (\bs{u}_i - \bar{\bs{u}})(\bs{u}_i - \bar{\bs{u}})^\top$, 
	and $\bs{B} \equiv \sum_{i=1}^N T_i ( \bs{u}_i - \bar{\bs{u}} ) + m  \bar{\bs{u}}$. 
	Let $\bs{\varepsilon}$ be a $d$-dimensional standard Gaussian random vector. 
	Then for any $N\ge 2$, $d\ge 1$, $f\in (0,1)$, and any finite population $\{\bs{u}_i: 1\le i \le N\}$ with nonsingular finite population covariance $\bs{S}^2$, 
	we have, for any measurable convex set $\mathcal{Q} \subset \mathbb{R}^d$, 
	\begin{align*}
		\big|
		\PP\big\{ \cov^{-1/2}(\bs{B}) \cdot (\bs{B} - \E\bs{B})  \in \mathcal{Q} \big\} - \PP(\bs{\varepsilon} \in \mathcal{Q})
		\big|
		& \le 
		\frac{165}{\sqrt{Nf(1-f)}} \frac{d^{1/4}}{N}
		\sum_{i=1}^N \big\| \bs{S}^{-1} (\bs{u}_i - \bar{\bs{u}}) \big\|_2^3. 
	\end{align*}
\end{lemma}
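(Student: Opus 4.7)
\textbf{Proof plan for Lemma \ref{lemma:bern_sample_berry}.} The natural strategy is to reduce the claim to the multivariate Berry--Esseen bound for sums of independent random vectors (Lemma \ref{lemma:ind_berry}), since $\bs{B}$ is literally a sum of $N$ independent summands once we peel off the deterministic shift $m\bar{\bs{u}}$. Concretely, I would write
\begin{align*}
\bs{B} - \E\bs{B} = \sum_{i=1}^N (T_i - f)(\bs{u}_i - \bar{\bs{u}}),
\end{align*}
observe that the $N$ summands are independent mean-zero vectors, and compute $\cov(\bs{B}) = f(1-f)\sum_{i=1}^N(\bs{u}_i - \bar{\bs{u}})(\bs{u}_i - \bar{\bs{u}})^\top = f(1-f)(N-1)\bs{S}^2$, which is nonsingular by assumption.

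Next, define the standardized summands $\bs{\xi}_i = [f(1-f)(N-1)]^{-1/2}(T_i - f)\bs{S}^{-1}(\bs{u}_i - \bar{\bs{u}})$. A quick check shows $\E\bs{\xi}_i = \bs{0}$, $\sum_{i=1}^N \cov(\bs{\xi}_i) = \bs{I}_d$, and $\sum_{i=1}^N \bs{\xi}_i = \cov^{-1/2}(\bs{B})(\bs{B} - \E\bs{B})$, so the hypotheses of Lemma \ref{lemma:ind_berry} are met. The only nontrivial bookkeeping is the third moment: using the Bernoulli identity $\E|T_i - f|^3 = f(1-f)[(1-f)^2 + f^2] \le f(1-f)$, one gets
\begin{align*}
\sum_{i=1}^N \E\|\bs{\xi}_i\|_2^3 \le \frac{1}{[f(1-f)(N-1)]^{1/2}(N-1)} \sum_{i=1}^N \big\|\bs{S}^{-1}(\bs{u}_i - \bar{\bs{u}})\big\|_2^3.
\end{align*}

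Plugging into Lemma \ref{lemma:ind_berry} produces a bound with leading constant $58$ and denominator $(N-1)^{3/2}[f(1-f)]^{1/2}$ rather than the $N\sqrt{Nf(1-f)}$ stated in the lemma. To convert, I would use the elementary inequality $(N/(N-1))^{3/2} \le 2^{3/2} < 165/58$, valid for every $N \ge 2$ (the worst case is $N=2$), which gives $58/(N-1)^{3/2} \le 165/N^{3/2}$ and hence the advertised bound with constant $165$. This constant-chasing step is really the only part that needs care; the rest is mechanical. No obstacle is expected since all the moment computations for Bernoulli variables are elementary and the standardization is dictated by the form of $\cov(\bs{B})$.
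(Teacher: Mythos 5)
Your proposal is correct and follows essentially the same route as the paper's proof: decompose $\bs{B}-\E\bs{B}=\sum_{i=1}^N(T_i-f)(\bs{u}_i-\bar{\bs{u}})$, standardize using $\cov(\bs{B})=f(1-f)(N-1)\bs{S}^2$, apply the independent-summands Berry--Esseen bound (Lemma \ref{lemma:ind_berry}), and control the third moments via $\E|T_i-f|^3=f(1-f)\{f^2+(1-f)^2\}\le f(1-f)$ together with $(N/(N-1))^{3/2}\le 2^{3/2}$ to absorb the constants into $58\cdot 2^{3/2}\le 165$. No gaps.
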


\begin{lemma}\label{lemma:convex_set}
	Let $\mathcal{Q} \subset \mathbb{R}^d$ be a convex set in $\mathbb{R}^d$.  
	\begin{itemize}
		\item[(i)] For any $c>0$, 
		$\overline{\mathcal{Q}}_c \equiv 
		\{\bs{x} \in \mathbb{R}^d : \exists \bs{x}' \in \mathcal{Q} \ \st \ \|\bs{x} - \bs{x}'\|_2 < c\}$ is a convex set in $\mathbb{R}^d$. 
		\item[(ii)] For any $c>0$, 
		$\underline{\mathcal{Q}}_c \equiv 
		\{\bs{x} \in \mathbb{R}^d: \|\bs{x}' - \bs{x}\|_2 \ge c \ \forall \bs{x}' \not\in \mathcal{Q}\}$ is a convex set in $\mathbb{R}^d$. 
		\item[(iii)] 
		For any matrix $\bs{\Delta} \in \mathbb{R}^{d \times d}$, 
		$\tilde{\mathcal{Q}} \equiv \{\bs{x} \in \mathbb{R}^d: \bs{\Delta} \bs{x} \in \mathcal{Q} \}$ is a convex set in $\mathbb{R}^d$. 
	\end{itemize}
\end{lemma}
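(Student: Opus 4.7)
The statement to prove, Lemma~\ref{lemma:convex_set}, collects three elementary closure properties of convex sets that will be used later in the H\'ajek-coupling argument for Theorem~\ref{thm:berry_esseen_clt_full}(iii): open $c$-thickenings, $c$-interior shrinkings with respect to the complement, and preimages under linear maps all preserve convexity. None of the three parts looks genuinely difficult; each reduces to checking the defining inequality on an arbitrary convex combination.

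For part (i), my plan is to take any $\bs{x}_1, \bs{x}_2 \in \overline{\mathcal{Q}}_c$ and $\lambda \in [0,1]$, pick witnesses $\bs{x}_1', \bs{x}_2' \in \mathcal{Q}$ with $\|\bs{x}_i - \bs{x}_i'\|_2 < c$, and use $\lambda \bs{x}_1' + (1-\lambda) \bs{x}_2'$ as the witness for $\lambda \bs{x}_1 + (1-\lambda) \bs{x}_2$. The witness lies in $\mathcal{Q}$ by convexity of $\mathcal{Q}$, and the triangle inequality gives $\|\lambda \bs{x}_1 + (1-\lambda) \bs{x}_2 - \lambda \bs{x}_1' - (1-\lambda) \bs{x}_2'\|_2 \le \lambda \|\bs{x}_1 - \bs{x}_1'\|_2 + (1-\lambda)\|\bs{x}_2 - \bs{x}_2'\|_2 < c$.

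For part (ii), my plan is to first rewrite the definition in a more convenient form: $\underline{\mathcal{Q}}_c = \{\bs{x} : B(\bs{x}, c) \subseteq \mathcal{Q}\}$, where $B(\bs{x}, c)$ is the open Euclidean ball. Given $\bs{x}_1, \bs{x}_2 \in \underline{\mathcal{Q}}_c$ and $\lambda \in [0,1]$, I would show $B(\lambda \bs{x}_1 + (1-\lambda) \bs{x}_2, c) \subseteq \mathcal{Q}$ by writing any perturbation as $\lambda \bs{x}_1 + (1-\lambda) \bs{x}_2 + \bs{u}$ with $\|\bs{u}\|_2 < c$, noting that $\bs{x}_1 + \bs{u} \in \mathcal{Q}$ and $\bs{x}_2 + \bs{u} \in \mathcal{Q}$ by hypothesis, and applying convexity of $\mathcal{Q}$ to their convex combination. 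Part (iii) is immediate: for $\bs{x}_1, \bs{x}_2 \in \tilde{\mathcal{Q}}$ and $\lambda \in [0,1]$, linearity gives $\bs{\Delta}(\lambda \bs{x}_1 + (1-\lambda) \bs{x}_2) = \lambda (\bs{\Delta} \bs{x}_1) + (1-\lambda)(\bs{\Delta} \bs{x}_2) \in \mathcal{Q}$ by convexity of $\mathcal{Q}$, so the preimage is convex.

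The only place where one might slip is in part (ii), where the strict-versus-weak inequality in the definition must be handled carefully: the condition $\|\bs{x}' - \bs{x}\|_2 \ge c$ for all $\bs{x}' \notin \mathcal{Q}$ is equivalent to saying the open ball $B(\bs{x}, c)$ lies inside $\mathcal{Q}$ (not the closed ball), which is exactly the form that makes the convex-combination argument go through. Beyond that bookkeeping point, there is no real obstacle.
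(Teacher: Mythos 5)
Your proposal is correct and follows essentially the same route as the paper's proof: part (i) via the triangle inequality on convex combinations of witnesses, part (ii) via the observation that translating both endpoints by a common perturbation vector keeps them in $\mathcal{Q}$ (the paper phrases this contrapositively, by contradiction, with $\bs{x}' = \bs{x} + \bs{z}' - \bs{z}$ and $\bs{y}' = \bs{y} + \bs{z}' - \bs{z}$, which is exactly your open-ball argument run in reverse), and part (iii) by linearity. Your reformulation $\underline{\mathcal{Q}}_c = \{\bs{x} : B(\bs{x},c) \subseteq \mathcal{Q}\}$ and the attention to the strict-versus-weak inequality are both correct and, if anything, make the argument slightly cleaner than the paper's.
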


\begin{lemma}\label{lemma:set_incre_decre}
	For any set $\mathcal{Q} \subset \mathbb{R}^d$ and any $c>0$, define 
	\begin{align*}
		\overline{\mathcal{Q}}_c \equiv 
		\{\bs{x} \in \mathbb{R}^d : \exists \bs{x}' \in \mathcal{Q} \ \st \ \|\bs{x} - \bs{x}'\|_2 < c\}, 
		\quad 
		\underline{\mathcal{Q}}_c \equiv 
		\{\bs{x} \in \mathbb{R}^d: \|\bs{x}' - \bs{x}\|_2 \ge c \ \forall \bs{x}' \not\in \mathcal{Q}\}. 
	\end{align*}
	\begin{itemize}
		\item[(i)] For any set $\mathcal{Q} \subset \mathbb{R}^d$ and any positive $c, h$, 
		$\overline{(\overline{\mathcal{Q}}_c)}_h = \overline{\mathcal{Q}}_{c+h}$. 
		\item[(ii)] For any set $\mathcal{Q} \subset \mathbb{R}^d$ and any $c>0$, 
		$(\underline{\mathcal{Q}}_c)^\complement = \overline{\mathcal{B}}_c$, where $\mathcal{B} = \mathcal{Q}^\complement$.
		\item[(iii)] For any set $\mathcal{Q} \subset \mathbb{R}^d$ and any positive $c, h$, 
		$\underline{(\underline{\mathcal{Q}}_c)}_h = \underline{\mathcal{Q}}_{c+h}$. 
	\end{itemize}
\end{lemma}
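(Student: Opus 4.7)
The plan is to prove the three parts by unwinding definitions, with part~(i) following from a direct triangle-inequality computation and parts~(ii)--(iii) then being deduced without further geometry by taking complements. None of the three parts should present a real obstacle; the only subtlety is keeping track of the strict versus non-strict inequalities in the definitions of $\overline{\mathcal{Q}}_c$ and $\underline{\mathcal{Q}}_c$.

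For part~(i), I would prove the two inclusions separately. The forward inclusion $\overline{(\overline{\mathcal{Q}}_c)}_h \subseteq \overline{\mathcal{Q}}_{c+h}$ is immediate from the triangle inequality: if $\|\bs{x}-\bs{y}\|_2 < h$ and $\|\bs{y}-\bs{x}'\|_2 < c$ with $\bs{x}'\in\mathcal{Q}$, then $\|\bs{x}-\bs{x}'\|_2 < c+h$. For the reverse inclusion, given $\bs{x}\in\overline{\mathcal{Q}}_{c+h}$ I would pick $\bs{x}'\in\mathcal{Q}$ with $t\equiv \|\bs{x}-\bs{x}'\|_2 < c+h$ and place $\bs{y}$ on the segment from $\bs{x}'$ to $\bs{x}$ at distance $s\in(\max\{0,t-h\},\min\{c,t\})$ from $\bs{x}'$; the interval is nonempty precisely because $t<c+h$ forces $t-h<c$, which gives the desired witness $\bs{y}\in\overline{\mathcal{Q}}_c$ with $\|\bs{x}-\bs{y}\|_2<h$.

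Part~(ii) is purely definitional. By negating the universal quantifier in the definition of $\underline{\mathcal{Q}}_c$, we have $\bs{x}\in(\underline{\mathcal{Q}}_c)^\complement$ iff there exists $\bs{x}'\notin\mathcal{Q}$, i.e., $\bs{x}'\in\mathcal{B}$, with $\|\bs{x}'-\bs{x}\|_2<c$, which is exactly the definition of $\overline{\mathcal{B}}_c$.

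For part~(iii), I would avoid repeating any geometric argument by bootstrapping from the previous two parts. Writing $\mathcal{B}=\mathcal{Q}^\complement$, applying (ii) once gives $(\underline{\mathcal{Q}}_c)^\complement = \overline{\mathcal{B}}_c$; applying the identity from (i) to the set $\overline{\mathcal{B}}_c$ gives $\overline{(\overline{\mathcal{B}}_c)}_h = \overline{\mathcal{B}}_{c+h}$; applying (ii) in the other direction, namely $\overline{\mathcal{B}}_{c+h} = (\underline{\mathcal{Q}}_{c+h})^\complement$, and combining these with the tautology $(\underline{(\underline{\mathcal{Q}}_c)}_h)^\complement = \overline{((\underline{\mathcal{Q}}_c)^\complement)}_h$ (which is (ii) applied to the set $\underline{\mathcal{Q}}_c$) yields $(\underline{(\underline{\mathcal{Q}}_c)}_h)^\complement = (\underline{\mathcal{Q}}_{c+h})^\complement$, so that the conclusion follows by taking complements once more. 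The only place where care is needed is matching the strict inequality in the definition of $\overline{\,\cdot\,}_c$ with the strict negation of the non-strict inequality in the definition of $\underline{\,\cdot\,}_c$; this is what makes (ii) hold on the nose with equal constants.
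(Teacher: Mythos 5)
Your proposal is correct and follows essentially the same route as the paper: part (i) by the triangle inequality plus an interpolation point on the segment (the paper fixes $\lambda=c/(c+h)$ where you allow any $s$ in a nonempty interval), part (ii) by negating the quantifier, and part (iii) by bootstrapping from (i) and (ii) via complements exactly as the paper does. The only microscopic gap is the degenerate case $t=\|\bs{x}-\bs{x}'\|_2=0$ in the reverse inclusion of (i), where your interval $(\max\{0,t-h\},\min\{c,t\})$ is empty; there $\bs{x}\in\mathcal{Q}\subseteq\overline{\mathcal{Q}}_c$ directly, so the claim is immediate.
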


\begin{lemma}\label{lemma:maximum_surface}
	Let $\bs{\varepsilon}$ be a $d$-dimensional standard Gaussian random variable, 
	$\phi_d(\cdot)$ be the probability density function of $\bs{\varepsilon}$, 
	and $\cC_d$ be the collection of convex sets in $\R^d$.
	We have that
	\begin{equation*}%
		\sup_{c > 0, \mathcal{Q} \in \cC_d} \frac{\int_{\overline{\mathcal{Q}}_c \setminus \mathcal{Q}} \phi_d (\bs{\varepsilon}) \text{d} \bs{\varepsilon}}{c} \leq 4 d^{\frac{1}{4}}
	\end{equation*}
	and 
	\begin{equation*}%
		\sup_{c > 0, \mathcal{Q} \in \cC_d} \frac{\int_{\mathcal{Q} \setminus \underline{\mathcal{Q}}_{c} } \phi_d (\bs{\varepsilon}) \text{d} \bs{\varepsilon}}{c} \leq 4 d^{\frac{1}{4}}. 
	\end{equation*}
\end{lemma}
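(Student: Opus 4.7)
The plan is to reduce both inequalities to the classical Ball--Nazarov bound on the Gaussian perimeter of convex sets: for any convex set $K\subset\R^d$, the Gaussian surface content
$s(K)\equiv\liminf_{h\downarrow 0} h^{-1}\bigl[P(\overline{K}_h)-P(K)\bigr]$
satisfies $s(K)\le 4 d^{1/4}$, where $P(\cdot)\equiv\int_{\cdot}\phi_d(\bs{\varepsilon})\,\mathrm{d}\bs{\varepsilon}$ denotes the standard Gaussian measure on $\R^d$. The constant $4d^{1/4}$ appearing in the lemma comes precisely from this Gaussian perimeter estimate.

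For the first inequality, I would express the numerator as a Gaussian mass difference and then apply a layer-cake / Minkowski-flow decomposition:
\[
\int_{\overline{\mathcal{Q}}_c\setminus\mathcal{Q}}\phi_d(\bs{\varepsilon})\,\mathrm{d}\bs{\varepsilon}
\;=\; P(\overline{\mathcal{Q}}_c)-P(\mathcal{Q})
\;=\; \int_0^c s(\overline{\mathcal{Q}}_t)\,\mathrm{d}t.
\]
The second equality rests on the dilation semigroup $\overline{(\overline{\mathcal{Q}}_t)}_h=\overline{\mathcal{Q}}_{t+h}$ from Lemma \ref{lemma:set_incre_decre}(i), which exhibits $\{\overline{\mathcal{Q}}_t\}_{t\ge 0}$ as a well-behaved Minkowski flow. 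By Lemma \ref{lemma:convex_set}(i) each $\overline{\mathcal{Q}}_t$ is convex, so $s(\overline{\mathcal{Q}}_t)\le 4 d^{1/4}$ uniformly in $t\in[0,c]$ by the Ball--Nazarov bound, and integrating finishes the first inequality.

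For the second inequality I would use the analogous erosion decomposition
\[
\int_{\mathcal{Q}\setminus\underline{\mathcal{Q}}_c}\phi_d(\bs{\varepsilon})\,\mathrm{d}\bs{\varepsilon}
\;=\; P(\mathcal{Q})-P(\underline{\mathcal{Q}}_c)
\;=\; \int_0^c s(\underline{\mathcal{Q}}_t)\,\mathrm{d}t,
\]
where now it is the erosion semigroup $\underline{(\underline{\mathcal{Q}}_t)}_h=\underline{\mathcal{Q}}_{t+h}$ of Lemma \ref{lemma:set_incre_decre}(iii) that legitimates the layer-cake identity. Each $\underline{\mathcal{Q}}_t$ is convex by Lemma \ref{lemma:convex_set}(ii), so the Ball--Nazarov bound again yields $s(\underline{\mathcal{Q}}_t)\le 4 d^{1/4}$ term by term, and integration closes the argument. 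If $c$ exceeds the inradius of $\mathcal{Q}$ so that $\underline{\mathcal{Q}}_c$ becomes empty, one simply integrates up to the inradius and concludes using $P(\mathcal{Q})\le 1$.

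The main obstacle is making the layer-cake identity fully rigorous for arbitrary (possibly non-smooth) convex sets, because $s(\cdot)$ is defined as a Minkowski lim-inf rather than as a classical derivative. The standard route is to first prove both identities for smooth, strictly convex bodies, where they follow from the divergence theorem applied to $\phi_d$ times the outward unit normal field on the boundary, and then extend to general convex $\mathcal{Q}$ by Hausdorff approximation, using the continuity of $t\mapsto P(\overline{\mathcal{Q}}_t)$ and $t\mapsto P(\underline{\mathcal{Q}}_t)$ together with the stability of convexity under Minkowski dilation and erosion recorded in Lemmas \ref{lemma:convex_set} and \ref{lemma:set_incre_decre}. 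Everything else reduces to routine bookkeeping.
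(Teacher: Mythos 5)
Your proposal is correct in substance, but it takes a more laborious route than the paper, which disposes of this lemma in one line by citing Bentkus (2005), inequalities (1.3)--(1.4) (with pointers to Ball (1993) and Nazarov (2003)): those inequalities \emph{are} the two displayed bounds of the lemma, already in integrated form, so no layer-cake argument is needed. What you do differently is cite only the infinitesimal statement --- Ball's bound $s(K)\le 4d^{1/4}$ on the Gaussian perimeter of a convex set --- and then supply the integration step yourself. That step is where the only real care is required, and you correctly flag it: for a merely monotone function $t\mapsto P(\overline{\mathcal{Q}}_t)$, Lebesgue's theorem gives $P(\overline{\mathcal{Q}}_c)-P(\mathcal{Q})\ \ge\ \int_0^c \frac{d}{dt}P(\overline{\mathcal{Q}}_t)\,\mathrm{d}t$, which is the \emph{wrong} direction, so you must establish absolute continuity before the semigroup identity of Lemma \ref{lemma:set_incre_decre} buys you anything. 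The cleanest repair (cleaner than your smooth-approximation route) is the coarea formula applied to the $1$-Lipschitz distance function $d_{\mathcal{Q}}(x)=\mathrm{dist}(x,\mathcal{Q})$, whose level sets $\{d_{\mathcal{Q}}=t\}$ are exactly $\partial\overline{\mathcal{Q}}_t$, giving $P(\overline{\mathcal{Q}}_c\setminus\mathcal{Q})=\int_0^c\bigl(\int_{\{d_{\mathcal{Q}}=t\}}\phi_d\,\mathrm{d}\mathcal{H}^{d-1}\bigr)\mathrm{d}t$ directly; the erosion case is symmetric via the distance to $\mathcal{Q}^\complement$, and your remark about $c$ exceeding the inradius is fine since the residual set $\bigcap_t\underline{\mathcal{Q}}_t$ is a convex set with empty interior and hence Gaussian measure zero. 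In short: same underlying mathematics (the Ball--Nazarov perimeter bound with constant $4d^{1/4}$), but you reconstruct the integration that the paper's cited reference already performs; your version is more self-contained at the cost of the coarea/absolute-continuity bookkeeping.
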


\begin{lemma}\label{lemma:berry_bound_coupling}
	Let $\bs{B}$ and $\bs{A}$ be two $d$-dimensional random vectors with equal means $\E \bs{B} = \E \bs{A}$
	and nonsingular covariance matrices $\bs{\Sigma}_{\bs{B}}$ and $\bs{\Sigma}_{\bs{A}}$, 
	and $\bs{\varepsilon}$ be a $d$-dimensional standard Gaussian random vector.
	Let $\cC_d$ denote the collection of convex sets in $\R^d$.
	If 
	$\bs{\Sigma}_{\bs{B}} = (1-l)^2 \bs{\Sigma}_{\bs{A}}$ for some $l\in (0,1)$, 
	\begin{align*}
		\sup_{\mathcal{Q} \in \cC_d} 
		\big|
		\PP 
		\big( 
		\bs{\Sigma}_{\bs{B}}^{-1/2} (\bs{B} - \E\bs{B}) \in \mathcal{Q} 
		\big) - \PP(\bs{\varepsilon} \in \mathcal{Q})
		\big| \leq a \quad \text{for some finite $a>0$}
	\end{align*}
	and 
	\begin{align*}
		\bs{\Sigma}_{\bs{B}}^{-1/2} \cdot \E \big\{ (\bs{B} - \bs{A}) (\bs{B} - \bs{A})^\top \big\} 
		\cdot
		\bs{\Sigma}_{\bs{B}}^{-1/2}
		\le b^2 \bs{I}_d 
		\quad \text{for some $b\in (0,1)$,}
	\end{align*}
	then for any positive constants $c$ and $h$, 
	\begin{align*}
		& \quad \ \sup_{\mathcal{Q} \in \cC_d} 
		\big|
		\PP \big( \bs{\Sigma}_{\bs{A}}^{-1/2} (\bs{A} - \E \bs{A} ) \in \mathcal{Q} \big) - \PP(\bs{\varepsilon} \in \mathcal{Q})
		\big| \\
		& \leq 
		4 d^{1/4} (c+h) + 2 d \cdot \exp\left( - \frac{h^2}{2 d l^2}  \right) + a + 
		\PP\big( \| \bs{\Sigma}_{\bs{A}}^{-1/2}(\bs{A} - \bs{B}) \|_2 \ge c \big)
		\\
		& \leq  
		4 d^{1/4} (c+h) + 2 d \cdot \exp\left( - \frac{h^2}{2 d l^2}  \right) + a + \frac{(1-l)^2 b^2 d}{c^2}.
	\end{align*}
\end{lemma}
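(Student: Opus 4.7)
The strategy is to chain three approximations linking $\bs{A}$ to the standard Gaussian $\bs{\varepsilon}$, paying for each with one term of the stated bound. The algebraic starting point, which follows from $\E\bs{A}=\E\bs{B}$ and $\bs{\Sigma}_{\bs{A}}^{-1/2}=(1-l)\bs{\Sigma}_{\bs{B}}^{-1/2}$, is
\[
\tilde{\bs{A}} \equiv \bs{\Sigma}_{\bs{A}}^{-1/2}(\bs{A}-\E\bs{A})
= (1-l)\bigl(\tilde{\bs{B}} - \tilde{\bs{D}}\bigr),
\qquad
\tilde{\bs{B}}=\bs{\Sigma}_{\bs{B}}^{-1/2}(\bs{B}-\E\bs{B}),\
\tilde{\bs{D}}=\bs{\Sigma}_{\bs{B}}^{-1/2}(\bs{B}-\bs{A}).
\]
Thus the plan is: (i) a coupling step that removes $\tilde{\bs{D}}$, paying $(1-l)^2 b^2 d/c^2$; (ii) invocation of the Berry--Esseen hypothesis on $\tilde{\bs{B}}$, paying $a$; and (iii) a rescaling step converting $(1-l)\bs{\varepsilon}$ into $\bs{\varepsilon}$, paying $2d\exp(-h^2/(2dl^2))+4d^{1/4}(c+h)$.

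Concretely, fix any $\mathcal{Q}\in\cC_d$ and aim first at the upper bound. On the good-coupling event $\{\|(1-l)\tilde{\bs{D}}\|_2<c\}$, the inclusion $\tilde{\bs{A}}\in\mathcal{Q}$ forces $(1-l)\tilde{\bs{B}}\in\overline{\mathcal{Q}}_c$; Chebyshev together with $\E[\tilde{\bs{D}}\tilde{\bs{D}}^\top]\preceq b^2\bs{I}_d$ pays the coupling term. Lemma~\ref{lemma:convex_set}(i)(iii) ensures $(1-l)^{-1}\overline{\mathcal{Q}}_c$ is convex, so the Berry--Esseen hypothesis for $\tilde{\bs{B}}$ transfers the probability to $\PP\bigl((1-l)\bs{\varepsilon}\in\overline{\mathcal{Q}}_c\bigr)$ at cost $a$. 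On the further event $\{\|l\bs{\varepsilon}\|_2<h\}$, writing $(1-l)\bs{\varepsilon}=\bs{\varepsilon}-l\bs{\varepsilon}$ gives $\bs{\varepsilon}\in\overline{(\overline{\mathcal{Q}}_c)}_h=\overline{\mathcal{Q}}_{c+h}$ by Lemma~\ref{lemma:set_incre_decre}(i); a coordinatewise union bound yields $\PP(\|\bs{\varepsilon}\|_2\ge h/l)\le 2d\exp(-h^2/(2dl^2))$, and Lemma~\ref{lemma:maximum_surface} bounds $\PP(\bs{\varepsilon}\in \overline{\mathcal{Q}}_{c+h}\setminus\mathcal{Q})$ by $4d^{1/4}(c+h)$. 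The lower bound is the mirror argument with $\underline{\mathcal{Q}}_c$ and the identity $\underline{\mathcal{Q}}_{c+h}=\underline{(\underline{\mathcal{Q}}_c)}_h$ from Lemma~\ref{lemma:set_incre_decre}(iii), using convexity of $\underline{\mathcal{Q}}_c$ via Lemma~\ref{lemma:convex_set}(ii)(iii) and again Lemma~\ref{lemma:maximum_surface}; taking the two directions together gives the claimed bound.

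The work is essentially bookkeeping, and the principal subtlety is preserving convexity throughout so that the Berry--Esseen hypothesis, which applies only to convex sets, remains usable after each enlargement, shrinking, and rescaling; this is precisely what Lemmas~\ref{lemma:convex_set} and \ref{lemma:set_incre_decre} were designed to guarantee. A second point of care, which also explains the appearance of a \emph{single} $a$ on the right-hand side, is that the rescaling error must be quantified on $\bs{\varepsilon}$ rather than on $\tilde{\bs{B}}$: handling $\|l\bs{\varepsilon}\|_2\ge h$ on the Gaussian side gives a clean tail bound and spends the Berry--Esseen budget only once. The free parameters $c$ and $h$ are left for the user to optimize in downstream applications, for instance when deducing Theorem~\ref{thm:berry_esseen_clt_full}(iii) by combining this lemma with the H\'ajek coupling of Lemma~\ref{lemma:couple_srs_bs} and the Bernoulli-sampling Berry--Esseen bound of Lemma~\ref{lemma:bern_sample_berry}.
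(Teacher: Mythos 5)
Your proposal is correct and follows essentially the same route as the paper's proof: the same decomposition $\bs{\Sigma}_{\bs{A}}^{-1/2}(\bs{A}-\E\bs{A}) = (1-l)\bs{\Sigma}_{\bs{B}}^{-1/2}(\bs{B}-\E\bs{B}) + (1-l)\bs{\Sigma}_{\bs{B}}^{-1/2}(\bs{A}-\bs{B})$, the same three-step accounting (Chebyshev for the coupling term, the Berry--Esseen hypothesis applied once after pulling back the enlarged convex set, and the Gaussian tail plus Lemma~\ref{lemma:maximum_surface} for the rescaling), and the same reliance on Lemmas~\ref{lemma:convex_set} and~\ref{lemma:set_incre_decre} to preserve convexity and compose the enlargements. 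Your observation that the rescaling error must be charged on the Gaussian side so the Berry--Esseen budget is spent only once matches the paper's argument exactly.
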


\begin{lemma}\label{lemma:gamma_srs_bound}
	Under the same setting as in Theorem \ref{thm:berry_esseen_clt_full}, 
	\begin{itemize}
		\item[(i)] $\sum_{i=1}^N \| \bs{S}^{-1} (\bs{u}_i - \bar{\bs{u}}) \|_2^2 = (N-1) d$; 
		\item[(ii)] $N^{-1} \sum_{i=1}^N \| \bs{S}^{-1} (\bs{u}_i - \bar{\bs{u}}) \|_2^3 \ge (d/2)^{3/2}$; 
		\item[(iii)] $\gamma$ defined in Theorem \ref{thm:berry_esseen_clt_full} satisfies that 
		\begin{align*}
			\gamma \equiv \frac{1}{\sqrt{Nf(1-f)}} \frac{d^{1/4}}{N}
			\sum_{i=1}^N \big\| \bs{S}^{-1} (\bs{u}_i - \bar{\bs{u}}) \big\|_2^3
			\ge 
			\frac{d^{7/4}}{2^{3/2}\sqrt{Nf(1-f)}}. 
		\end{align*}
	\end{itemize}

\end{lemma}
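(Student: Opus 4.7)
\bigskip

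\noindent\textbf{Proof plan for Lemma \ref{lemma:gamma_srs_bound}.}

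The plan is to reduce the three claims to a single scalar sequence $a_i \equiv \|\bs{S}^{-1}(\bs{u}_i - \bar{\bs{u}})\|_2$, prove identity (i) by a trace trick, deduce (ii) from (i) via the power-mean inequality, and obtain (iii) as a direct substitution into the definition of $\gamma$.

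For part (i), I will write $\|\bs{S}^{-1}(\bs{u}_i-\bar{\bs{u}})\|_2^2$ as $\tr\{\bs{S}^{-1}(\bs{u}_i-\bar{\bs{u}})(\bs{u}_i-\bar{\bs{u}})^\top \bs{S}^{-1}\}$, sum over $i$, pull the trace outside the sum, and use the definition $\bs{S}^2 = (N-1)^{-1}\sum_i (\bs{u}_i-\bar{\bs{u}})(\bs{u}_i-\bar{\bs{u}})^\top$. This gives $\tr(\bs{S}^{-1}\cdot(N-1)\bs{S}^2\cdot\bs{S}^{-1}) = (N-1)\tr(\bs{I}_d) = (N-1)d$, as claimed.

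For part (ii), the key tool is the power-mean (or Jensen with the convex map $t\mapsto t^{3/2}$ applied to $t=a_i^2$): $N^{-1}\sum_i a_i^3 \ge (N^{-1}\sum_i a_i^2)^{3/2}$. Substituting (i) yields $((N-1)d/N)^{3/2}$, and the bound $(N-1)/N \ge 1/2$ for $N\ge 2$ gives the stated lower bound $(d/2)^{3/2}$. Part (iii) is then immediate: by the definition of $\gamma$,
\begin{equation*}
\gamma = \frac{d^{1/4}}{\sqrt{Nf(1-f)}}\cdot \frac{1}{N}\sum_{i=1}^N a_i^3 \ge \frac{d^{1/4}}{\sqrt{Nf(1-f)}}\cdot \Bigl(\frac{d}{2}\Bigr)^{3/2} = \frac{d^{7/4}}{2^{3/2}\sqrt{Nf(1-f)}}.
\end{equation*}

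The proof is essentially routine once the trace identity and the power-mean inequality are in hand; I do not foresee any real obstacle. The only subtlety worth flagging is that the bound $(N-1)/N \ge 1/2$ is what turns the sharp estimate $((N-1)d/N)^{3/2}$ into the cleaner expression $(d/2)^{3/2}$ used in the statement, and that nonsingularity of $\bs{S}^2$ (which is part of the standing assumption of the lemma) is what ensures $\bs{S}^{-1}$ is well defined throughout.
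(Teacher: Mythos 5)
Your proposal is correct and follows essentially the same route as the paper: the trace identity for (i), the power-mean/Jensen inequality applied to $t\mapsto t^{3/2}$ (the paper invokes H\"older) together with $(N-1)/N\ge 1/2$ for (ii), and direct substitution for (iii). No gaps.
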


\subsubsection{Proofs of the lemmas}

\begin{proof}[Proof of Lemma \ref{lemma:couple_srs_bs}]
	Let $\bs{T} = (T_1, T_2, \ldots, T_N) \in \{0, 1\}^N$ be an indicator vector for a Bernoulli random sample with equal probability $n/N$ for all units. 
	Below we construct the indicator vector $\bs{Z}$ for a simple random sample of size $n$ based on $\bs{T}$. 
	We consider the following three different cases depending on the size $\tilde{m}$ of the set $\mathcal{T} \equiv\{i: T_i = 1, 1\le i \le N\}$, i.e., $\tilde{m} \equiv |\mathcal{T}|$. 
	\begin{itemize}
		\item[(i)] If $\tilde{m} = m$ by accident, we define $\bs{Z} = \bs{T}$; 
		\item[(ii)] If $\tilde{m} > m$, we introduce a set $\mathcal{D}$ to be a simple random sample of size $\tilde{m} - m$ from $\mathcal{T}$, and define $Z_i = 1$ if $i \in \mathcal{T}\setminus \mathcal{D}$ and 0 otherwise;
		\item[(iii)] If $\tilde{m} < m$, we introduce a set $\mathcal{D}$ to be a simple random sample of size $m-\tilde{m}$ from $\{1,2,\ldots,N\}\setminus \mathcal{T}$, and define $Z_i = 1$ if $i \in \mathcal{T} \cup \mathcal{D}$ and 0 otherwise.
	\end{itemize}
	We can verify
	that $\bs{Z}$ must be an indicator vector for a simple random sample of size $m$.  
	This is essentially the coupling between simple random sampling and Bernoulli random sampling used in \citet{Hajek60}.

	By definition, $\bs{B} - \bs{A} = \sum_{i=1}^N (T_i - Z_i) (\bs{u}_i - \bar{\bs{u}})$. 
	By the construction of $\bs{T}$ and $\bs{Z}$, 
	conditioning on $\tilde{m}$, 
	the difference between $\bs{B}$ and $\bs{A}$ is essentially the summation of a simple random sample of size 
	\begin{align}\label{eq:couple_srs}
	    \begin{cases}
	        \tilde{m} - m \text{ from the population } \{\bs{u}_1 - \bar{\bs{u}}, \bs{u}_2 - \bar{\bs{u}}, \ldots, \bs{u}_N - \bar{\bs{u}}\}, & \text{if } \tilde{m} \ge m;
	        \\
	        m - \tilde{m} \text{ from the population } \{-(\bs{u}_1 - \bar{\bs{u}}), -(\bs{u}_2 - \bar{\bs{u}}), \ldots, -(\bs{u}_N - \bar{\bs{u}})\}, & \text{if } \tilde{m} <  m. 
	    \end{cases}
	\end{align}
	Let 
	$\Delta = |\tilde{m} - m|$.
	By the property of simple random sampling, this difference satisfies 
	$\E(\bs{B} - \bs{A} \mid \tilde{m}) = \bs{0}$ and 
	$$
	\cov (\bs{B} - \bs{A} \mid \tilde{m}) = \frac{\Delta }{N} \cdot \frac{N-\Delta}{N-1} \sum_{i=1}^N (\bs{u}_i - \bar{\bs{u}} )(\bs{u}_i - \bar{\bs{u}} )^\top 
	\le  \frac{\Delta }{N} \sum_{i=1}^N (\bs{u}_i - \bar{\bs{u}} )(\bs{u}_i - \bar{\bs{u}})^\top. 
	$$
	By the law of total expectation and total variance, and the fact that $\{\E(\Delta)\}^2 \le \E(\Delta^2) = \var(\tilde{m})$, we have
	$\E(\bs{B} - \bs{A}) = \E\{ \E(\bs{B} - \bs{A} \mid \tilde{m}) \} = \bs{0}$, and 
	\begin{align}\label{eq:bound_cov_diff_B_S}
		\cov(\bs{B} - \bs{A}) 
		& = 
		\E \left\{ \cov(\bs{B} - \bs{A} \mid \tilde{m}) \right\}+ \cov\left\{ \E (\bs{B} - \bs{A} \mid \tilde{m} ) \right\}
		\leq  \frac{\E(\Delta) }{N} \sum_{i=1}^N (\bs{u}_i - \bar{\bs{u}} )(\bs{u}_i - \bar{\bs{u}} )^\top
		\nonumber
		\\
		& \leq   
		\frac{ \sqrt{ \var(\tilde{m}) }  }{N} \sum_{i=1}^N (\bs{u}_i - \bar{\bs{u}} )(\bs{u}_i - \bar{\bs{u}} )^\top 
			\nonumber
			\\
		& = \sqrt{  N \frac{m}{N} \left(  1 - \frac{m}{N} \right) }   \frac{ 1   }{N}  \sum_{i=1}^N (\bs{u}_i - \bar{\bs{u}} )(\bs{u}_i - \bar{\bs{u}} )^\top. 
	\end{align}
	By the property of bernoulli sampling and simple random sampling, 
	we can derive that 
	\begin{align}\label{eq:cov_B}
		\cov(\bs{B}) & = \frac{m}{N} \left(  1 - \frac{m}{N} \right) \sum_{i=1}^N (\bs{u}_i - \bar{\bs{u}} )(\bs{u}_i - \bar{\bs{u}} )^\top. 
	\end{align}
	and 
	\begin{align*}
		\cov(\bs{A}) & = \frac{m(N-m)}{N(N-1)} \sum_{i=1}^N (\bs{u}_i - \bar{\bs{u}} )(\bs{u}_i - \bar{\bs{u}} )^\top
		= 
		\frac{N}{N-1} \cov(\bs{B}). 
	\end{align*}
	\eqref{eq:bound_cov_diff_B_S},  \eqref{eq:cov_B} and the fact that $\E(\bs{B} - \bs{A}) = \bs{0}$
	immediately imply that 
	\begin{align*}
		\cov^{-1/2}(\bs{B}) \cdot 
		\E \big\{ (\bs{B} - \bs{A}) (\bs{B} - \bs{A})^\top \big\} 
		\cdot \cov^{-1/2}(\bs{B})
		& = \cov^{-1/2}(\bs{B}) \cdot \cov(\bs{B} - \bs{A}) \cdot \cov^{-1/2}(\bs{B})
		\\
		& \le \sqrt{\frac{1}{m} + \frac{1}{N-m}} \cdot \bs{I}_d. 
	\end{align*}
	From the above, Lemma \ref{lemma:couple_srs_bs} holds. 
\end{proof}

\begin{proof}[Proof of Lemma \ref{lemma:ind_berry}]
	Lemma \ref{lemma:ind_berry} follows immediately from \citet[][Theorem 1.1]{R19}. 
\end{proof}

\begin{proof}[Proof of Lemma \ref{lemma:bern_sample_berry}]
	By definition, we can derive that 
	$$
	\cov^{-1/2}(\bs{B}) \cdot (\bs{B} - \E\bs{B})
	= \cov^{-1/2}(\bs{B}) \cdot \sum_{i=1}^N T_i ( \bs{u}_i - \bar{\bs{u}} )
	= \cov^{-1/2}(\bs{B}) \sum_{i=1}^N \big( T_i - f \big) (\bs{u}_i - \bar{\bs{u}}), 
	$$
	where the last equality holds due to the centering of the $\bs{u}_i$'s. 
	Define $\bs{\xi}_i = (T_i-f) \cov^{-1/2}(\bs{B}) \cdot (\bs{u}_i - \bar{\bs{u}})$.  
	We can verify that $\bs{\xi}_i$'s satisfy the condition in Lemma \ref{lemma:ind_berry}. 
	Thus, from Lemma \ref{lemma:ind_berry}, for any measurable convex set $\mathcal{Q}\subset \mathbb{R}^d$, 
	\begin{align}\label{eq:bern_berry1}
		\big|
		\PP\big\{ \cov^{-1/2}(\bs{B}) \cdot (\bs{B} - \E\bs{B})  \in \mathcal{Q} \big\} - \PP(\bs{\varepsilon} \in \mathcal{Q})
		\big|
		& 
		\le 58 d^{1/4} \sum_{i=1}^N \E \|\bs{\xi}_i\|_2^3. 
	\end{align}
	By definition, 
	$
	\E \{ |T_i - f|^3 \} = 
	f(1-f)\{f^2 + (1-f)^2 \}.  
	$
	From \eqref{eq:cov_B}, $\cov(\bs{B}) = f(1-f) (N-1) \bs{S}^2$. 
	We can then simplify $\sum_{i=1}^N \E \|\bs{\xi}_i\|_2^3$ as
	\begin{align*}%
		\sum_{i=1}^N \E \|\bs{\xi}_i\|_2^3
		& = 
		\sum_{i=1}^N \frac{\E \big\{ |T_i - f|^3 \big\}}{\{f(1-f) (N-1) \}^{3/2}}
		\big\| \bs{S}^{-1} (\bs{u}_i - \bar{\bs{u}}) \big\|_2^3
		= 
		\frac{f^2 + (1-f)^2}{(N-1)^{3/2} \sqrt{f(1-f)}}
		\sum_{i=1}^N \big\| \bs{S}^{-1} (\bs{u}_i - \bar{\bs{u}}) \big\|_2^3
		\nonumber
		\\
		& \le 
		\frac{2^{3/2}}{\sqrt{Nf(1-f)}} \frac{1}{N}
		\sum_{i=1}^N \big\| \bs{S}^{-1} (\bs{u}_i - \bar{\bs{u}}) \big\|_2^3, 
	\end{align*}
	where the last inequality holds becomes 
	$f^2 + (1-f)^2 \le \{f+(1-f)\}^2 = 1$ 
	and 
	$N-1 \ge N/2$. 
	From \eqref{eq:bern_berry1}, we then have, for any measurable convex set $\mathcal{Q}\subset \mathbb{R}^d$, 
	\begin{align*}
		\big|
		\PP\big\{ \cov^{-1/2}(\bs{B}) \cdot (\bs{B} - \E\bs{B})  \in \mathcal{Q} \big\} - \PP(\bs{\varepsilon} \in \mathcal{Q})
		\big|
		& 
		\le 58 d^{1/4}  \frac{2^{3/2}}{\sqrt{Nf(1-f)}} \frac{1}{N}
		\sum_{i=1}^N \big\| \bs{S}^{-1} (\bs{u}_i - \bar{\bs{u}}) \big\|_2^3
		\\
		& \le 
		\frac{165}{\sqrt{Nf(1-f)}} \frac{d^{1/4}}{N}
		\sum_{i=1}^N \big\| \bs{S}^{-1} (\bs{u}_i - \bar{\bs{u}}) \big\|_2^3
	\end{align*}
	Therefore, Lemma \ref{lemma:bern_sample_berry} holds. 
\end{proof}

\begin{proof}[Proof of Lemma \ref{lemma:convex_set}]
	We first prove (i). 
	Consider any $\bs{x}, \bs{y} \in \overline{\mathcal{Q}}_c$ and any $\lambda \in (0,1)$. 
	By definition, there must exist $\bs{x}', \bs{y}' \in \mathcal{Q}$ such that $\|\bs{x} - \bs{x}'\|_2 < c$ and $\|\bs{y} - \bs{y}'\|_2 < c$. 
	Because $\mathcal{Q}$ is convex, $\lambda \bs{x}' + (1-\lambda) \bs{y}' \in \mathcal{Q}$. 
	Moreover, by the triangle inequality, 
	\begin{align*}
		\| \lambda \bs{x} + (1-\lambda) \bs{y} - \{\lambda \bs{x}' + (1-\lambda) \bs{y}'\} \|_2 
		\le 
		\lambda \|\bs{x} - \bs{x}'\|_2 + (1-\lambda) \|\bs{y} - \bs{y}'\|_2 < c. 
	\end{align*}
	Thus, we must have $\lambda \bs{x} + (1-\lambda) \bs{y} \in \overline{\mathcal{Q}}_c$. 
	Therefore, $\overline{\mathcal{Q}}_c$ must be a convex set. 
	
	We then prove (ii). 
	Consider any $\bs{x}, \bs{y} \in \underline{\mathcal{Q}}_c$ and any $\lambda \in (0,1)$. 
	We prove that $\bs{z}\equiv \lambda \bs{x} + (1-\lambda) \bs{y} \in \underline{\mathcal{Q}}_c$ by contradiction. 
	Suppose that $\bs{z} \notin \underline{\mathcal{Q}}_c$. By definition, there must exist $\bs{z}' \notin \mathcal{Q}$ such that $\|\bs{z} - \bs{z}'\|_2 < c$. 
	Define $\bs{x}' = \bs{x} + \bs{z}' - \bs{z}$ and $\bs{y}' = \bs{y} + \bs{z}' - \bs{z}$. 
	Because $\|\bs{x} - \bs{x}'\|_2 = \|\bs{y} - \bs{y}'\|_2 = \|\bs{z} - \bs{z}'\|_2 < c$ and $\bs{x}, \bs{y} \in \underline{\mathcal{Q}}_c$, by definition, 
	we must have $\bs{x}' , \bs{y}' \in \mathcal{Q}$. 
	Due to the convexity of $A$, this further implies that $\lambda \bs{x}' + (1-\lambda) \bs{y}' \in \mathcal{Q}$. 
	By some algebra, we can show that $\bs{z}' = \lambda \bs{x}' + (1-\lambda) \bs{y}' \in \mathcal{Q}$, which contradicts with $\bs{z}' \notin \mathcal{Q}$. 
	Therefore, we must have $\bs{z} =  \lambda \bs{x} + (1-\lambda) \bs{y} \in \underline{\mathcal{Q}}_c$. 
	Consequently, $\underline{\mathcal{Q}}_c$ is a convex set. 
	
	Finally, we prove (iii). 
	Consider any $\bs{x}, \bs{y} \in \tilde{\mathcal{Q}}$ and any $\lambda \in (0,1)$. 
	By definition, $\bs{\Delta} \bs{x}, \bs{\Delta} \bs{y}  \in \mathcal{Q}$. 
	By the convexity of  $\mathcal{Q}$, this implies that 
	$
	\bs{\Delta} \{\lambda \bs{x} + (1-\lambda) \bs{y}\}
	= 
	\lambda \bs{\Delta} \bs{x} + (1-\lambda) \bs{\Delta} \bs{y} \in \mathcal{Q}.
	$
	Consequently, $\lambda \bs{x} + (1-\lambda) \bs{y} \in \tilde{\mathcal{Q}}$. 
	Therefore, $\tilde{\mathcal{Q}}$ must be a convex set.
	
	From the above, Lemma \ref{lemma:convex_set} holds. 
\end{proof}

\begin{proof}[Proof of Lemma \ref{lemma:set_incre_decre}]
	We first prove (i). 
	We first prove $\overline{(\overline{\mathcal{Q}}_c)}_h \subset \overline{\mathcal{Q}}_{c+h}$. 
	For any $\bs{x} \in \overline{(\overline{\mathcal{Q}}_c)}_h$, by definition, 
	there exists $\bs{x}' \in \overline{\mathcal{Q}}_c$ such that $\|\bs{x} - \bs{x}'\|_2 < h$. 
	By the same logic, there exists $\bs{x}'' \in \mathcal{Q}$ such that $\|\bs{x}' - \bs{x}''\|_2 < c$. 
	By the triangle inequality,
	$\|\bs{x} - \bs{x}''\|_2 \le \|\bs{x} - \bs{x}'\|_2 + \|\bs{x}' - \bs{x}''\|_2 < c+h$, 
	which then implies that $\bs{x} \in \overline{\mathcal{Q}}_{c+h}$. 
	Therefore, we must have $\overline{(\overline{\mathcal{Q}}_c)}_h \subset \overline{\mathcal{Q}}_{c+h}$. 
	We then prove $\overline{(\overline{\mathcal{Q}}_c)}_h \supset \overline{\mathcal{Q}}_{c+h}$. For any $\bs{x} \in \overline{\mathcal{Q}}_{c+h}$, 
	by definition, there exists $\bs{x}' \in \mathcal{Q}$ such that $\|\bs{x} - \bs{x}'\|_2 < c+h$. 
	Let $\lambda = c/(c+h)$, and $\bs{x}'' = \bs{x}' + \lambda (\bs{x} - \bs{x}')$. 
	We then have 
	$\|\bs{x}' - \bs{x}''\|_2 = \lambda \|\bs{x} - \bs{x}'\|_2 < c$, 
	and 
	$\|\bs{x}'' - \bs{x}\|_2 = (1-\lambda) \|\bs{x} - \bs{x}'\|_2 < h$. 
	Consequently, $\bs{x}'' \in \overline{\mathcal{Q}}_c$, and $\bs{x} \in \overline{(\overline{\mathcal{Q}}_c)}_h$. 
	Therefore, we must have $\overline{\mathcal{Q}}_{c+h} \subset \overline{(\overline{\mathcal{Q}}_c)}_h$. 
	From the above, we have $\overline{(\overline{\mathcal{Q}}_c)}_h = \overline{\mathcal{Q}}_{c+h}$. 
	
	We then prove (ii). By definition, 
	\begin{align*}
		\big( \underline{\mathcal{Q}}_c \big)^\complement & = 
		\{\bs{x} \in \mathbb{R}^d: \|\bs{x}' - \bs{x}\|_2 \ge c \ \forall \bs{x}' \not\in \mathcal{Q}\}^\complement
		= \{\bs{x} \in \mathbb{R}^d: \|\bs{x}' - \bs{x}\|_2 \ge c \ \forall \bs{x}' \in \mathcal{B}\}^\complement\\
		& = \{\bs{x} \in \mathbb{R}^d : \exists \bs{x}' \in \mathcal{B} \ \st \ \|\bs{x} - \bs{x}'\|_2 < c\}
		= \overline{\mathcal{B}}_c. 
	\end{align*}
	
	Finally, we prove (iii) using (i) and (ii).  
	From (ii), we have 
	$\underline{(\underline{\mathcal{Q}}_c)}_h = ( \overline{\mathcal{D}}_h )^\complement$, 
	where $\mathcal{D} = (\underline{\mathcal{Q}}_c)^\complement$. 
	By the same logic, $\mathcal{D} = (\underline{\mathcal{Q}}_c)^\complement = \overline{\mathcal{B}}_c$, where $\mathcal{B} = \mathcal{Q}^\complement$. 
	Consequently, using (i) and (ii), 
	we have 
	\begin{align*}
		\underline{(\underline{\mathcal{Q}}_c)}_h
		& = 
		( \overline{\mathcal{D}}_h )^\complement
		= \left( \overline{\left(\overline{\mathcal{B}}_c \right)}_h \right)^\complement
		= \left( \overline{\mathcal{B}}_{c+h} \right)^\complement
		= \underline{\mathcal{Q}}_{c+h}. 
	\end{align*}
	
	From the above, Lemma \ref{lemma:set_incre_decre} holds. 
\end{proof}

\begin{proof}[Proof of Lemma \ref{lemma:maximum_surface}]
	This is a direct consequence of (1.3)-(1.4) of~\citet{B05}. See also~\citep{B93,N03}
\end{proof}

\begin{proof}[Proof of Lemma \ref{lemma:berry_bound_coupling}]
	Let $\bs{\zeta} \equiv \bs{\Sigma}_{\bs{B}}^{-1/2}(\bs{A} - \bs{B})$ and $\bs{\Delta} \equiv \bs{\Sigma}_{\bs{A}}^{-1/2} \bs{\Sigma}_{\bs{B}}^{1/2} = (1-l) \bs{I}_d$. 
	Then, 
	by definition, 
	$\E(\bs{\zeta}\bs{\zeta}^\top) \le b^2 \bs{I}_d$,  and 
	\begin{align*}
		\bs{\Sigma}_{\bs{A}}^{-1/2} (\bs{A} - \E \bs{A} )
		& = 
		\bs{\Delta}
		\bs{\Sigma}_{\bs{B}}^{-1/2} \{ \bs{B} - \E \bs{B} + (\bs{A} - \bs{B} ) \}
		= 
		\bs{\Delta}
		\bs{\Sigma}_{\bs{B}}^{-1/2} (\bs{B} - \E\bs{B})
		+ 
		\bs{\Delta} \bs{\zeta}. 
	\end{align*}
	
	First, 
	for any convex set $\mathcal{Q}\subset \mathbb{R}^d$ and any $c>0$, define 
	\begin{align*}
		\overline{\mathcal{Q}}_c \equiv 
		\{\bs{x} \in \mathbb{R}^d : \exists \bs{x}' \in \mathcal{Q} \ \st \ \|\bs{x} - \bs{x}'\|_2 < c\}
		\quad \text{and} \quad 
		\underline{\mathcal{Q}}_c \equiv 
		\{\bs{x} \in \mathbb{R}^d: \|\bs{x}' - \bs{x}\|_2 \ge c \ \forall \bs{x}' \not\in \mathcal{Q}\}.
	\end{align*}
	Intuitively, 
	$\overline{\mathcal{Q}}(c)$ contains all the points whose distance from $\mathcal{Q}$ is at most $c$, and $\overline{\mathcal{Q}}(c)$ contains all the points whose distance from $\mathcal{Q}^\complement$ is at least $c$. 
	Then, by definition, 
	\begin{align}\label{eq:S_upper_bound1}
		\PP \big\{ \bs{\Sigma}_{\bs{A}}^{-1/2} (\bs{A} - \E \bs{A} ) \in \mathcal{Q} \big\}
		& \le 
		\PP \big\{ \bs{\Sigma}_{\bs{A}}^{-1/2} (\bs{A} - \E \bs{A} ) \in \mathcal{Q},  \| \bs{\Delta} \bs{\zeta} \|_2 < c \big\}
		+ 
		\PP\big( \| \bs{\Delta} \bs{\zeta} \|_2 \ge c \big)
		\nonumber
		\\
		& \le 
		\PP \big\{
		\bs{\Delta}
		\bs{\Sigma}_{\bs{B}}^{-1/2} (\bs{B} - \E\bs{B}) \in \overline{\mathcal{Q}}_c
		\big\} + \PP\big( \| \bs{\Delta} \bs{\zeta} \|_2 \ge c \big), 
	\end{align}
	and 
	\begin{align}\label{eq:S_upper_bound2}
		\PP \big\{ \bs{\Sigma}_{\bs{A}}^{-1/2} (\bs{A} - \E \bs{A} ) \in \mathcal{Q} \big\}
		& \ge 
		\PP \big\{ \bs{\Sigma}_{\bs{A}}^{-1/2} (\bs{A} - \E \bs{A} ) \in \mathcal{Q}, \| \bs{\Delta} \bs{\zeta} \|_2 < c \big\}
		\nonumber
		\\
		& \ge 
		\PP \big\{ \bs{\Delta} 
		\bs{\Sigma}_{\bs{B}}^{-1/2} (\bs{B} - \E\bs{B}) \in \underline{\mathcal{Q}}_c, \| \bs{\Delta} \bs{\zeta} \|_2 < c \big\}
		\nonumber
		\\
		& \ge 
		\PP \big\{ \bs{\Delta}
		\bs{\Sigma}_{\bs{B}}^{-1/2} (\bs{B} - \E\bs{B}) \in \underline{\mathcal{Q}}_{c} \big\}
		- \PP\big( \| \bs{\Delta} \bs{\zeta} \|_2 \ge c \big). 
	\end{align}
	From Lemma \ref{lemma:convex_set} and the condition in Lemma \ref{lemma:berry_bound_coupling}, these imply that 
	\begin{align*}
		\PP \big\{ \bs{\Sigma}_{\bs{A}}^{-1/2} (\bs{A} - \E \bs{A} ) \in \mathcal{Q} \big\}
		& \le \PP \big(
		\bs{\Delta} 
		\bs{\varepsilon} \in \overline{\mathcal{Q}}_c
		\big) + a + \PP\big( \| \bs{\Delta} \bs{\zeta} \|_2 \ge c \big),
	\end{align*}
	and 
	\begin{align*}
		\PP \big\{ \bs{\Sigma}_{\bs{A}}^{-1/2} (\bs{A} - \E \bs{A} ) \in \mathcal{Q} \big\}
		& \ge 
		\PP \big( \bs{\Delta} \bs{\varepsilon} \in \underline{\mathcal{Q}}_c \big) - a - \PP\big( \| \bs{\Delta} \bs{\zeta} \|_2 \ge c \big).
	\end{align*}
	
	Second, by definition,  
	$
	\big\| \big( \bs{\Delta} - \bs{I}_d \big) \bs{\varepsilon} \big\|_2 
	= 
	l \big\| \bs{\varepsilon} \big\|_2. 
	$
	By the Gaussian tail bound, for any $h>0$, 
	\begin{align*}
		\PP\big\{
		\big\| \big( \bs{\Delta}_N - \bs{I}_d \big) \bs{\varepsilon} \big\|_2 
		\ge h
		\big\}
		& \le 
		\PP\left\{
		l
		\| \bs{\varepsilon} \|_2
		\ge h \right\}
		= 
		\PP\left(
		\sum_{k=1}^d \varepsilon_k^2 
		\ge \frac{h^2}{l^2} 
		\right)
		\le 
		\sum_{k=1}^d \PP\left(
		\varepsilon_k^2 
		\ge \frac{h^2 }{d \cdot l^2} 
		\right)
		\\
		& = 2 d \cdot
		\PP\left(
		\varepsilon_k
		\ge \frac{h}{d^{1/2} \cdot l} 
		\right)
		\le 
		2 d \cdot \exp\left( - \frac{h^2}{2 d l^2}  \right). 
	\end{align*}
	By the same logic as \eqref{eq:S_upper_bound1} and using Lemma \ref{lemma:set_incre_decre}, 
	\begin{align*}
		\PP \big(
		\bs{\Delta}
		\bs{\varepsilon} \in \overline{\mathcal{Q}}_c \big)
		& \le 
		\PP \big(
		\bs{\Delta} 
		\bs{\varepsilon} \in \overline{\mathcal{Q}}_c, \big\| \big( \bs{\Delta} - \bs{I}_d \big) \bs{\varepsilon} \big\|_2 
		< h \big) + 
		\PP \big(
		\big\| \big( \bs{\Delta} - \bs{I}_d \big) \bs{\varepsilon} \big\|_2 
		\ge h \big)
		\\
		& \le \PP \big(
		\bs{\varepsilon} \in \overline{\mathcal{Q}}_{c+h} \big) + 
		2 d \cdot \exp\left( - \frac{h^2}{2 d l^2}  \right), 
	\end{align*}
	and by the same logic as \eqref{eq:S_upper_bound2} and using Lemma \ref{lemma:set_incre_decre}, 
	\begin{align*}
		\PP \big( \bs{\Delta}_N \bs{\varepsilon} \in \underline{\mathcal{Q}}_{c} \big) 
		& \ge 
		\PP \big( \bs{\Delta} \bs{\varepsilon} \in \underline{\mathcal{Q}}_{c}, 
		\big\| \big( \bs{\Delta} - \bs{I}_d \big) \bs{\varepsilon} \big\|_2 
		< h
		\big)
		\ge 
		\PP \big(  \bs{\varepsilon} \in \underline{\mathcal{Q}}_{c+h}, 
		\big\| \big( \bs{\Delta} - \bs{I}_d \big) \bs{\varepsilon} \big\|_2 
		< h
		\big)\\
		& \ge 
		\PP \big(  \bs{\varepsilon} \in \underline{\mathcal{Q}}_{c+h}
		\big)
		- 
		\PP \big(
		\big\| \big( \bs{\Delta} - \bs{I}_d \big) \bs{\varepsilon} \big\|_2 
		\ge h \big)
		\\
		& \ge 
		\PP \big(  \bs{\varepsilon} \in \underline{\mathcal{Q}}_{c+h}
		\big)
		- 2 d \cdot \exp\left( - \frac{h^2}{2 d l^2}  \right). 
	\end{align*}
	These imply that 
	\begin{align*}
		\PP \big\{ \bs{\Sigma}_{\bs{A}}^{-1/2} (\bs{A} - \E \bs{A} ) \in \mathcal{Q} \big\}
		& \le \PP \big(
		\bs{\Delta}
		\bs{\varepsilon} \in \overline{\mathcal{Q}}_{c}
		\big) + a + \frac{b^2 d}{c^2(1-b)^2}
		\\
		& \le \PP \big(
		\bs{\varepsilon} \in \overline{\mathcal{Q}}_{c+h} \big) + 
		2 d \cdot \exp\left( - \frac{h^2}{2 d l^2}  \right) + a + \PP\big( \| \bs{\Delta} \bs{\zeta} \|_2 \ge c \big), 
	\end{align*}
	and 
	\begin{align*}
		\PP \big\{ \bs{\Sigma}_{\bs{A}}^{-1/2} (\bs{A} - \E \bs{A} ) \in \mathcal{Q} \big\}
		& \ge 
		\PP \big( \bs{\Delta} \bs{\varepsilon} \in \underline{\mathcal{Q}}_{c} \big) - a - \frac{b^2 d}{c^2(1-b)^2}
		\\
		& \ge 
		\PP \big(  \bs{\varepsilon} \in \underline{\mathcal{Q}}_{c+h}
		\big)
		- 2 d \cdot \exp\left( - \frac{h^2}{2 d l^2}  \right) - a - \PP\big( \| \bs{\Delta} \bs{\zeta} \|_2 \ge c \big). 
	\end{align*}
	
	Third, from Lemma \ref{lemma:maximum_surface}, we have 
	\begin{align*}
		\PP \big( \bs{\varepsilon} \in \overline{\mathcal{Q}}_{c+h} \big) \le 
		\PP \big( \bs{\varepsilon} \in \mathcal{Q} \big) + 4 d^{1/4} (c+h)
		\ \ 
		\text{and} 
		\ \ 
		\PP \big(  \bs{\varepsilon} \in \underline{\mathcal{Q}}_{c+h}
		\big) 
		\ge \PP \big(  \bs{\varepsilon} \in \mathcal{Q}
		\big) - 4 d^{1/4} (c+h). 
	\end{align*}
	
	From the above, we must have that, for any $\mathcal{Q}\in \cC_d$, 
	\begin{align*}
		& \quad \ \big| \PP \big\{ \bs{\Sigma}_{\bs{A}}^{-1/2} (\bs{A} - \E \bs{A} ) \in \mathcal{Q} \big\} - 
		\PP \big( \bs{\varepsilon} \in \mathcal{Q} \big) \big|
		\\
		& \le 4 d^{1/4} (c+h) + 2 d \cdot \exp\left( - \frac{h^2}{2 d l^2}  \right) + a + \PP\big( \| \bs{\Delta} \bs{\zeta} \|_2 \ge c \big).
	\end{align*}
    By Chebyshev's inequality, 
    we can bound the tail probability of $\bs{\Delta} \bs{\zeta} = \bs{\Sigma}_{\bs{A}}^{-1/2}(\bs{A} - \bs{B})$ by 
	\begin{align*}%
		\PP( \| \bs{\Delta} \bs{\zeta} \|_2 \ge c)
		& 
		= 
		\PP\left\{ (1-l) \| \bs{\zeta} \|_2 \ge c\right\}
		\le 
		\frac{(1-l)^2}{c^2}
		\E 
		\big(
		\bs{\zeta}^\top \bs{\zeta}
		\big)
		= \frac{(1-l)^2}{c^2}
		\tr\big\{ \E 
		\big(
		\bs{\zeta} \bs{\zeta}^\top 
		\big) \big\}
		\nonumber
		\\
		& \le \frac{(1-l)^2 b^2 d}{c^2}. 
	\end{align*}
	Therefore, Lemma \ref{lemma:berry_bound_coupling} holds. 
\end{proof}

\begin{proof}[Proof of Lemma \ref{lemma:gamma_srs_bound}]
	By definition and some algebra, we can verify that 
	\begin{align*}
		\sum_{i=1}^N \big\| \bs{S}^{-1} (\bs{u}_i - \bar{\bs{u}}) \big\|_2^2
		& =  \sum_{i=1}^N (\bs{u}_i - \bar{\bs{u}})^\top \bs{S}^{-2} (\bs{u}_i - \bar{\bs{u}})  
		= 
		\text{tr}\left( \sum_{i=1}^N  \bs{S}^{-2} (\bs{u}_i - \bar{\bs{u}})(\bs{u}_i - \bar{\bs{u}})^\top   \right)
		\\
		& = 	\text{tr}\left(  \bs{S}^{-2} \cdot (N-1) \bs{S}^2   \right) = \text{tr}\left(    (N-1) \bs{I}_d   \right)
		= (N-1)d. 
	\end{align*}
	By H\"{o}lder's inequality, 
	\begin{align*}
		\frac{1}{N}\sum_{i=1}^N \big\| \bs{S}^{-1} (\bs{u}_i - \bar{\bs{u}}) \big\|_2^3
		& \ge 
		\left( \frac{1}{N}\sum_{i=1}^N \big\| \bs{S}^{-1} (\bs{u}_i - \bar{\bs{u}}) \big\|_2^2 \right)^{3/2}
		\frac{(N-1)^{3/2} d^{3/2}}{N^{3/2}} \ge 
		(d/2)^{3/2}, 
	\end{align*}
	where the last inequality holds because $(N-1)/N \ge 1/2$. 
	Consequently, we have 
	\begin{align*}
		\gamma \equiv \frac{1}{\sqrt{Nf(1-f)}} \frac{d^{1/4}}{N}
		\sum_{i=1}^N \big\| \bs{S}^{-1} (\bs{u}_i - \bar{\bs{u}}) \big\|_2^3
		\ge 
		\frac{d^{1/4}}{\sqrt{Nf(1-f)}} \frac{d^{3/2}}{2^{3/2}}
		= 
		\frac{d^{7/4}}{2^{3/2}\sqrt{Nf(1-f)}}. 
	\end{align*}
	Therefore, Lemma \ref{lemma:gamma_srs_bound} holds. 
\end{proof}

\subsubsection{Proof of Theorem \ref{thm:berry_esseen_clt_full}(iii)}

\begin{proof}[\bf Proof of Theorem \ref{thm:berry_esseen_clt_full}(iii)]
	Let $\bs{Z}$ and $\bs{T}$ be the pair of indicator vectors for simple random sampling and Bernoulli sampling satisfying Lemma \ref{lemma:couple_srs_bs}. 
	Recall that  
	$\bs{A} \equiv \sum_{i=1}^N Z_i \bs{u}_i$ and  
	$\bs{B} \equiv \sum_{i=1}^N T_i ( \bs{u}_i - \bar{\bs{u}} ) + m  \bar{\bs{u}}$, 
	and define further
	$\bs{\Sigma}_{\bs{A}} = \cov(\bs{A})$ and $\bs{\Sigma}_{\bs{B}} = \cov(\bs{B})$. 
	By definition, we can verify that $\bs{W} = \bs{\Sigma}_{\bs{A}}^{-1/2} (\bs{A} - \E \bs{A} )$. 
	
	First,
	from Lemma \ref{lemma:couple_srs_bs}, 
	$\bs{\Sigma}_{\bs{B}} = (1-l)^2 \cdot \bs{\Sigma}_{\bs{A}}$ with $l = 1 - \sqrt{1-N^{-1}}$, 
	and 
	\begin{align*}
		\bs{\Sigma}_{\bs{B}}^{-1/2} \cdot \E \big\{ (\bs{B} - \bs{A}) (\bs{B} - \bs{A})^\top \big\} \cdot \bs{\Sigma}_{\bs{B}}^{-1/2}
		\le b^2 \bs{I}_d
	\end{align*}
	with 
	$b^2 = \sqrt{1/m+1/(N-m)} = 1/\sqrt{Nf(1-f)}$. 
	
	Second, from Lemma \ref{lemma:bern_sample_berry}, 
	\begin{align*}
		\sup_{\mathcal{Q}\in \cC_d}\big|
		\PP\big\{ \bs{\Sigma}_{\bs{B}}^{-1/2}  \cdot (\bs{B} - \E\bs{B})  \in \mathcal{Q} \big\} - \PP(\bs{\varepsilon} \in \mathcal{Q})
		\big|
		& \le a \equiv \frac{165}{\sqrt{Nf(1-f)}} \frac{d^{1/4}}{N}
		\sum_{i=1}^N \big\| \bs{S}^{-1} (\bs{u}_i - \bar{\bs{u}}) \big\|_2^3 
		= 
		165 \gamma,
	\end{align*}
	where the last equality follows from the definition of $\gamma$ in Theorem \ref{thm:berry_esseen_clt_full}. 
	
	Third, let  
	\begin{align*}
		c = \left\{ \frac{1}{2} (1-l)^2 b^2 d^{3/4} \right\}^{1/3}, 
		\ \ \text{and} \ \ 
		h = \left\{ dl^2 \cdot \log N \right\}^{1/2}.
	\end{align*}
	From Lemma \ref{lemma:berry_bound_coupling}, we have 
	\begin{align*}
		& \quad \ \sup_{\mathcal{Q} \in \cC_d} 
		\big|
		\PP \big( \bs{W} \in \mathcal{Q} \big) - \PP(\bs{\varepsilon} \in \mathcal{Q})
		\big| \\
		& \leq   4 d^{1/4} (c+h) + 2 d \cdot \exp\left( - \frac{h^2}{2 d l^2}  \right) + a + \frac{(1-l)^2 b^2 d}{c^2}\\
		& = a + \left\{ 4 d^{1/4} c + \frac{(1-l)^2 b^2 d}{c^2} \right\} + 2 d \cdot \exp\left( - \frac{h^2}{2 d l^2}  \right) + 4d^{1/4} h
		\\
		& = 
		a
		+ 
		3 \cdot 2^{2/3} (1-l)^{2/3} d^{1/2} b^{2/3} + 2 d N^{-1/2} + 4 d^{3/4} l \cdot \sqrt{\log N}. 
	\end{align*}

	Fourth, from Lemma \ref{lemma:gamma_srs_bound} and by definition, we have 
	\begin{align*}
		& \left\{ 3 \cdot 2^{2/3} (1-l)^{2/ 3} d^{1/2} b^{2/3} \right\}^3
		= 
		108 (1-l)^2 d^{3/2} b^2 
		\le \frac{108 d^{3/2}}{\sqrt{N f(1-f)}}
		\le 
		108 \cdot 2^{3/2} \gamma \le 7^3 \gamma, \\
		& 2 d N^{-1/2} 
		\le \frac{d}{\sqrt{N/4}} \le  \frac{d^{7/4}}{\sqrt{Nf(1-f)}} = 2^{3/2} \gamma, \\
		& 4 d^{3/4} l \cdot \sqrt{\log N}
		= 
		4 d^{3/4} \frac{N^{-1}}{1+\sqrt{1-N^{-1}}} \cdot \sqrt{\log N}
		\le 
		\frac{2 d^{7/4}}{\sqrt{Nf(1-f)}} \sqrt{\frac{\log N}{N}}
		\le 2^{5/2} \gamma
	\end{align*}
	These then imply that 
	\begin{align*}
		\sup_{\mathcal{Q} \in \cC_d} 
		\big|
		\PP \big( \bs{W} \in \mathcal{Q} \big) - \PP(\bs{\varepsilon} \in \mathcal{Q})
		\big| 
		& \leq 
		165 \gamma
		+ 2^{3/2} \gamma + 2^{5/2} \gamma + 
		3 \cdot 2^{2/3} (1-l)^{2/3} d^{1/2} b^{2/3}
		\\
		& \le 174 \gamma + 3 \cdot 2^{2/3} \frac{d^{1/2}}{\{Nf(1-f)\}^{1/6}}
		\\
		& \le 174 \gamma + 7 \gamma^{1/3}. 
	\end{align*}
	
	From the above, Theorem \ref{thm:berry_esseen_clt_full}(iii) holds. 
\end{proof}

\subsection{Proof of Theorem \ref{thm:berry_esseen_clt_full}(iv)}

To prove Theorem \ref{thm:berry_esseen_clt_full}(iv), we need the following two lemmas. 

\begin{lemma}\label{lem:hoeffding}
	Let $\mathcal{X} \equiv \{x_i\}_{i=1}^N$ be $N$ zero-centered real valued quantities; and let $X_1, \cdots, X_m$ be $m$ random sample drawn without replacement from $\mathcal{X}$, then for all $\varepsilon > 0$, 
	\[
	\PP\left(\left|\sum_{i = 1}^m X_i \right| \ge \varepsilon\right) \le 2 \exp\left(- \frac{2 \varepsilon^2}{m (\max_{1 \leq i \leq N} x_i - \min_{1 \leq i \leq N} x_i )^2}\right)
	\]
\end{lemma}

\begin{proof}[Proof of Lemma \ref{lem:hoeffding}]
    Lemma \ref{lem:hoeffding} follows immediately from \citet[][Proposition 1.2]{BM15}. 
\end{proof}

\begin{lemma}\label{lem:hoeffdingbnd}
Let $\bs{Z}$ and $\bs{T}$ be the pair of random vectors constructed as in Lemma~\ref{lemma:couple_srs_bs}. 
For any $c, t>0$, 
    \begin{align*}
	    \PP\left\{ \left\| \cov^{-1/2}(\bs{A}) \cdot (\bs{B} - \bs{A}) \right\|_2 \ge c  \right\}
	    \le 
	    2 d \exp 
	\left(
	- \frac{c^2 f(1-f) \sqrt{N} }{ 2 t d\xi^2 }
	\right) + 2 \exp\left( -2t^2 \right), 
	\end{align*}
    where $f = m/N$ and 
	$
	\xi = \max_{1 \le i \le n} \left\|\left(\bs{S}_{\bs{u}}^2\right)^{-1/2} (\bs{u}_i-\bar{\bs{u}})\right\|_\infty. 
	$
\end{lemma}

\begin{proof}[Proof of Lemma \ref{lem:hoeffdingbnd}]
    For $1\le i \le N$ and $1\le k \le d$, 
	let $\bs{v}_i \equiv \left(\bs{S}_{\bs{u}}^2\right)^{-1/2} (\bs{u}_i-\bar{\bs{u}})$, and $v_{ik}$ denote the $k$-th coordinate of $\bs{v}_i$. 
	From the proof of Lemma \ref{lemma:couple_srs_bs}, we then have 
	\begin{align}\label{eq:diff_B_A_v}
	\cov^{-1/2}(\bs{A}) \cdot (\bs{B} - \bs{A}) 
	= 
	\sqrt{\frac{1}{f(1-f)N}} \cdot \sum_{i=1}^N (T_i-Z_i) \bs{v}_i. 
	\end{align}
	
	We first consider bounding the tail probability of $\sum_{i=1}^N (T_i-Z_i) \bs{v}_i$. 
	By definition, $2 \xi = 2 \max_{1\le i \le n} \| \bs{v}_i\|_{\infty} \ge 2 \max_{1\le i \le n} |v_{ik}| \ge \max_i v_{ik} - \min_i v_{ik}$ for $1\le k \le d$. 
	From \eqref{eq:couple_srs} and Lemma~\ref{lem:hoeffding}, we then have, for $1 \le k \le d$ and any $c>0$, 
	\begin{align*}
	    \PP\left(\left|\sum_{i=1}^n (T_i - Z_i) v_{ik} \right| \ge c \mid \tilde{m}\right)
	    & \le 
	    2 
	    \exp 
	    \left(
	    - \frac{2 c^2}{ |\tilde{m}-m| (\max_{1 \leq i \leq N} v_{ik} - \min_{1 \leq i \leq N} v_{ik} )^2}
	    \right)
	    \\
	    & \le 
	    2 
	    \exp 
	    \left(
	    - \frac{c^2}{ 2 |\tilde{m}-m| \xi^2 }
	    \right), 
	\end{align*}
	where $c^2/(2|\tilde{m}-m|\xi^2)$ is defined to be infinity when $\tilde{m} = m$. 
    This further implies that
	\begin{align*}
	\PP\left(\left\|\sum_{i=1}^n (T_i - Z_i) \bs{v}_{i} \right\|_2 \ge c \mid \tilde{m}\right) 
	& \le 
	\sum_{k=1}^d \PP\left(\left|\sum_{i=1}^n (T_i - Z_i) v_{ik} \right| \ge \frac{c}{\sqrt{d}} \mid \tilde{m}\right)
	\\
	& 
	\le 
	2 d \exp 
	\left(
	- \frac{c^2}{ 2 |\tilde{m}-m| d\xi^2}
	\right).
	\end{align*}
	Note that, by Hoeffding's inequality, for any $t > 0$,
	$
	\PP(|\tilde{m} - m| \ge t) \le 2 \exp(- 2 t^2 / N).
	$
	From the above, we can know that, for any $c,t>0$, 
	\begin{align*}
	    & \quad \ \PP\left(\left\|\sum_{i=1}^n (T_i - Z_i) \bs{v}_{i} \right\|_2 \ge c \right) 
	    \\
	    & \le 
	    \PP\left(\left\|\sum_{i=1}^n (T_i - Z_i) \bs{v}_{i} \right\|_2 \ge c \mid |\tilde{m} - m| < t\sqrt{N} \right)
	    + \PP( |\tilde{m} - m| \ge t\sqrt{N} )
	    \\
	    & \le 
	    2 d \exp 
	\left(
	- \frac{c^2}{ 2 t d\xi^2 \sqrt{N}}
	\right) + 2 \exp\left( -2t^2 \right). 
	\end{align*}
	Consequently, for any $c, t>0$, 
	\begin{align*}
	    & \quad \ \PP\left\{ \left\| \cov^{-1/2}(\bs{A}) \cdot (\bs{B} - \bs{A}) \right\|_2 \ge c  \right\}
	    = \PP\left\{ \left\| \sum_{i=1}^N (T_i-Z_i) \bs{v}_i \right\|_2 \ge c \sqrt{f(1-f)N}  \right\}
	    \\
	    & \le 
	    2 d \exp 
	\left(
	- \frac{ c^2 f(1-f) \sqrt{N}}{ 2 t d\xi^2}
	\right) + 2 \exp\left( -2t^2 \right). 
	\end{align*}
    From the above, Lemma \ref{lem:hoeffdingbnd} holds. 
\end{proof}

\begin{proof}[\bf Proof of Theorem \ref{thm:berry_esseen_clt_full}(iv)]
    Let $\bs{Z}$ and $\bs{T}$ be the pair of indicator vectors for simple random sampling and Bernoulli sampling satisfying Lemma \ref{lemma:couple_srs_bs}, 
    and adopt the same notation from the proof of Theorem \ref{thm:berry_esseen_clt_full}(iii). 
    From the proof of Lemma \ref{lemma:berry_bound_coupling} and Theorem \ref{thm:berry_esseen_clt_full}(iii), 
    for any $c, h >0$, 
    \begin{align*}
		& \quad \ \sup_{\mathcal{Q} \in \cC_d} 
		\big|
		\PP \big( \bs{W} \in \mathcal{Q} \big) - \PP(\bs{\varepsilon} \in \mathcal{Q})
		\big| \\
		& \leq   4 d^{1/4} (c+h) + 2 d \cdot \exp\left( - \frac{h^2}{2 d l^2}  \right) + a + \frac{(1-l)^2 b^2 d}{c^2}\\
		& = a + \left\{ 2 d \cdot \exp\left( - \frac{h^2}{2 d l^2}  \right) + 4d^{1/4} h \right\}
		+ \left[ 4 d^{1/4} c + \PP\left\{ \left\| \bs{\Sigma}_{\bs{A}}^{-1/2} (\bs{B} - \bs{A}) \right\|_2 \ge c  \right\} \right], 
	\end{align*}
    where $a=165\gamma$ and $l = 1-\sqrt{1-N^{-1}}$. 
    Letting $h = \{ dl^2 \cdot \log N \}^{1/2}$ and from the proof of Theorem \ref{thm:berry_esseen_clt_full}(iii), we can know that, for any $c>0$, 
    \begin{align*}
		\sup_{\mathcal{Q} \in \cC_d} 
		\big|
		\PP \big( \bs{W} \in \mathcal{Q} \big) - \PP(\bs{\varepsilon} \in \mathcal{Q})
		\big| \le 174 \gamma 
		+ \left[ 4 d^{1/4} c + \PP\left\{ \left\| \bs{\Sigma}_{\bs{A}}^{-1/2} (\bs{B} - \bs{A}) \right\|_2 \ge c  \right\} \right]. 
	\end{align*}
    Applying Lemma \ref{lem:hoeffdingbnd} and letting 
    \begin{align*}
        c = \left\{ \left( \frac{\log N}{N} \right)^{1/2} 
        \frac{1}{f(1-f)} \frac{\log N}{2} d \xi^2 \right\}^{1/2}, 
        \quad 
        t = \left( \frac{\log N}{4} \right)^{1/2}, 
    \end{align*}
    we have 
    \begin{align*}
        4 d^{1/4} c + \PP\left\{ \left\| \bs{\Sigma}_{\bs{A}}^{-1/2} (\bs{B} - \bs{A}) \right\|_2 \ge c  \right\}
        & \le 
        4 d^{1/4} c + 2 d \exp 
	    \left(
	    - \frac{c^2 f(1-f) \sqrt{N} }{ 2 t d\xi^2 }
	    \right) + 2 \exp\left( -2t^2 \right)\\
	    & = 
	    2\sqrt{2} \frac{(\log N)^{3/4}d^{3/4}}{ N^{1/4} \sqrt{f(1-f)}} \cdot \xi 
	    + 
	    2d N^{-1/2} + 2 N^{-1/2}\\
	    & \le 
	    3 \frac{(\log N)^{3/4}d^{3/4}}{ N^{1/4} \sqrt{f(1-f)}} \cdot \xi 
	    + 
	    4d N^{-1/2}, 
    \end{align*}
    where $\xi = \max_{1 \le i \le n} \left\|\left(\bs{S}_{\bs{u}}^2\right)^{-1/2} (\bs{u}_i-\bar{\bs{u}})\right\|_\infty$. 
    From Lemma \ref{lemma:gamma_srs_bound} and the proof of Theorem \ref{thm:berry_esseen_clt_full}(iii), 
    we can know that 
    $4d N^{-1/2} \le 2^{5/2} \gamma \le 6 \gamma$. 
    From the above, 
    \begin{align*}
        \sup_{\mathcal{Q} \in \cC_d} 
		\big|
		\PP \big( \bs{W} \in \mathcal{Q} \big) - \PP(\bs{\varepsilon} \in \mathcal{Q})
		\big| 
		& \le 174 \gamma 
		+ 3 \frac{(\log N)^{3/4}d^{3/4}}{ N^{1/4} \sqrt{f(1-f)}} \cdot \xi + 6 \gamma
		\\
		& 
		\le 
		180 \gamma + 3 \frac{(\log N)^{3/4}d^{3/4}}{ N^{1/4} \sqrt{f(1-f)}} \cdot \max_{1 \le i \le n} \left\|\left(\bs{S}_{\bs{u}}^2\right)^{-1/2} (\bs{u}_i-\bar{\bs{u}})\right\|_\infty. 
    \end{align*}
    Therefore, Theorem \ref{thm:berry_esseen_clt_full}(iv) holds.
\end{proof}

\subsection{Proof of Theorem \ref{thm:berry_esseen_clt_full}(v)}

To prove Theorem \ref{thm:berry_esseen_clt_full}(v), we need the following two lemmas. 

\begin{lemma}\label{lemma:srs_tail_higher_order}
	Let $\mathcal{X} \equiv \{x_i\}_{i=1}^N$ be $N$ zero-centered real valued quantities, and let $X_1, \cdots, X_m$ be $m$ random sample drawn without replacement from $\mathcal{X}$. Then for any $t > 0$ and $\iota \ge 2$, we have
	\[
	\PP\left(\left|\sum_{i = 1}^m X_i \right| \ge t\right) \le R_\iota \frac{\left( f \sum_{i=1}^N x_i^2\right)^{\iota / 2} + f \sum_{i = 1}^N |x_i|^\iota}{t^\iota},
	\]
	where $f=m/N$ and $R_\iota$ is a universal constant depending only on $\iota$.
\end{lemma}

\begin{proof}[Proof of Lemma \ref{lemma:srs_tail_higher_order}]
	From Markov's inequality, for any $\iota \ge 2$, 
	\[
	\PP\left(\left|\sum_{i = 1}^m X_i \right| \ge t\right) = \PP\left(\left|\sum_{i = 1}^m X_i \right|^\iota \ge t^\iota\right) \le \frac{\E \left|\sum_{i = 1}^m X_i \right|^\iota}{t^\iota}.
	\]
	From \citet[Theorem 4]{Hoeffding63}, 
	we further have 
	$
	\E \left|\sum_{i = 1}^m X_i \right|^\iota \le \E |\sum_{i = 1}^m \tilde{X}_i |^\iota,
	$
	where $\tilde{X}_1, \cdots, \tilde{X}_m$ are i.i.d. random samples drawn with replacement from $\mathcal{X}$. 
	From Rosenthal's inequality, 
    we then have 
	\begin{align*}
	    \PP\left(\left|\sum_{i = 1}^m X_i \right| \ge t\right) \le \frac{\E \left|\sum_{i = 1}^m \tilde{X}_i \right|^\iota}{t^\iota} \le R_\iota \frac{\left(\sum_{i=1}^m \E\tilde{X}_i^2 \right)^{\iota / 2} + \sum_{i = 1}^m \E |\tilde{X}_i|^\iota}{t^\iota},
	\end{align*}
	where $R_\iota$ is a universal constant depending only on $\iota$. 
	Note that 
	$
	\E\tilde{X}_i^2 = N^{-1} \sum_{i=1}^N x_i^2 
	$
	and 
	$
	\E |\tilde{X}_i|^\iota = N^{-1} \sum_{i=1}^N |x_i|^\iota. 
	$
	We can then derive Lemma \ref{lemma:srs_tail_higher_order}. 
\end{proof}

\begin{lemma}\label{lem:rosenthualbnd}
    Let $\bs{Z}$ and $\bs{T}$ be the pair of random vectors constructed as in Lemma~\ref{lemma:couple_srs_bs}.
    For any $\iota \geq 2$ and $t > 0$,
	\begin{align*}
	    \PP\left\{ \left\| \cov^{-1/2}(\bs{A}) \cdot (\bs{B} - \bs{A}) \right\|_2 \ge c  \right\}
	    = 
	    \frac{C_\iota d^{\iota/2+1}}{c^\iota N^{\iota/4} \{f(1-f)\}^{\iota/2}}
	     + 
	    \frac{C_\iota d^{\iota/2} \xi_\iota}{c^\iota N^{(\iota-1)/2} \{f(1-f)\}^{(\iota-1)/2}}
	\end{align*}
	where $C_\iota$ is a constant depending only on $\iota$, and 
	$
	    \xi_\iota = N^{-1} \sum_{i = 1}^N \| \left(\bs{S}_{\bs{u}}^2\right)^{-1/2} (\bs{u}_i-\bar{\bs{u}}) \|_{\iota}^\iota. 
	$
\end{lemma}

\begin{proof}[Proof of Lemma \ref{lem:rosenthualbnd}]
    We construct $\bs{Z}$ and $\bs{T}$ in the same way as in the proof of Lemmas \ref{lemma:couple_srs_bs} and \ref{lem:hoeffdingbnd}, and we adopt the same notation as in Lemma \ref{lem:hoeffdingbnd}.
    We further define $\xi_{k,\iota} = N^{-1} \sum_{i = 1}^N |v_{ik}|^\iota$.

    We first consider bounding the tail probability of $\left\|\sum_{i=1}^n (T_i - Z_i) \bs{v}_{i} \right\|_2$. 
    From Lemma \ref{lemma:srs_tail_higher_order}, for any $c > 0$ and $1\le k \le d$, 
    \begin{align*}
        \PP\left(\left|\sum_{i=1}^n (T_i - Z_i) v_{ik} \right| \ge c \mid \tilde{m}\right) 
        & \le R_\iota \frac{\left(|\tilde{m} - m| \cdot N^{-1}\sum_{i=1}^N v_{ik}^2\right)^{\iota / 2} + |\tilde{m} - m| \cdot N^{-1} \sum_{i = 1}^N |v_{ik}|^\iota}{c^\iota}
        \\
        & 
        \le R_\iota \frac{|\tilde{m} - m|^{\iota / 2} + |\tilde{m} - m| \cdot \xi_{k,\iota}}{c^\iota}, 
    \end{align*}
    where the last inequality holds because $N^{-1}\sum_{i=1}^N v_{ik}^2 = (N-1)/N \le 1$. 
    This then implies that 
    \begin{align*}
        & \quad \ \PP\left(\left\|\sum_{i=1}^n (T_i - Z_i) \bs{v}_{i} \right\|_2 \ge c \mid \tilde{m}\right) 
        \le 
        \sum_{k=1}^d 
        \PP\left(\left|\sum_{i=1}^n (T_i - Z_i) v_{ik} \right| \ge c/\sqrt{d} \mid \tilde{m}\right) 
        \\
        & 
        \le R_\iota \sum_{k=1}^d  \frac{ |\tilde{m} - m|^{\iota / 2} + |\tilde{m} - m| \cdot \xi_{k,\iota}}{c^\iota d^{-\iota/2}}
        = 
        R_\iota d^{\iota/2}  \cdot \frac{ d |\tilde{m} - m|^{\iota / 2} + |\tilde{m} - m| \cdot \sum_{k=1}^d  \xi_{k,\iota}}{c^\iota}. 
    \end{align*}
    By the law of iterated expectation, 
    \begin{align*}
        & \quad \ \PP\left(\left\|\sum_{i=1}^n (T_i - Z_i) \bs{v}_{i} \right\|_2 \ge c \right) 
        = \E \left\{ \PP\left(\left\|\sum_{i=1}^n (T_i - Z_i) \bs{v}_{i} \right\|_2 \ge c \mid \tilde{m}\right) \right\}
        \\
        & \le 
        R_\iota d^{\iota/2} \cdot \frac{ d \cdot \E\{ |\tilde{m} - m|^{\iota/2} \} + \E\{|\tilde{m} - m|\} \cdot \sum_{k=1}^d  \xi_{k,\iota}}{c^\iota}. 
    \end{align*}

    We then consider bounding the moments of $|\tilde{m} - m|$. 
    By Hoeffding's inequality, 
    for any $t > 0$,
	$
	\PP(|\tilde{m} - m| \ge t) \le 2 \exp(- 2 t^2 / N).
	$
	Using \citep[Lemma~1.4]{RH15}, this implies that 
	\begin{align*}
	    \E\{ |\tilde{m} - m|^{\iota/2} \}
	    \le \left( \frac{N}{2} \right)^{\iota/4} \cdot (\iota/2) \cdot \Gamma(\iota/4).
	\end{align*}
	Besides, 
	$\E\{|\tilde{m} - m|\} \le \sqrt{ \var(\tilde{m} - m)} = \sqrt{Nf(1-f)}$. 
	
	Finally, we consider bounding the tail probability of $\cov^{-1/2}(\bs{A}) \cdot (\bs{B} - \bs{A})$. 
	From \eqref{eq:diff_B_A_v}, 
    \begin{align*}
	    & \quad \ \PP\left\{ \left\| \cov^{-1/2}(\bs{A}) \cdot (\bs{B} - \bs{A}) \right\|_2 \ge c  \right\}
	    = \PP\left\{ \left\| \sum_{i=1}^N (T_i-Z_i) \bs{v}_i \right\|_2 \ge c \sqrt{f(1-f)N}  \right\}
	    \\
	    & \le 
	    R_\iota d^{\iota/2} \cdot \frac{ d \cdot \E\{ |\tilde{m} - m|^{\iota/2} \} + \E\{|\tilde{m} - m|\} \cdot \sum_{k=1}^d  \xi_{k,\iota}}{c^\iota N^{\iota/2} \{f(1-f)\}^{\iota/2}}
	    \\
	    & \le 
	    \frac{R_\iota d^{\iota/2}}{c^\iota N^{\iota/2} \{f(1-f)\}^{\iota/2}} \left\{ d \cdot \left( \frac{N}{2} \right)^{\iota/4} \cdot (\iota/2) \cdot \Gamma(\iota/4) + \sqrt{Nf(1-f)} \cdot \sum_{k=1}^d  \xi_{k,\iota} \right\}
	    \\
	    & \le 
	    \frac{C_\iota d^{\iota/2}}{c^\iota N^{\iota/2} \{f(1-f)\}^{\iota/2}} \left\{ d \cdot N^{\iota/4} + \sqrt{Nf(1-f)} \cdot N^{-1} \sum_{i = 1}^N \|\bs{v}_i\|_{\iota}^\iota \right\}
	    \\
	    & = 
	    \frac{C_\iota d^{\iota/2+1}}{c^\iota N^{\iota/4} \{f(1-f)\}^{\iota/2}}
	     + 
	    \frac{C_\iota d^{\iota/2}}{c^\iota N^{(\iota-1)/2} \{f(1-f)\}^{(\iota-1)/2}}
	    \cdot \frac{1}{N} \sum_{i = 1}^N \|\bs{v}_i\|_{\iota}^\iota. 
	\end{align*}
	
	From the above, we can immediately derive Lemma \ref{lem:rosenthualbnd}. 
\end{proof}

\begin{proof}[\bf Proof of Theorem \ref{thm:berry_esseen_clt_full}(v)]
    Letting $\bs{Z}$ and $\bs{T}$ be the pair of indicator vectors for simple random sampling and Bernoulli sampling satisfying Lemma \ref{lemma:couple_srs_bs}, 
    and by the same logic as the proof of Theorem \ref{thm:berry_esseen_clt_full}(iv),  
    for any $c>0$, 
    \begin{align*}
		\sup_{\mathcal{Q} \in \cC_d} 
		\big|
		\PP \big( \bs{W} \in \mathcal{Q} \big) - \PP(\bs{\varepsilon} \in \mathcal{Q})
		\big| \le 174 \gamma 
		+ \left[ 4 d^{1/4} c + \PP\left\{ \left\| \bs{\Sigma}_{\bs{A}}^{-1/2} (\bs{B} - \bs{A}) \right\|_2 \ge c  \right\} \right]. 
	\end{align*}
    Applying Lemma \ref{lem:hoeffdingbnd} and letting 
    $
        c = N^{-\iota/\{4(\iota+1)\}} \cdot d^{(2\iota-1)/\{4(\iota+1)\}}, 
    $
    we have 
    \begin{align*}
        & \quad \ 4 d^{1/4} c + \PP\left\{ \left\| \bs{\Sigma}_{\bs{A}}^{-1/2} (\bs{B} - \bs{A}) \right\|_2 \ge c  \right\}
        \\
        & \le 
        4 d^{1/4} c + \frac{C_\iota d^{\iota/2+1}}{c^\iota N^{\iota/4} \{f(1-f)\}^{\iota/2}}
	     + 
	    \frac{C_\iota d^{\iota/2} \xi_\iota}{c^\iota N^{(\iota-1)/2} \{f(1-f)\}^{(\iota-1)/2}}\\
	    & = 
        4 \frac{d^{3\iota/\{4(\iota+1)\}}}{N^{\iota/\{4(\iota+1)\}}}
        + 
        \frac{C_\iota d^{3\iota/\{4(\iota+1)\}} \cdot d}{N^{\iota/\{4(\iota+1)\}} \{f(1-f)\}^{\iota/2}}
	     + 
	    \frac{C_\iota \cdot d^{3\iota/\{4(\iota+1)\}} \xi_\iota}{ N^{(\iota^2-2)/\{4(\iota+1)\}} \{f(1-f)\}^{(\iota-1)/2}}
	    \\
	    & \le  
	    4 \frac{d^{3\iota/\{4(\iota+1)\}}}{N^{\iota/\{4(\iota+1)\}}}
        + 
        \frac{C_\iota d^{3\iota/\{4(\iota+1)\}} \cdot (d+\xi_\iota)}{N^{\iota/\{4(\iota+1)\}} \{f(1-f)\}^{\iota/2}}
        \le 
        \max\{4, C_{\iota}\} \cdot 
        \frac{d^{3\iota/\{4(\iota+1)\}} \cdot (2d+\xi_\iota)}{N^{\iota/\{4(\iota+1)\}} \{f(1-f)\}^{\iota/2}},
    \end{align*}
    where 
    $
	    \xi_\iota = N^{-1} \sum_{i = 1}^N \| \left(\bs{S}_{\bs{u}}^2\right)^{-1/2} (\bs{u}_i-\bar{\bs{u}}) \|_{\iota}^\iota. 
	$
	Adopting the notation from the proof of Lemma \ref{lem:rosenthualbnd}, 
	\begin{align*}
	    \xi_\iota
	    & = 
	    N^{-1} \sum_{i = 1}^N \sum_{k=1}^d |v_{ik}|^{\iota}
	    = 
	    \sum_{k=1}^d \left( N^{-1}   \sum_{i = 1}^N |v_{ik}|^{\iota} \right)
	    \ge 
	    \sum_{k=1}^d \left( N^{-1}   \sum_{i = 1}^N v_{ik}^2 \right)^{\iota/2}
	    = 
	    \sum_{k=1}^d \left( \frac{N-1}{N} \right)^{\iota/2}\\
	    & 
	    \ge 2^{-\iota/2} \cdot d. 
	\end{align*}
	From the above, we then have 
	\begin{align*}
	    & \quad \ \sup_{\mathcal{Q} \in \cC_d} 
		\big|
		\PP \big( \bs{W} \in \mathcal{Q} \big) - \PP(\bs{\varepsilon} \in \mathcal{Q})
		\big| 
		\\
		& \le 174 \gamma 
        + 
        \max\{4, C_{\iota}\} \cdot 
        \frac{d^{3\iota/\{4(\iota+1)\}} \cdot (1+d+\xi_\iota)}{N^{\iota/\{4(\iota+1)\}} \{f(1-f)\}^{\iota/2}}
        \\
        & \le 
        174 \gamma 
        + 
        \max\{4, C_{\iota}\} \cdot 
        \frac{d^{3\iota/\{4(\iota+1)\}} \cdot (2^{\iota/2+1}+1)\xi_\iota}{N^{\iota/\{4(\iota+1)\}} \{f(1-f)\}^{\iota/2}}\\
        & \le 
        174 \gamma 
        + 
        C_{\iota}' \cdot 
        \frac{d^{3\iota/\{4(\iota+1)\}}}{N^{\iota/\{4(\iota+1)\}} \{f(1-f)\}^{\iota/2}} \cdot \frac{1}{N}\sum_{i = 1}^N \| \left(\bs{S}_{\bs{u}}^2\right)^{-1/2} (\bs{u}_i-\bar{\bs{u}}) \|_{\iota}^\iota,
	\end{align*}
	where $C_{\iota}'= \max\{4, C_{\iota}\} \cdot (2^{\iota/2+1}+1)$ is a universal constant depending only on $\iota$. 
	Therefore, Theorem \ref{thm:berry_esseen_clt_full}(v) holds. 
\end{proof}

\section{Asymptotic Distributions in Completely Randomized and Rerandomized Experiments}\label{sec:asym_cre_rerand}

\begin{proof}[\bf Proof of Theorems \ref{thm:berry_esseen_clt} and \ref{thm:Berry--Esseen-higher-order}]
	Following the notation in Section \ref{sec:berry_rate} and from \eqref{eq:did_srs_u}, 
	the difference-in-means vector $(\hat{\tau}, \hat{\bs{\tau}}_{\bs{X}}^\top)^\top$ is essentially the sample total of a simple random sample of  size $n_1$ from the finite population of $\{\bs{u}_i = (r_0 Y_i(1) + r_1 Y_i(0), \bs{X}_i^\top)^\top: i=1,2,\ldots, n\}$, up to some constant scaling and shifting. 
	This  then  implies that 
	\begin{align*}
		\Delta_n & \equiv 
		\sup_{\mathcal{Q} \in \cC_{K+1}}
		\left|
		\PP
		\left\{\bs{V}^{-1/2}
		\begin{pmatrix}
			\hat{\tau} - \tau\\
			\hat{\bs{\tau}}_{\bs{X}}
		\end{pmatrix}
		\in \mathcal{Q}
		\right\}
		- 
		\PP \left( \bs{\varepsilon} \in \mathcal{Q} \right)
		\right|
		= 
		\sup_{\mathcal{Q} \in \cC_{K+1}}
		\left|
		\PP
		\left(\bs{W}
		\in \mathcal{Q}
		\right)
		- 
		\PP \left( \bs{\varepsilon} \in \mathcal{Q} \right)
		\right|, 
	\end{align*}
	where $\bs{W} = \cov^{-1/2}( \sum_{i=1}^n Z_i \bs{u}_i)  \sum_{i=1}^n Z_i \bs{u}_i$ is the standardization of $\sum_{i=1}^n Z_i \bs{u}_i$. 
	By the definition of $\gamma_n$ in \eqref{eq:gamma_n} and the definitions of $r_1, r_0, K$,
	Theorem \ref{thm:berry_esseen_clt} then follows immediately from Theorem \ref{thm:berry_esseen_clt_full} (i - iii); Theorem \ref{thm:Berry--Esseen-higher-order} follows from Theorem \ref{thm:berry_esseen_clt_full} (iv - v). 
\end{proof}

\begin{proof}[\bf Proof of Theorem~\ref{thm:dim_rem}]
	Under Condition \ref{cond:p_n}, there must exist $\underline{n}\ge 2$ such that $p_n > \Delta_n$ for all $n \geq \underline{n}$. Let $(\tilde{\tau}, \tilde{\bs{\tau}}_{\bs{X}}^\top)^\top$ denote a Gaussian random vector with mean $(\tau, \bs{0}_K^\top)$ and covariance matrix $\bs{V}$ in \eqref{eq:V}. 
	By the definition of $\Delta_n$ in \eqref{eq:Delta_n}, we can know that, for any measurable convex set $\mathcal{Q}$ in $\mathbb{R}^{K+1}$, 
	\begin{align*}
		\left|
		\PP
		\left\{
		\begin{pmatrix}
			\hat{\tau} - \tau\\
			\hat{\bs{\tau}}_{\bs{X}}
		\end{pmatrix}
		\in \mathcal{Q}
		\right\}
		- \PP
		\left\{
		\begin{pmatrix}
			\tilde{\tau} - \tau\\
			\tilde{\bs{\tau}}_{\bs{X}}
		\end{pmatrix}
		\in \mathcal{Q}
		\right\}
		\right|
		& = 
		\left|
		\PP
		\left\{
		\bs{V}^{-1/2}
		\begin{pmatrix}
			\hat{\tau} - \tau\\
			\hat{\bs{\tau}}_{\bs{X}}
		\end{pmatrix}
		\in  \bs{V}^{-1/2} \mathcal{Q}
		\right\}
		- 
		\PP
		\left(
		\bs{\varepsilon}
		\in \bs{V}^{-1/2} \mathcal{Q}
		\right)
		\right|\\
		& \le \Delta_n.
	\end{align*}
	This  implies that,
	\[
	\left| 
	\PP\left(\hat{\bs{\tau}}_{\bs{X}}^\top \bs{V}_{\bs{xx}}^{-1} \hat{\bs{\tau}}_{\bs{X}} 
	\le a_n\right) 
	- 
	\PP\left(\tilde{\bs{\tau}}_{\bs{X}}^\top \bs{V}_{\bs{xx}}^{-1} \tilde{\bs{\tau}}_{\bs{X}} \le a_n\right)
	\right|
	= 
	\left|
	\PP(M \le a_n) - \PP(\tilde{M} \le a_n)
	\right|
	\leq \Delta_n, 
	\]
	and for any $c\in \mathbb{R}$,
	\begin{align}\label{eq:bound_numer_rem_proof}
		& \quad \ 
		\left|
		\PP\left(\hat{\tau} - \tau \le c, \hat{\bs{\tau}}_{\bs{X}}^\top \bs{V}_{\bs{xx}}^{-1} \hat{\bs{\tau}}_{\bs{X}} \le a_n\right) 
		-
		\PP\left( \tilde{\tau} - \tau \le c, \tilde{\bs{\tau}}_{\bs{X}}^\top \bs{V}_{\bs{xx}}^{-1} \tilde{\bs{\tau}}_{\bs{X}} \le a_n\right)
		\right|
		\nonumber
		\\
		& 
		= 
		\left|
		\PP\left(\hat{\tau} - \tau \le c,  M \le a_n\right) 
		-
		\PP\left( \tilde{\tau} - \tau \le c, \tilde{M}\le a_n\right)
		\right|
		\le 
		\Delta_n, 
	\end{align}
	where $\tilde{M} \equiv \tilde{\bs{\tau}}_{\bs{X}}^\top \bs{V}_{\bs{xx}}^{-1} \tilde{\bs{\tau}}_{\bs{X}} \sim \chi_{K_n}^2$.
	By definition, $p_n = \PP(\tilde{M}\le a_n)$. 
	Thus, for $n\ge \underline{n}$, we must have $\PP(M\le a_n) \ge p_n - \Delta_n > 0$, 
	and consequently 
	\begin{align}\label{eq:bound_denom_rem_proof}
		\frac{1}{p_n + \Delta_n} \leq 
		\frac{1}{\PP( M \le a_n)} \leq \frac{1}{p_n - \Delta_n}.
	\end{align}
	From \eqref{eq:bound_numer_rem_proof} and \eqref{eq:bound_denom_rem_proof},  we then have, for all $n \geq \underline{n}$ and $c\in \mathbb{R}$, 
	\begin{align*}
		\PP(\hat{\tau} - \tau \le c \mid M \le a_n)
		& = 
		\frac{\PP(\hat{\tau} - \tau \le c,  M \le a_n)}{\PP(M \le a_n)}
		\le 
		\frac{\PP(\tilde{\tau} - \tau \le c, \tilde{M} \le a_n) + \Delta_n}{\PP(\tilde{M}\le a_n) - \Delta_n}
		\\
		& = 
		\frac{\PP(\tilde{\tau} - \tau \le c \mid \tilde{M} \le a_n) + \Delta_n/p_n}{1 - \Delta_n/p_n}
		\\
		& \le 
		\frac{\PP(\tilde{\tau} - \tau \le c \mid \tilde{M} \le a_n) (1-\Delta_n/p_n) + 2 \Delta_n/p_n}{1 - \Delta_n/p_n}\\
		& = 
		\PP(\tilde{\tau} - \tau \le c \mid \tilde{M} \le a_n) + 
		\frac{2 \Delta_n/p_n}{1 - \Delta_n/p_n}, 
	\end{align*}
	and 
	\begin{align*}
		\PP(\hat{\tau} - \tau \le c \mid M \le a_n)
		& = 
		\frac{\PP(\hat{\tau} - \tau \le c, M\le a_n)}{\PP(M \le a_n)}
		\ge 
		\frac{\PP(\tilde{\tau} - \tau \le c, \tilde{M} \le a_n) - \Delta_n}{\PP(\tilde{M} \le a_n) + \Delta_n}
		\\
		& = 
		\frac{\PP(\tilde{\tau} - \tau \le c \mid \tilde{M} \le a_n) - \Delta_n/p_n}{1 + \Delta_n/p_n}
		\\
		& 
		\ge 
		\frac{\PP(\tilde{\tau} - \tau \le c \mid \tilde{M} \le a_n)(1+\Delta_n/p_n) -2 \Delta_n/p_n}{1 + \Delta_n/p_n}\\
		& = 
		\PP(\tilde{\tau} - \tau \le c \mid \tilde{M} \le a_n) 
		-
		\frac{2 \Delta_n/p_n}{1 + \Delta_n/p_n}.
	\end{align*}
	These imply that, for all $n\ge \underline{n}$, 
	\begin{align*}
		& \quad \ 
		\sup_{c \in \R} \left| \PP\left\{V_{\tau\tau}^{-1/2} (\hat{\tau} - \tau) \le c \mid M \le a_n\right\} - \PP\left\{V_{\tau\tau}^{-1/2} (\tilde{\tau} - \tau) \le c \mid \tilde{M} \le a_n\right\}\right|
		\\
		& \le \max\left\{ \frac{2 \Delta_n/p_n}{1 - \Delta_n/p_n}, \frac{2 \Delta_n/p_n}{1 + \Delta_n/p_n} \right\}
		\le \frac{2 \Delta_n/p_n}{1 - \Delta_n/p_n}. 
	\end{align*}
	Under Condition \ref{cond:p_n}, we then have, as $n\rightarrow \infty$, 
	\begin{align*}
		\sup_{c \in \R} \left| \PP\left\{V_{\tau\tau}^{-1/2} (\hat{\tau} - \tau) \le c \mid M \le a_n\right\} - \PP\left\{V_{\tau\tau}^{-1/2} (\tilde{\tau} - \tau) \le c \mid \tilde{M} \le a_n\right\}\right|
		\rightarrow 0. 
	\end{align*}
	Finally, from the proof of  \citet[][Theorem 1]{LDR18}, for any $c\in \mathbb{R}$, 
	\[
	\PP\left\{V_{\tau\tau}^{-1/2} (\tilde{\tau} - \tau) \le c \mid \tilde{M} \le a_n\right\} = \PP\left\{\left(\sqrt{1 - R^2} \varepsilon_0 + \sqrt{R^2} L_{K_n, a_n}\right) \leq c\right\},
	\]
	with $\varepsilon_0$ and $L_{K_n, a_n}$ defined as in Section \ref{sec:asymptotic}. 
	Therefore, we derive Theorem \ref{thm:dim_rem}. 
\end{proof}

\begin{proof}[\bf Comment on Condition \ref{cond:gamma_n} and regularity conditions in \citet{LDR18}]
By the definition in \eqref{eq:gamma_n}, 
\begin{align*}
    \gamma_n & \le 
	\frac{(K+1)^{1/4}}{\sqrt{n r_1r_0}} \cdot \max_{1\le i\le n} \left\| \bs{S}_{\bs{u}}^{-1} (\bs{u}_i-\bar{\bs{u}}) \right\|_2 \cdot \frac{1}{n} \sum_{i=1}^n \left\| \bs{S}_{\bs{u}}^{-1} (\bs{u}_i-\bar{\bs{u}}) \right\|_2^2
	\\
	& = 
	\frac{(K+1)^{1/4}}{\sqrt{n r_1r_0}} \cdot \max_{1\le i\le n} \left\| \bs{S}_{\bs{u}}^{-1} (\bs{u}_i-\bar{\bs{u}}) \right\|_2 \cdot \frac{(n-1)(K+1)}{n}
	\\
	& \le 
	\frac{(K+1)^{5/4}}{\sqrt{r_1r_0}} \cdot \left\| \bs{S}_{\bs{u}}^{-1} \right\|_2 \cdot \frac{1}{\sqrt{n}} \max_{1\le i\le n} \left\| \bs{u}_i-\bar{\bs{u}} \right\|_2. 
\end{align*}
Note that
\begin{align}\label{eq:bound_max_squarenorm}
    \frac{1}{n} \max_{1\le i\le n} \left\| \bs{u}_i-\bar{\bs{u}} \right\|_2^2 
    & = \frac{1}{n} \max_{1\le i \le n}\left[ r_0\{ Y_i(1) - \bar{Y}(1) \} + r_1 \{ Y_i(0) - \bar{Y}(0) \} \right]^2 + 
    \frac{1}{n} \max_{1\le i\le n}\left\| \bs{X}_i - \bar{\bs{X}} \right\|_2^2
    \nonumber
    \\
    & \le \frac{2}{n} \max_{1\le i\le n} \{ Y_i(1) - \bar{Y}(1) \}^2 + \frac{2}{n} \max_{1\le i\le n}  \{ Y_i(0) - \bar{Y}(0) \}^2 +  \frac{1}{n} \left\| \bs{X}_i - \bar{\bs{X}} \right\|_2^2. 
\end{align}
Under \citet[][Condition 1]{LDR18}, as $n\rightarrow \infty$, 
both $r_1$ and $r_0$ have positive limits, $\bs{S}_{\bs{u}}^2$ has a limiting value (in particular, the limit of $\bs{S}^2_{\bs{X}}$ is nonsingular), 
and the quantities on the right hand side of \eqref{eq:bound_max_squarenorm} converge to zero. 
If additionally the limit of $R^2$ is less than 1, then the limit of $\bs{S}_{\bs{u}}^2$ will be invertible, and thus $\gamma_n$ must converge to zero as $n\rightarrow \infty$, i.e., Condition \ref{cond:gamma_n} holds. 
\end{proof}

\begin{proof}[\bf Proof of Corollary \ref{cor:improve}]
	Corollary \ref{cor:improve} follows by the same logic as \citet[][Corollaries 1--3]{LDR18}. 
\end{proof}

\begin{proof}[\bf Comments on the lower bound of $\gamma_n$ in \eqref{eq:gamma_n_lower}]
	The lower bound of $\gamma_n$ follows by the same logic as Lemma \ref{lemma:gamma_srs_bound}.
\end{proof}

\begin{proof}[\bf Comments on the upper bound of $\gamma_n$]
    By the definition in \eqref{eq:gamma_n}, 
    \begin{align*}
        \gamma_n & \equiv 
    	\frac{(K_n+1)^{1/4}}{\sqrt{n r_1r_0}} \frac{1}{n} \sum_{i=1}^n \left\| \bs{S}_{\bs{u}}^{-1} (\bs{u}_i-\bar{\bs{u}}) \right\|_2^3
    	\le 
    	\frac{(K_n+1)^{1/4}}{\sqrt{n r_1r_0}} \frac{1}{n} \sum_{i=1}^n (K_n+1)^{3/2}\left\| \bs{S}_{\bs{u}}^{-1} (\bs{u}_i-\bar{\bs{u}}) \right\|_{\infty}^{3/2}\\
    	& \le 
    	\frac{(K_n+1)^{7/4}}{\sqrt{n r_1r_0}} \max_{1\le i\le n}\left\| \bs{S}_{\bs{u}}^{-1} (\bs{u}_i-\bar{\bs{u}}) \right\|_{\infty}^{3/2}. 
    \end{align*}
	If the standardized finite population $\{\bs{S}_{\bs{u}}^{-1} (\bs{u}_i-\bar{\bs{u}}):1\le i\le n\}$ is coordinate-wise bounded, and the proportions of treated and control units are bounded away from zero, 
	then there exist finite positive constants $c$ and $C$ such that for all $n$ and $1\le i \le n$, $\|\bs{S}_{\bs{u}}^{-1} (\bs{u}_i-\bar{\bs{u}})\|_{\infty} \le C$ and $\min\{r_1, r_0\}> c$.  
	Consequently, 
	\begin{align*}
	     \gamma_n & \le \frac{(K_n+1)^{7/4}}{\sqrt{n r_1r_0}} \max_{1\le i\le n}\left\| \bs{S}_{\bs{u}}^{-1} (\bs{u}_i-\bar{\bs{u}}) \right\|_{\infty}^{3/2}
	     \le 
	     \frac{(K_n+1)^{7/4}}{\sqrt{n c^2}} C^{3/2} = \left( \frac{K_n+1}{n^{2/7}} \right)^{7/4} \frac{C^{3/2}}{c}, 
	\end{align*}
	under which $K_n = o(n^{2/7})$ implies that $\gamma_n = o(1)$. 
\end{proof}

\section{Limiting Behavior of the Constrained Gaussian Random Variable}\label{sec:limit_constrained_Gaussian}

In this section, we prove Theorem \ref{thm:v_Ka} regarding the limiting behavior of the constrained Gaussian random variable $L_{K_n, a_n}$. 
We first give some technical lemmas in Section \ref{sec:lemma_L_limit}, 
and then study the limiting behavior of $L_{K_n, a_n}$ in Sections \ref{sec:limt_L_logpK_inf}--\ref{sec:limt_L_logpK_0} under various relationship between $\log(p_n^{-1})$ and $K_n$.
Sections \ref{sec:limt_L_logpK_inf}--\ref{sec:limt_L_logpK_0} essentially prove Theorem  \ref{thm:v_Ka}(i)--(iv) respectively, as briefly commented in Section \ref{sec:proof_thm_v_Ka}. 
For descriptive convenience, 
we introduce $\chi^2_K$ to denote a random variable following the chi-square distribution with degrees of freedom $K$.

\subsection{Technical lemmas and their proofs}\label{sec:lemma_L_limit}

\subsubsection{Lemmas for the acceptance probability $p=\PP(\chi^2_K \le a)$}

\begin{lemma}\label{lemma:Gamma_fun}
	For any integer $K\ge 1$, 
	$
	\sqrt{\pi K}\{K/(2e)\}^{K/2} \le
	\Gamma(K/2+1)
	\le 2\sqrt{\pi K}\{K/(2e)\}^{K/2}. 
	$
\end{lemma}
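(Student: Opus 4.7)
The plan is to deduce the inequalities directly from the standard two-sided Stirling bound for the Gamma function:
\begin{equation*}
\sqrt{2\pi}\, x^{x+1/2} e^{-x} \;\le\; \Gamma(x+1) \;\le\; \sqrt{2\pi}\, x^{x+1/2} e^{-x} \cdot e^{1/(12x)}
\end{equation*}
which holds for all $x > 0$ (see, e.g., Robbins' refinement of Stirling's formula). Setting $x = K/2$ gives the relevant estimate for $\Gamma(K/2+1)$.

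For the lower bound, I would compute
\begin{equation*}
\sqrt{2\pi}\, (K/2)^{K/2 + 1/2} e^{-K/2}
= \sqrt{2\pi \cdot K/2} \cdot (K/2)^{K/2} e^{-K/2}
= \sqrt{\pi K}\, \{K/(2e)\}^{K/2},
\end{equation*}
which is exactly the desired lower bound. For the upper bound, the same substitution yields $\Gamma(K/2+1) \le \sqrt{\pi K}\,\{K/(2e)\}^{K/2}\cdot e^{1/(6K)}$, and since $K \ge 1$ we have $e^{1/(6K)} \le e^{1/6} < 2$, which gives the claimed factor of $2$.

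There is essentially no obstacle; the only judgment call is which form of Stirling's bound to cite. If the referenced form is considered too specific to invoke as a black box, I would instead note that Robbins' bounds $\Gamma(x+1) = \sqrt{2\pi}\, x^{x+1/2} e^{-x + \theta(x)}$ with $0 < \theta(x) < 1/(12x)$ are elementary and may be proved by comparing $\log \Gamma(x+1)$ with the trapezoidal approximation to $\int_1^x \log t\, \mathrm{d}t$; this is completely standard. Either way, the lemma is an immediate specialization at $x = K/2$, and the constant $2$ on the upper bound is a comfortable slack over the actual factor $e^{1/6} \approx 1.18$.
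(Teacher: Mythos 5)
Your proof is correct, and it takes a different (and arguably cleaner) route than the paper. The paper invokes a sharper two-sided bound of the form
$\Gamma(K/2+1) = \sqrt{\pi}\{K/(2e)\}^{K/2}\,(K^3+K^2+K/2+c)^{1/6}$ with $c$ between $1/100$ and $1/30$ (citing Karatsuba), which requires $K\ge 2$, so the case $K=1$ is checked numerically and the two cases are then reassembled. You instead use the classical Stirling--Robbins bound $\Gamma(x+1)=\sqrt{2\pi}\,x^{x+1/2}e^{-x+\theta(x)}$ with $0<\theta(x)<1/(12x)$, specialized at $x=K/2$; the algebra $\sqrt{2\pi}(K/2)^{1/2}=\sqrt{\pi K}$ is right, the lower bound falls out exactly, and $e^{1/(6K)}\le e^{1/6}<2$ gives the upper bound uniformly for all $K\ge 1$ with no case split. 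The only caveat is a citation nicety: Robbins' original inequality is stated for positive integers, whereas you need it for the half-integer $x=K/2$; the real-variable version follows from Binet's first formula for $\log\Gamma$ (which gives $0<\theta(x)<1/(12x)$ for all real $x>0$) rather than from the trapezoidal comparison of $\sum\log k$ with $\int\log t\,\mathrm{d}t$ that you sketch, which is the integer-argument proof. Since the real-argument bound is textbook-standard, this is a matter of pointing at the right reference, not a gap. What your approach buys is uniformity over $K\ge1$ and a more widely known black box; what the paper's buys is a tighter correction factor ($(1+1/K+\cdots)^{1/6}$ versus $e^{1/(6K)}$), which is not needed here since both leave ample slack below the constant $2$.
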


\begin{proof}[Proof of Lemma \ref{lemma:Gamma_fun}]
	We can numerically verify that Lemma \ref{lemma:Gamma_fun} holds when $K=1$. 
	Below we consider only the case with $K\ge 2$. 
	From \citet{K01}, the Gamma function can be bounded by 
	\begin{align*}
		\sqrt{\pi} \left( \frac{K}{2e} \right)^{K/2} 
		\left( K^3 + K^2 + \frac{K}{2} + \frac{1}{100} \right)^{1/6}
		\le 
		\Gamma(K/2 + 1) \le \sqrt{\pi} \left( \frac{K}{2e} \right)^{K/2} 
		\left( K^3 + K^2 + \frac{K}{2} + \frac{1}{30} \right)^{1/6}, 
	\end{align*}
	\begin{align*}
		1 \le \left( 1 + \frac{1}{K} + \frac{1}{2K^2} + \frac{1}{100 K^3} \right)^{1/6}
		\le 
		\frac{\Gamma(K/2+1)}{\sqrt{\pi K}\{K/(2e)\}^{K/2}} 
		\le 
		\left( 1 + \frac{1}{K} + \frac{1}{2K^2} + \frac{1}{30 K^3} \right)^{1/6}
		\le 2. 
	\end{align*}
	From the above, Lemma \ref{lemma:Gamma_fun} holds. 
\end{proof}

\begin{lemma}\label{lemma:link_p_a}
	For any integer $K\ge 1$ and $a > 0$, define $p = \PP(\chi^2_K \le a)$. 
	Then 
	\begin{align*}
		\frac{\log(p^{-1})}{K} \le \frac{ \log (4\pi K)}{2K} + 
		\frac{1}{2} \left\{ \frac{a}{K} - 1 - \log\left( \frac{a}{K} \right) \right\}. 
	\end{align*}
	Moreover, if $a/K<1$, then 
	\begin{align*}
		\frac{\log (p^{-1})}{K} \ge 
		\frac{1}{2} \left\{ \frac{a}{K} - 1 - \log\left( \frac{a}{K} \right) \right\} + \frac{\log(\pi K)}{2K}  + \frac{1}{K}\log \left(1-\frac{a}{K}\right). 
	\end{align*}
\end{lemma}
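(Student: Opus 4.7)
The starting point is the explicit form of the chi-square density,
\[
p \;=\; \PP(\chi^2_K \le a) \;=\; \frac{1}{2^{K/2}\,\Gamma(K/2)} \int_0^a x^{K/2-1} e^{-x/2}\,\du x,
\]
so both bounds reduce to estimating this integral and then invoking Lemma~\ref{lemma:Gamma_fun} to replace $\Gamma(K/2+1)$ by $\sqrt{\pi K}\,(K/(2e))^{K/2}$ up to a factor between $1$ and $2$.

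For the upper bound on $\log(p^{-1})/K$, which is equivalent to a lower bound on $p$, I would use the trivial monotonicity $e^{-x/2}\ge e^{-a/2}$ on $[0,a]$ to get
\[
\int_0^a x^{K/2-1} e^{-x/2}\,\du x \;\ge\; e^{-a/2}\int_0^a x^{K/2-1}\,\du x \;=\; \frac{2a^{K/2}e^{-a/2}}{K},
\]
so $p \ge a^{K/2}e^{-a/2}/\{2^{K/2}\Gamma(K/2+1)\}$. Taking logs and applying the upper bound $\Gamma(K/2+1)\le 2\sqrt{\pi K}(K/(2e))^{K/2}$ from Lemma~\ref{lemma:Gamma_fun}, the terms $\frac{K}{2}\log a$, $-\frac{K}{2}\log K$, $+\frac{K}{2}$, and $-\frac{a}{2}$ combine into $-\frac{K}{2}[a/K - 1 - \log(a/K)]$, while the Stirling prefactor contributes $-\tfrac12\log(\pi K)-\log 2 = -\tfrac12\log(4\pi K)$. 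Dividing by $K$ yields the claimed upper bound.

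The lower bound on $\log(p^{-1})/K$ (upper bound on $p$), valid only when $a<K$, is the more delicate half; this will be the main obstacle since the naive estimate $e^{-x/2}\le 1$ discards the Gaussian-type concentration factor $e^{-a/2}$ and loses the $a/K - 1 - \log(a/K)$ term. The trick I would use is the identity
\[
\frac{d}{\du x}\bigl[x^{K/2} e^{-x/2}\bigr] \;=\; x^{K/2-1} e^{-x/2}\cdot \tfrac{1}{2}(K-x),
\]
which, for $x\in[0,a]$ with $a<K$, gives $x^{K/2-1}e^{-x/2} \le \frac{2}{K-a}\frac{d}{\du x}[x^{K/2}e^{-x/2}]$. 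Integrating from $0$ to $a$ produces
\[
\int_0^a x^{K/2-1}e^{-x/2}\,\du x \;\le\; \frac{2\,a^{K/2}e^{-a/2}}{K-a},
\]
so $p \le \frac{K}{K-a}\cdot \frac{a^{K/2}e^{-a/2}}{2^{K/2}\Gamma(K/2+1)}$. Taking logs, applying the lower bound $\Gamma(K/2+1)\ge \sqrt{\pi K}(K/(2e))^{K/2}$ from Lemma~\ref{lemma:Gamma_fun}, and rearranging as before produces $-\log p \ge \tfrac{K}{2}[a/K-1-\log(a/K)] + \tfrac12\log(\pi K) + \log(1-a/K)$, which gives the stated lower bound after dividing by $K$. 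The extra $\frac{1}{K}\log(1-a/K)$ term is precisely the cost of the factor $K/(K-a)$ incurred by the averaging $\frac{2}{K-x}\le \frac{2}{K-a}$, and it goes to zero as long as $a/K$ stays bounded away from $1$.
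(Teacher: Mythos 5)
Your proof is correct, and it lands on exactly the same pair of intermediate inequalities as the paper, namely
\begin{align*}
\Big(1-\frac{a}{K}\Big)\, p \;\le\; \frac{a^{K/2}e^{-a/2}}{2^{K/2}\,\Gamma(K/2+1)} \;\le\; p,
\end{align*}
after which both arguments finish identically by invoking Lemma~\ref{lemma:Gamma_fun} and taking logarithms. The only genuine difference is how you reach those two inequalities. The paper integrates by parts once, writing $p$ as the boundary term $a^{K/2}e^{-a/2}/\{2^{K/2}\Gamma(K/2+1)\}$ plus $K^{-1}$ times $\int_0^a t\cdot t^{K/2-1}e^{-t/2}\,\du t$; dropping the second term gives the lower bound on $p$, and bounding $t\le a$ inside it gives the self-referential inequality $p\le(\text{boundary term})+(a/K)p$, hence the upper bound. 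You instead bound the integrand pointwise: $e^{-x/2}\ge e^{-a/2}$ on $[0,a]$ for the lower bound, and the antiderivative identity $\frac{\du}{\du x}[x^{K/2}e^{-x/2}]=\tfrac12(K-x)\,x^{K/2-1}e^{-x/2}$ combined with $K-x\ge K-a>0$ for the upper bound. Your derivation is arguably more elementary and makes transparent where the factor $1-a/K$ comes from (it is the price of replacing $K-x$ by $K-a$), while the paper's integration-by-parts identity has the minor advantage of also being reused verbatim in the proof of Lemma~\ref{lemma:bound_chisq_ratio1}, where the recursion $p=\text{(boundary term)}+\PP(\chi^2_{K+2}\le a)$ is needed. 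One small point of care in your upper-bound step: the inequality $\frac{2}{K-x}\frac{\du}{\du x}[x^{K/2}e^{-x/2}]\le\frac{2}{K-a}\frac{\du}{\du x}[x^{K/2}e^{-x/2}]$ requires the derivative to be nonnegative on $[0,a]$, which holds precisely because $a<K$; you should state this explicitly, but it is not a gap.
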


\begin{proof}[Proof of Lemma \ref{lemma:link_p_a}]
	By definition and using integration by parts, we have 
	\begin{align}\label{eq:int_by_part}
		p & =
		\frac{1}{2^{K/2} \Gamma(K/2)} \int_0^a t^{K/2-1} e^{-t/2} \deri t
		= 
		\frac{1}{2^{K/2} \Gamma(K/2)} \cdot \frac{t^{K/2}}{K/2} e^{-t/2} \Big|_0^a + 
		\frac{1}{2^{K/2} \Gamma(K/2)} \cdot
		\int_0^a \frac{t^{K/2}}{K/2} e^{-t/2} \frac{1}{2} \deri t
		\nonumber
		\\
		& = 
		\frac{a^{K/2} e^{-a/2}}{2^{K/2}\Gamma(K/2+1)}  + 
		\frac{1}{K}
		\frac{1}{2^{K/2} \Gamma(K/2)} 
		\int_0^a t \cdot t^{K/2-1} e^{-t/2} \deri t
		\\
		& \le \frac{a^{K/2} e^{-a/2}}{2^{K/2}\Gamma(K/2+1)}  + 
		\frac{a}{K}
		\frac{1}{2^{K/2} \Gamma(K/2)} 
		\int_0^a t^{K/2-1} e^{-t/2} \deri t
		\nonumber
		\\
		& = \frac{a^{K/2} e^{-a/2}}{2^{K/2}\Gamma(K/2+1)} + \frac{a}{K} p,
		\nonumber
	\end{align}
	and 
	\begin{align*}
		p 
		& 
		= 
		\frac{a^{K/2} e^{-a/2}}{2^{K/2}\Gamma(K/2+1)}  + 
		\frac{1}{K}
		\frac{1}{2^{K/2} \Gamma(K/2)} 
		\int_0^a t \cdot t^{K/2-1} e^{-t/2} \deri t
		\ge \frac{a^{K/2} e^{-a/2}}{2^{K/2}\Gamma(K/2+1)}.  
	\end{align*}
	These implies that 
	\begin{align*}
		\left(1-\frac{a}{K}\right) p \le \frac{a^{K/2} e^{-a/2}}{2^{K/2}\Gamma(K/2+1)} 
		= 
		\frac{\left( a/K \cdot e^{1-a/K} \right)^{K/2}}{\sqrt{\pi K}} 
		\frac{\sqrt{\pi K}\{K/(2e)\}^{K/2}}{\Gamma(K/2+1)}
		\le p. 
	\end{align*}
	From Lemma \ref{lemma:Gamma_fun}, we then have 
	\begin{align*}
		p \ge \frac{1}{2} \frac{\left( a/K \cdot e^{1-a/K} \right)^{K/2}}{\sqrt{\pi K}} 
		\quad \text{and} \quad
		\left(1-\frac{a}{K}\right) p \le \frac{\left( a/K \cdot e^{1-a/K} \right)^{K/2}}{\sqrt{\pi K}}.  
	\end{align*}
	Consequently, 
	\begin{align*}
		\frac{\log(p^{-1})}{K} \le \frac{ \log (4\pi K)}{2K} + 
		\frac{1}{2} \left\{ \frac{a}{K} - 1 - \log\left( \frac{a}{K} \right) \right\}. 
	\end{align*}
	and, when $a/K<1$, 
	\begin{align*}
		\frac{\log (p^{-1})}{K} \ge 
		\frac{1}{2} \left\{ \frac{a}{K} - 1 - \log\left( \frac{a}{K} \right) \right\} + \frac{\log(\pi K)}{2K}  + \frac{1}{K}\log \left(1-\frac{a}{K}\right). 
	\end{align*}
	Therefore, Lemma \ref{lemma:link_p_a} holds. 
\end{proof}

\subsubsection{Lemmas for the variance of $L_{K,a}$ and its bounds}

\begin{lemma}\label{lem:varchi}
	For any integer $K > 0$ and $a > 0$,
	\begin{itemize}
		\item[(i)]
		$
		\var(L_{K, a}) = K^{-1} \E (\chi_{K}^2 \mid \chi_{K}^2 \leq a) = \PP(\chi^2_{K+2} \le a) / \PP(\chi^2_{K} \le a). 
		$
		\item[(ii)] 
		$\var(L_{K, a})$ is nondecreasing in $a$ for any given fixed $K\ge 1$. 
	\end{itemize} 
\end{lemma}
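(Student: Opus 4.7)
\textbf{Proof proposal for Lemma~\ref{lem:varchi}.}

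For part (i), my plan is to exploit the spherical symmetry of $\bs{D}\sim\mathcal N(\bs 0,\bs I_K)$. Since $L_{K,a}\sim D_1\mid \bs{D}^\top\bs{D}\le a$ and the conditioning event is invariant under the sign flip $D_1\mapsto -D_1$, we get $\E(L_{K,a})=0$, hence $\var(L_{K,a})=\E(D_1^2\mid \bs{D}^\top\bs{D}\le a)$. Using symmetry across coordinates (each $D_j$ plays the same role under the conditioning event),
\[
\E(D_1^2\mid \bs{D}^\top\bs{D}\le a)=\frac{1}{K}\sum_{j=1}^{K}\E(D_j^2\mid \bs{D}^\top\bs{D}\le a)=\frac{1}{K}\E(\chi^2_K\mid \chi^2_K\le a),
\]
which gives the first equality. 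For the second equality, the key identity is the chi-square density recursion
\[
t\,f_K(t)=K\,f_{K+2}(t),
\]
which follows from $f_K(t)=\{2^{K/2}\Gamma(K/2)\}^{-1}t^{K/2-1}e^{-t/2}$ together with $\Gamma(K/2+1)=(K/2)\Gamma(K/2)$. Integrating this over $[0,a]$ and dividing by $\PP(\chi^2_K\le a)$ yields
\[
\frac{1}{K}\E(\chi^2_K\mid \chi^2_K\le a)=\frac{\int_0^a t\,f_K(t)\,dt}{K\,\PP(\chi^2_K\le a)}=\frac{\PP(\chi^2_{K+2}\le a)}{\PP(\chi^2_K\le a)},
\]
completing (i). The same identity (in the form used in \eqref{eq:int_by_part} via integration by parts in the proof of Lemma~\ref{lemma:link_p_a}) already appears in the paper, so no new machinery is needed.

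For part (ii), I would show monotonicity in two equivalent ways and pick whichever is cleaner. The direct approach is to differentiate $v(a)\equiv \PP(\chi^2_{K+2}\le a)/\PP(\chi^2_K\le a)$ in $a$:
\[
v'(a)=\frac{f_{K+2}(a)\,\PP(\chi^2_K\le a)-\PP(\chi^2_{K+2}\le a)\,f_K(a)}{\PP(\chi^2_K\le a)^2},
\]
so $v'(a)\ge 0$ iff $f_{K+2}(a)/f_K(a)\ge \PP(\chi^2_{K+2}\le a)/\PP(\chi^2_K\le a)$. The left side equals $a/K$ by the density recursion above, while by part (i) the right side equals $K^{-1}\E(\chi^2_K\mid \chi^2_K\le a)\le a/K$ since $\chi^2_K\le a$ on the conditioning event. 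Thus $v'(a)\ge 0$, giving (ii). A conceptually cleaner alternative is to note that part (i) expresses $v(a)$ as the mean of $\chi^2_K/K$ truncated above at $a$, and truncations of a fixed distribution at larger upper thresholds are stochastically larger, so the conditional mean is nondecreasing in $a$.

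The main obstacle, such as it is, is just the chi-square density recursion $t f_K(t)=K f_{K+2}(t)$ and its consequence that conditional expectations of $\chi^2_K$ match tail ratios of $\chi^2_{K+2}$; once this is in hand both parts are short. No regularity or asymptotic conditions from the rest of the paper intervene, since the lemma is purely a statement about the Gaussian/chi-square distributions at fixed $K$ and $a$.
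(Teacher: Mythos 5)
Your proposal is correct. Note, however, that the paper does not actually prove this lemma from scratch: its entire proof is a citation, stating that the result "follows immediately from" Theorem 3.1 of \citet{MR12} (for the identity in part (i)) and Lemma A5 of \citet{LDR18} (for the monotonicity in part (ii)). What you have done is reconstruct self-contained proofs of those two cited facts, and your arguments are sound: the sign-flip and coordinate-exchangeability symmetries give $\E(L_{K,a})=0$ and $\var(L_{K,a})=K^{-1}\E(\chi^2_K\mid\chi^2_K\le a)$; the density recursion $t\,f_K(t)=K\,f_{K+2}(t)$ (which indeed also underlies the integration-by-parts step \eqref{eq:int_by_part} in the paper's proof of Lemma \ref{lemma:link_p_a}) converts the truncated mean into the ratio $\PP(\chi^2_{K+2}\le a)/\PP(\chi^2_K\le a)$; and for (ii) both your derivative computation (using $f_{K+2}(a)/f_K(a)=a/K\ge v(a)$) and the stochastic-ordering observation about upper truncations are valid. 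The trade-off is the usual one: the paper's citation is shorter and leans on established results, while your version is fully self-contained and makes transparent exactly which distributional facts are being used; nothing in your argument requires machinery beyond what the paper already deploys elsewhere.
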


\begin{proof}[Proof of Lemma \ref{lem:varchi}]
	Lemma \ref{lem:varchi} follows immediately from \citet[][Theorem 3.1]{MR12} and \citet[][Lemma A5]{LDR18}. 
\end{proof}

\begin{lemma}\label{lem:vlbnd}
	For any integer $K \ge 1$ and $a \ge 0$, it holds that
	\[
	\min\bigg\{\frac{a}{4 K}, \frac{K - 2}{4 K}\bigg\} \leq \var(L_{K, a}) \leq \frac{a}{K}.
	\]
\end{lemma}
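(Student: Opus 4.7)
The plan is to exploit the identity $\var(L_{K,a}) = K^{-1} \E(\chi_K^2 \mid \chi_K^2 \le a)$ from Lemma \ref{lem:varchi}(i), which reduces both bounds to controlling the truncated conditional mean of a chi-square random variable on $[0,a]$.

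The upper bound is immediate: since the conditioning event $\{\chi_K^2 \le a\}$ forces the random variable to take values in $[0,a]$, we have $\E(\chi_K^2 \mid \chi_K^2 \le a) \le a$, and dividing by $K$ gives $\var(L_{K,a}) \le a/K$.

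For the lower bound, the cases $K \in \{1,2\}$ are trivial because then $(K-2)/(4K) \le 0$ and the minimum on the right-hand side is non-positive, so the bound follows from $\var(L_{K,a}) \ge 0$. The substantive work is for $K \ge 3$, where I would use that the chi-square density $f_K(t) = t^{K/2-1} e^{-t/2}/(2^{K/2}\Gamma(K/2))$ is unimodal with mode at $K-2$, hence non-decreasing on $[0,K-2]$. When $a \le K-2$, the conditional density $f_K(t)/p$ on $[0,a]$ is non-decreasing, which implies (by a short argument: $\int_0^s f_K/p \, dt \le s/a$ for $s \in [0,a]$ whenever the density is non-decreasing on $[0,a]$ and integrates to $1$) that the conditional law stochastically dominates the uniform distribution on $[0,a]$. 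This yields $\E(\chi_K^2 \mid \chi_K^2 \le a) \ge a/2$ and hence $\var(L_{K,a}) \ge a/(2K) \ge a/(4K)$. When $a > K-2$, I would invoke the monotonicity of $\var(L_{K,a})$ in $a$ established in Lemma \ref{lem:varchi}(ii) to reduce to the previous case applied at $a = K-2$, giving $\var(L_{K,a}) \ge \var(L_{K,K-2}) \ge (K-2)/(2K) \ge (K-2)/(4K)$.

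The only point requiring any care is the stochastic dominance step; there is no real obstacle, only the minor bookkeeping of splitting on $a \lessgtr K-2$ and handling the degenerate small-$K$ cases. The factor $1/4$ in the statement (rather than the tighter $1/2$ that this argument actually delivers) provides slack for downstream usage.
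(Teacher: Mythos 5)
Your proposal is correct and follows essentially the same route as the paper's proof: both reduce to the truncated mean via Lemma \ref{lem:varchi}(i), exploit monotonicity of the $\chi^2_K$ density on $[0,K-2]$, and handle $a>K-2$ via the monotonicity in $a$ from Lemma \ref{lem:varchi}(ii). The only difference is the final estimate, where your stochastic-dominance-over-uniform argument yields the sharper constant $\tilde a/(2K)$ while the paper settles for $\tilde a/(4K)$ by noting that at least half the conditional mass lies in $[\tilde a/2,\tilde a]$.
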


\begin{proof}[Proof of Lemma \ref{lem:vlbnd}]
	The upper bound of $\var(L_{K, a})$ is a direct consequence of Lemma~\ref{lem:varchi}(i). 
	The lower bound of $\var(L_{K, a})$ holds obviously when $a=0$ or $K\le 2$. 
	Below we consider only the lower bound of $\var(L_{K, a})$ when $a>0$ and $K\ge 3$. 
	Define $\tilde{a} = \min\{a, K-2\}$. 
	By the property of chi-square distribution, the density function of $\chi^2_K$ is monotonically increasing on the interval $[0, K - 2] \supset [0, \tilde{a}]$. 
	This implies that $\PP(\chi_{K}^2 \leq \tilde{a} / 2) \leq \PP(\tilde{a} / 2\leq \chi_{K}^2 \leq \tilde{a})$ and 
	\begin{align}\label{eq:prob_cond_chisq}
		\PP(\tilde{a} / 2 \leq \chi_{K}^2 \leq \tilde{a} \mid \chi_{K}^2 \leq \tilde{a})
		& = 
		\frac{\PP(\tilde{a} / 2 \leq \chi_{K}^2 \leq \tilde{a})}{\PP(\chi_{K}^2\le \tilde{a}/2) + \PP(\tilde{a} / 2 \leq \chi_{K}^2 \leq \tilde{a})} \ge 1/2. 
	\end{align}
	Consequently, from Lemma \ref{lem:varchi}, 
	the variance of $L_{K, a}$ multiplied by $K$ can be bounded by 
	\begin{align*}
		K\var(L_{K, a}) & \ge K \var(L_{K, \tilde{a}}) = \E[\chi_{K}^2 \mid \chi_{K}^2 \leq \tilde{a}]
		\geq  \PP(\tilde{a} / 2 \leq \chi_{K}^2 \leq \tilde{a} \mid \chi_{K}^2 \leq \tilde{a}) \cdot \E[\chi_{K}^2 \mid \tilde{a} / 2 \leq \chi_{K}^2 \leq \tilde{a}] 
		\\
		& \geq \frac{1}{2} \cdot \frac{\tilde{a}}{2} 
		= \frac{\min\{a, K-2\}}{4}
	\end{align*}
	where the first inequality holds because $a \ge \tilde{a}$ and 
	the last inequality holds due to \eqref{eq:prob_cond_chisq}. 
	From the above, Lemma \ref{lem:vlbnd} holds.
\end{proof}

\begin{lemma}\label{lemma:bound_chisq_ratio1}
	For any integer $K>0$ and $a>0$, with $p=\PP(\chi^2_K\le a)$, 
	\begin{align*}
		-\log\left\{
		1 - \var(L_{K,a})
		\right\}
		& \ge 
		\frac{K}{2}
		\left\{
		\frac{a}{K} - 1 
		- \log\left( \frac{a}{K} \right) 
		-
		\frac{2\log(p^{-1})}{K} + \frac{\log(\pi K)}{K}
		\right\}. 
	\end{align*}
\end{lemma}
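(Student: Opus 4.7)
\textbf{Proof proposal for Lemma \ref{lemma:bound_chisq_ratio1}.} The plan is to express $1 - \var(L_{K,a})$ in closed form using a clean identity between $\PP(\chi^2_K \le a)$ and $\PP(\chi^2_{K+2} \le a)$, then invoke the Stirling-type bound from Lemma \ref{lemma:Gamma_fun} and take logarithms.

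First, by Lemma \ref{lem:varchi}(i), $\var(L_{K,a}) = \PP(\chi^2_{K+2} \le a)/\PP(\chi^2_{K} \le a)$, so that
\[
1 - \var(L_{K,a}) \;=\; \frac{\PP(\chi^2_K \le a) - \PP(\chi^2_{K+2} \le a)}{\PP(\chi^2_K \le a)}.
\]
Next I would reuse the integration-by-parts identity already established in the proof of Lemma \ref{lemma:link_p_a}, namely \eqref{eq:int_by_part}, and recognize the remaining integral there as $\PP(\chi^2_{K+2}\le a)$ times an explicit normalizing constant (using $\Gamma(K/2+1) = (K/2)\Gamma(K/2)$). This gives the clean relation
\[
\PP(\chi^2_K \le a) - \PP(\chi^2_{K+2} \le a) \;=\; \frac{a^{K/2} e^{-a/2}}{2^{K/2} \Gamma(K/2+1)},
\]
so that
\[
1 - \var(L_{K,a}) \;=\; \frac{a^{K/2} e^{-a/2}}{2^{K/2} \Gamma(K/2+1) \, p}.
\]

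Then I would apply the lower bound $\Gamma(K/2+1) \ge \sqrt{\pi K}\,(K/(2e))^{K/2}$ from Lemma \ref{lemma:Gamma_fun}, which simplifies the denominator to $\sqrt{\pi K}\,(K/e)^{K/2}\, p$ after absorbing the $2^{K/2}$. This produces
\[
1 - \var(L_{K,a}) \;\le\; \frac{1}{\sqrt{\pi K}\, p} \left(\frac{a}{K}\right)^{K/2} e^{K/2 - a/2}.
\]
Taking $-\log$ of both sides and rearranging the elementary pieces $-\tfrac{K}{2}\log(a/K)$, $-\tfrac{K}{2} + \tfrac{a}{2}$, $-\tfrac{1}{2}\log(\pi K)$, and $\log p^{-1}$ into the stated form yields exactly the inequality of the lemma.

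The calculation is almost entirely routine; the only mild subtlety is the integration-by-parts step used to relate $\PP(\chi^2_K\le a)$ and $\PP(\chi^2_{K+2}\le a)$, but since \eqref{eq:int_by_part} has already been carried out in the proof of Lemma \ref{lemma:link_p_a}, I would just cite it. Thus no real obstacle is anticipated beyond careful bookkeeping of constants.
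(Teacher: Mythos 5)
Your proposal is correct and follows essentially the same route as the paper's own proof: the exact identity $\PP(\chi^2_K\le a)-\PP(\chi^2_{K+2}\le a)=a^{K/2}e^{-a/2}/\{2^{K/2}\Gamma(K/2+1)\}$ obtained from \eqref{eq:int_by_part}, the lower bound on $\Gamma(K/2+1)$ from Lemma \ref{lemma:Gamma_fun}, and a final logarithm. No gap; the bookkeeping of constants works out exactly as you describe.
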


\begin{proof}[Proof of Lemma \ref{lemma:bound_chisq_ratio1}]
	From \eqref{eq:int_by_part}, 
	\begin{align*}
		p & = \PP(\chi^2_K \le a)
		= 
		\frac{a^{K/2} e^{-a/2}}{2^{K/2}\Gamma(K/2+1)}  + 
		\frac{1}{K}
		\frac{1}{2^{K/2} \Gamma(K/2)} 
		\int_0^a t^{K/2} e^{-t/2} \deri t
		\\
		& 
		= 
		\frac{a^{K/2} e^{-a/2}}{2^{K/2}\Gamma(K/2+1)}  + 
		\frac{1}{2^{(K+2)/2} \Gamma((K+2)/2)} 
		\int_0^a t^{(K+2)/2-1} e^{-t/2} \deri t\\
		& = 
		\frac{a^{K/2} e^{-a/2}}{2^{K/2}\Gamma(K/2+1)} + \PP(\chi^2_{K+2} \le a). 
	\end{align*}
	From Lemmas \ref{lemma:Gamma_fun} and \ref{lem:varchi}, this implies that 
	\begin{align}\label{eq:1_v_equi}
		1 - \var(L_{K,a}) & = 1 - \frac{\PP(\chi^2_{K+2} \le a)}{\PP(\chi^2_K \le a)}
		= 
		\frac{a^{K/2} e^{-a/2}}{2^{K/2}\Gamma(K/2+1)} \cdot \frac{1}{p}
		\le 
		\frac{a^{K/2} e^{-a/2}}{2^{K/2}\sqrt{\pi K}\{K/(2e)\}^{K/2}} \cdot \frac{1}{p}
		\\
		& = 
		\frac{(a/K)^{K/2} e^{(K-a)/2}}{p\sqrt{\pi K} } . \nonumber
	\end{align}  
	Consequently, 
	\begin{align*}
		-\log\left\{
		1 - \var(L_{K,a})
		\right\}
		& \ge 
		-\frac{K}{2} \log\left( \frac{a}{K} \right) - \frac{K-a}{2} + \log(p) + \frac{1}{2}\log(\pi K)
		\\
		& = 
		\frac{K}{2}
		\left\{
		\frac{a}{K} - 1 
		- \log\left( \frac{a}{K} \right) 
		-
		\frac{2\log(p^{-1})}{K} + \frac{\log(\pi K)}{K}
		\right\}.
	\end{align*}
	Therefore, Lemma \ref{lemma:bound_chisq_ratio1} holds. 
\end{proof}

\begin{lemma}\label{lemma:bound_chisq_ratio2}
	For any $K>2$, $a \in (0, K-2]$ and $\zeta \in (0,1)$, 
	\begin{align*}
		-\log \left\{
		1 - \var(L_{K,a})
		\right\}
		\ge 
		- \log(2) + 
		\log(K\zeta)
		-
		\frac{\zeta}{2(1-\zeta)}
		\left\{
		a\zeta + (K-2-a)
		\right\}. 
	\end{align*}
\end{lemma}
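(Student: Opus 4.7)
My plan is to start from the exact identity derived in the proof of Lemma~\ref{lemma:bound_chisq_ratio1}, namely
\[
1 - \var(L_{K,a}) \;=\; \frac{a^{K/2}\,e^{-a/2}}{2^{K/2}\,\Gamma(K/2+1)\cdot p},
\qquad p \equiv \PP(\chi^2_K \le a),
\]
so that bounding $-\log\{1-\var(L_{K,a})\}$ from below reduces to producing a sufficiently strong lower bound on $p$. Unlike Lemma~\ref{lemma:bound_chisq_ratio1}, where $\Gamma(K/2+1)$ was controlled via Stirling, the target bound here involves only $K$, $a$, and the free parameter $\zeta$, which suggests we must instead bound $p$ directly without invoking Stirling, so that the gamma factors cancel cleanly.

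To produce the needed lower bound on $p$, I would exploit the hypothesis $a\le K-2$: the $\chi^2_K$ density $f_K(t)\propto t^{K/2-1}e^{-t/2}$ has derivative proportional to $(K-2-t)$, so $f_K$ is nondecreasing on $[0,K-2]\supset[0,a]$. For any $\zeta\in(0,1)$, monotonicity gives
\[
p \;\ge\; \int_{(1-\zeta)a}^{a} f_K(t)\,\du t \;\ge\; a\zeta\cdot f_K\big((1-\zeta)a\big)
\;=\; \frac{\zeta\,(1-\zeta)^{K/2-1}\,a^{K/2}\,e^{-(1-\zeta)a/2}}{2^{K/2}\,\Gamma(K/2)}.
\]
Plugging this into the identity, cancelling the $a^{K/2}$ and $2^{K/2}$ factors, and using $\Gamma(K/2+1)=(K/2)\,\Gamma(K/2)$ collapses the right-hand side to
\[
1 - \var(L_{K,a}) \;\le\; \frac{2\,e^{-\zeta a/2}}{K\,\zeta\,(1-\zeta)^{K/2-1}}.
\]
Taking $-\log$ of both sides yields the intermediate bound
\[
-\log\{1-\var(L_{K,a})\} \;\ge\; -\log 2 + \log(K\zeta) + (K/2-1)\log(1-\zeta) + \zeta a/2.
\]

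The final step converts the transcendental term $\log(1-\zeta)$ into the rational form appearing in the conclusion via the elementary inequality $\log(1-\zeta)\ge -\zeta/(1-\zeta)$ for $\zeta\in(0,1)$, which I would verify by noting that $g(\zeta)\equiv\log(1-\zeta)+\zeta/(1-\zeta)$ satisfies $g(0)=0$ and $g'(\zeta)=\zeta/(1-\zeta)^2\ge 0$. Since $K>2$ implies $K/2-1>0$, substituting this into the previous display and placing the $(K-2)\zeta/[2(1-\zeta)]$ and $\zeta a/2$ contributions over the common denominator $2(1-\zeta)$ produces exactly $-\zeta\{a\zeta+(K-2-a)\}/[2(1-\zeta)]$ as the residual term, completing the proof. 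I do not anticipate any serious technical obstacle; the only real choice is the integration window $[(1-\zeta)a,a]$, and the hypothesis $a\le K-2$ is precisely what is needed to apply the density monotonicity on this window. The value of this bound lies in its flexibility as $\zeta$ varies, which will later allow optimization when $a/K$ is bounded away from $0$ and $1$---the regime complementary to Lemma~\ref{lemma:bound_chisq_ratio1}.
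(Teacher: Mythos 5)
Your proposal is correct and follows essentially the same route as the paper's proof: both start from the identity $1-\var(L_{K,a}) = a^{K/2}e^{-a/2}/\{2^{K/2}\Gamma(K/2+1)\,\PP(\chi^2_K\le a)\}$, lower-bound the integral over the window $[(1-\zeta)a,\,a]$ using monotonicity of $t^{K/2-1}e^{-t/2}$ on $[0,K-2]$, and finish with $\log(1-\zeta)\ge -\zeta/(1-\zeta)$. The only difference is cosmetic (you cancel the gamma factor via $\Gamma(K/2+1)=(K/2)\Gamma(K/2)$ rather than rewriting the ratio as $\tfrac{2}{K}\cdot a^{K/2}e^{-a/2}/\int_0^a t^{K/2-1}e^{-t/2}\,\du t$ first).
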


\begin{proof}[Proof of Lemma \ref{lemma:bound_chisq_ratio2}]
	From \eqref{eq:1_v_equi}, 
	\begin{align*}
		1 - \var(L_{K,a}) & 
		= 
		\frac{a^{K/2} e^{-a/2}}{2^{K/2}\Gamma(K/2+1)} \cdot \frac{1}{\PP(\chi^2_K \le a)}
		= \frac{a^{K/2} e^{-a/2}}{2^{K/2}\Gamma(K/2+1)} \cdot
		\frac{2^{K/2} \Gamma(K/2)}{\int_0^a t^{K/2-1} e^{-t/2} \deri t}\\
		& = 
		\frac{2}{K}
		\frac{a^{K/2} e^{-a/2}}{\int_0^a t^{K/2-1} e^{-t/2} \deri t}.
	\end{align*}
	By the property of chi-square distribution, 
	$t^{K/2-1} e^{-t/2}$ is nondecreasing in $t\in [0,K-2] \supset [0,a]$, which implies that 
	\begin{align*}
		\int_0^a t^{K/2-1} e^{-t/2} \deri t
		& \ge 
		\int_{(1-\zeta) a}^a t^{K/2-1} e^{-t/2} \deri t
		\ge 
		\zeta a \cdot \{ (1-\zeta) a\}^{K/2-1} e^{-(1-\zeta)a/2}
		\\
		& = 
		\zeta (1-\zeta)^{K/2-1} a^{K/2} e^{-(1-\zeta) a/2}. 
	\end{align*}
	Thus, 
	\begin{align*}
		1 - \var(L_{K,a})
		& = 
		\frac{2}{K}
		\frac{a^{K/2} e^{-a/2}}{\int_0^a t^{K/2-1} e^{-t/2} \deri t}
		\le 
		\frac{2}{K}
		\frac{a^{K/2} e^{-a/2}}{\zeta (1-\zeta)^{K/2-1} a^{K/2} e^{-(1-\zeta) a/2}}
		= 
		\frac{2}{K}
		\frac{e^{-\zeta a/2}}{\zeta (1-\zeta)^{K/2-1}}, 
	\end{align*}
	and consequently
	\begin{align*}
		-\log\left\{ 1 - \var(L_{K,a}) \right\}    
		& \ge 
		-\log(2) + \log(K\zeta) + \frac{\zeta a}{2} + \left(\frac{K}{2}-1\right)\log(1-\zeta). 
	\end{align*}
	Using the inequality that $\log (1+x) \ge x/(1+x)$ for all $x>-1,$ we have $\log(1-\zeta) \ge - \zeta/(1-\zeta)$, and thus 
	\begin{align*}
		& \quad \ -\log\left\{ 1 - \var(L_{K,a}) \right\}    
		\\
		& \ge 
		-\log(2) + \log(K\zeta) + \frac{\zeta a}{2} -  \left(\frac{K}{2}-1\right) \frac{\zeta}{1-\zeta} 
		= 
		-\log(2) + \log(K\zeta) + \frac{\zeta}{2(1-\zeta)} \left\{
		a - a\zeta - (K-2)
		\right\}\\
		& = 
		-\log(2) + \log(K\zeta) - \frac{\zeta}{2(1-\zeta)} \left\{
		a\zeta + (K-2-a)
		\right\}. 
	\end{align*}
	Therefore, Lemma \ref{lemma:bound_chisq_ratio2} holds. 
\end{proof}

\subsection{Limiting behavior when $\lim_{n\rightarrow \infty}\log(p_n^{-1}) / K_n = \infty$}\label{sec:limt_L_logpK_inf}

\begin{lemma}\label{lem:dimlka}
	As $n \converge \infty$, 
	if $\log(p_n^{-1}) / K_n \converge \infty$, then $a_n/K_n \converge 0$ and 
	$\var(L_{K_n, a_n}) \converge 0$. 
\end{lemma}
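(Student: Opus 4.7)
The plan is to deduce the stronger statement $a_n/K_n \to 0$ first, and then to read off $\var(L_{K_n, a_n}) \to 0$ as an immediate consequence of the upper bound $\var(L_{K,a}) \le a/K$ from Lemma~\ref{lem:vlbnd}.

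For the first step I would argue by contradiction. Suppose $a_n/K_n \not\to 0$; then there is a subsequence $\{n_k\}$ along which $a_{n_k}/K_{n_k} \ge c$ for some constant $c>0$. The key observation is that the upper-bound half of Lemma~\ref{lemma:link_p_a} already yields
\[
\frac{\log(p_n^{-1})}{K_n} \;\le\; \frac{\log(4\pi K_n)}{2K_n} + \frac{1}{2}\,f(a_n/K_n), \qquad f(x) \equiv x - 1 - \log x,
\]
for every $n$. Passing to a further subsequence if needed, I would split into two cases. In case (i) the ratio $a_{n_k}/K_{n_k}$ is also bounded above by some $C<\infty$, so it lies in the compact interval $[c,C]$; continuity of $f$ on $[c,C]$ makes $f(a_{n_k}/K_{n_k})$ bounded, and $\log(4\pi K_{n_k})/(2K_{n_k})$ is bounded for $K_{n_k}\ge 1$, so the right-hand side above stays bounded along $\{n_k\}$, contradicting the hypothesis $\log(p_n^{-1})/K_n \to \infty$. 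In case (ii) the ratio $a_{n_k}/K_{n_k}\to\infty$ along a further subsequence; here Lemma~\ref{lemma:link_p_a} does not help (since $f$ also blows up at infinity), so I would instead apply Markov's inequality $\PP(\chi^2_{K_{n_k}}>a_{n_k}) \le K_{n_k}/a_{n_k} \to 0$ to conclude $p_{n_k}\to 1$ and therefore $\log(p_{n_k}^{-1})\to 0$, which again contradicts $\log(p_n^{-1})/K_n\to\infty$ since $K_n\ge 1$. Either case is impossible, so $a_n/K_n \to 0$.

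With $a_n/K_n \to 0$ in hand, the conclusion $\var(L_{K_n,a_n})\to 0$ is a one-liner from Lemma~\ref{lem:vlbnd}, which gives $\var(L_{K_n,a_n}) \le a_n/K_n$. No step here is technically deep; the only point that needs genuine care is recognising that Lemma~\ref{lemma:link_p_a}'s upper bound alone cannot separate the two regimes in which $f(x)$ is large (small $x$ vs.\ large $x$), which is precisely why the $a_n/K_n\to\infty$ branch of the contradiction must be handled by a different tool, namely Markov's inequality.
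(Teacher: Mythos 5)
Your proof is correct and follows the same overall strategy as the paper's: argue by contradiction that $a_n/K_n \to 0$ using Lemma~\ref{lemma:link_p_a}, then conclude $\var(L_{K_n,a_n})\to 0$ from the bound $\var(L_{K,a}) \le a/K$ in Lemma~\ref{lem:vlbnd}. The one place you diverge is in handling the regime where $a_{n_k}/K_{n_k}$ might tend to infinity. The paper sidesteps this entirely with a monotonicity trick: since $a_{n_j} \ge cK_{n_j}$ along the contradiction subsequence, it bounds $p_{n_j} \ge \PP(\chi^2_{K_{n_j}} \le cK_{n_j})$ and applies Lemma~\ref{lemma:link_p_a} with threshold $cK_{n_j}$ rather than $a_{n_j}$, so the function $x-1-\log x$ is only ever evaluated at the fixed constant $c$ and no case split is needed. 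You instead apply the lemma at $a_{n_k}$ itself, correctly observe that this fails when the ratio blows up, and patch that branch with Markov's inequality $\PP(\chi^2_K > a) \le K/a$, which forces $p_{n_k}\to 1$ and hence $\log(p_{n_k}^{-1})\to 0$. Both arguments are valid; the paper's is slightly more economical, while yours makes explicit why the upper bound of Lemma~\ref{lemma:link_p_a} applied directly at $a_n$ cannot close the argument on its own.
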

\begin{proof}[Proof of Lemma \ref{lem:dimlka}]
	From Lemma \ref{lem:vlbnd}, it suffices to show that $a_n/K_n \converge 0$ as $n\converge \infty$. 
	We prove this by contradiction. 
	If $a_n/K_n$ does not converge to zero, 
	then there must exist a positive constant $c>0$ and a subsequence $\{n_j:j=1,2,\ldots\}$ such that $a_{n_j}/K_{n_j} \ge c$ for all $j\ge 1$. 
	Thus, for any $j\ge 1$,
	$
	p_{n_j} = \PP(\chi^2_{K_{n_j}} \le a_{n_j}) \ge \PP(\chi^2_{K_{n_j}} \le c K_{n_j}). 
	$
	From Lemma \ref{lemma:link_p_a}, this then implies that 
	\begin{align*}
		\frac{\log(p_{n_j}^{-1})}{K_{n_j}} \le 
		\frac{\log\{ \PP(\chi^2_{K_{n_j}} \le c K_{n_j})^{-1} \}}{K_{n_j}}
		\le
		\frac{ \log (4\pi K_{n_j})}{2K_{n_j}} + 
		\frac{c - 1 - \log\left( c \right)}{2}
		\le \frac{ \log (4\pi)}{2} + 
		\frac{c - 1 - \log\left( c \right)}{2}, 
	\end{align*}
	where the last inequality holds because $\log(4\pi K)/(2K)$ is decreasing in $K$ for $K\ge 1$. 
	However, this contradicts the fact that $\log(p_n^{-1}) / K_n \converge \infty$. 
	Therefore, we must have $a_n/K_n \converge 0$ as $n\converge \infty$. 
	From the above, Lemma \ref{lem:dimlka} holds.
\end{proof}

\subsection{Limiting behavior when $\limsup_{n\rightarrow\infty} \log(p_n^{-1})/K_n < \infty$}

\begin{lemma}\label{lem:nodim}
	If $\limsup_{n\rightarrow\infty} \log(p_n^{-1})/K_n < \infty$, then 
	$\liminf_{n\rightarrow \infty} a_n/K_n > 0$ and 
	$
	\liminf_{n\rightarrow \infty}\var(L_{K_n, a_n}) > 0. 
	$
\end{lemma}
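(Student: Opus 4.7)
The plan is to split the argument into two stages: first prove that $\liminf_{n\to\infty} a_n/K_n > 0$, and then transfer this to the desired lower bound on $v_{K_n,a_n}$ via Lemma \ref{lem:vlbnd}, with a small side-case in the second stage handled by the monotonicity in Lemma \ref{lem:varchi}(ii). The first stage is a direct inversion of the logic used in Lemma \ref{lem:dimlka}, now applied to the \emph{lower} bound of Lemma \ref{lemma:link_p_a} rather than its upper bound.

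For the first stage I would argue by contradiction. Assume there is a subsequence along which $a_n/K_n \to 0$; along this subsequence eventually $a_n/K_n < 1$, so the lower bound in Lemma \ref{lemma:link_p_a} applies and yields
\[
\frac{\log(p_n^{-1})}{K_n} \ge \tfrac{1}{2}\bigl\{a_n/K_n - 1 - \log(a_n/K_n)\bigr\} + \frac{\log(\pi K_n)}{2K_n} + \frac{\log(1 - a_n/K_n)}{K_n}.
\]
The term $-\tfrac{1}{2}\log(a_n/K_n)$ diverges to $+\infty$, the middle term is nonnegative for $K_n\ge 1$, and the last term is $o(1)$ since $\log(1 - a_n/K_n) = -a_n/K_n + O((a_n/K_n)^2)$. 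Thus $\log(p_n^{-1})/K_n \to \infty$ along the subsequence, contradicting $\limsup \log(p_n^{-1})/K_n < \infty$. Hence $\liminf_{n\to\infty} a_n/K_n \ge c$ for some constant $c > 0$.

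For the second stage I would invoke Lemma \ref{lem:vlbnd}, which gives $v_{K_n,a_n} \ge \min\{a_n/(4K_n),\,(K_n-2)/(4K_n)\}$. If $K_n \ge 3$ for all sufficiently large $n$, then $(K_n-2)/(4K_n) \ge 1/12$ and $a_n/(4K_n) \ge c/8$ eventually, giving a uniform positive floor $\min\{c/8,1/12\}$. If instead $K_n \in \{1,2\}$ along some subsequence, I would extract a sub-subsequence on which $K_n$ is the constant $K\in\{1,2\}$; then $a_n \ge (c/2)K > 0$ for large $n$, and Lemma \ref{lem:varchi}(ii) gives $v_{K,a_n} \ge v_{K,\,cK/2}$, a fixed positive constant. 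Every subsequence therefore admits a further subsequence along which $v_{K_n,a_n}$ stays above a positive constant, which proves $\liminf_{n\to\infty} v_{K_n,a_n} > 0$. The only mildly awkward point in the whole argument is the degeneracy of Lemma \ref{lem:vlbnd}'s bound when $K\le 2$; since only finitely many values of $K$ are involved there, monotonicity in $a$ resolves it cleanly, so no serious obstacle arises.
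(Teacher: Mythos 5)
Your proof is correct and follows essentially the same route as the paper's: both stages use the lower bound of Lemma \ref{lemma:link_p_a} by contradiction for $\liminf a_n/K_n>0$, and then combine Lemma \ref{lem:vlbnd} for $K_n\ge 3$ with the monotonicity of Lemma \ref{lem:varchi}(ii) for the residual $K_n\in\{1,2\}$ case. The only difference is that the paper runs the second stage by contradiction rather than directly, which is purely cosmetic.
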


\begin{proof}[Proof of Lemma \ref{lem:nodim}]
	We first prove $\liminf_{n\rightarrow \infty} a_n/K_n > 0$ by contradiction. 
	If  $\liminf_{n\rightarrow \infty} a_n/K_n = 0$, then there exists
	a subsequence $\{n_j:j=1,2,\ldots\}$ such that $a_{n_j}/K_{n_j}\converge 0$ as $j\converge \infty$ and 
	$a_{n_j}/K_{n_j} < 1$ for all $j$. 
	From Lemma \ref{lemma:link_p_a}, we then have, for any $j\ge 1$, 
	\begin{align*}
		\frac{\log (p_{n_j}^{-1})}{K_{n_j}} & \ge 
		\frac{1}{2} \left\{ \frac{a_{n_j}}{K_{n_j}} - 1 - \log\left( \frac{a_{n_j}}{K_{n_j}} \right) \right\} + \frac{\log(\pi K_{n_j})}{2K_{n_j}}  + \frac{1}{K_{n_j}}\log \left(1-\frac{a_{n_j}}{K_{n_j}}\right)\\
		& \ge - \frac{1}{2} - \frac{1}{2} \log\left( \frac{a_{n_j}}{K_{n_j}} \right)  +  \log \left(1-\frac{a_{n_j}}{K_{n_j}}\right),
	\end{align*}
	which converges to infinity as $j\rightarrow \infty$. 
	However, this contradicts with the fact that $\limsup_{n\rightarrow\infty} \log(p_n^{-1})/K_n < \infty$. 
	Thus, we must have $\liminf_{n\rightarrow \infty} a_n/K_n > 0$. 
	
	Second, we prove that $
	\liminf_{n\rightarrow \infty}\var(L_{K_n, a_n}) > 0
	$ by contradiction. 
	If $
	\liminf_{n\rightarrow \infty}\var(L_{K_n, a_n}) = 0
	$, 
	then there exists a subsequence $\{n_j:j=1,2,\ldots\}$ such that 
	$
	\var(L_{K_{n_j}, a_{n_j}}) \converge 0
	$
	as $n\rightarrow \infty$
	and 
	$\var(L_{K_{n_j}, a_{n_j}}) < 1/12$ for all $j$. 
	Below we consider two cases, depending on whether $\limsup_{j\rightarrow\infty} K_{n_j}$ is greater than or equal to 3. 
	If $\limsup_{j\rightarrow\infty} K_{n_j} \ge 3$, 
	then there exists a further subsequence $\{m_1, m_2, \ldots\}\subset\{n_1, n_2, \ldots\}$ such that $K_{m_j}\ge 3$ for all $j$. 
	Because $(K_{m_j}-2)/(4K_{m_j}) \ge 1/12$, from Lemma \ref{lem:vlbnd}, we must have $\var(L_{K_{m_j}, a_{m_j}}) \ge a_{m_j} / (4K_{m_j})$. 
	This then implies that 
	\begin{align*}
		0 = \lim_{j\converge\infty}\var(L_{K_{m_j}, a_{m_j}}) \ge \liminf_{j\converge\infty} a_{m_j} / (4K_{m_j})
		\ge \liminf_{n\converge\infty} a_{n} / (4K_{n}) > 0, 
	\end{align*}
	a contradiction. 
	If $\limsup_{j\rightarrow\infty} K_{n_j} < 3$,  then there exists a further subsequence $\{m_1, m_2, \ldots\}\subset\{n_1, n_2, \ldots\}$ such that $K_{m_j}\le 2$ for all $j$. 
	Because 
	\begin{align*}
		0 < \liminf_{n\converge\infty} a_{n} / (4K_{n}) \le \liminf_{j\converge\infty} a_{m_j} / (4K_{m_j}) 
		\le \liminf_{j\converge\infty} a_{m_j} / 4, 
	\end{align*}
	there must exist a positive constant $\underline{a}>0$ such that $a_{m_j} > \underline{a}$ for all $j$. 
	From Lemma \ref{lem:varchi}, this then implies that 
	\begin{align*}
		0 = \lim_{j\rightarrow \infty} \var(L_{K_{m_j}, a_{m_j}}) 
		\ge \liminf_{j\rightarrow \infty} \var(L_{K_{m_j}, \underline{a}}) \ge \min\left\{ \var(L_{1, \underline{a}}), \var(L_{2, \underline{a}}) \right\} > 0, 
	\end{align*}
	a contradiction. 
	Therefore, we must have $
	\liminf_{n\rightarrow \infty}\var(L_{K_n, a_n}) > 0,
	$
	i.e., Lemma \ref{lem:nodim} holds. 
\end{proof}

\subsection{Limiting behavior when $\liminf_{n\converge \infty} \log(p_n^{-1})/K_n > 0$}

\begin{lemma}\label{lem:varub}
	If 
	$\liminf_{n\converge \infty} \log(p_n^{-1})/K_n > 0$, then 
	\begin{itemize}
		\item[(i)]  
		for any subsequence $\{n_j: j=1,2, \ldots\}$ with $\lim_{j\converge \infty}K_{n_j} = \infty$, $\limsup_{j\converge \infty}a_{n_j}/K_{n_j} < 1$. 
		\item[(ii)] 
		$\limsup_{n \to \infty} \var(L_{K_n, a_n}) < 1.$
	\end{itemize}
\end{lemma}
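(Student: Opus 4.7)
The plan is to prove (i) by contradiction using Lemma \ref{lemma:link_p_a} together with a standard chi-square concentration bound, and then to deduce (ii) by a subsequence argument that splits into the divergent-$K$ and bounded-$K$ regimes.

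For (i), suppose that along some subsequence $\{n_j\}$ with $K_{n_j}\converge\infty$ one has $\limsup_{j}a_{n_j}/K_{n_j}\ge 1$. Passing to a further subsequence, assume $a_{n_j}/K_{n_j}\converge \alpha\in[1,\infty]$. Write $g(x)\equiv x-1-\log x\ge 0$ (with equality iff $x=1$). If $\alpha=1$, the upper-bound half of Lemma \ref{lemma:link_p_a} gives
\[
\frac{\log(p_{n_j}^{-1})}{K_{n_j}}\le \frac{\log(4\pi K_{n_j})}{2K_{n_j}}+\frac{1}{2}g(a_{n_j}/K_{n_j})\converge 0,
\]
contradicting $\liminf \log(p_n^{-1})/K_n>0$. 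If $\alpha>1$ (possibly $\alpha=\infty$), the Lemma \ref{lemma:link_p_a} upper bound is too weak, so I would instead invoke the Chernoff bound obtained by optimizing $e^{-sa}(1-2s)^{-K/2}$ over $s\in(0,1/2)$, namely $\PP(\chi_K^2\ge a)\le \exp\{-(K/2)g(a/K)\}$ for $a>K$. This yields $1-p_{n_j}\le \exp\{-(K_{n_j}/2)g(a_{n_j}/K_{n_j})\}$, which is eventually at most $1/2$ and goes to $0$ exponentially in $K_{n_j}$ because $g(a_{n_j}/K_{n_j})\converge g(\alpha)>0$. Using $-\log(1-x)\le 2x$ on $x\in[0,1/2]$ then gives $\log(p_{n_j}^{-1})\le 2(1-p_{n_j})$ for large $j$, and dividing by $K_{n_j}$ again produces $\log(p_{n_j}^{-1})/K_{n_j}\converge 0$, the desired contradiction.

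For (ii), I would show that any subsequence $\{n_j\}$ admits a further subsequence along which $\var(L_{K_{n_j},a_{n_j}})$ is eventually bounded away from $1$. Pass to such a further subsequence so that either $K_{n_j}\converge\infty$ or $K_{n_j}$ is bounded. In the divergent case, part (i) gives $a_{n_j}/K_{n_j}\le 1-\epsilon$ eventually for some fixed $\epsilon>0$, and then the upper bound $\var(L_{K,a})\le a/K$ from Lemma \ref{lem:vlbnd} delivers $\var(L_{K_{n_j},a_{n_j}})\le 1-\epsilon$. In the bounded case, say $K_{n_j}\le \bar K$, the hypothesis $\liminf\log(p_n^{-1})/K_n\ge c$ for some $c>0$ forces $p_{n_j}\le e^{-c K_{n_j}/2}\le e^{-c/2}$ eventually; since $\PP(\chi_K^2\le a)\ge \PP(\chi_{\bar K}^2\le a)$ for $K\le \bar K$, this bounds $a_{n_j}$ above by the $e^{-c/2}$-quantile $\bar a$ of $\chi_{\bar K}^2$. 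Monotonicity of $\var(L_{K,a})$ in $a$ (Lemma \ref{lem:varchi}(ii)) then yields $\var(L_{K_{n_j},a_{n_j}})\le \max_{1\le K\le \bar K}\var(L_{K,\bar a})$, a maximum over finitely many quantities each of which is strictly less than $1$ because $\chi_{K+2}^2=\chi_K^2+\chi_2^2$ strictly stochastically dominates $\chi_K^2$, so each ratio $\PP(\chi_{K+2}^2\le\bar a)/\PP(\chi_K^2\le\bar a)<1$.

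The only nontrivial step is the $\alpha>1$ case of part (i), where Lemma \ref{lemma:link_p_a} alone is insufficient and a separate chi-square tail bound must be invoked; the remainder is bookkeeping on top of Lemmas \ref{lemma:link_p_a}, \ref{lem:varchi}, and \ref{lem:vlbnd}, combined with routine subsequence and stochastic-monotonicity arguments.
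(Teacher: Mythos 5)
Your proof is correct, and part (ii) together with the $\alpha=1$ case of part (i) follows the paper's argument essentially verbatim (split on whether $K_{n_j}$ diverges, use Lemma \ref{lem:vlbnd} with part (i) in the divergent regime, and use $p_n\le e^{-c}$ plus monotonicity of $\var(L_{K,a})$ in $a$ from Lemma \ref{lem:varchi}(ii) and a finite maximum of quantities strictly below $1$ in the bounded regime). The one genuine divergence is your treatment of the case $a_{n_j}/K_{n_j}\to\alpha>1$ in part (i): you correctly observe that the upper bound of Lemma \ref{lemma:link_p_a} is not by itself contradictory there, and you repair this by importing the standard chi-square Chernoff bound $\PP(\chi^2_K\ge a)\le\exp\{-(K/2)(a/K-1-\log(a/K))\}$ for $a>K$, forcing $p_{n_j}\to 1$ and hence $\log(p_{n_j}^{-1})/K_{n_j}\to 0$. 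The paper instead avoids any tail bound by a capping trick: it sets $\tilde a_{m_j}=\min\{1,a_{m_j}/K_{m_j}\}\cdot K_{m_j}\le a_{m_j}$, so that $p_{m_j}\ge\PP(\chi^2_{K_{m_j}}\le\tilde a_{m_j})$ by monotonicity, and then applies Lemma \ref{lemma:link_p_a} to $\tilde a_{m_j}$ with $\tilde a_{m_j}/K_{m_j}\to 1$, handling $\alpha=1$ and $\alpha>1$ (including $\alpha=\infty$) uniformly and staying entirely within the lemmas already proved. Your route is equally valid but requires verifying one extra external inequality; the paper's is more self-contained and slightly shorter.
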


\begin{proof}[Proof of Lemma \ref{lem:varub}]
	First, 
	we prove (i) by contradiction. 
	Suppose that there exists a subsequence $\{n_j: j=1,2, \ldots\}$ such that $K_{n_j} \converge \infty$ as $j\converge \infty$ and $\limsup_{j\converge \infty}a_{n_j}/K_{n_j} \ge 1$. 
	Then there exists a further subsequence $\{m_j:j=1,2,\ldots\} \subset \{n_j:j=1,2,\ldots\}$ such that $\lim_{j \converge \infty} a_{m_j}/K_{m_j} \ge 1$. 
	Define $\tilde{a}_{m_j} = \min\{1, a_{m_j}/K_{m_j}\} \cdot K_{m_j}$. 
	We can then verify that $\tilde{a}_{m_j} \le a_{m_j}$ and $\lim_{j \converge \infty} \tilde{a}_{m_j}/K_{m_j} = 1$. 
	From Lemma \ref{lemma:link_p_a}, for any $j\ge 1$, 
	\begin{align*}
		\frac{\log(p_{m_j}^{-1})}{K_{m_j}} \le 
		\frac{\log\{ \PP(\chi^2_{K_{m_j}} \le \tilde{a}_{m_j} )^{-1} \}}{K}
		\le 
		\frac{ \log (4\pi K_{m_j})}{2K_{m_j}} + 
		\frac{1}{2} \left\{ \frac{\tilde{a}_{m_j}}{K_{m_j}} - 1 - \log\left( \frac{\tilde{a}_{m_j}}{K_{m_j}} \right) \right\},  
	\end{align*}
	where the right hand side converges to 0 as $j\converge \infty$. Consequently, 
	$\log(p_{m_j}^{-1})/ K_{m_j} \converge 0$ as $j\converge \infty$. 
	However, this contradicts with the fact that $\liminf_{n\converge \infty} \log(p_n^{-1})/K_n > 0$. 
	Therefore, for any subsequence $\{n_j: j=1,2, \ldots\}$ with $\lim_{j\converge \infty}K_{n_j} = \infty$, we must have $\limsup_{j\converge \infty}a_{n_j}/K_{n_j} < 1$. 
	
	Second, we prove 
	(ii) by contradiction. 
	If $\limsup_{n \to \infty} \var(L_{K_n, a_n}) = 1$, then there exists a subsequence $\{n_j: j=1,2,\ldots\}$ such that $\var(L_{K_{n_j}, a_{n_j}}) \converge 1$ as $j\converge \infty$. 
	Below we consider two cases, depending on whether $\limsup_{j\converge\infty} K_{n_j}$ is finite. 
	If $\limsup_{j\converge\infty} K_{n_j} = \infty$, then there exists a further subsequence $\{m_j: j=1,2,\ldots\} \subset \{n_j: j=1,2,\ldots\}$ such that $K_{m_j}\converge \infty$ as $j\converge \infty$. 
	From Lemma \ref{lem:vlbnd} and the discussion before, we have 
	\begin{align*}
		1 > \limsup_{j\converge \infty} a_{m_j}/K_{m_j}
		\ge \limsup_{j\converge \infty} \var(L_{K_{m_j}, a_{m_j}}) = 1, 
	\end{align*}
	a contradiction. 
	If $\limsup_{j\converge\infty} K_{n_j} < \infty$, then there exists a finite integer $\overline{K}$ such that $K_{n_j} \le \overline{K}$ for all $j$. 
	Because $\liminf_{n\converge \infty} \log(p_n^{-1})/K_n > 0$, 
	there must exists a positive constant $c>0$ such that $\log(p_n^{-1})/K_n \ge c$ for all $n$. This immediately implies that $\log( p_n^{-1} ) \ge c$ and $p_n \le e^{-c}$ for all $n$. Consequently, for all $j$, we have $a_{n_j} = F_{K_{n_j}}^{-1} (p_{n_j}) \le F_{K_{n_j}}^{-1} (e^{-c}) \le \max_{1\le K\le \overline{K}} F_K^{-1}(e^{-c}) \equiv \overline{a}$, where $F_K^{-1}(\cdot)$ denotes the quantile function for the chi-square distribution with degrees of freedom $K$. 
	From Lemma \ref{lem:varchi}, we then have 
	$$
	1 = \lim_{j\rightarrow \infty}\var(L_{K_{n_j}, a_{n_j}}) \le \limsup_{j\rightarrow \infty} \var(L_{K_{n_j}, \overline{a}})
	\le \max_{1\le K \le \overline{K}} \var(L_{{K}, \overline{a}}) < 1, 
	$$
	a contradiction. 
	Therefore, we must have $\limsup_{n \to \infty} \var(L_{K_n, a_n}) < 1$.
	
	From the above, Lemma \ref{lem:varub} holds. 
\end{proof}

\subsection{Limiting behavior when $\lim_{n\converge \infty} \log(p_n^{-1}) / K_n = 0$}\label{sec:limt_L_logpK_0}

\begin{lemma}\label{lem:infan}
	If $\log(p_n^{-1}) / K_n \converge 0$ as $n\rightarrow \infty$, then 
	$\liminf_{n\rightarrow \infty} a_n / K_n \geq 1$.
\end{lemma}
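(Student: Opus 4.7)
\bigskip

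\noindent\textbf{Proof proposal for Lemma \ref{lem:infan}.} The plan is to proceed by contradiction: assume that $\liminf_{n\to\infty} a_n/K_n < 1$, and show that this forces $\log(p_n^{-1})/K_n$ to fail to converge to $0$. Concretely, the assumption gives $\varepsilon \in (0,1)$ and a subsequence $\{n_j\}$ along which $a_{n_j}/K_{n_j} \le 1-\varepsilon$; passing to a further subsequence (and relabeling) we may assume $a_{n_j}/K_{n_j} \to c^\ast$ for some $c^\ast \in [0,\,1-\varepsilon]$. The strategy is then to split according to whether $K_{n_j}$ diverges or stays bounded, and in each case produce a positive lower bound on $\log(p_{n_j}^{-1})/K_{n_j}$.

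In the first case, pass to yet a further subsequence (again relabeled) with $K_{n_j}\to\infty$. Since $a_{n_j}/K_{n_j} < 1$ eventually, Lemma \ref{lemma:link_p_a} applies and yields
\begin{align*}
\frac{\log(p_{n_j}^{-1})}{K_{n_j}} \;\ge\; \frac{1}{2}\Big\{\frac{a_{n_j}}{K_{n_j}} - 1 - \log\!\Big(\frac{a_{n_j}}{K_{n_j}}\Big)\Big\} + \frac{\log(\pi K_{n_j})}{2K_{n_j}} + \frac{1}{K_{n_j}}\log\!\Big(1 - \frac{a_{n_j}}{K_{n_j}}\Big).
\end{align*}
Now use that $f(x) \equiv x - 1 - \log x$ is strictly decreasing on $(0,1)$ with $f(1)=0$ and $f(x) \to +\infty$ as $x\to 0^+$. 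If $c^\ast > 0$, the bracketed term converges to $f(c^\ast) > 0$, the $\log(\pi K_{n_j})/(2K_{n_j})$ term vanishes, and $K_{n_j}^{-1}\log(1-a_{n_j}/K_{n_j}) \to 0$ because $1-a_{n_j}/K_{n_j} \ge \varepsilon$ eventually, so $\liminf_j \log(p_{n_j}^{-1})/K_{n_j} \ge f(c^\ast)/2 > 0$. If $c^\ast = 0$, the $-\tfrac{1}{2}\log(a_{n_j}/K_{n_j})$ piece drives the bound to $+\infty$, while the remaining terms stay $o(1)$ (again using $a_{n_j}/K_{n_j} \le 1-\varepsilon$ to control the last term). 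Either way we contradict $\log(p_{n_j}^{-1})/K_{n_j} \to 0$.

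In the second case, $K_{n_j}$ is bounded, so along a further subsequence it is constant, $K_{n_j} \equiv K^\ast$, and $a_{n_j} \to K^\ast c^\ast \le K^\ast(1-\varepsilon) < \infty$. Then $p_{n_j} = \PP(\chi^2_{K^\ast} \le a_{n_j})$ is bounded above by $\PP(\chi^2_{K^\ast} \le K^\ast(1-\varepsilon)) < 1$, so $\log(p_{n_j}^{-1})$ is bounded away from $0$ while $K_{n_j}$ is a fixed finite constant, and again $\log(p_{n_j}^{-1})/K_{n_j}$ stays bounded away from $0$, contradicting the hypothesis.

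The only delicate step is the case $c^\ast = 0$ with $K_{n_j}\to\infty$, where one must confirm that the $-\tfrac{1}{2}\log(a_{n_j}/K_{n_j})$ term really dominates even though $a_{n_j}/K_{n_j}$ could, in principle, decay to $0$ at an arbitrary rate; this is where the explicit lower bound from Lemma \ref{lemma:link_p_a} is indispensable, since it certifies that the positive contribution of $-\log(a_{n_j}/K_{n_j})$ is not cancelled by the negative $\log(1-a_{n_j}/K_{n_j})/K_{n_j}$ correction. Beyond that, the argument is a routine case analysis, so no technical obstacle beyond careful subsequence bookkeeping is anticipated.
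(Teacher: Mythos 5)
Your proposal is correct and follows essentially the same route as the paper's proof: argue by contradiction, extract a subsequence along which $a_{n_j}/K_{n_j}$ converges to some $c^\ast\in[0,1)$, apply the lower bound of Lemma \ref{lemma:link_p_a}, and split on whether $K_{n_j}$ diverges or stays bounded. Your treatment of the $c^\ast=0$ sub-case and the bounded-$K_{n_j}$ case is a touch more explicit than the paper's, but the underlying argument is identical.
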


\begin{proof}[Proof of Lemma \ref{lem:infan}]
	We prove the lemma by contradiction. 
	Assume that $\liminf_{n\rightarrow \infty} a_n / K_n < 1$. 
	Then there must exist a subsequence $\{n_j: j=1,2,\ldots\}$ such that $a_{n_j}/K_{n_j} < 1$ and $a_{n_j}/K_{n_j}$ converges to some $c\in [0,1)$ as $n\rightarrow \infty$. 
	From Lemma \ref{lemma:link_p_a}, for any $j\ge 1$, 
	\begin{align*}
		\frac{\log (p_{n_j}^{-1})}{K_{n_j}} \ge 
		\frac{1}{2} \left\{ \frac{a_{n_j}}{K_{n_j}} - 1 - \log\left( \frac{a_{n_j}}{K_{n_j}} \right) \right\} + \frac{\log(\pi K_{n_j})}{2K_{n_j}}  + \frac{1}{K_{n_j}}\log \left(1-\frac{a_{n_j}}{K_{n_j}}\right). 
	\end{align*}
	If $\limsup_{j\rightarrow \infty} K_{n_j} = \infty$, 
	then there exists a further subsequence $\{m_j: j=1,2,\ldots\}$ such that $K_{m_j} \converge \infty$ as $j\converge \infty$, under which we have 
	\begin{align*}
		\frac{\log (p_{m_j}^{-1})}{K_{m_j}} & \ge 
		\frac{1}{2} \left\{ \frac{a_{m_j}}{K_{m_j}} - 1 - \log\left( \frac{a_{m_j}}{K_{m_j}} \right) \right\} + \frac{\log(\pi K_{m_j})}{2K_{m_j}}  + \frac{1}{K_{m_j}}\log \left(1-\frac{a_{m_j}}{K_{m_j}}\right) 
		\\
		& \converge 
		\frac{1}{2} \left( c - 1 - \log(c) \right) > 0.
	\end{align*}
	However, this contradicts with $\lim_{n\rightarrow \infty}\log(p_n^{-1}) / K_n = 0$. 
	If $\limsup_{j\rightarrow \infty} K_{n_j} < \infty$, 
	then there exists a finite $\overline{K} < \infty$ such that $K_{n_j} \le \overline{K}$ for all $j$. 
	Thus, $a_{n_j} = a_{n_j}/K_{n_j} \cdot K_{n_j} \le \overline{K}$ for all $j$, under which we have
	\begin{align*}
		p_{n_j} = \PP(\chi^2_{K_{n_j}} \le a_{n_j})
		\le \PP(\chi^2_{K_{n_j}} \le \overline{K})
		\le \max_{1\le K \le \overline{K}}  \PP(\chi^2_{K} \le \overline{K}), 
	\end{align*}
	and 
	\begin{align*}
		\frac{\log (p_{n_j}^{-1})}{K_{n_j}} \ge 
		\frac{\log (p_{n_j}^{-1})}{\overline{K}}
		\ge 
		\frac{1}{\overline{K}} 
		\log\left\{ \frac{1}{\max_{1\le K \le \overline{K}}\PP(\chi^2_{K} \le \overline{K})} \right\} > 0.
	\end{align*}
	However, this contradicts with $\lim_{n\rightarrow \infty}\log(p_n^{-1}) / K_n = 0$. 
	From the above, Lemma \ref{lem:infan} holds. 
\end{proof}

\begin{lemma}\label{lem:infpn2}
	If $\log(p_n^{-1}) / K_n  \converge 0$, then $\var(L_{K_n, a_n}) \converge 1$.
\end{lemma}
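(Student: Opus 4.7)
The plan is to argue by contradiction: supposing $\var(L_{K_n, a_n}) \not\to 1$, I extract a subsequence along which $1 - \var(L_{K_n, a_n})$ stays bounded away from zero and derive a contradiction with $\log(p_n^{-1})/K_n \to 0$. Lemma~\ref{lem:infan} already gives $\liminf_n a_n/K_n \geq 1$ under the hypothesis. After passing to a subsequence, I may assume either (a) $K_n$ stays bounded, or (b) $K_n \to \infty$.

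In case (a), bounded $K_n$ combined with $\log(p_n^{-1})/K_n \to 0$ forces $p_n \to 1$; refining further so that $K_n \equiv K$ is constant, $a_n = F_K^{-1}(p_n) \to \infty$, so $\PP(\chi^2_{K+2} \leq a_n) \to 1$ and $\var(L_{K, a_n}) \to 1$, a contradiction. In case (b), I split further by whether $a_n \geq K_n - 2$ or $a_n < K_n - 2$ holds eventually. For $a_n \geq K_n - 2$, I use the identity~\eqref{eq:1_v_equi},
\[
1 - \var(L_{K_n, a_n}) = \frac{a_n^{K_n/2} e^{-a_n/2}}{2^{K_n/2}\, \Gamma(K_n/2+1)\, p_n}.
\]
The map $a \mapsto a^{K_n/2} e^{-a/2}$ is maximized at $a = K_n$, so the numerator is bounded by $K_n^{K_n/2} e^{-K_n/2}$; combined with the Stirling lower bound $\Gamma(K_n/2+1) \geq \sqrt{\pi K_n}\,(K_n/(2e))^{K_n/2}$ from Lemma~\ref{lemma:Gamma_fun}, this yields $1 - \var(L_{K_n, a_n}) \leq 1/(\sqrt{\pi K_n}\, p_n)$. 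The CLT for $\chi^2_{K_n}$ gives $p_n \geq \PP(\chi^2_{K_n} \leq K_n - 2) \to 1/2$, so eventually $p_n \geq 1/3$ and the bound vanishes.

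For the remaining sub-case $a_n < K_n - 2$ with $K_n \to \infty$, Lemma~\ref{lem:infan} together with $a_n/K_n \leq 1 - 2/K_n$ forces $\delta_n \equiv 1 - a_n/K_n \to 0^+$ with $K_n \delta_n \geq 2$. I will apply Lemma~\ref{lemma:bound_chisq_ratio2} with the regime-adaptive choice
\[
\zeta_n = \min\bigl\{ 1/\sqrt{K_n},\ 1/(K_n \delta_n) \bigr\}.
\]
Since $\zeta_n \leq 1/\sqrt{K_n} \to 0$ and $a_n \zeta_n + K_n - 2 - a_n \leq K_n(\zeta_n + \delta_n)$, the lemma reduces to
\[
-\log\bigl\{1 - \var(L_{K_n, a_n})\bigr\} \geq -\log 2 + \log(K_n \zeta_n) - \frac{K_n \zeta_n (\zeta_n + \delta_n)}{2(1 - \zeta_n)}.
\]
A short calculation shows $\zeta_n K_n (\zeta_n + \delta_n) \leq 2$ in both regimes $\delta_n \leq 1/\sqrt{K_n}$ and $\delta_n > 1/\sqrt{K_n}$, so the last term stays bounded. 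Meanwhile $\log(K_n \zeta_n)$ equals $(1/2)\log K_n$ in the first regime and $-\log \delta_n$ in the second; both tend to infinity, yielding $\var(L_{K_n, a_n}) \to 1$, the desired contradiction.

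The main obstacle is that $\log(p_n^{-1})/K_n \to 0$ places no constraint on $\delta_n \sqrt{K_n}$, which may tend to $0$, stay bounded, or tend to infinity; no single uniform $\zeta_n$ makes Lemma~\ref{lemma:bound_chisq_ratio2} work, and the regime-adaptive selection above is the main new idea required beyond the earlier auxiliary lemmas.
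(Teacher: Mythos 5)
Your proof is correct, and while it follows the same broad strategy as the paper (contradiction, subsequence extraction, and the identity \eqref{eq:1_v_equi} together with Lemmas \ref{lemma:Gamma_fun}, \ref{lemma:bound_chisq_ratio2} and \ref{lem:infan}), your case decomposition is genuinely different and somewhat cleaner. The paper splits the $K_n \to \infty$ situation into three cases --- $\limsup a_n/K_n > 1$ (handled via Lemma \ref{lemma:bound_chisq_ratio1}), $\liminf (K_n - a_n)/\sqrt{K_n} < \infty$ (handled via a CLT lower bound on $p_n$ plus Lemma \ref{lemma:bound_chisq_ratio1} again), and the remaining regime (handled via Lemma \ref{lemma:bound_chisq_ratio2} with $\zeta_n = \min\{\Delta_n^{-1/2}/K_n, K_n^{-3/4}\}$). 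You instead split only on $a_n \ge K_n - 2$ versus $a_n < K_n - 2$: the first branch is dispatched by maximizing $a \mapsto a^{K/2}e^{-a/2}$ at $a = K$ and the Stirling bound, giving $1 - \var(L_{K_n,a_n}) \le 1/(\sqrt{\pi K_n}\,p_n)$ with $p_n$ bounded below by the CLT, which absorbs both the paper's "$a_n$ large" cases and avoids Lemma \ref{lemma:bound_chisq_ratio1} entirely; the second branch uses the regime-adaptive $\zeta_n = \min\{K_n^{-1/2}, (K_n - a_n)^{-1}\}$, which I checked does satisfy $K_n\zeta_n(\zeta_n + \delta_n) \le 2$ and $K_n\zeta_n = \min\{\sqrt{K_n}, \delta_n^{-1}\} \to \infty$ in both regimes, so it covers the entire range $2 < K_n - a_n < K_n$ in one stroke rather than the paper's two. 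What your route buys is a shorter case analysis and one fewer auxiliary lemma; what the paper's buys is that its case 3 argument makes the role of the $\sqrt{K_n}$-scale fluctuation of $\chi^2_{K_n}$ explicit. Both are valid; yours is a legitimate streamlining.
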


\begin{proof}[Proof of Lemma \ref{lem:infpn2}]
	We prove the lemma by contradiction. 
	Assume $\liminf_{n\converge \infty} \var(L_{K_n, a_n}) < 1$.
	Then there exist a constant $c \in [0,1)$ and a subsequence $\{n_j: j = 1, 2,\ldots \}$ such that 
	$\var(L_{K_{n_j}, a_{n_j}}) \le c$ for all $j$. 
	Below we consider several cases, depending on the values of
	$\liminf_{j\converge \infty} K_{n_j}$, 
	$\limsup_{j\converge \infty} a_{n_j}/K_{n_j}$ and $\liminf_{j\converge \infty} (K_{n_j}-a_{n_j})/\sqrt{K_{n_j}}$. 
	
	First, we consider the case in which $\liminf_{j\converge \infty} K_{n_j} < \infty$. 
	Then there exist a finite constant $\overline{K} < \infty$ and a subsequence $\{m_j: j=1,2,\ldots\} \subset \{n_j: j = 1, 2,\ldots \}$ such that 
	$
	K_{m_j} \le \overline{K}. 
	$
	Thus, for any $j\ge 1$, we have
	\begin{align*}
		\min_{1\le K\le \overline{K}}\log 
		\left\{
		\frac{1}{\PP(\chi^2_{K} \le a_{m_j})} \right\}
		\le 
		\log 
		\left\{ \frac{1}{\PP(\chi^2_{K_{m_j}} \le a_{m_j})}\right\}
		=
		\log( p^{-1}_{m_j} )
		\le \overline{K} \frac{\log(p_{m_j}^{-1})}{K_{m_j}}  \converge 0, 
	\end{align*}
	which must implies that $a_{m_j} \converge \infty$ as $j\converge \infty$. Consequently, from Lemma \ref{lem:varchi}, 
	\begin{align*}
		1 > c \ge \var ( L_{K_{m_j}, a_{m_j}} ) \ge 
		\min_{1\le K\le \overline{K}} \var ( L_{K, a_{m_j}} )
		= 
		\min_{1\le K\le \overline{K}} \frac{\PP(\chi^2_{K+2} \le a_{m_j})}{\PP(\chi^2_{K} \le a_{m_j})} 
		\converge 1, 
	\end{align*}
	a contradiction.

	Second, we consider the case in which 
	$\liminf_{j\converge \infty} K_{n_j} = \infty$
	and 
	$\limsup_{j\converge \infty} a_{n_j}/K_{n_j} > 1$. 
	Then there exist a constant $\delta >1$ and a further subsequence $\{m_j:j=1,2,\ldots\}\subset \{n_j:j=1,2,\ldots\}$ such that 
	$
	a_{m_j}/K_{m_j} > \delta
	$
	and 
	$K_{m_j} \ge 3$
	for all $j$. 
	Note that the function $x-1-\log(x)$ is increasing in $x\in [1, \infty)$ and takes positive value when $x>1$. 
	From Lemma \ref{lemma:bound_chisq_ratio1}, we then have
	\begin{align*}
		-\frac{2\log(1-c)}{K_{m_j}} 
		& \ge 
		-
		\frac{2\log\{
			1 - \var(L_{K_{m_j},a_{m_j}})
			\}}{K_{m_j}}
		\ge 
		\frac{a_{m_j}}{K_{m_j}} - 1 
		- \log\left( \frac{a_{m_j}}{K_{m_j}} \right) 
		-
		\frac{2\log(p_{m_j}^{-1})}{K_{m_j}} + \frac{\log(\pi K_{m_j})}{K_{m_j}}
		\\
		& \ge \delta - 1 - \log \delta -
		\frac{2\log(p_{m_j}^{-1})}{K_{m_j}} + \frac{\log(\pi K_{m_j})}{K_{m_j}}
		\converge \delta - 1 - \log \delta > 0.
	\end{align*}
	However, 
	as $j\converge \infty$, 
	$K_{m_j} \converge \infty$ and thus 
	$-2\log(1-c)/K_{m_j} \converge 0$, a contradiction. 
	
	Third, 
	we consider the case in which 
	$\liminf_{j\converge \infty} K_{n_j} = \infty$
	and 
	$\liminf_{j\converge \infty} (K_{n_j}-a_{n_j})/\sqrt{K_{n_j}} < \infty$. 
	Then there exists a finite constant $\beta$ and a subsequence $\{m_j:j=1,2,\ldots\} \subset \{n_j:j=1,2,\ldots\}$ such that 
	$(K_{m_j} - a_{m_j})/\sqrt{K_{m_j}} \le \beta$. 
	By the central limit theorem, 
	\begin{align*}
		p_{m_j}
		& = 
		\PP(\chi^2_{K_{m_j}} \le a_{m_j})
		= 
		\PP\left( \frac{\chi^2_{K_{m_j}} - K_{m_j}}{\sqrt{2K_{m_j}}} \le -\frac{K_{m_j}-a_{m_j}}{\sqrt{2K_{m_j}}} \right)
		\ge
		\PP\left( \frac{\chi^2_{K_{m_j}} - K_{m_j}}{\sqrt{2K_{m_j}}} \le -\frac{\beta}{\sqrt{2}} \right)
		\converge \Phi\left( -\frac{\beta}{\sqrt{2}} \right), 
	\end{align*}
	which implies that $\limsup_{j\rightarrow \infty}\log( p_{m_j}^{-1} ) < \infty$. 
	From Lemma \ref{lemma:bound_chisq_ratio1}
	and the inequality $x-1-\log(x)\ge 0$ for all $x>0$, 
	we then have 
	\begin{align*}
		-\log(1-c)
		& \ge 
		-\log\{
		1 - \var(L_{K_{m_j},a_{m_j}})
		\}
		\ge 
		\frac{K_{m_j}}{2}
		\left\{
		\frac{a_{m_j}}{K_{m_j}} - 1 
		- \log\left( \frac{a_{m_j}}{K_{m_j}} \right)  \right\}
		-
		\log(p_{m_j}^{-1}) + \frac{\log(\pi K_{m_j})}{2}
		\\
		& \ge -
		\log(p_{m_j}^{-1}) + \frac{\log(\pi K_{m_j})}{2} \converge \infty, 
	\end{align*}
	a contradiction. 
	
	Finally, we consider the case in which $\liminf_{j\converge \infty} K_{n_j} = \infty$, $\limsup_{j\converge \infty} a_{n_j}/K_{n_j} \le 1$ and 
	$\liminf_{j\converge \infty} (K_{n_j}-a_{n_j})/\sqrt{K_{n_j}} = \infty$. 
	From Lemma \ref{lem:infan}, 
	$\liminf_{j\converge \infty} a_{n_j}/K_{n_j} \ge \liminf_{n\rightarrow \infty} a_n / K_n \ge 1$, 
	which implies that $a_{n_j}/K_{n_j} \converge 1$ as $j\converge \infty$. 
	Moreover, there exists a finite $\underline{j}$ such that $a_{n_j} < K_{n_j}-3$ for all $j \ge \underline{j}$. 
	For any $j\ge \underline{j}$, 
	define $\Delta_{n_j} = (K_{n_j} - 2 - a_{n_j})/K_{n_j}$ and 
	$\zeta_{n_j} = \min\{ \Delta_{n_j}^{-1/2}/K_{n_j}, K_{n_j}^{-3/4} \} \in (0,1)$. 
	We can then verify that, as $j\converge \infty$, 
	$\Delta_{n_j} \converge 0$, $\zeta_{n_j}\converge 0$, 
	$K_{n_j} \zeta_{n_j} = \min\{ \Delta_{n_j}^{-1/2}, K_{n_j}^{1/4} \} \converge \infty$, and 
	\begin{align*}
		a_{n_j} \zeta_{n_j}^2 \le  a_{n_j} K_{n_j}^{-3/2} = K_{n_j}^{-1/2} \frac{a_{n_j}}{K_{n_j}} \converge 0, 
		\quad 
		\zeta_j (K_{n_j} - 2 - a_{n_j})
		& \le \Delta_{n_j}^{-1/2} \frac{K_{n_j} - 2 - a_{n_j}}{K_{n_j}}
		= \Delta_{n_j}^{1/2} \converge 0. 
	\end{align*}
	From Lemma \ref{lemma:bound_chisq_ratio2}, 
	this further implies that, 
	for any $j \ge \underline{j}$, 
	\begin{align*}
		-\log(1-c) 
		& \ge 
		-\log \big\{
		1 -  \var(L_{K_{n_j}, a_{n_j}})
		\big\}
		\ge 
		- \log(2) + 
		\log(K_{n_j} \zeta_{n_j} )
		-
		\frac{a_{n_j}\zeta_{n_j}^2 + \zeta_{n_j}(K_{n_j}-2-a_{n_j})}{2(1-\zeta_{n_j})}\\
		& \converge \infty, 
	\end{align*}
	a contradiction. 
	
	From the above, Lemma \ref{lem:infpn2} holds. 
\end{proof}

\subsection{Proof of Theorem~\ref{thm:v_Ka}
	and an additional proposition 
}\label{sec:proof_thm_v_Ka}

\begin{proof}[\bf Proof of Theorem \ref{thm:v_Ka}]
	(i) is a direct consequence of Lemma~\ref{lem:dimlka}. 
	(ii) is a direct consequence of Lemma~\ref{lem:nodim}. 
	(iii) is a direct consequence of Lemma~\ref{lem:varub}. 
	(iv) is a direct consequence of Lemma~\ref{lem:infpn2}.
\end{proof}

The following proposition establishes the equivalence between convergence in probability and convergence of variance for the constrained Gaussian random variable discussed in Section \ref{sec:asym_con_Gauss}. 

\begin{proposition}\label{prop:convpvar}
	As $n \to \infty$, $L_{K_n, a_n} \convergep 0$ if and only if $\var(L_{K_n, a_n}) \converge 0$.
\end{proposition}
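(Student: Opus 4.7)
Plan. The key preliminary observation is that $L_{K,a}$ has mean zero: the joint law of $\bs{D} \sim \mathcal{N}(\bs{0}, \bs{I}_K)$ together with the event $\{\bs{D}^\top \bs{D} \le a\}$ is invariant under the sign flip $D_1 \mapsto -D_1$, so the conditional distribution of $D_1$ given $\bs{D}^\top \bs{D} \le a$ is symmetric about zero, giving $\E L_{K,a} = 0$ and $\var(L_{K,a}) = \E L^2_{K,a}$. The ``if'' direction is then immediate from Markov's inequality, since $\PP(|L_{K_n,a_n}| > \epsilon) \le \epsilon^{-2}\,\E L^2_{K_n,a_n} \to 0$.

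For the ``only if'' direction, the strategy is to establish the uniform fourth-moment bound $\sup_n \E L^4_{K_n,a_n} \le 3$. This immediately gives $\E[L^2_{K_n,a_n}\,\mathbbm{1}\{L^2_{K_n,a_n} > M\}] \le 3/M$ by Markov, so the family $\{L^2_{K_n,a_n}\}_{n\ge 1}$ is uniformly integrable. Combined with the hypothesis $L_{K_n,a_n} \convergep 0$ (which entails $L^2_{K_n,a_n} \convergep 0$), uniform integrability yields $\E L^2_{K_n,a_n} \to 0$, which is the desired conclusion.

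To prove the fourth-moment bound, I would use the radial decomposition $\bs{D} = R\bs{U}$, where $R = \|\bs{D}\|_2$ and $\bs{U} = \bs{D}/R$ is uniformly distributed on the unit sphere in $\mathbb{R}^K$ and independent of $R$. The conditioning event $\{\bs{D}^\top \bs{D} \le a\} = \{R^2 \le a\}$ is measurable with respect to $R$ alone, so $\bs{U}$ remains independent of $R$ and uniform on the sphere under the conditional law. Consequently,
$$\E L^4_{K,a} \;=\; \E[R^4 U_1^4 \mid R^2 \le a] \;=\; \E[U_1^4]\cdot \E[R^4 \mid R^2 \le a].$$
Since $U_1^2 \sim \mathrm{Beta}(1/2,(K-1)/2)$ for $K \ge 2$ and $U_1^2 \equiv 1$ for $K = 1$, a direct moment computation yields $\E U_1^4 = 3/[K(K+2)]$ in all cases, while $\E[R^4 \mid R^2 \le a] \le \E R^4 = \E(\chi^2_K)^2 = K(K+2)$. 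Multiplying the two factors gives $\E L^4_{K,a} \le 3$, uniformly in $K$ and $a$.

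No serious obstacle is anticipated. The one point to flag is that the pathwise bound $|L_{K_n,a_n}| \le \sqrt{a_n}$ is useless when $a_n \to \infty$, so uniform integrability must instead be extracted from the cancellation between the spherical factor $\E U_1^4$ (of order $K^{-2}$) and the radial factor $\E[R^4\mid R^2 \le a]$ (at most $K(K+2)$); the radial decomposition makes this cancellation transparent and dimension-free, which is exactly what is needed since the proposition is claimed for arbitrary sequences $(K_n, a_n)$.
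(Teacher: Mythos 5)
Your proof is correct, and its overall architecture matches the paper's: both directions reduce to showing that $\{L^2_{K_n,a_n}\}_{n\ge 1}$ is uniformly integrable, after which $L_{K_n,a_n}\convergep 0$ plus uniform integrability yields $\E L^2_{K_n,a_n}=\var(L_{K_n,a_n})\converge 0$. Where you differ is in how uniform integrability is obtained. The paper imports the stochastic-ordering result $|L_{K,a}|\preceq |\varepsilon|$ from \citet[Lemma A5]{LDR18}, which gives $\E\{L^2_{K_n,a_n}\I(L^2_{K_n,a_n}>c)\}\le \E\{\varepsilon^2\I(\varepsilon^2>c)\}$ directly and then concludes by dominated convergence. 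You instead prove the dimension-free fourth-moment bound $\E L^4_{K,a}\le 3$ via the radial decomposition $\bs{D}=R\bs{U}$, exploiting that the conditioning event is $R$-measurable so that $\bs{U}$ stays uniform on the sphere and independent of $R$; the factorization $\E L^4_{K,a}=\E[U_1^4]\cdot\E[R^4\mid R^2\le a]\le \frac{3}{K(K+2)}\cdot K(K+2)$ is correct (the inequality $\E[R^4\mid R^2\le a]\le \E R^4$ holds because $R^4$ is an increasing function of $R^2$, so conditioning on $\{R^2\le a\}$ can only lower its mean). Your route is self-contained — it does not lean on the external lemma — and is quantitative, giving the explicit tail bound $\E\{L^2\I(L^2>M)\}\le 3/M$ uniformly in $(K,a)$; the paper's route is shorter given the cited lemma and bounds the tails by those of a single dominating random variable rather than by a moment. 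Either argument is acceptable; you might note in passing that the same radial decomposition, applied to second rather than fourth moments, also recovers the identity $\var(L_{K,a})=K^{-1}\E(\chi^2_K\mid \chi^2_K\le a)$ used elsewhere in the paper.
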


\begin{proof}[Proof of Proposition \ref{prop:convpvar}]
	The ``if'' is a direct consequence of Chebyshev's inequality. Below we focus on the ``only if'' direction. 
	Suppose that $L_{K_n, a_n} \convergep 0$. 
	Note that when $a_n = \infty$, $L_{K_n, a_n}\sim \varepsilon$, a standard Gaussian random variable. From Lemma \ref{lem:varchi}, for all $n\ge 1$, 
	$\E(L_{K_n, a_n}^2) = \var(L_{K_n, a_n}) \le \var(L_{K_n, \infty}) = \var(\varepsilon) = 1$. 
	From \citet[][Theorem 4.6.3]{D19}, 
	to prove that $\var(L_{K_n, a_n}) = \E(L_{K_n, a_n}^2)\converge 0$, it suffices to show that $\{L_{K_n, a_n}^2: n\ge 1\}$ is uniformly integrable. 
	From \citet[][Lemma A5]{LDR18}, 
	$|L_{K_n, a_n}|$ is stochastically smaller than or equal to $|\varepsilon|$. 
	Because $x\I(x>c)$ is a nondecreasing function of $x\in [0, \infty)$ for any given $c>0$, by the property of stochastic ordering, we have 
	$
	\E\{ L_{K_n, a_n}^2\I(L_{K_n, a_n}^2 > c) \}
	\le 
	\E\{ \varepsilon^2\I(\varepsilon^2 > c) \}. 
	$
	By the dominated convergence theorem, this further implies that 
	\begin{align*}
		\lim_{c\converge \infty}\left( \sup_{n \ge 1} \E\{ L_{K_n, a_n}^2\I(L_{K_n, a_n}^2 > c) \} \right)
		\le 
		\lim_{c\converge \infty}
		\E\{ \varepsilon^2\I(\varepsilon^2 > c) \}
		= 0,  
	\end{align*}
	i.e., $\{L_{K_n, a_n}^2: n\ge 1\}$ is uniformly integrable. 
	From the above, Proposition \ref{prop:convpvar} holds. 
\end{proof}

\section{Asymptotics for Optimal Rerandomization}\label{sec:asym_optimal_rerand}

\begin{lemma}\label{lemma:bound_inf_norm_two_rv}
	For any two random variables $\psi$ and $\tilde{\psi}$, and any constant $\delta > 0$, 
	\begin{align*}
		\sup_{c\in \mathbb{R}}
		\big| \PP(\tilde{\psi} \le c) - \PP(\psi \le c) \big|
		& \le 
		\PP(|\tilde{\psi} - \psi| > \delta) 
		+ 
		\sup_{b\in \mathbb{R}}\PP(b < \psi \le b+\delta)
	\end{align*}
\end{lemma}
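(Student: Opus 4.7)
The plan is to prove this via the standard two-sided smoothing argument that relates $\PP(\tilde{\psi} \le c)$ to $\PP(\psi \le c)$ by intersecting with the event $\{|\tilde{\psi} - \psi| \le \delta\}$ and its complement. First I would fix an arbitrary $c \in \mathbb{R}$ and aim to bound $\PP(\tilde{\psi} \le c) - \PP(\psi \le c)$ from above and below by the same quantity (up to sign), so that taking a supremum over $c$ at the end yields the claimed bound.

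For the upper bound, I would split
\begin{align*}
\PP(\tilde{\psi} \le c)
&\le \PP(\tilde{\psi} \le c,\ |\tilde{\psi} - \psi| \le \delta) + \PP(|\tilde{\psi} - \psi| > \delta) \\
&\le \PP(\psi \le c + \delta) + \PP(|\tilde{\psi} - \psi| > \delta),
\end{align*}
since on the event $\{\tilde{\psi} \le c,\ |\tilde{\psi} - \psi| \le \delta\}$ we have $\psi \le \tilde{\psi} + \delta \le c + \delta$. Writing $\PP(\psi \le c+\delta) = \PP(\psi \le c) + \PP(c < \psi \le c+\delta)$ and bounding the latter by $\sup_{b\in\mathbb{R}}\PP(b < \psi \le b+\delta)$ gives the desired upper bound on $\PP(\tilde{\psi}\le c) - \PP(\psi \le c)$.

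For the matching lower bound, I would use the symmetric inclusion
\begin{align*}
\PP(\tilde{\psi} \le c)
&\ge \PP(\psi \le c - \delta,\ |\tilde{\psi} - \psi| \le \delta) \\
&\ge \PP(\psi \le c - \delta) - \PP(|\tilde{\psi} - \psi| > \delta),
\end{align*}
using that $\psi \le c-\delta$ and $|\tilde{\psi}-\psi|\le \delta$ imply $\tilde{\psi}\le c$. Then $\PP(\psi\le c-\delta) = \PP(\psi \le c) - \PP(c-\delta < \psi \le c)$, and again the latter is at most $\sup_{b}\PP(b<\psi\le b+\delta)$. Combining the two bounds yields
\[
\bigl|\PP(\tilde{\psi} \le c) - \PP(\psi \le c)\bigr| \le \PP(|\tilde{\psi} - \psi| > \delta) + \sup_{b\in\mathbb{R}}\PP(b < \psi \le b+\delta),
\]
and since the right-hand side does not depend on $c$, taking $\sup_{c\in\mathbb{R}}$ completes the argument.

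There is essentially no obstacle here; the only care needed is to ensure the coupling-style splitting is applied consistently in both directions (shifting by $+\delta$ on one side and $-\delta$ on the other), and to use $\sup_b$ at the end rather than evaluating the small-ball probability at the specific $c$, so that the bound is uniform in $c$.
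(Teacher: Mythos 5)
Your proof is correct and uses essentially the same two-sided smoothing argument as the paper; the paper simply decomposes $\PP(\psi \le c)$ and sandwiches it by $\PP(\tilde{\psi}\le c)$ plus the error terms, whereas you decompose $\PP(\tilde{\psi}\le c)$ and sandwich it by $\PP(\psi \le c\pm\delta)$ — these are mirror images of the same intersection-with-$\{|\tilde{\psi}-\psi|\le\delta\}$ device and yield the identical bound.
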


\begin{proof}[Proof of Lemma \ref{lemma:bound_inf_norm_two_rv}]
	For any $c\in \mathbb{R}$ and $\delta>0$, we have 
	\begin{align*}
		\PP(\psi \le c) 
		& = 
		\PP(\psi \le c - \delta) + \PP(c-\delta < \psi \le c)
		\\
		& 
		\le 
		\PP(\psi \le c - \delta, |\tilde{\psi} - \psi| \le \delta) + 
		\PP(|\tilde{\psi} - \psi| > \delta) + 
		\PP(c-\delta < \psi \le c)
		\\
		& 
		\le 
		\PP(\tilde{\psi} \le c) + \PP(|\tilde{\psi} - \psi| > \delta) + 
		\PP(c-\delta < \psi \le c)
	\end{align*}
	and 
	\begin{align*}
		\PP(\psi \le c) 
		& = 
		\PP(\psi \le c+\delta) - \PP(c<\psi \le c+\delta) 
		\ge 
		\PP(\psi \le c+\delta, |\tilde{\psi}-\psi| \le \delta) - \PP(c<\psi \le c+\delta) 
		\\
		& \ge 
		\PP(\tilde{\psi} \le c, |\tilde{\psi}-\psi| \le \delta) - \PP(c<\psi \le c+\delta) 
		\\
		& \ge 
		\PP(\tilde{\psi} \le c) - \PP(|\tilde{\psi} - \psi| > \delta) - \PP(c<\psi \le c+\delta).
	\end{align*}
	These imply that 
	\begin{align*}
		\big| \PP(\tilde{\psi} \le c) - \PP(\psi \le c) \big|
		& \le 
		\PP(|\tilde{\psi} - \psi| > \delta) 
		+ 
		\sup_{b\in \mathbb{R}}\PP(b < \psi \le b+\delta).
	\end{align*}
	Taking supremum over $c$, we then derive Lemma \ref{lemma:bound_inf_norm_two_rv}.  
\end{proof}

\begin{lemma}\label{lemma:diminish_inf_norm_two_rv}
	Let $\{{\psi}_n\}$ and $\{\tilde{\psi}_n\}$ be two sequence of random variables satisfying that 
	$\psi_n = \beta_n \varepsilon_0 + \zeta_n$ and $\psi_n - \tilde{\psi}_n = o_{\PP}(\beta_n)$, 
	where $\{\beta_n\}$ is a sequence of positive constants, 
	$\{\zeta_n\}$ is a sequence of random variables independent of  
	$\varepsilon_0$, 
	and $\varepsilon_0$ is a random variable with bounded density. 
	Then we have, as $n\rightarrow \infty$, 
	$
	\sup_{c\in \mathbb{R}}
	| \PP(\tilde{\psi}_n \le c) - \PP(\psi_n \le c) | \rightarrow 0.
	$
\end{lemma}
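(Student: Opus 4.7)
The plan is to apply Lemma \ref{lemma:bound_inf_norm_two_rv} with a carefully chosen sequence of slackness parameters $\delta_n$, using the decomposition $\psi_n = \beta_n \varepsilon_0 + \zeta_n$ together with the bounded density of $\varepsilon_0$ to control the anti-concentration term, and the $o_{\PP}(\beta_n)$ hypothesis to control the tail term.

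First, I would bound the anti-concentration quantity $\sup_{b\in \mathbb{R}} \PP(b < \psi_n \le b+\delta_n)$. By conditioning on $\zeta_n$ and using independence of $\zeta_n$ and $\varepsilon_0$, this probability equals $\E[\PP((b-\zeta_n)/\beta_n < \varepsilon_0 \le (b-\zeta_n+\delta_n)/\beta_n \mid \zeta_n)]$. Letting $M$ denote the uniform bound on the density of $\varepsilon_0$, this conditional probability is at most $M \delta_n/\beta_n$, so $\sup_b \PP(b < \psi_n \le b+\delta_n) \le M \delta_n / \beta_n$. This is where both the independence structure and the bounded-density assumption enter; without them, the conditional distribution of $\psi_n$ could concentrate in a very short interval.

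Next, I would choose $\delta_n$ of the form $\delta_n = \epsilon \beta_n$ for an arbitrary fixed $\epsilon > 0$. With this choice, the anti-concentration bound becomes $M\epsilon$, and the tail term satisfies $\PP(|\tilde{\psi}_n - \psi_n| > \delta_n) = \PP(|\tilde{\psi}_n - \psi_n|/\beta_n > \epsilon) \rightarrow 0$ by the assumption $\psi_n - \tilde{\psi}_n = o_{\PP}(\beta_n)$. Combining these with Lemma \ref{lemma:bound_inf_norm_two_rv} yields
\begin{align*}
\limsup_{n\to\infty} \sup_{c\in\mathbb{R}} \big|\PP(\tilde{\psi}_n \le c) - \PP(\psi_n \le c)\big| \le M\epsilon.
\end{align*}
Since $\epsilon>0$ was arbitrary, letting $\epsilon \downarrow 0$ gives the desired conclusion.

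There is no substantial obstacle here; the argument is essentially a two-line application of Lemma \ref{lemma:bound_inf_norm_two_rv} once one recognizes that the scale $\beta_n$ in the remainder matches exactly the scale at which the leading term $\beta_n \varepsilon_0$ prevents concentration. The only care needed is in the order of quantifiers: one fixes $\epsilon$ first, takes $n \to \infty$ to kill the tail term via the $o_{\PP}(\beta_n)$ bound, and only then sends $\epsilon \to 0$. Alternatively, one could replace the fixed $\epsilon$ with a sufficiently slowly vanishing sequence $\epsilon_n \to 0$ for which $\PP(|\tilde{\psi}_n - \psi_n|/\beta_n > \epsilon_n) \to 0$ still holds, but the two-step argument above avoids the need to construct such a sequence explicitly.
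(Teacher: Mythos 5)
Your proposal is correct and follows essentially the same route as the paper's proof: both apply Lemma \ref{lemma:bound_inf_norm_two_rv} with $\delta = \epsilon\beta_n$ for a fixed $\epsilon>0$, bound the anti-concentration term by conditioning on $\zeta_n$ and using the bounded density of $\varepsilon_0$ to get $M\epsilon$, kill the tail term via the $o_{\PP}(\beta_n)$ hypothesis, and then let $\epsilon\downarrow 0$. The order of quantifiers you emphasize is exactly how the paper handles it.
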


\begin{proof}[Proof of Lemma \ref{lemma:diminish_inf_norm_two_rv}]
	For any constant $\eta > 0$, 
	using Lemma \ref{lemma:bound_inf_norm_two_rv} with $\delta = \beta_n \eta$, we have 
	\begin{align}\label{eq:inf_norm_two_rv_bound}
		\sup_{c\in \mathbb{R}}
		\big| \PP(\tilde{\psi}_n \le c) - \PP(\psi_n \le c) \big|
		& \le 
		\PP(|\tilde{\psi}_n - \psi_n| > \beta_n \eta ) 
		+ 
		\sup_{b\in \mathbb{R}}\PP(b < \psi_n \le b+\beta_n \eta)
		\nonumber
		\\
		& = 
		\PP(|\tilde{\psi}_n - \psi_n|/\beta_n > \eta ) 
		+ 
		\sup_{b\in \mathbb{R}}\PP(b < \psi_n/\beta_n \le b+ \eta). 
	\end{align}
	Below we consider the two terms in \eqref{eq:inf_norm_two_rv_bound}, separately. 
	First, by the fact that $\psi_n - \tilde{\psi}_n = o_{\PP}(\beta_n)$, the first term in \eqref{eq:inf_norm_two_rv_bound} satisfies that 
	$\PP(|\tilde{\psi}_n - \psi_n|/\beta_n > \eta ) \rightarrow 0$ as $n \rightarrow \infty$. 
	Second, let $C$ be the upper bound of the density of $\varepsilon_0$.  
	For any $b\in \mathbb{R}$, 
	we then have,
	\begin{align*}
		\PP(b < \psi_n/\beta_n \le b+ \eta \mid \zeta_n)
		& = 
		\PP(b -  \zeta_n/\beta_n < \varepsilon_0 \le b -  \zeta_n/\beta_n + \eta \mid \zeta_n)
		\le C \eta, 
	\end{align*}
	and thus, by the law of iterated expectation, 
	\begin{align*}
		\PP(b < \psi_n/\beta_n \le b+ \eta)
		& = 
		\E\{\PP(b < \psi_n/\beta_n \le b+ \eta \mid \zeta_n)\}
		\le C \eta. 
	\end{align*} 
	Consequently, we have 
	$\sup_{b\in \mathbb{R}}\PP(b < \psi_n/\beta_n \le b+ \eta) \le C\eta$. 
	
	From the above, for any constant $\eta>0$, 
	\begin{align*}
		\limsup_{n\rightarrow \infty}\sup_{c\in \mathbb{R}}
		\big| \PP(\tilde{\psi}_n \le c) - \PP(\psi_n \le c) \big|
		& \le 
		\limsup_{n\rightarrow \infty}
		\PP(|\tilde{\psi}_n - \psi_n|/\beta_n > \eta ) 
		+ 
		C \eta 
		\le C\eta. 
	\end{align*}
	Because the above inequality holds for any $\eta>0$, we must have 
	$\limsup_{n\rightarrow \infty}\sup_{c\in \mathbb{R}}
	| \PP(\tilde{\psi}_n \le c) - \PP(\psi_n \le c) | = 0$. 
	Therefore, Lemma \ref{lemma:diminish_inf_norm_two_rv} holds. 
\end{proof}

\begin{proof}[\bf Proof of Theorem~\ref{thm:rem_gaussian}]
	From Theorem \ref{thm:v_Ka}(i) and Proposition \ref{prop:convpvar}, under Condition \ref{cond:k_np_n}, we must have $L_{K_n, a_n} = o_{\PP}(1)$. 
	From Condition \ref{cond:rsup}, we can know that for sufficiently large $n$, 
	$1-R_n^2$ is greater than certain positive constant, 
	and $\sqrt{R_n^2} \ L_{K_n, a_n} = o_{\PP}(\sqrt{1-R_n^2})$. 
	Using Lemma \ref{lemma:diminish_inf_norm_two_rv} with $\psi_n =\sqrt{1-R_n^2} \ \varepsilon_0$ and $\tilde{\psi}_n = \sqrt{1-R_n^2} \ \varepsilon_0 + \sqrt{R_n^2} \ L_{K_n, a_n}$, we then have 
	\begin{align*}
		\sup_{c\in \mathbb{R}}\bigg|\PP\left( \sqrt{1-R_n^2} \ \varepsilon_0 \le c \right) -&\; \PP\left\{ \big(\sqrt{1-R_n^2} \ \varepsilon_0  + \sqrt{R_n^2} \ L_{K_n, a_n} \big) \le c \right\} \bigg|  \converge 0.
	\end{align*}
	From Theorem~\ref{thm:dim_rem}, as $n\rightarrow \infty$, 
	\begin{align*}
		& \quad \ \sup_{c\in \mathbb{R}}  \left| \PP \left\{ V_{\tau\tau}^{-1/2}( \hat{\tau} - \tau) \le c \mid M \le a_n \right\} - \PP\left\{ \sqrt{1-R_n^2} \ \varepsilon_0 \le c \right\}
		\right| \\
		& \leq \sup_{c\in \mathbb{R}}\left| \PP \left\{ V_{\tau\tau}^{-1/2}( \hat{\tau} - \tau) \le c \mid M \le a_n \right\} - \PP\left\{ \big(\sqrt{1-R_n^2} \ \varepsilon_0  + \sqrt{R_n^2} \ L_{K_n, a_n} \big) \le c \right\}
		\right| \\
		& + \sup_{c\in \mathbb{R}}\left|\PP\left\{ \sqrt{1-R_n^2} \ \varepsilon_0 \le c \right\} - \PP\left\{ \big(\sqrt{1-R_n^2} \ \varepsilon_0  + \sqrt{R_n^2} \ L_{K_n, a_n} \big) \le c \right\} \right|
		\\
		& \rightarrow 0.
	\end{align*}
	Therefore, Theorem~\ref{thm:rem_gaussian} holds. 
\end{proof}

\begin{proof}[\bf Proof of Theorem~\ref{thm:K_n_Delta_n}]
	Note that Condition \ref{cond:p_n} is that $p_n/\Delta_n \rightarrow \infty$ as $n\rightarrow \infty$, 
	and Condition \ref{cond:k_np_n} is that $\log(p_n^{-1})/K_n \rightarrow \infty$ as $n\rightarrow \infty$. 
	Below we prove Theorem \ref{thm:K_n_Delta_n}(i)--(iv) respectively. 
	
	First, we prove (i). 
	Consider first the ``only if'' part. 
	If both Conditions  \ref{cond:p_n} and \ref{cond:k_np_n} hold for some sequence $\{p_n\}$, then we must have, for sufficiently large $n$, 
	$
	\log (\Delta_n^{-1})/K_n 
	=
	\log (p_n^{-1})/K_n + \log(p_n/\Delta_n)/K_n  \ge \log (p_n^{-1})/K_n, 
	$
	which must imply that $\log (\Delta_n^{-1})/K_n  \rightarrow \infty$ as $n\rightarrow \infty$. 
	Consider then the ``if'' part. 
	Because $\log (\Delta_n^{-1})/K_n  \rightarrow \infty$ as $n\rightarrow \infty$, 
	we can construct a sequence $\{p_n\}$ such that, as $n\rightarrow \infty$,  $\Delta_n/p_n \rightarrow 0$, and $ \log(\Delta_n/p_n)/K_n+ \log (\Delta_n^{-1})/K_n \rightarrow \infty$. 
	For such a choice of $\{p_n\}$, Condition \ref{cond:p_n} holds obviously, and 
	$
	\log(p_n^{-1})/K_n = \log(\Delta_n/p_n)/K_n+ \log (\Delta_n^{-1})/K_n \rightarrow \infty, 
	$
	i.e., Condition \ref{cond:k_np_n} holds. 
	
	Second, we prove (ii). 
	For any sequence $\{p_n\}$ such that Condition \ref{cond:p_n} holds, we have 
	\begin{align*}
		\limsup_{n\rightarrow \infty} \frac{\log(p_n^{-1})}{K_n} 
		\le  
		\limsup_{n\rightarrow \infty} \frac{\log(\Delta_n/p_n)}{K_n} + \limsup_{n\rightarrow \infty} \frac{\log (\Delta_n^{-1})}{K_n}
		\le 
		\limsup_{n\rightarrow \infty} \frac{\log (\Delta_n^{-1})}{K_n}
		< \infty.
	\end{align*}
	Fro Theorem \ref{thm:v_Ka}(ii), this further implies that $\liminf_{n\rightarrow \infty} v_{K_n, a_n} >0$.
	
	Third, we prove (iii). 
	Because Condition \ref{cond:gamma_n} holds, from Theorem \ref{thm:berry_esseen_clt}, 
	we can construct a sequence $\{p_n\}$ such that $\lim_{n\rightarrow \infty} p_n/\Delta_n= \infty$ and 
	$ \limsup_{n\rightarrow \infty}\log(p_n/\Delta_n)/ \log (\Delta_n^{-1}) < 1 - c$ for some $c>0$.  
	This then implies that 
	\begin{align*}
		\liminf_{n\rightarrow\infty}\frac{\log(p_n^{-1})}{K_n} = 
		\liminf_{n\rightarrow\infty} \left[\frac{\log(\Delta_n^{-1})}{K_n} \left\{1 -  \log(p_n/\Delta_n)/\log(\Delta_n^{-1}) \right\}\right]
		\ge 
		c \liminf_{n\rightarrow\infty} \frac{\log(\Delta_n^{-1})}{K_n} > 0. 
	\end{align*}
	From Theorem \ref{thm:v_Ka}(iii), we then have $ \limsup_{n\rightarrow \infty} v_{K_n, a_n} < 1$. 
	
	Fourth, we prove (iv). 
	For any sequence $\{p_n\}$ such that Condition \ref{cond:p_n} holds, we have 
	\begin{align*}
		\limsup_{n\rightarrow \infty} \frac{\log(p_n^{-1})}{K_n} 
		\le  
		\limsup_{n\rightarrow \infty} \frac{\log(\Delta_n/p_n)}{K_n} + \limsup_{n\rightarrow \infty} \frac{\log (\Delta_n^{-1})}{K_n}
		\le 
		\limsup_{n\rightarrow \infty} \frac{\log (\Delta_n^{-1})}{K_n}
		= 0. 
	\end{align*}
	From Theorem \ref{thm:v_Ka}(iv), this further implies that $v_{K_n, a_n} \rightarrow 0$ as $n\rightarrow \infty$. 
\end{proof}

\section{Asymptotic Validity of Confidence Intervals}\label{sec:asym_CI}

\subsection{Technical lemmas}

For descriptive convenience, throughout this section, we define $a/b$ as $+\infty$ when $a>0$ and $b = 0$. 

\begin{lemma}\label{lem:scineq}
	Let $\{u_i\in \mathbb{R}: i=1,2,\ldots,n\}$ be a finite population of $N>0$ units,
	with $\bar{u} = N^{-1} \sum_{i=1}^N u_i$ and 
	$\sigma^2_{u} = N^{-1} \sum_{i=1}^N (u_i - \bar{u})^2$. 
	Let $(Z_1, \cdots, Z_N)$ denote a sampling indicator vector for a simple random sample of size $m>0$, 
	and 
	$\hat{u} = m^{-1} \sum_{i=1}^N Z_i u_i$ denote the corresponding sample average. 
	Define $f=m/N$. 
	Then for any $t > 0$, 
	\begin{align*}
		\PP\left( 
		\left| \hat{u} - \bar{u} \right|
		\ge 
		t 
		\right)
		& 
		\le 
		2 
		\exp\left(
		- \frac{70^2}{71^2} \frac{N f^2 t^2}{\sigma^2_u}
		\right). 
	\end{align*}
\end{lemma}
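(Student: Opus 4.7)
The plan is to establish this Bernstein-type concentration inequality for simple random sampling by coupling with Bernoulli sampling, in the spirit of the H\'ajek-type coupling used earlier in the paper (Lemma~\ref{lemma:couple_srs_bs}).

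First I would introduce an i.i.d.\ $\mathrm{Bernoulli}(f)$ indicator vector $\bs{T}=(T_1,\dots,T_N)$ coupled to $\bs{Z}$ as in Lemma~\ref{lemma:couple_srs_bs}, and write
\[
m(\hat u - \bar u) \;=\; \sum_{i=1}^N (T_i - f)(u_i-\bar u) \;+\; R,
\]
where the leading sum is over independent centered random variables with variance total $Nf(1-f)\sigma_u^2$, and the coupling remainder $R$ is governed by the deviation of the binomial $\tilde m = \sum_i T_i$ from its mean $m$. To the Bernoulli sum $B = \sum_i (T_i - f)(u_i-\bar u)$ I would apply Hoeffding's inequality in its sharp range form: each summand lies in an interval of length $|u_i - \bar u|$, so $\PP(|B|\ge s)\le 2\exp(-2s^2/(N\sigma_u^2))$. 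Substituting $s = mt$ already produces the target shape $\exp(-cNf^2t^2/\sigma_u^2)$ with constant $c=2$, which is strictly larger than the target $(70/71)^2$ and hence affords room to absorb the coupling remainder.

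Second I would control the coupling remainder $R$. Conditional on $|\tilde m - m| = k$, $R$ is, up to a sign, a sum over a simple random sample of size $k$ drawn from the centered population $\{u_i - \bar u\}_{i=1}^N$, while $|\tilde m - m|$ is itself a centered binomial concentrating on scale $\sqrt{Nf(1-f)}$. Combined via a union bound with a deviation split $t = (1-\varepsilon)t + \varepsilon t$, the Bernoulli bound applied to $\{|B|\ge(1-\varepsilon)mt\}$ and the remainder bound applied to $\{|R|\ge\varepsilon mt\}$ can be added, and the specific numerical constant $(70/71)^2$ will then emerge from optimizing $\varepsilon$ in this split and tracking constants through the binomial concentration step.

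The main obstacle will be the remainder bound: a crude estimate $|R|\le|\tilde m - m|\cdot \max_i|u_i-\bar u|$ re-introduces the population range $\max_i|u_i-\bar u|$, which is incompatible with the purely variance-based exponent in the statement. The sharp handling of $R$ will therefore require either a preliminary truncation of the $u_i$ that trades the range-dependent tail for a small controlled loss, or exploiting the fact that $R$ is itself (conditionally on $|\tilde m - m|$) a simple-random-sample sum, so that a range-free Hoeffding bound applies to it as well. Threading through these estimates to recover the explicit constant $70^2/71^2$---rather than just an unspecified universal constant---is the delicate piece of bookkeeping that will dominate the proof.
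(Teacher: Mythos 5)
Your plan and the paper's proof diverge completely: the paper does not prove this from scratch at all. It simply cites Lemma S1 of \citet{BLZ16}, which already provides the one-sided bound $\PP(\hat{u}-\bar{u}\ge t)\le\exp\bigl(-fmt^2/\{(1+c)^2\sigma_u^2\}\bigr)$ with $c=\min\{1/70,(3f)^2/70,(3-3f)^2/70\}\le 1/70$, applies it to $\{u_i\}$ and $\{-u_i\}$ to get the two-sided version, and observes that $fm=Nf^2$ and $(1+c)^{-2}\ge 70^2/71^2$. The entire content of the lemma, including the peculiar constant, lives inside the cited result.

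Your from-scratch route has a genuine gap at exactly the point you flag. The remainder $R=\sum_i(Z_i-T_i)(u_i-\bar u)$ is, conditionally on $|\tilde m-m|$, a simple-random-sample sum, and you propose to control it with a ``range-free Hoeffding bound'' for such sums --- but a variance-based (rather than range-based) sub-Gaussian tail for simple random sampling \emph{is the statement of the lemma}; the classical Hoeffding inequality for sampling without replacement only gives the range-based exponent $2s^2/(k\max_i(u_i-\bar u)^2)$, and the Bernstein/Serfling-type refinements reintroduce the range in the correction term. So the proposed resolution is circular, and the alternative (truncation) is left entirely unspecified, with no argument that the variance lost to truncation can be absorbed. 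A second, independent problem is the constant: even if the remainder were handled, your union bound with the split $t=(1-\varepsilon)t+\varepsilon t$ would yield \emph{some} universal constant strictly below $2$, but there is no mechanism by which optimizing $\varepsilon$ would land on $70^2/71^2$; in the paper that number is inherited verbatim from the $1/70$ appearing in the cited lemma's constant $c$, i.e., it is an artifact of a different proof. As written, your argument would at best establish the inequality with an unspecified constant in the exponent, which is not the claimed statement.
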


\begin{lemma}\label{lemma:s_uw_scalar}
	Let $\{ (u_i, w_i)\in \mathbb{R}^2: i = 1, 2, \ldots, N \}$ be a finite population of $N\ge 2$ units, 
	with finite population averages and covariance 
	$\bar{u} \equiv N^{-1} \sum_{i=1}^N u_i$, $\bar{w} = N^{-1} \sum_{i=1}^N w_i$ and 
	$S_{uw} = (N-1)^{-1} \sum_{i=1}^N (u_i - \bar{u}) (w_i - \bar{w})$. 
	Let $(Z_1, \cdots, Z_N)$ denote a sampling indicator vector for a simple random sample of size $m\ge 2$, 
	with corresponding sample averages and covariance
	$\hat{u} = m^{-1} \sum_{i=1}^N Z_i u_i$, $\hat{w} = m^{-1} \sum_{i=1}^N Z_i w_i$ and 
	$s_{uw} = (m-1)^{-1} \sum_{i=1}^N Z_i (u_i - \hat{u}) (w_i - \hat{w})$. 
	Define $f=m/N$,
	\begin{align*}
		\Delta_{u} = \hat{u} - \bar{u}, 
		\quad
		\Delta_{w} = \hat{w} - \bar{w},
		\quad
		\Delta_{uw} = 
		\frac{1}{m}\sum_{i=1}^N Z_i (u_i - \bar{u}) (w_i - \bar{w})
		- 
		\frac{N-1}{N} S_{uw}, 
	\end{align*}
	and 
	\begin{align*}
		\sigma^2_u = \frac{1}{N} \sum_{i=1}^N(u_i-\bar{u})^2, \ \ 
		\sigma^2_w = \frac{1}{N} \sum_{i=1}^N(w_i-\bar{w})^2,
		\ \ 
		\sigma^2_{u\times w} = \frac{1}{N} \sum_{i=1}^N\left\{ 
		(u_i - \bar{u}) (w_i - \bar{w})- \frac{N-1}{N}S_{uw}
		\right\}^2. 
	\end{align*}
	Then 
	$
	\left| s_{uw} -  S_{uw} \right|
	\le 
	2 | \Delta_{u\times w} | + 2 |\Delta_{u}| |\Delta_{w}|
	+ 2(1-f)| S_{uw} |/m,  
	$
	and for any $t > 0$, 
	\begin{align*}
		\PP\left( \left|\Delta_u\right| \ge t \right)
		& \le 
		2 
		\exp\left(
		- \frac{70^2}{71^2} \frac{N f^2 t^2}{\sigma^2_u}
		\right),
		& 
		\PP\left( \left|\Delta_w\right| \ge t \right)
		\le 
		2 
		\exp\left(
		- \frac{70^2}{71^2} \frac{N f^2 t^2}{\sigma^2_w}
		\right), 
		\\
		\PP\left( \left| \Delta_{u\times w}
		\right| \ge t \right)
		& \le 
		2 
		\exp\left(
		- \frac{70^2}{71^2} \frac{N f^2 t^2}{\sigma^2_{u\times w}}
		\right). 
	\end{align*}
\end{lemma}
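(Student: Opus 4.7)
The plan is to prove both parts of Lemma \ref{lemma:s_uw_scalar} directly, with the first (the decomposition inequality) being a purely algebraic identity plus the triangle inequality, and the second (the three tail bounds) being three parallel applications of Lemma \ref{lem:scineq}, where the nontrivial one amounts to recognizing $\Delta_{u\times w}$ as the sample-mean deviation for a suitably defined auxiliary finite population.

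First, I would derive a clean decomposition of $s_{uw}$. Write $u_i - \hat{u} = (u_i - \bar{u}) - \Delta_u$ and $w_i - \hat{w} = (w_i - \bar{w}) - \Delta_w$, expand the product, and use the two identities $\sum_{i=1}^N Z_i(u_i - \bar{u}) = m \Delta_u$ and $\sum_{i=1}^N Z_i(w_i - \bar{w}) = m \Delta_w$. After cancellation this yields
\[
(m-1)\, s_{uw} \;=\; \sum_{i=1}^N Z_i (u_i - \bar{u})(w_i - \bar{w}) \;-\; m \Delta_u \Delta_w,
\]
so that by the definition of $\Delta_{u\times w}$,
\[
s_{uw} \;=\; \tfrac{m}{m-1}\left\{ \Delta_{u\times w} + \tfrac{N-1}{N} S_{uw} - \Delta_u \Delta_w \right\}.
\]
Subtracting $S_{uw}$, collecting the $S_{uw}$ term gives coefficient $\tfrac{m(N-1)}{(m-1)N} - 1 = \tfrac{1-f}{m-1}$, and the triangle inequality together with $m/(m-1)\le 2$ and $1/(m-1)\le 2/m$ (both valid for $m \ge 2$) yields the stated bound on $|s_{uw} - S_{uw}|$.

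Second, for the tail bounds on $|\Delta_u|$ and $|\Delta_w|$, these are immediate applications of Lemma \ref{lem:scineq} to the univariate finite populations $\{u_i\}$ and $\{w_i\}$ respectively, whose finite-population variances are $\sigma_u^2$ and $\sigma_w^2$ by definition.

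Third, for the $\Delta_{u \times w}$ bound, define the auxiliary finite population $v_i \equiv (u_i-\bar{u})(w_i-\bar{w})$ for $1 \le i \le N$. Then its finite-population mean is $\bar{v} = \tfrac{N-1}{N} S_{uw}$ and its finite-population variance is exactly $\sigma^2_{u\times w}$. Moreover the simple random sample average is $\hat{v} = m^{-1}\sum_{i=1}^N Z_i v_i$, so $\hat{v} - \bar{v} = \Delta_{u\times w}$. Applying Lemma \ref{lem:scineq} to $\{v_i\}$ gives the stated bound. There is no real obstacle — the only thing to be careful about is the algebraic bookkeeping in the first step, particularly keeping track of the $(N-1)/N$ factor in the definition of $\Delta_{u\times w}$ so that the $S_{uw}$ coefficient simplifies correctly to $(1-f)/(m-1)$.
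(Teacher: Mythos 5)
Your proposal is correct and follows essentially the same route as the paper: the same algebraic decomposition of $s_{uw}$ into the $\Delta_{u\times w}$, $\Delta_u\Delta_w$, and $\tfrac{1-f}{m-1}S_{uw}$ terms (with the identical bounds $m/(m-1)\le 2$ and $1/(m-1)\le 2/m$), followed by three applications of Lemma \ref{lem:scineq} to the populations $\{u_i\}$, $\{w_i\}$, and $\{(u_i-\bar{u})(w_i-\bar{w})\}$. No gaps.
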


\begin{lemma}\label{lemma:s_uw_vector}
	Let $\{ (u_i, \bs{w}_i^\top)\in \mathbb{R}^{1+K}: i = 1, 2, \ldots, N \}$ be a finite population of $N\ge 2$ units, 
	with 
	$\bs{w}_i = (w_{1i}, w_{2i}, \ldots w_{Ki})^\top$ and 
	finite population averages and covariance 
	$\bar{u} \equiv N^{-1} \sum_{i=1}^N u_i$, 
	$\bar{\bs{w}} = (\bar{w}_1, \ldots, \bar{w}_K)^\top = N^{-1} \sum_{i=1}^N \bs{w}_i$ and 
	$\bs{S}_{u\bs{w}} = (S_{uw_1}, \ldots, S_{uw_K}) =  (N-1)^{-1} \sum_{i=1}^N (u_i - \bar{u}) (\bs{w}_i - \bar{\bs{w}})^\top$. 
	Let $(Z_1, \cdots, Z_N)$ denote a sampling indicator vector for a simple random sample of size $m\ge 2$, 
	with corresponding sample averages and covariance
	$\hat{u} = m^{-1} \sum_{i=1}^N Z_i u_i$, 
	$\hat{\bs{w}} = m^{-1} \sum_{i=1}^N Z_i \bs{w}_i$ and 
	$\bs{s}_{u\bs{w}} = (s_{uw_1}, \ldots, s_{uw_K}) = (m-1)^{-1} \sum_{i=1}^N Z_i (u_i - \hat{u}) (\bs{w}_i - \hat{\bs{w}})^\top$. 
	Let $f=m/N$, and for $1\le k\le K$, define 
	\begin{align*}
		\Delta_{u} = \hat{u} - \bar{u}, 
		\quad
		\Delta_{w_k} = \hat{w}_k - \bar{w}_k,
		\quad
		\Delta_{uw_k} = 
		\frac{1}{m}\sum_{i=1}^N Z_i (u_i - \bar{u}) (w_{ki} - \bar{w}_k)
		- 
		\frac{N-1}{N} S_{uw_k}, 
	\end{align*}
	and 
	\begin{align*}
		\sigma^2_u = \frac{1}{N} \sum_{i=1}^N(u_i-\bar{u})^2, \ \ 
		\sigma^2_{w_k} = \frac{1}{N} \sum_{i=1}^N(w_{ki}-\bar{w}_k)^2,
		\ \ 
		\sigma^2_{u\times w_k} = \frac{1}{N} \sum_{i=1}^N\left\{ 
		(u_i - \bar{u}) (w_{ki} - \bar{w}_k)- \frac{N-1}{N}S_{uw_k}
		\right\}^2. 
	\end{align*}
	Then 
	\begin{align*}
		\left\| \bs{s}_{u\bs{w}} - \bs{S}_{u\bs{w}}  \right\|^2_2
		\le 
		12 \sum_{k=1}^K \Delta_{u\times w_k}^2 + 12 \Delta_u^2 \sum_{k=1}^K \Delta_{w_k}^2 
		+ 
		\frac{12(1-f)^2}{m^2} \sum_{k=1}^K  S_{uw_k}^2, 
	\end{align*}
	and for any $t > 0$, 
	\begin{align*}
		\PP\left( \Delta_u^2 \ge t \right)
		& \le 
		2 
		\exp\left(
		- \frac{70^2}{71^2} \frac{N f^2 t}{\sigma^2_u}
		\right), 
		\qquad \quad
		\PP\left( \sum_{k=1}^K \Delta_{w_k}^2 \ge t \right)
		\le 
		2 K 
		\exp\left(
		- \frac{70^2}{71^2} \frac{N f^2  t}{\sum_{k=1}^K \sigma^2_{w_k}}
		\right), \\
		\PP\left( \sum_{k=1}^K \Delta_{u\times w_k}^2 \ge t \right)
		& 
		\le 
		2 K 
		\exp\left(
		- \frac{70^2}{71^2} \frac{N f^2  t}{\sum_{k=1}^K \sigma^2_{u\times w_k}}
		\right). 
	\end{align*}
\end{lemma}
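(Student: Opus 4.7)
}
The plan is to reduce everything to the scalar case already handled in Lemma \ref{lemma:s_uw_scalar} (for the deterministic inequality) and Lemma \ref{lem:scineq} (for the tail bounds), and then assemble the pieces through Cauchy--Schwarz / weighted pigeonhole / union bounds. No new probabilistic tool is needed beyond what has been proved for the scalar case.

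First, for the deterministic inequality, I would apply Lemma \ref{lemma:s_uw_scalar} coordinatewise. For each $1\le k\le K$, the scalar lemma gives
\begin{equation*}
|s_{uw_k} - S_{uw_k}| \le 2|\Delta_{u\times w_k}| + 2|\Delta_u||\Delta_{w_k}| + 2(1-f)|S_{uw_k}|/m.
\end{equation*}
Square both sides and use the elementary inequality $(a+b+c)^2 \le 3(a^2 + b^2 + c^2)$ to obtain
\begin{equation*}
(s_{uw_k} - S_{uw_k})^2 \le 12\Delta_{u\times w_k}^2 + 12 \Delta_u^2 \Delta_{w_k}^2 + 12(1-f)^2 S_{uw_k}^2/m^2.
\end{equation*}
Summing over $k$ gives the claimed bound on $\|\bs{s}_{u\bs{w}} - \bs{S}_{u\bs{w}}\|_2^2$.

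Next, for the three tail bounds. The bound on $\PP(\Delta_u^2 \ge t)$ is immediate from Lemma \ref{lem:scineq} applied to $u$ with threshold $\sqrt{t}$. For the other two, the standard trick is a weighted pigeonhole followed by a union bound. If $\sum_{k=1}^K \Delta_{w_k}^2 \ge t$, then since
\begin{equation*}
\sum_{k=1}^K \Delta_{w_k}^2 = \sum_{k=1}^K \sigma_{w_k}^2 \cdot \frac{\Delta_{w_k}^2}{\sigma_{w_k}^2},
\end{equation*}
some coordinate $k$ must satisfy $\Delta_{w_k}^2/\sigma_{w_k}^2 \ge t/\sum_{j} \sigma_{w_j}^2$. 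Applying Lemma \ref{lem:scineq} to the $k$th coordinate at threshold $\sigma_{w_k}\sqrt{t/\sum_j \sigma_{w_j}^2}$ and taking a union bound over $k\in\{1,\dots,K\}$ produces exactly the advertised factor of $2K$ and the denominator $\sum_j \sigma_{w_j}^2$ inside the exponent. The treatment of $\sum_k \Delta_{u\times w_k}^2$ is identical, with $\sigma_{w_k}$ replaced by $\sigma_{u\times w_k}$ and the underlying finite population being $\{(u_i-\bar u)(w_{ki}-\bar w_k) - (N-1)S_{uw_k}/N : 1\le i\le N\}$, whose finite-population mean is zero by construction, so that Lemma \ref{lem:scineq} applies directly.

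The calculations are routine, and the only mild subtlety is the weighted pigeonhole decomposition that allows an unweighted sum of squares to be controlled by a single exponent whose denominator is the corresponding sum of population variances (rather than the worst-case variance); this is what prevents the bound from degrading with the maximum $\sigma_{w_k}^2$ and is essential for the later high-dimensional applications. I do not foresee any serious obstacle beyond carefully tracking constants and verifying that each finite population to which Lemma \ref{lem:scineq} is applied has the required zero mean.
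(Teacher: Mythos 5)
Your proposal is correct and follows essentially the same route as the paper's proof: coordinatewise application of the scalar lemma plus $(a+b+c)^2\le 3(a^2+b^2+c^2)$ for the deterministic bound, and the weighted pigeonhole with weights $a_k=\sigma^2_{w_k}/\sum_j\sigma^2_{w_j}$ followed by a union bound for the two vector tail bounds. The only detail the paper spells out that you gloss over is the degenerate case $\sigma^2_{w_k}=0$ (where $\Delta_{w_k}$ is identically zero, so those coordinates can be dropped from the pigeonhole), which is a one-line fix.
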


\begin{lemma}\label{lemma:s_uw_re}
	Consider the same setting as in Lemma \ref{lemma:s_uw_vector} and any event $\bs{Z} \in \mathcal{E} \subset \{0,1\}^N$ with positive probability $p = \PP(\bs{Z} \in \mathcal{E})$. 
	Define 
	\begin{align*}
		\xi
		& = 
		\frac{\max\{1, \log K, - \log p\}}{Nf^2} \sum_{k=1}^K \sigma^2_{u\times w_k} 
		+
		\frac{ \max\{1, -\log p\} \cdot \max\{1, \log K, - \log p\} }{N^2f^4} \sigma^2_u \sum_{k=1}^K \sigma^2_{w_k}
		\\
		& \quad \ + 
		\frac{(1-f)^2}{N^2 f^2} \sum_{k=1}^K  S_{uw_k}^2. 
	\end{align*}
	Then 
	for any $t \ge 3 \cdot 71^2/70^2$, 
	\begin{align*}
		\PP
		\left( 
		\left\| \bs{s}_{u\bs{w}} - \bs{S}_{u\bs{w}}  \right\|^2_2 > 36 t^2  \xi
		\mid \bs{Z} \in 
		\mathcal{E}
		\right)
		& \le 
		6
		\exp\left(
		- \frac{1}{3} \frac{70^2}{71^2} t
		\right). 
	\end{align*}
\end{lemma}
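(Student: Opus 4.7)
\textbf{Proof proposal for Lemma \ref{lemma:s_uw_re}.}

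The plan is to start from the deterministic decomposition given by Lemma \ref{lemma:s_uw_vector},
\[
\left\| \bs{s}_{u\bs{w}} - \bs{S}_{u\bs{w}}  \right\|^2_2
\le
12 \sum_{k=1}^K \Delta_{u\times w_k}^2 + 12 \Delta_u^2 \sum_{k=1}^K \Delta_{w_k}^2
+
\frac{12(1-f)^2}{m^2} \sum_{k=1}^K  S_{uw_k}^2,
\]
and then control each of the three pieces conditionally on $\{\bs{Z}\in\mathcal{E}\}$.  The third piece is purely deterministic and equals $12 A_3$, where $A_3$ is the third summand defining $\xi$; since $t \ge 3\cdot 71^2/70^2 > 3$, we have $12 A_3 \le 12 t^2 \xi$ trivially.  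For the two random pieces I would pick thresholds calibrated so that (i) on the intersection of the good events their contribution is at most $12 t A_1 + 12 t^2 A_2 \le 24 t^2 \xi$, and (ii) each bad event has conditional probability at most $2\exp(-\tfrac13 (70/71)^2 t)$.  A union bound over the three bad events then delivers the $36 t^2 \xi$ bound with the claimed tail.

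Concretely, I would take the thresholds
\[
t_3 = t\, A_1,\qquad
t_1 = t\,\frac{\max\{1,-\log p\}\,\sigma_u^2}{Nf^2},\qquad
t_2 = t\,\frac{\max\{1,\log K,-\log p\}\,\sum_{k}\sigma_{w_k}^2}{Nf^2},
\]
so that $t_1 t_2 = t^2 A_2$ and hence the decomposition is bounded by $12 t A_1 + 12 t^2 A_2 + 12 A_3 \le 12 t^2 \xi \le 36 t^2 \xi$ on the intersection $\{\sum_k \Delta_{u\times w_k}^2 \le t_3\}\cap\{\Delta_u^2\le t_1\}\cap\{\sum_k \Delta_{w_k}^2\le t_2\}$.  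The tail bounds in Lemma \ref{lemma:s_uw_vector} give unconditional probabilities
\[
2K \exp\!\big(-(70/71)^2 t \max\{1,\log K,-\log p\}\big),\ \
2\exp\!\big(-(70/71)^2 t \max\{1,-\log p\}\big),
\]
and analogously for $t_2$.  Conditioning on $\bs{Z}\in\mathcal{E}$ inflates each of these by a factor $1/p = \exp(-\log p)$.

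The main technical obstacle is the bookkeeping that shows the inflation is absorbed into the exponent.  For the covariance term, for example, the conditional bound becomes $2\exp\!\big(\log K - (70/71)^2 t\max\{1,\log K,-\log p\} - \log p\big)$; to bring this below $2\exp(-\tfrac13(70/71)^2 t)$ I would use $\log K - \log p \le 2\max\{1,\log K,-\log p\}$ and then note that for $t \ge 3\cdot 71^2/70^2$,
\[
(70/71)^2 t \bigl[\max\{1,\log K,-\log p\}-\tfrac13\bigr] \ge 2\max\{1,\log K,-\log p\},
\]
since $\max\{\cdot\}\ge 1$ implies $\max - \tfrac13 \ge \tfrac23 \max$, which gives the required inequality once $(70/71)^2 t \ge 3$.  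Analogous arithmetic handles the two $\Delta$-pieces.  Summing the three conditional tail bounds yields the total $6\exp\!\big(-\tfrac13(70/71)^2 t\big)$, completing the proof.  The sole nontrivial point is that the constants $3$ and $36$ in the hypothesis and conclusion were chosen precisely so that these absorbing inequalities go through uniformly in $K$, $p$, and $f$.
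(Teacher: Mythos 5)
Your proposal is correct and follows essentially the same route as the paper's proof: the decomposition from Lemma \ref{lemma:s_uw_vector}, inflation of each unconditional tail by $1/p$, the split of the product term via thresholds $t_1, t_2$ with $t_1 t_2 = t^2 A_2$, and the absorption of $\log K - \log p$ into the exponent using $\max\{1,\log K,-\log p\}\ge 1$ and $(70/71)^2 t \ge 3$. The only cosmetic difference is that the paper thresholds the cross-covariance piece at $t^2\xi$ rather than $tA_1$ (yielding an even smaller $\exp(-\tfrac13(70/71)^2 t^2)$ term) and handles the deterministic piece by noting its exceedance probability is exactly zero, but the accounting and final constants are identical.
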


\begin{lemma}\label{lemma:V_R2_hat_bound}
	Under ReM with threshold $a_n$, 
	along the sequence of finite populations with increasing sample size $n$, 
	if $\min\{n_1, n_0\} \ge 2$ when $n$ is sufficiently large, then 
	the estimators $\hat{V}_{\tau\tau}$ and $\hat{R}^2$ satisfy that 
	\begin{align*}
		\hat{V}_{\tau\tau} - V_{\tau\tau} - n^{-1} S_{\tau\setminus \bs{X}}^2
		& = 
		O_{\PP}\left(
		\frac{\xi_{11}^{1/2}}{n_1} + 
		\frac{\xi_{00}^{1/2}}{n_0} + 
		\frac{\xi_{1\bs{w}} + \xi_{0\bs{w}}}{n} + \left\| S_{1\bs{w}} - S_{0\bs{w}} \right\|_2\frac{\xi_{1\bs{w}}^{1/2} + \xi_{0\bs{w}}^{1/2}}{n} 
		\right), 
	\end{align*}
	and 
	\begin{align*}
		\hat{V}_{\tau\tau} \hat{R}^2_n - V_{\tau\tau} R^2_n
		& = 
		O_{\PP}\left(
		\frac{\xi_{1\bs{w}}}{n_1} 
		+ 
		\frac{\xi_{0\bs{w}}}{n_0}
		+ \left\| S_{1\bs{w}} \right\|_2 \frac{\xi_{1\bs{w}}^{1/2}}{n_1}
		+ \left\| S_{0\bs{w}} \right\|_2 \frac{\xi_{0\bs{w}}^{1/2}}{n_1} 
		+ 
		\left\| S_{1\bs{w}} - S_{0\bs{w}} \right\|_2 \frac{\xi_{1\bs{w}}^{1/2} + \xi_{0\bs{w}}^{1/2}}{n}
		\right), 
	\end{align*}
	where 
	$\bs{w}_i = (w_{1i}, \ldots, w_{K_n i})^\top = \bs{S}_{\bs{X}}^{-1}(\bs{X}_i - \bar{\bs{X}})$ is the standardized covariates, 
	$S_{z\bs{w}} = (S_{zw_1}, \ldots, S_{zw_{K_n}})$ is the finite population covariance between $Y(z)$ and $\bs{w}$, 
	\begin{align*}
		\xi_{zz}
		& = 
		\frac{\max\{1, - \log \tilde{p}_n \}}{n r_z^2} \sigma^2_{z\times z} 
		+
		\frac{ \max\{1, (-\log \tilde{p}_n)^2 \} }{n^2 r_z^4} \sigma^4_z 
		+ 
		\frac{(1-r_z)^2}{n^2 r_z^2} S_{z}^4, 
		\\
		\xi_{z \bs{w}}
		& = 
		\frac{\max\{1, \log K_n, - \log \tilde{p}_n\}}{n r_z^2} \sum_{k=1}^K \sigma^2_{z\times w_k} 
		+
		\frac{ \max\{1, -\log \tilde{p}_n \} \cdot \max\{1, \log K_n, - \log \tilde{p}_n \} }{n^2 r_z^4} \sigma^2_u \sum_{k=1}^{K_n} \sigma^2_{w_k}
		\\
		& \quad \ + 
		\frac{(1-r_z)^2}{n^2 r_z^2} \sum_{k=1}^{K_n} S_{z w_k}^2,  
	\end{align*}
	$\tilde{p}_n = \PP(M \le a_n)$ is the actual acceptance probability under ReM, and 
	\begin{align*}
		\sigma^2_z & = \frac{1}{n} \sum_{i=1}^n \{Y_i(z) - \bar{Y}(z)\}^2 = \frac{n-1}{n} S_z^2, 
		\qquad
		\sigma_{w_k}^2 
		= \frac{1}{n} \sum_{i=1}^n ( w_{ki} - \bar{w}_k )^2  = \frac{n-1}{n}, 
		\\
		\sigma^2_{z\times z} 
		& = 
		\frac{1}{n} \sum_{i=1}^n\Big[ 
		\{Y_i(z) - \bar{Y}(z) \}^2 - \sigma^2_z
		\Big]^2,
		\ \ 
		\sigma^2_{z\times w_k} 
		= 
		\frac{1}{n} \sum_{i=1}^n\Big[ 
		\{Y_i(z) - \bar{Y}(z)\} (w_{ki} - \bar{w}_k)- \frac{n-1}{n} S_{z w_k}
		\Big]^2. 
	\end{align*}
\end{lemma}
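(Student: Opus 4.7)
The strategy is to combine explicit algebraic decompositions of $\hat{V}_{\tau\tau}$ and $\hat{V}_{\tau\tau}\hat{R}^2_n$ with the conditional concentration bound for sample covariances in Lemma \ref{lemma:s_uw_re}. Throughout, I would work in the standardized coordinates $\bs{w}_i=\bs{S}_{\bs{X}}^{-1}(\bs{X}_i-\bar{\bs{X}})$, under which the finite population covariance of $\bs{w}$ equals $\bs{I}_{K_n}$, so that $\bs{s}_{z,\bs{X}}\bs{S}_{\bs{X}}^{-2}\bs{s}_{\bs{X},z}=\|\bs{s}_{z,\bs{w}}\|_2^2$, $s^2_{z\setminus\bs{X}}=s^2_z-\|\bs{s}_{z,\bs{w}}\|_2^2$ and $s^2_{\tau\mid\bs{X}}=\|\bs{s}_{1,\bs{w}}-\bs{s}_{0,\bs{w}}\|_2^2$, with analogous identities at the finite population level. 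The lemma's claim then becomes a statement about how closely the sample quantities in these coordinates match their finite population counterparts.

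First, I would derive the two master identities
\begin{align*}
\hat{V}_{\tau\tau}-V_{\tau\tau}-n^{-1}S^2_{\tau\setminus\bs{X}}
&=\sum_{z\in\{0,1\}}\frac{s^2_z-S^2_z}{n_z}-\frac{1}{n}\Bigl\{\|\bs{s}_{1,\bs{w}}-\bs{s}_{0,\bs{w}}\|_2^2-\|\bs{S}_{1,\bs{w}}-\bs{S}_{0,\bs{w}}\|_2^2\Bigr\},\\
\hat{V}_{\tau\tau}\hat{R}^2_n-V_{\tau\tau}R^2_n
&=\sum_{z\in\{0,1\}}\frac{\|\bs{s}_{z,\bs{w}}\|_2^2-\|\bs{S}_{z,\bs{w}}\|_2^2}{n_z}-\frac{1}{n}\Bigl\{\|\bs{s}_{1,\bs{w}}-\bs{s}_{0,\bs{w}}\|_2^2-\|\bs{S}_{1,\bs{w}}-\bs{S}_{0,\bs{w}}\|_2^2\Bigr\},
\end{align*}
both of which follow from $V_{\tau\tau}+n^{-1}S^2_{\tau\setminus\bs{X}}=n_1^{-1}S^2_1+n_0^{-1}S^2_0-n^{-1}\|\bs{S}_{1,\bs{w}}-\bs{S}_{0,\bs{w}}\|_2^2$ and the Young-style identity $\hat{V}_{\tau\tau}\hat{R}^2_n=n_1^{-1}\|\bs{s}_{1,\bs{w}}\|_2^2+n_0^{-1}\|\bs{s}_{0,\bs{w}}\|_2^2-n^{-1}\|\bs{s}_{1,\bs{w}}-\bs{s}_{0,\bs{w}}\|_2^2$ obtained by plugging the definition of $\hat{R}^2_n$ into $\hat{V}_{\tau\tau}-(n_1^{-1}s^2_{1\setminus\bs{X}}+n_0^{-1}s^2_{0\setminus\bs{X}})$.

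Second, I would control each piece using Lemma \ref{lemma:s_uw_re} applied to the conditioning event $\mathcal{E}=\{M\le a_n\}$ of probability $\tilde{p}_n$. Taking $u=w=Y(z)$ (a scalar case of the vector lemma) yields $|s^2_z-S^2_z|=O_{\PP}(\xi_{zz}^{1/2})$ under ReM, while taking $u=Y(z)$ and $\bs{w}$ the standardized covariate vector yields $\|\bs{s}_{z,\bs{w}}-\bs{S}_{z,\bs{w}}\|_2=O_{\PP}(\xi_{z\bs{w}}^{1/2})$. For the quadratic form differences I would then apply the polarization identity $\|a\|_2^2-\|b\|_2^2=\|a-b\|_2^2+2\langle a-b,b\rangle$ together with Cauchy--Schwarz:
\begin{align*}
\bigl|\|\bs{s}_{z,\bs{w}}\|_2^2-\|\bs{S}_{z,\bs{w}}\|_2^2\bigr|&=O_{\PP}\!\bigl(\xi_{z\bs{w}}+\|\bs{S}_{z,\bs{w}}\|_2\,\xi_{z\bs{w}}^{1/2}\bigr),\\
\bigl|\|\bs{s}_{1,\bs{w}}-\bs{s}_{0,\bs{w}}\|_2^2-\|\bs{S}_{1,\bs{w}}-\bs{S}_{0,\bs{w}}\|_2^2\bigr|&=O_{\PP}\!\Bigl(\xi_{1\bs{w}}+\xi_{0\bs{w}}+\|\bs{S}_{1,\bs{w}}-\bs{S}_{0,\bs{w}}\|_2(\xi_{1\bs{w}}^{1/2}+\xi_{0\bs{w}}^{1/2})\Bigr).
\end{align*}
Substituting these bounds into the two master identities, with $n_z=nr_z$, reproduces exactly the two $O_{\PP}$ statements of the lemma.

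The main technical obstacle is bookkeeping rather than a single hard estimate. Lemma \ref{lemma:s_uw_re} produces only a conditional tail probability valid for $t\ge 3\cdot 71^2/70^2$, so turning its family of exponential tails into the uniform $O_{\PP}$ assertion above requires, for any prescribed $\eta>0$, choosing a single $t=t(\eta)$ large enough that the union bound over all six invocations of the lemma (two for $s^2_z-S^2_z$, two for $\|\bs{s}_{z,\bs{w}}-\bs{S}_{z,\bs{w}}\|_2^2$, and two more for the cross terms arising from polarization) has total probability below $\eta$ along the whole sequence. Checking that the exponents $\max\{1,\log K_n,-\log\tilde{p}_n\}$ appearing inside $\xi_{zz}$ and $\xi_{z\bs{w}}$ absorb this $t$ precisely as in the statement is what the definitions of the $\xi$'s are designed for, so the whole argument closes once the decompositions above are in place. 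A minor subtlety to address along the way is that the lemma involves the biased population variances $\sigma^2$ rather than $S^2$ and that the standardization uses $\bs{S}_{\bs{X}}^2$ (population-centered) rather than its sample-centered analogue; both discrepancies contribute only $(n-1)/n$-type factors that are harmless under the assumption $\min\{n_1,n_0\}\ge 2$ for all large $n$.
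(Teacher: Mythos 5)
Your proposal is correct and follows essentially the same route as the paper's proof: the same decomposition of $\hat{V}_{\tau\tau}$ and $\hat{V}_{\tau\tau}\hat{R}^2_n$ into sample-versus-population differences of $s_z^2$, $\|\bs{s}_{z\bs{w}}\|_2^2$ and $s^2_{\tau\mid\bs{X}}$, the same invocation of Lemma \ref{lemma:s_uw_re} conditioned on $\{M\le a_n\}$ (with the scalar case giving $\xi_{zz}$ and the vector case giving $\xi_{z\bs{w}}$), and the same polarization-plus-Cauchy--Schwarz step for the quadratic-form differences. The bookkeeping concerns you flag (union bound over the finitely many tail bounds, the $(n-1)/n$ factors, the role of $\min\{n_1,n_0\}\ge 2$) are exactly the details the paper handles implicitly, so no gap remains.
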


\begin{lemma}\label{lemma:V_R2_hat_bound_simp}
	Under the same setting as Lemma \ref{lemma:V_R2_hat_bound}, 
	if $\max\{1, \log K_n, - \log \tilde{p}_n \} = O(nr_1^2 r_0^2)$, 
	then 
	\begin{align*}
		& \quad \ \max\left\{ \big| \hat{V}_{\tau\tau} - V_{\tau\tau} - n^{-1} S_{\tau\setminus \bs{X}}^2 \big|, \ \ \big| \hat{V}_{\tau\tau} \hat{R}^2_n - V_{\tau\tau} R^2_n \big| \right\}
		\\
		& 
		= 
		\max_{z\in \{0,1\}}\max_{1\le i \le n}\{Y_i(z) - \bar{Y}(z)\}^2 \cdot 
		O_{\PP}\left( 
		\max\{K_n, 1\} \cdot \frac{\sqrt{ \max\{1, \log K_n, - \log \tilde{p}_n\} }}{n^{3/2} r_1^2r_0^2} \right). 
	\end{align*}
\end{lemma}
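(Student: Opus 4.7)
The plan is to derive this simplification from Lemma \ref{lemma:V_R2_hat_bound} by uniformly controlling the moment quantities that appear inside $\xi_{zz}$ and $\xi_{z\bs{w}}$ in terms of $M_Y \equiv \max_{z,i}\{Y_i(z)-\bar{Y}(z)\}^2$ and $K_n$. Writing $L \equiv \max\{1,\log K_n,-\log\tilde p_n\}$, I would first establish the following elementary bounds: $\sigma^2_z \le M_Y$ and $S^2_z \le 2M_Y$ because $\sigma^2_z$ is an average of squared centered outcomes; $\sigma^2_{z\times z}\le n^{-1}\sum_i\{Y_i(z)-\bar Y(z)\}^4 \le M_Y\cdot\sigma^2_z\le M_Y^2$; $\sum_k \sigma^2_{w_k}\le K_n$ since $\bs{w}_i = \bs{S}_{\bs{X}}^{-1}(\bs{X}_i-\bar{\bs{X}})$ is the standardized covariate vector; $\sum_k \sigma^2_{z\times w_k}\le n^{-1}\sum_i\{Y_i(z)-\bar Y(z)\}^2\|\bs{w}_i-\bar{\bs{w}}\|_2^2 \le M_Y K_n$ by swapping the order of summation; and $\sum_k S_{zw_k}^2 = S^2_{z\mid\bs{X}}\le S^2_z\le 2M_Y$.

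Substituting these bounds into the definitions of $\xi_{zz}$ and $\xi_{z\bs{w}}$, the hypothesis $L = O(nr_1^2r_0^2)$ (which forces $L = O(nr_z^2)$ for each $z$) makes the first term of each $\xi$ dominate, so
\begin{equation*}
\xi_{zz} = O\!\left(\frac{M_Y^2 L}{nr_z^2}\right),\qquad \xi_{z\bs{w}} = O\!\left(\frac{M_Y K_n L}{nr_z^2}\right).
\end{equation*}
Plugging these into the two displays of Lemma \ref{lemma:V_R2_hat_bound} and repeatedly invoking the key inequality $r_1 r_0 \le \min(r_1,r_0)\le r_z$ (so that $r_1^2r_0^2\le r_z^2$ and $(r_1r_0)^3 \le r_z^3$), each summand can be matched against the target $M_Y\max\{K_n,1\}\sqrt{L}/(n^{3/2}r_1^2r_0^2)$. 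Specifically, $\xi_{zz}^{1/2}/n_z = O(M_Y\sqrt{L}/(n^{3/2}r_z^2))$ is bounded immediately; for $\xi_{z\bs{w}}/n = O(M_Y K_n L/(n^2r_z^2))$, the inequality $\sqrt{L}\,r_1^2r_0^2 \le \sqrt{Cn}\,(r_1r_0)^3 \le \sqrt{Cn}\,r_z^2$ yields the target after rearrangement; the analogous $\xi_{z\bs{w}}/n_z$ appearing in the $\hat V_{\tau\tau}\hat R_n^2$ display is handled by the sharper $(r_1r_0)^3\le r_z^3$ after the extra factor $1/r_z$; and the cross terms $\|S_{z\bs{w}}\|_2\xi_{z\bs{w}}^{1/2}/(\cdot)$ use $\|S_{z\bs{w}}\|_2\le\sqrt{2M_Y}$ together with $\sqrt{K_n}\le K_n$ (for $K_n\ge 1$).

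The main obstacle will be the careful bookkeeping in the final step: the four summands in each display of Lemma \ref{lemma:V_R2_hat_bound} carry different powers of $r_z$, of $K_n$, and of $L$, and collapsing them all into the single rate $M_Y\max\{K_n,1\}\sqrt{L}/(n^{3/2}r_1^2r_0^2)$ requires invoking $L=O(nr_1^2r_0^2)$ and $r_1r_0\le r_z$ in slightly different combinations for each term. Beyond that bookkeeping, no new idea is needed; the corresponding calculation for $|\hat V_{\tau\tau}\hat R_n^2 - V_{\tau\tau}R_n^2|$ proceeds along identical lines, and the $K_n=0$ edge case is trivial since then $\xi_{z\bs{w}}=0$, $\|S_{z\bs{w}}\|_2=0$, and only the $\xi_{zz}^{1/2}/n_z$ terms remain.
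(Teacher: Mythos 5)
Your proposal is correct and follows essentially the same route as the paper's proof: bound the population moments $\sigma^2_z$, $\sigma^2_{z\times z}$, $\sigma^2_{w_k}$, $\sigma^2_{z\times w_k}$, $S^2_{zw_k}$ by powers of $M_Y$ and $K_n$, deduce $\xi_{zz}=O(M_Y^2 L/(nr_z^2))$ and $\xi_{z\bs{w}}=O(M_Y K_n L/(nr_z^2))$ using $L=O(nr_1^2r_0^2)$, and then collapse the terms of Lemma \ref{lemma:V_R2_hat_bound} via $r_1r_0\le r_z$. The only (harmless) difference is that you bound $\sum_k S^2_{zw_k}=S^2_{z\mid\bs{X}}\le 2M_Y$ exactly, where the paper uses the cruder per-coordinate Cauchy--Schwarz bound $\sum_k S^2_{zw_k}\le 2K_nM_Y$; both suffice for the stated rate.
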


\begin{lemma}\label{lemma:cond_infer}
	Under the same setting as Lemmas \ref{lemma:V_R2_hat_bound} and \ref{lemma:V_R2_hat_bound_simp}, 
	\begin{itemize}
		\item[(i)] if Condition \ref{cond:p_n} holds, then $\max\{1, -\log \tilde{p}_n\} = O(\max\{1, -\log p_n\})$, 
		recalling that $\tilde{p}_n = \PP(M\le a_n)$ is the actually acceptance probability under ReM, while $p_n = \PP(\chi^2_{K_n} \le a_n)$ is the approximate acceptance probability; 
		\item[(ii)]
		$\max_{z\in \{0,1\}}\max_{1\le i \le n}\{Y_i(z) - \bar{Y}(z)\}^2/
		(r_0 S^2_{1\setminus \bs{X}} + r_1 S^2_{0\setminus \bs{X}})
		\ge 1/2$; 
		\item[(iii)] 
		if Conditions \ref{cond:p_n} and \ref{cond:infer} hold, 
		then,  $\max\{1, \log K_n, - \log \tilde{p}_n \} = o(nr_1^2 r_0^2)$. 
	\end{itemize}
\end{lemma}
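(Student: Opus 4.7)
\medskip

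\noindent\textbf{Proof plan for Lemma \ref{lemma:cond_infer}.} The three parts are essentially book-keeping lemmas that convert the approximate acceptance probability $p_n$ appearing in our conditions into the actual acceptance probability $\tilde p_n$ appearing in Lemmas \ref{lemma:V_R2_hat_bound} and \ref{lemma:V_R2_hat_bound_simp}, and then combine with Condition \ref{cond:infer} to yield a clean rate. I will treat the three claims in order.

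For part (i), the strategy is to compare $\tilde p_n = \PP(M \le a_n)$ to $p_n = \PP(\chi^2_{K_n} \le a_n)$ via the Berry--Esseen-type bound of Theorem~\ref{thm:berry_esseen_clt}. Since the Mahalanobis acceptance region $\{\hat{\bs\tau}_{\bs X}^\top \bs V_{\bs x \bs x}^{-1} \hat{\bs\tau}_{\bs X} \le a_n\}$ is a measurable convex set (a centered ellipsoid) in $\mathbb{R}^{K_n}$, the definition of $\Delta_n$ in \eqref{eq:Delta_n} gives $|\tilde p_n - p_n| \le \Delta_n$. Under Condition \ref{cond:p_n}, $\Delta_n/p_n \to 0$, so eventually $\tilde p_n \ge p_n - \Delta_n \ge p_n/2$, which yields $-\log \tilde p_n \le \log 2 - \log p_n$. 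Taking maxima with $1$ then produces $\max\{1, -\log \tilde p_n\} \le (1+\log 2)\max\{1, -\log p_n\}$, giving the desired $O$-bound.

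For part (ii), I will use the elementary bound that the sample variance is dominated by the maximum centered squared value. Concretely, for any $z \in \{0,1\}$ and $n \ge 2$, $S^2_z = (n-1)^{-1} \sum_{i=1}^n \{Y_i(z) - \bar Y(z)\}^2 \le \frac{n}{n-1}\max_{1 \le i \le n}\{Y_i(z) - \bar Y(z)\}^2 \le 2 \max_{z,i}\{Y_i(z) - \bar Y(z)\}^2$. Since projections reduce variance, $S^2_{z\setminus \bs X} \le S^2_z$, and since $r_0 + r_1 = 1$, we get $r_0 S^2_{1\setminus \bs X} + r_1 S^2_{0\setminus \bs X} \le 2\max_{z,i}\{Y_i(z) - \bar Y(z)\}^2$, which is exactly the claim.

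For part (iii), the plan is to chain (i) and (ii) with Condition \ref{cond:infer}. From (ii), the first ratio in \eqref{eq:infer_cond} is bounded below by $1/2$, so Condition~\ref{cond:infer} forces the remaining factor $\frac{\max\{K_n,1\}}{r_1 r_0}\sqrt{\max\{1,\log K_n,-\log p_n\}/n}$ to tend to zero. Squaring and using $\max\{K_n,1\}^2 \ge 1$ then gives $\max\{1, \log K_n, -\log p_n\} = o(n r_1^2 r_0^2 / \max\{K_n,1\}^2) = o(nr_1^2 r_0^2)$. Invoking (i) to replace $-\log p_n$ by $-\log \tilde p_n$ (up to constants) completes the proof. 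The main obstacle, such as it is, is really just being careful that the constants in (i) are absolute and that the inequality in (ii) is uniform in $n$; none of the steps require delicate estimates beyond what is already available.
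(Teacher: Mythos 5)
Your proposal is correct and follows essentially the same route as the paper: part (i) via $|\tilde p_n - p_n|\le \Delta_n$ and Condition \ref{cond:p_n}, part (ii) via $S^2_{z\setminus\bs X}\le S^2_z\le 2\max_{z,i}\{Y_i(z)-\bar Y(z)\}^2$ with $r_0+r_1=1$, and part (iii) by combining the two with Condition \ref{cond:infer}. The only cosmetic difference is that in (iii) you derive the $o(nr_1^2r_0^2)$ rate for $-\log p_n$ first and then swap in $-\log\tilde p_n$ via (i), whereas the paper performs the swap first; the two orderings are equivalent.
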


\subsection{Proofs of the lemmas}

\begin{proof}[Proof of Lemma \ref{lem:scineq}]
	When $\sigma^2_u = 0$, $u_1 = \ldots = u_N = \bar{u}$, and thus $\hat{u} - \bar{u}$ must be a constant zero, under which Lemma \ref{lem:scineq} holds obviously. 
	Below we consider only the case where $\sigma^2_u > 0$.
	From \citet[][Lemma S1]{BLZ16}, for any $t > 0$, 
	\begin{align*}
		\PP\left( 
		\left| \hat{u} - \bar{u} \right|
		\ge 
		t 
		\right)
		& 
		= 
		\PP\left( 
		\hat{u} - \bar{u}
		\ge 
		t 
		\right)
		+ 
		\PP\left\{
		(-\hat{u}) - (-\bar{u})
		\ge 
		t 
		\right\}
		\le 
		2 
		\exp\left(
		- \frac{f m t^2}{(1+c)^2 \sigma^2_u}
		\right), 
	\end{align*}
	where $c \equiv \min \{1 / 70, (3 f)^2 / 70, (3 - 3 f)^2 / 70\} \le 1/70$. 
	This then implies that for any $t > 0$, 
	\begin{align*}
		\PP\left( 
		\left| \hat{u} - \bar{u} \right|
		\ge 
		t 
		\right)
		& 
		\le 
		2 
		\exp\left(
		- \frac{f m t^2}{(1+c)^2 \sigma^2_u}
		\right)
		\le 
		2 
		\exp\left(
		- \frac{70^2}{71^2} \frac{N f^2 t^2}{\sigma^2_u}
		\right), 
	\end{align*}
	i.e., Lemma \ref{lem:scineq} holds.
\end{proof}

\begin{proof}[Proof of Lemma \ref{lemma:s_uw_scalar}]
	First, by definition, the sample covariance between $u$ and $w$ has the following equivalent forms:
	\begin{align*}
		s_{uw} & = 
		\frac{1}{m-1}\sum_{i=1}^N Z_i (u_i - \hat{u}) (w_i - \hat{w}) 
		= 
		\frac{m}{m-1} \frac{1}{m}\sum_{i=1}^N Z_i (u_i - \bar{u}) (w_i - \bar{w}) 
		- 
		\frac{m}{m-1} (\hat{u} - \bar{u}) (\hat{w} -  \bar{w})\\
		& = 
		\frac{m}{m-1} 
		\left\{ \frac{1}{m}\sum_{i=1}^N Z_i (u_i - \bar{u}) (w_i - \bar{w})
		- 
		\frac{N-1}{N} S_{uw}
		\right\}
		- 
		\frac{m}{m-1} (\hat{u} - \bar{u}) (\hat{w} -  \bar{w})
		+  
		\frac{m(N-1)}{(m-1)N} S_{uw}. 
	\end{align*}
	Consequently, we can bound the difference between $s_{uw}$ and $S_{uw}$ by 
	\begin{align*}
		& \quad \ \left| s_{uw} -  S_{uw} \right|
		\\
		& = 
		\left|
		\frac{m}{m-1} 
		\left\{ \frac{1}{m}\sum_{i=1}^N Z_i (u_i - \bar{u}) (w_i - \bar{w})
		- 
		\frac{N-1}{N} S_{uw}
		\right\}
		- 
		\frac{m}{m-1} (\hat{u} - \bar{u}) (\hat{w} -  \bar{w})
		+  
		\frac{1-f}{m-1} S_{uw}
		\right|\\
		& 
		\le 
		2 \left| \Delta_{u\times w}
		\right| + 2
		\left|\Delta_{u}\right| \left|\Delta_{w}\right|
		+ \frac{2(1-f)}{m} \left| S_{uw} \right|, 
	\end{align*}
	where the last inequality holds because $m/(m-1) \le 2$. 
	
	Second, 
	applying Lemma \ref{lem:scineq} to the finite populations of $\{u_i\}_{i=1}^n$, $\{w_i\}_{i=1}^n$ and $\{(u_i - \bar{u}) (w_i - \bar{w})\}_{i=1}^n$, 
	we can immediately derive the probability bounds for $\Delta_{u}$, $\Delta_{w}$ and $\Delta_{u\times w}$.

	From the above, Lemma \ref{lemma:s_uw_scalar} holds. 
\end{proof}

\begin{proof}[Proof of Lemma \ref{lemma:s_uw_vector}]
	First, we consider the bound for $\| \bs{s}_{u\bs{w}} - \bs{S}_{u\bs{w}} \|^2_2$. 
	From Lemma \ref{lemma:s_uw_scalar} and the Cauchy-Schwartz inequality, 
	\begin{align*}
		\left\| \bs{s}_{u\bs{w}} - \bs{S}_{u\bs{w}}  \right\|^2_2 
		& = \sum_{k=1}^K 
		\left( s_{uw_k} - S_{u w_k}  \right)^2
		\le 
		4\sum_{k=1}^K 
		\left( | \Delta_{u\times w_k} | + |\Delta_{u}| |\Delta_{w_k}|
		+ (1-f)| S_{uw_k} |/m \right)^2
		\\
		& \le 
		12 \sum_{k=1}^K 
		\left( \Delta_{u\times w_k}^2 + \Delta_{u}^2 \Delta_{w_k}^2
		+ (1-f)^2 S_{uw_k}^2/m^2 \right)
		\\
		& = 
		12 \sum_{k=1}^K \Delta_{u\times w_k}^2 + 12 \Delta_u^2 \sum_{k=1}^K \Delta_{w_k}^2 
		+ 
		\frac{12(1-f)^2}{m^2} \sum_{k=1}^K  S_{uw_k}^2. 
	\end{align*}
	
	Second, the probability bound for $\Delta_u^2$ follows immediately from Lemma \ref{lemma:s_uw_scalar}. 
	
	Third, we consider the probability bound for $\sum_{k=1}^K \Delta_{w_k}^2$. 
	We consider two cases separately, depending on whether $\sum_{j=1}^K \sigma^2_{w_j}$ is positive. 
	When $\sum_{j=1}^K \sigma^2_{w_j}>0$,  
	we introduce $a_k = \sigma^2_{w_k}/\sum_{j=1}^K \sigma^2_{w_j}$ for $1\le k\le K$. 
	Obviously, $a_k\ge 0$ for all $k$ and $\sum_{k=1}^K a_k = 1$. 
	Note that if $a_k = 0$ for some $1\le k \le K$, then it follows from Lemma~\ref{lem:scineq} that the corresponding $\Delta_{w_k}$ is a constant zero. With this in mind,
	from Lemma \ref{lemma:s_uw_scalar}, we have that for any $t>0$, 
	\begin{align*}
		\PP\left( \sum_{k=1}^K \Delta_{w_k}^2 \ge t \right)
		& = 
		\PP\left( \sum_{k: a_k > 0} \Delta_{w_k}^2 \ge \sum_{k: a_k > 0} a_k t \right)
		\le 
		\sum_{k: a_k > 0} \PP\left( \Delta_{w_k}^2 \ge  a_k t \right)
		\le 
		2 
		\sum_{k: a_k > 0}
		\exp\left(
		- \frac{70^2}{71^2} \frac{N f^2 a_k t}{\sigma^2_{w_k}}
		\right)\\
		& = 
		2 
		\sum_{k: a_k > 0}
		\exp\left(
		- \frac{70^2}{71^2} \frac{N f^2  t}{\sum_{j=1}^K \sigma^2_{w_j}}
		\right)
		\le 
		2 K 
		\exp\left(
		- \frac{70^2}{71^2} \frac{N f^2  t}{\sum_{k=1}^K \sigma^2_{w_k}}
		\right). 
	\end{align*}
	When $\sum_{j=1}^K \sigma^2_{w_j} = 0$, 
	$\sum_{k=1}^K \Delta_{w_k}^2$ is a constant zero, under which the above probability bound holds obviously. 
	
	Fourth, we consider the probability bound for $\sum_{k=1}^K \Delta_{u\times w_k}^2$. 
	By the same logic as the proof above for the probability bound of $\sum_{k=1}^K \Delta_{w_k}^2$, we can derive that, for any $t>0$, 
	\begin{align*}
		\PP\left( \sum_{k=1}^K \Delta_{u\times w_k}^2 \ge t \right)
		& 
		\le 
		2 K 
		\exp\left(
		- \frac{70^2}{71^2} \frac{N f^2  t}{\sum_{k=1}^K \sigma^2_{u\times w_k}}
		\right). 
	\end{align*}
	
	From the above, 
	Lemma \ref{lemma:s_uw_vector} holds. 
\end{proof}

\begin{proof}[Proof of Lemma \ref{lemma:s_uw_re}]
	From Lemma \ref{lemma:s_uw_vector}, we have that for any $t > 0$, 
	\begin{align}\label{eq:bound_s_uw_re}
		& \quad \ \PP
		\left( 
		\left\| \bs{s}_{u\bs{w}} - \bs{S}_{u\bs{w}}  \right\|^2_2 > 36 t^2  \xi
		\mid \bs{Z} \in 
		\mathcal{E}
		\right)
		\nonumber
		\\
		& \le 
		\frac{
			\PP
			\left( 
			\left\| \bs{s}_{u\bs{w}} - \bs{S}_{u\bs{w}}  \right\|^2_2 > 36 t^2  \xi
			\right)
		}{
			\PP
			\left( 
			\bs{Z} \in 
			\mathcal{E}
			\right)
		}
		\le 
		\frac{1}{p}
		\PP
		\left( 
		12 \sum_{k=1}^K \Delta_{u\times w_k}^2 + 12 \Delta_u^2 \sum_{k=1}^K \Delta_{w_k}^2 
		+ 
		\frac{12(1-f)^2}{N^2 f^2} \sum_{k=1}^K  S_{uw_k}^2 > 36 t^2 \xi
		\right)
		\nonumber
		\\
		& 
		\le 
		\frac{1}{p}
		\PP\left( \sum_{k=1}^K \Delta_{u\times w_k}^2 > t^2 \xi \right)
		+ 
		\frac{1}{p}
		\PP\left( 
		\Delta_u^2 \sum_{k=1}^K \Delta_{w_k}^2  > t^2 \xi 
		\right)
		+ 
		\frac{1}{p} 
		\PP\left( 
		\frac{(1-f)^2}{N^2 f^2} \sum_{k=1}^K  S_{uw_k}^2 > t^2 \xi 
		\right). 
	\end{align}
	Below we consider the three terms in \eqref{eq:bound_s_uw_re} separately. 
	
	First, 
	we prove that, for any $t^2 \ge 3 \cdot 71^2/70^2$,  
	\begin{align}\label{eq:bound_s_uw_re_1}
		\frac{1}{p}
		\PP\left( \sum_{k=1}^K \Delta_{u\times w_k}^2 > t^2 \xi \right)
		& \le     
		2 \exp\left( -\frac{1}{3} \frac{70^2}{71^2} t^2 \right). 
	\end{align}
	Note that if $\sum_{k=1}^K \sigma^2_{u\times w_k}=0$, then $\sum_{k=1}^K \Delta_{u\times w_k}^2$ is 
	a constant zero 
	and the above inequality holds obviously.  
	Below we consider only the case where $\sum_{k=1}^K \sigma^2_{u\times w_k}>0$. 
	By definition, for any $t^2 \ge 3 \cdot 71^2/70^2$, 
	\begin{align*}
		\frac{70^2}{71^2}
		\frac{N f^2  t^2 \xi}{\sum_{k=1}^K \sigma^2_{u\times w_k}}
		- \log K + \log p
		& 
		\ge 
		\frac{70^2}{71^2} t^2 \max\{1, \log K, - \log p\} - \log K + \log p
		\\
		& \ge 
		\frac{70^2}{71^2} t^2 \frac{1 + \log K - \log p}{3} - \log K + \log p\\
		& \ge \frac{1}{3} \frac{70^2}{71^2} t^2. 
	\end{align*}
	Thus,  
	from Lemma \ref{lemma:s_uw_vector}, 
	for any $t^2 \ge 3 \cdot 71^2/70^2$, 
	\begin{align*}
		\frac{1}{p}
		\PP\left( \sum_{k=1}^K \Delta_{u\times w_k}^2 > t^2 \xi \right)
		& \le 
		2 \frac{K}{p} 
		\exp\left(
		- \frac{70^2}{71^2} \frac{N f^2  t^2 \xi}{\sum_{k=1}^K \sigma^2_{u\times w_k}}
		\right)
		=
		2 
		\exp\left(
		- \frac{70^2}{71^2} \frac{N f^2  t^2 \xi}{\sum_{k=1}^K \sigma^2_{u\times w_k}} + \log K - \log p
		\right)\\
		& 
		\le 
		2 \exp\left( -\frac{1}{3} \frac{70^2}{71^2} t^2 \right). 
	\end{align*}
	
	Second, we prove that, for any $t \ge 3 \cdot 71^2/70^2 $, 
	\begin{align}\label{eq:bound_s_uw_re_2}
		\frac{1}{p}
		\PP\left( 
		\Delta_u^2 \sum_{k=1}^K \Delta_{w_k}^2  > t^2 \xi 
		\right)
		\le 
		4
		\exp\left(
		- \frac{1}{3} \frac{70^2}{71^2} t
		\right). 
	\end{align}
	Note that if $\sigma^2_u=0$ or $\sum_{k=1}^K \sigma^2_{w_k}=0$, 
	then $\Delta_u^2 \sum_{k=1}^K \Delta_{w_k}^2$ is 
	a constant zero 
	and the above inequality holds obviously. 
	Below we consider only the case where both $\sigma^2_u$ and $\sum_{k=1}^K \sigma^2_{w_k}$ are positive. 
	By definition, 
	for any $t>0$, 
	\begin{align*}
		t^2 \xi
		& 
		\ge 
		t  \frac{\max\{1, -\log p\}}{N f^2} \sigma^2_u  \cdot
		t  \frac{\max\{1, \log K, - \log p\}}{Nf^2} \sum_{k=1}^K \sigma^2_{w_k}. 
	\end{align*}
	From Lemma \ref{lemma:s_uw_vector}, 
	this implies that,  for any $t>0$, 
	\begin{align}\label{eq:deltau_deltaw}
		& \quad \ \frac{1}{p}
		\PP\left( 
		\Delta_u^2 \sum_{k=1}^K \Delta_{w_k}^2  > t^2 \xi 
		\right)
		\\
		& \le 
		\frac{1}{p}
		\PP\left( 
		\Delta_u^2 > t \frac{\max\{1, -\log p\}}{N f^2} \sigma^2_u
		\right)
		+ 
		\frac{1}{p}
		\PP\left(
		\sum_{k=1}^K \Delta_{w_k}^2
		> t \frac{\max\{1, \log K, - \log p\}}{Nf^2} \sum_{k=1}^K \sigma^2_{w_k}
		\right) \nonumber
		\\
		& 
		\le 
		\frac{2}{p} 
		\exp\left(
		- \frac{70^2}{71^2}  t \max\{1, -\log p\}
		\right) 
		+
		\frac{2K}{p}
		\exp\left(
		- \frac{70^2}{71^2} t \max\{1, \log K, - \log p\}
		\right) \nonumber\\
		& = 
		2
		\exp\left(
		- \frac{70^2}{71^2}  t \max\{1, -\log p\} - \log p
		\right) 
		+
		2
		\exp\left(
		- \frac{70^2}{71^2} t \max\{1, \log K, - \log p\}
		+ \log K - \log p
		\right). \nonumber
	\end{align}
	Note that when $t \ge 2 \cdot 71^2/70^2 $, 
	\begin{align*}
		\frac{70^2}{71^2} t \max\{1, -\log p\} + \log p
		\ge \frac{70^2}{71^2}  t \frac{1 - \log p}{2} + \log p 
		\ge \frac{1}{2}\frac{70^2}{71^2}  t, 
	\end{align*}
	and when $t \ge 3 \cdot 71^2/70^2 $, 
	\begin{align}\label{eq:tlowerbnd}
		\frac{70^2}{71^2} t\max\{1, \log K, - \log p\}
		- \log K + \log p
		& \ge 
		\frac{70^2}{71^2} t \frac{1+\log K - \log p}{3} - \log K + \log p\\
		& \ge 
		\frac{1}{3} \frac{70^2}{71^2} t. \nonumber
	\end{align}
	Thus, when $t \ge 3 \cdot 71^2/70^2 $, we have 
	\begin{align*}
		\frac{1}{p}
		\PP\left( 
		\Delta_u^2 \sum_{k=1}^K \Delta_{w_k}^2  > t^2 \xi 
		\right)
		\le 
		2
		\exp\left(
		- \frac{1}{2}\frac{70^2}{71^2}  t
		\right) 
		+
		2
		\exp\left(
		- \frac{1}{3} \frac{70^2}{71^2} t
		\right)
		\le 
		4
		\exp\left(
		- \frac{1}{3} \frac{70^2}{71^2} t
		\right). 
	\end{align*}
	
	Third, 
	by definition, 
	when $t\ge 1$, 
	$
	t^2 \xi \ge \xi \ge  (1-f)^2/(N^2 f^2) \cdot \sum_{k=1}^K  S_{uw_k}^2. 
	$
	This immediately implies that, when $t\ge 1$, 
	\begin{align}\label{eq:bound_s_uw_re_3}
		\frac{1}{p} 
		\PP\left( 
		\frac{(1-f)^2}{N^2 f^2} \sum_{k=1}^K  S_{uw_k}^2 > t^2 \xi 
		\right)
		& = 0. 
	\end{align}
	
	From \eqref{eq:bound_s_uw_re}--\eqref{eq:bound_s_uw_re_3}, we can know that, when $t \ge  3 \cdot 71^2/70^2 $, 
	\begin{align*}
		& \quad \ \PP
		\left( 
		\left\| \bs{s}_{u\bs{w}} - \bs{S}_{u\bs{w}}  \right\|^2_2 > 36 t^2  \xi
		\mid \bs{Z} \in 
		\mathcal{E}
		\right)
		\nonumber
		\\
		& 
		\le 
		\frac{1}{p}
		\PP\left( \sum_{k=1}^K \Delta_{u\times w_k}^2 > t^2 \xi \right)
		+ 
		\frac{1}{p}
		\PP\left( 
		\Delta_u^2 \sum_{k=1}^K \Delta_{w_k}^2  > t^2 \xi 
		\right)
		+ 
		\frac{1}{p} 
		\PP\left( 
		\frac{(1-f)^2}{N^2 f^2} \sum_{k=1}^K  S_{uw_k}^2 > t^2 \xi 
		\right)\\
		& \le 
		2 \exp\left( -\frac{1}{3} \frac{70^2}{71^2} t^2 \right)
		+ 4
		\exp\left(
		- \frac{1}{3} \frac{70^2}{71^2} t
		\right) \le 
		6
		\exp\left(
		- \frac{1}{3} \frac{70^2}{71^2} t
		\right). 
	\end{align*}
	Therefore, Lemma \ref{lemma:s_uw_re} holds. 
\end{proof}

\begin{proof}[Proof of Lemma \ref{lemma:V_R2_hat_bound}]
	By definition, Lemma \ref{lemma:s_uw_re} immediately implies that, under ReM, 
	\begin{align*}
		\left| s_z^2 - S_z^2 \right| = O_{\PP}\left( \xi_{zz}^{1/2}\right), 
		\quad
		\left\| s_{z\bs{w}} - S_{z\bs{w}} \right\|_2 = O_{\PP}\left( \xi_{z\bs{w}}^{1/2}\right).  
	\end{align*}
	This implies that, for $z=0,1$, 
	\begin{align*}
		\left| \left\|s_{z\bs{w}} \right\|_2^2 - \left\|S_{z\bs{w}} \right\|_2^2 \right|
		& = 
		\left|
		\left( s_{z\bs{w}} - S_{z\bs{w}} \right) 
		\left( 
		s_{z\bs{w}} - S_{z\bs{w}} + 2S_{z\bs{w}}
		\right)^\top
		\right|
		= 
		\left|
		\left\| s_{z\bs{w}} - S_{z\bs{w}} \right\|_2^2 + 
		2 \left( s_{z\bs{w}} - S_{z\bs{w}} \right) S_{z\bs{w}}^\top
		\right|
		\\
		& \le 
		\left\| s_{z\bs{w}} - S_{z\bs{w}} \right\|_2^2 + 
		2 \left\| s_{z\bs{w}} - S_{z\bs{w}} \right\|_2 \left\| S_{z\bs{w}} \right\|_2 
		= O_{\PP} \left( 
		\xi_{z\bs{w}} + \left\| S_{z\bs{w}} \right\|_2 \xi_{z\bs{w}}^{1/2}
		\right). 
	\end{align*}
	By the same logic,
	\begin{align*}
		\left| s_{\tau\mid \bs{X}}^2 - S_{\tau\mid \bs{X}}^2 \right|
		& = 
		\left|\left\| \left( s_{1\bs{w}} - s_{0\bs{w}} \right) -  \left( S_{1\bs{w}} - S_{0\bs{w}} \right) \right\|_2^2  
		+ 
		2
		\left\{ \left( s_{1\bs{w}} - s_{0\bs{w}} \right) -  \left( S_{1\bs{w}} - S_{0\bs{w}} \right) \right\} 
		\left( S_{1\bs{w}} - S_{0\bs{w}} \right)^\top \right|
		\\
		& 
		\le 
		2 \left( 
		\left\| s_{1\bs{w}} - S_{1\bs{w}} \right\|_2^2 + 
		\left\| s_{0\bs{w}} - S_{0\bs{w}} \right\|_2^2
		\right)
		+ 
		2
		\left\| S_{1\bs{w}} - S_{0\bs{w}} \right\|_2 
		\left\{ 
		\left\| s_{1\bs{w}} - S_{1\bs{w}} \right\|_2 
		+ 
		\left\| s_{0\bs{w}} -  S_{0\bs{w}} \right\|_2 
		\right\}
		\\
		& = 
		O_{\PP} \left( \xi_{1\bs{w}} + \xi_{0\bs{w}} + \left\| S_{1\bs{w}} - S_{0\bs{w}} \right\|_2 \xi_{1\bs{w}}^{1/2} + \left\| S_{1\bs{w}} - S_{0\bs{w}} \right\|_2  \xi_{0\bs{w}}^{1/2}  \right). 
	\end{align*}
	
	From the above and by definition, we then have 
	\begin{align*}
		\left| \hat{V}_{\tau\tau} - V_{\tau\tau} - n^{-1} S_{\tau\setminus \bs{X}}^2 \right|
		& \le 
		n_1^{-1} \left| s_1^2 - S_1^2 \right| + n_0^{-1} \left| s_0^2 - S_0^2 \right| + n^{-1} \left| s_{\tau \mid \bs{X}}^2 - S_{\tau \mid \bs{X}}^2 \right|
		\\
		& = 
		O_{\PP}\left(
		\frac{\xi_{11}^{1/2}}{n_1} + 
		\frac{\xi_{00}^{1/2}}{n_0} + 
		\frac{\xi_{1\bs{w}} + \xi_{0\bs{w}}}{n}
		+ \left\| S_{1\bs{w}} - S_{0\bs{w}} \right\|_2 \frac{ \xi_{1\bs{w}}^{1/2} + \xi_{0\bs{w}}^{1/2} }{n}
		\right), 
	\end{align*}
	and 
	\begin{align*}
		& \quad \ \left| \hat{V}_{\tau\tau} \hat{R}^2_n - V_{\tau\tau} R^2_n \right|
		\\
		& =
		\left|
		n_1^{-1} \left\|s_{1\bs{w}} \right\|_2^2 + n_0^{-1} \left\|s_{0\bs{w}} \right\|_2^2 - n^{-1} s_{\tau \mid \bs{X}}^2
		- 
		\left(
		n_1^{-1} \left\|S_{1\bs{w}} \right\|_2^2 + n_0^{-1} \left\|S_{0\bs{w}} \right\|_2^2 - n^{-1} S_{\tau \mid \bs{X}}^2
		\right)
		\right|\\
		& \le 
		n_1^{-1} \left| \left\|s_{1\bs{w}} \right\|_2^2 - \left\|S_{1\bs{w}} \right\|_2^2 \right|
		+ 
		n_0^{-1} \left| \left\|s_{0\bs{w}} \right\|_2^2 - \left\|S_{0\bs{w}} \right\|_2^2 \right|
		+ 
		n^{-1}\left| s_{\tau \mid \bs{X}}^2 - S_{\tau \mid \bs{X}}^2 \right|\\
		& = 
		O_{\PP}\left(
		\frac{\xi_{1\bs{w}}}{n_1} + \left\| S_{1\bs{w}} \right\|_2 \frac{\xi_{1\bs{w}}^{1/2}}{n_1}
		+ 
		\frac{\xi_{0\bs{w}}}{n_1} + \left\| S_{0\bs{w}} \right\|_2 \frac{\xi_{0\bs{w}}^{1/2}}{n_0} 
		+ 
		\frac{\xi_{1\bs{w}} + \xi_{0\bs{w}}}{n} + \left\| S_{1\bs{w}} - S_{0\bs{w}} \right\|_2 \frac{ \xi_{1\bs{w}}^{1/2} + \xi_{0\bs{w}}^{1/2} }{n}
		\right)\\
		& = 
		O_{\PP}\left(
		\frac{\xi_{1\bs{w}}}{n_1} 
		+ 
		\frac{\xi_{0\bs{w}}}{n_0}
		+ \left\| S_{1\bs{w}} \right\|_2 \frac{\xi_{1\bs{w}}^{1/2}}{n_1}
		+ \left\| S_{0\bs{w}} \right\|_2 \frac{\xi_{0\bs{w}}^{1/2}}{n_0}
		+ 
		\left\| S_{1\bs{w}} - S_{0\bs{w}} \right\|_2 \frac{ \xi_{1\bs{w}}^{1/2} + \xi_{0\bs{w}}^{1/2} }{n}
		\right). 
	\end{align*}
	Therefore, Lemma \ref{lemma:V_R2_hat_bound} holds. 
\end{proof}

\begin{proof}[Proof of Lemma \ref{lemma:V_R2_hat_bound_simp}]
	First, we consider bounding some finite population quantities.  
	For descriptive convenience, we introduce 
	$\psi = \max_{z\in \{0,1\}}\max_{1\le i \le n}\{Y_i(z) - \bar{Y}(z)\}^2$. 
	By definition, 
	for $z=0,1$ and $1\le k \le K$, 
	\begin{align}\label{eq:sigmawz}
		\sigma^2_z & = \frac{1}{n} \sum_{i=1}^n \{Y_i(z) - \bar{Y}(z)\}^2
		\le \psi, 
		\ \ \ 
		S_z^2 = \frac{n}{n-1} \sigma^2_z \le 2 \psi, 
		\ \ \ 
		\sigma_{w_k}^2 
		= \frac{1}{n} \sum_{i=1}^n ( w_{ki} - \bar{w}_k )^2  = \frac{n-1}{n} \le 1. 
	\end{align}
	and 
	\begin{align*}
		\sigma^2_{z\times z} 
		& = 
		\frac{1}{n} \sum_{i=1}^n\Big[ 
		\{Y_i(z) - \bar{Y}(z) \}^2 - \sigma^2_z
		\Big]^2
		\le \frac{1}{n} \sum_{i=1}^n \{Y_i(z) - \bar{Y}(z) \}^4
		\le \psi^2, 
		\\
		\sigma^2_{z\times w_k} 
		& = 
		\frac{1}{n} \sum_{i=1}^n\Big[ 
		\{Y_i(z) - \bar{Y}(z)\} (w_{ki} - \bar{w}_k)- \frac{n-1}{n} S_{z w_k}
		\Big]^2
		\le 
		\frac{1}{n} \sum_{i=1}^n
		\{Y_i(z) - \bar{Y}(z)\}^2 (w_{ki} - \bar{w}_k)^2
		\\
		& 
		\le 
		\psi \cdot
		\frac{1}{n} \sum_{i=1}^n (w_{ki} - \bar{w}_k)^2
		\le 
		\psi.
	\end{align*}
	Furthermore, by the Cauchy–Schwarz inequality, 
	\begin{align*}
		S_{z w_k}^2
		& = 
		\left[\frac{1}{n-1} \sum_{i=1}^n\{Y_i(z) - \bar{Y}(z)\} (w_{ki} - \bar{w}_k) \right]^2 
		\le 
		\frac{1}{(n-1)^2}
		\sum_{i=1}^n \{Y_i(z) - \bar{Y}(z)\}^2 
		\cdot 
		\sum_{i=1}^n (w_{ki} - \bar{w}_k)^2 \\
		& = 
		\frac{1}{n-1}\sum_{i=1}^n \{Y_i(z) - \bar{Y}(z)\}^2 
		\le 
		2 \psi. 
	\end{align*}
	
	Second, we consider the bounds on $\xi_{zz}$ and $\xi_{z \bs{w}}$ for $z=0,1$. 
	For descriptive convenience, we introduce 
	$
	b_n = \max\{1, - \log \tilde{p}_n \}
	$
	and 
	$
	c_n = \max\{1, \log K_n, - \log \tilde{p}_n\}. 
	$
	By definition and from the bounds we derived above, 
	for $z=0,1$, 
	\begin{align*}
		\xi_{zz}
		& = 
		\frac{b_n}{n r_z^2} \sigma^2_{z\times z} 
		+
		\frac{ b_n^2 }{n^2 r_z^4} \sigma^4_z 
		+ 
		\frac{(1-r_z)^2}{n^2 r_z^2} S_{z}^4
		\le 
		\psi^2 
		\left(
		\frac{b_n}{n r_z^2}
		+
		\frac{ b_n^2 }{n^2 r_z^4}
		+ 
		\frac{4}{n^2 r_z^2}
		\right)
		\le 
		\psi^2 
		\left(
		\frac{2b_n}{n r_z^2}
		+
		\frac{ b_n^2 }{n^2 r_z^4}
		\right)
	\end{align*}
	where the last inequality holds because $b_n\ge 1$ and $n\ge 4$. 
	From the condition in Lemma \ref{lemma:V_R2_hat_bound_simp}, 
	$b_n \le c_n = O(nr_1^2r_0^2)$, and thus 
	\[
	\xi_{zz} = 
	\psi^2 
	\frac{b_n}{n r_z^2}
	\left( 2 + \frac{b_n}{n r_z^2} \right)
	= 
	O\left( \psi^2 
	\frac{b_n}{n r_z^2} \right).
	\] 
	Similarly, we can derive that, for $z=0,1$, 
	\begin{align}\label{eq:xizw}
		\xi_{z \bs{w}}
		& = 
		\frac{c_n}{n r_z^2} \sum_{k=1}^{K_n} \sigma^2_{z\times w_k} 
		+
		\frac{ b_n c_n }{n^2 r_z^4} \sigma^2_z \sum_{k=1}^{K_n} \sigma^2_{w_k}
		+ 
		\frac{(1-r_z)^2}{n^2 r_z^2} \sum_{k=1}^{K_n} S_{z w_k}^2
		\le 
		\psi K_n 
		\left(
		\frac{c_n}{n r_z^2} 
		+
		\frac{ b_n c_n }{n^2 r_z^4}
		+ 
		\frac{2}{n^2 r_z^2} \right) 
		\\
		& \le 
		\psi K_n 
		\left(
		2 \frac{c_n}{n r_z^2} 
		+
		\frac{ b_n c_n }{n^2 r_z^4} \right)
		= 
		\psi K_n
		\frac{c_n}{n r_z^2} 
		\left(
		2 
		+
		\frac{ b_n }{n r_z^2} \right)
		= 
		O\left( \psi K_n
		\frac{c_n}{n r_z^2}  \right). \nonumber
	\end{align}

	Third, we consider the probability bounds for 
	$\hat{V}_{\tau\tau} - V_{\tau\tau} - n^{-1} S_{\tau\setminus \bs{X}}^2$ and $\hat{V}_{\tau\tau} \hat{R}^2_n - V_{\tau\tau} R^2_n$. From the bounds we derived before, for $z=0,1$, 
	\begin{align*}
		\left\| S_{z\bs{w}} \right\|_2 & 
		\le 
		\left( \sum_{k=1}^{K_n} S_{z\bs{w}_k}^2 \right)^{1/2} 
		\le 
		\sqrt{2K_n\psi}, 
		\quad
		\left\| S_{1\bs{w}} - S_{0\bs{w}} \right\|_2
		\le 
		\left\| S_{1\bs{w}} \right\|_2 + \left\| S_{0\bs{w}} \right\|_2 
		\le 
		2 \sqrt{2K_n\psi}. 
	\end{align*}
	Consequently, we have 
	\begin{align*}
		& \quad \ \frac{\xi_{11}^{1/2}}{n_1} + 
		\frac{\xi_{00}^{1/2}}{n_0} + 
		\frac{\xi_{1\bs{w}} + \xi_{0\bs{w}}}{n} + \left\| S_{1\bs{w}} - S_{0\bs{w}} \right\|_2\frac{\xi_{1\bs{w}}^{1/2} + \xi_{0\bs{w}}^{1/2}}{n} 
		\\
		& = 
		\psi 
		\cdot
		O\left(
		\frac{\sqrt{b_n}}{n^{3/2} r_1^2} + \frac{\sqrt{b_n}}{n^{3/2} r_0^2} 
		+ 
		K_n \frac{c_n}{n^2r_1^2} + K_n \frac{c_n}{n^2r_0^2} + 
		K_n \frac{\sqrt{c_n}}{n^{3/2} r_1} + K_n \frac{\sqrt{c_n}}{n^{3/2} r_0}
		\right)\\
		& = 
		\psi \cdot O\left(
		\frac{\sqrt{b_n}}{n^{3/2} r_1^2r_0^2} 
		+ 
		K_n \frac{c_n}{n^2r_1^2r_0^2} + 
		K_n \frac{\sqrt{c_n}}{n^{3/2} r_1r_0} 
		\right)
		= 
		\psi \cdot O\left\{
		\frac{\sqrt{b_n}}{n^{3/2} r_1^2r_0^2} 
		+ 
		K_n \frac{\sqrt{c_n}}{n^{3/2} r_1r_0} 
		\left(1 + 
		\sqrt{\frac{c_n}{nr_1^2 r_0^2}}
		\right)
		\right\}\\
		& 
		=
		\psi \cdot O\left(
		\frac{\sqrt{b_n}}{n^{3/2} r_1^2r_0^2} 
		+ 
		K_n \frac{\sqrt{c_n}}{n^{3/2} r_1r_0} \right)
		=
		\psi \cdot O
		\left(
		\frac{\sqrt{b_n} + K_n \sqrt{c_n}}{n^{3/2} r_1^2r_0^2} 
		\right). 
	\end{align*}
	where the second last equality holds because $c_n = O(nr_1^2 r_0^2)$.  
	Similarly, we can derive that 
	\begin{align*}
		& \quad \ \frac{\xi_{1\bs{w}}}{n_1} 
		+ 
		\frac{\xi_{0\bs{w}}}{n_0}
		+ \left\| S_{1\bs{w}} \right\|_2 \frac{\xi_{1\bs{w}}^{1/2}}{n_1}
		+ \left\| S_{0\bs{w}} \right\|_2 \frac{\xi_{0\bs{w}}^{1/2}}{n_0} 
		+ 
		\left\| S_{1\bs{w}} - S_{0\bs{w}} \right\|_2 \frac{\xi_{1\bs{w}}^{1/2} + \xi_{0\bs{w}}^{1/2}}{n}
		\\
		& = 
		\psi \cdot
		O\left(
		K_n \frac{c_n}{n^2 r_1^3} + K_n \frac{c_n}{n^2 r_0^3} + 
		K_n \frac{\sqrt{c_n}}{n^{3/2} r_1^2} + 
		K_n \frac{\sqrt{c_n}}{n^{3/2} r_0^2} + 
		K_n \frac{\sqrt{c_n}}{n^{3/2} r_1} + K_n \frac{\sqrt{c_n}}{n^{3/2} r_0}
		\right)\\
		& = 
		\psi \cdot
		O\left(
		K_n \frac{c_n}{n^2 r_1^3 r_0^3} + 
		K_n \frac{\sqrt{c_n}}{n^{3/2} r_1^2 r_0^2}
		\right)
		= 
		\psi \cdot 
		O\left\{
		K_n \frac{\sqrt{c_n}}{n^{3/2} r_1^2 r_0^2}
		\left( 1 +
		\sqrt{\frac{c_n}{n r_1^2 r_0^2}}
		\right)
		\right\}
		\\
		& = 
		\psi \cdot 
		O\left(
		K_n \frac{\sqrt{c_n}}{n^{3/2} r_1^2 r_0^2}
		\right). 
	\end{align*}
	Note that, by definition, 
	$
	b_n \le c_n. 
	$
	Thus, we must have 
	\begin{align*}
		\frac{\sqrt{b_n} + K_n \sqrt{c_n}}{n^{3/2} r_1^2r_0^2} 
		\le 
		2 \max\{K_n, 1\} \cdot  \frac{\sqrt{c_n}}{n^{3/2} r_1^2r_0^2}, 
		\qquad
		K_n \frac{\sqrt{c_n}}{n^{3/2} r_1^2 r_0^2} 
		\le \max\{K_n, 1\} \cdot  \frac{\sqrt{c_n}}{n^{3/2} r_1^2r_0^2}. 
	\end{align*}
	From the above and Lemma \ref{lemma:V_R2_hat_bound}, these then imply that 
	\begin{align*}
		\max\left\{ \big|\hat{V}_{\tau\tau} - V_{\tau\tau} - n^{-1} S_{\tau\setminus \bs{X}}^2\big|, \ \ \big|\hat{V}_{\tau\tau} \hat{R}^2_n - V_{\tau\tau} R^2_n\big| \right\}
		= 
		\psi \cdot 
		O_{\PP}\left( \max\{K_n, 1\} \cdot \frac{\sqrt{c_n}}{n^{3/2} r_1^2r_0^2} \right). 
	\end{align*}
	Therefore, Lemma \ref{lemma:V_R2_hat_bound_simp} holds. 
\end{proof}

\begin{proof}[Proof of Lemma \ref{lemma:cond_infer}]
	We first prove (i). 
	By definition, $|\tilde{p}_n - p_n| \le \Delta_n$. 
	Because $\Delta_n/p_n = o(1)$, 
	this implies that 
	\begin{align*}
		-\log \tilde{p}_n 
		& \le 
		-\log \left( p_n - \Delta_n \right) 
		= 
		-\log p_n - \log \left(1 - \Delta_n/p_n \right) 
		= 
		-\log p_n + O(1) = O\left( \max\{1, -\log p_n\} \right). 
	\end{align*}
	Thus, 
	$\max\{1, -\log \tilde{p}_n\} = O(\max\{1, -\log p_n\})$. 
	
	We then prove (ii). 
	By definition, 
	\begin{align*}
		r_0 S^2_{1\setminus \bs{X}} + r_1 S^2_{0\setminus \bs{X}}
		& \le  r_0 S_1^2 + r_1 S_0^2 
		\le 2 (r_0+r_1) \max_{z\in \{0,1\}}\max_{1\le i \le n}\{Y_i(z) - \bar{Y}(z)\}^2
		\\
		& = 
		2 \max_{z\in \{0,1\}}\max_{1\le i \le n}\{Y_i(z) - \bar{Y}(z)\}^2. 
	\end{align*}
	Thus, (ii) holds. 
	
	Last, we prove (iii). From (i) and Condition \ref{cond:infer}, we can verify that 
	\begin{align*}
		& \quad \ \frac{\max_{z\in \{0,1\}}\max_{1\le i \le n}\{Y_i(z) - \bar{Y}(z)\}^2}{r_0 S^2_{1\setminus \bs{X}} + r_1 S^2_{0\setminus \bs{X}}}
		\cdot 
		\frac{\max\{K_n, 1\}}{r_1  r_0}
		\cdot 
		\sqrt{ \frac{\max\{1, \log K_n, - \log \tilde{p}_n\} }{n} } 
		\\
		& = 
		O
		\left(
		\frac{\max_{z\in \{0,1\}}\max_{1\le i \le n}\{Y_i(z) - \bar{Y}(z)\}^2}{r_0 S^2_{1\setminus \bs{X}} + r_1 S^2_{0\setminus \bs{X}}}
		\cdot 
		\frac{\max\{K_n, 1\}}{r_1  r_0}
		\cdot 
		\sqrt{ \frac{\max\{1, \log K_n, - \log p_n\} }{n} } 
		\right)
		\\
		& = o(1). 
	\end{align*}
	From (ii), this then implies that 
	\begin{align*}
		o(1) 
		& =
		\frac{\max_{z\in \{0,1\}}\max_{1\le i \le n}\{Y_i(z) - \bar{Y}(z)\}^2}{nr_1r_0 V_{\tau\tau}(1-R^2)}
		\cdot 
		\frac{\max\{K_n, 1\}}{r_1  r_0}
		\cdot 
		\sqrt{ \frac{\max\{1, \log K_n, - \log \tilde{p}_n\} }{n} } 
		\\
		& \ge 
		\frac{1}{2} \cdot 
		\frac{\max\{K_n, 1\}}{r_1  r_0}
		\cdot 
		\sqrt{ \frac{\max\{1, \log K_n, - \log \tilde{p}_n\} }{n} } 
		\ge 
		\frac{1}{2} 
		\sqrt{ \frac{\max\{1, \log K_n, - \log \tilde{p}_n\} }{n r_1^2 r_0^2} }. 
	\end{align*}
	Consequently, we must have 
	$\max\{1, \log K_n, - \log \tilde{p}_n\} = o(n r_1^2 r_0^2)$, 
	i.e., (iii) holds. 
\end{proof}

\subsection{Proofs of Theorems \ref{thm:inf} and \ref{thm:inf_gaussian}}%

\begin{proof}[\bf Proof of Theorem \ref{thm:inf}(i)]
	From Lemmas \ref{lemma:V_R2_hat_bound_simp} and \ref{lemma:cond_infer},  
	under ReM and Conditions \ref{cond:p_n} and \ref{cond:infer}, we must have 
	\begin{align*}
		& \quad \ \max\left\{ \big| \hat{V}_{\tau\tau} - V_{\tau\tau} - n^{-1} S_{\tau\setminus \bs{X}}^2 \big|, \ \ \big| \hat{V}_{\tau\tau} \hat{R}^2_n - V_{\tau\tau} R^2_n \big| \right\}
		\\
		& 
		= 
		\max_{z\in \{0,1\}}\max_{1\le i \le n}\{Y_i(z) - \bar{Y}(z)\}^2 \cdot 
		O_{\PP}\left( 
		\max\{K_n, 1\} \cdot \frac{\sqrt{ \max\{1, \log K_n, - \log p_n\} }}{n^{3/2} r_1^2r_0^2} \right)
		\\
		& = 
		\frac{\max_{z\in \{0,1\}}\max_{1\le i \le n}\{Y_i(z) - \bar{Y}(z)\}^2}{n r_1 r_0} \cdot 
		O_{\PP}\left( 
		\frac{\max\{K_n, 1\}}{r_1r_0} \cdot \sqrt{ \frac{ \max\{1, \log K_n, - \log p_n\} }{n} } \right)
		\\
		& = 
		\frac{r_0 S^2_{1\setminus \bs{X}} + r_1 S^2_{0\setminus \bs{X}}}{nr_1r_0} \cdot o_{\PP}(1)
		= 
		\left( n_1^{-1}  S^2_{1\setminus \bs{X}} + n_0^{-1}  S^2_{0\setminus \bs{X}} \right) \cdot o_{\PP}(1). 
	\end{align*}
	Note that, by definition, 
	\begin{align}\label{eq:denom_inf_cond_equiv}
		V_{\tau\tau} (1-R^2_n)+n^{-1} S^2_{\tau \setminus \bs{X}} 
		& = 
		\left( n_1^{-1} S^2_{1} +  n_0^{-1} S^2_{0} - n^{-1} S^2_{\tau} \right) 
		- 
		\left( n_1^{-1} S^2_{1\mid \bs{X}} +  n_0^{-1} S^2_{0\mid \bs{X}} - n^{-1} S^2_{\tau \mid \bs{X}} \right)
		+n^{-1} S^2_{\tau \setminus \bs{X}} 
		\nonumber
		\\
		& = n_1^{-1}  S^2_{1\setminus \bs{X}} + n_0^{-1}  S^2_{0\setminus \bs{X}}.
	\end{align}
	From the above, Theorem \ref{thm:inf}(i) holds. 
\end{proof}

To prove Theorem \ref{thm:inf}(ii), we need the following two lemmas. 

\begin{lemma}\label{lemma:quant_converg}
	Let $\{\psi_n\}$ and $\{\tilde{\psi}_n\}$ be two sequences of continuous random variables such that, as $n\rightarrow \infty$, 
	$
	\sup_{c\in \mathbb{R}}\big| \PP(\psi_n \le c ) - \PP(\tilde{\psi}_n \le c ) \big| \rightarrow 0. 
	$
	For any $n$ and $\alpha \in (0,1)$, 
	let $q_{n}(\alpha)$ and $\tilde{q}_{n}(\alpha)$ be the $\alpha$th quantile of $\psi_n$ and $\tilde{\psi}_n$, respectively. 
	Then for any $0 < \alpha < \beta < 1$, 
	$\I \{ \tilde{q}_n(\beta) \le q_n(\alpha) \} \rightarrow 0$ as $n \rightarrow \infty$. 
\end{lemma}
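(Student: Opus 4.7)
The plan is to exploit the uniform closeness of the CDFs of $\psi_n$ and $\tilde{\psi}_n$ together with the continuity of $\psi_n$ to pin down $\PP(\tilde\psi_n\le q_n(\alpha))$ near $\alpha$, and then use the definition of the $\beta$th quantile with $\beta>\alpha$ to force $\tilde q_n(\beta)$ to lie strictly above $q_n(\alpha)$ eventually.

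First I would note that since $\psi_n$ is continuous, the identity $\PP(\psi_n\le q_n(\alpha))=\alpha$ holds exactly, not merely with an inequality. Applying the uniform bound $\sup_{c\in\mathbb R}|\PP(\psi_n\le c)-\PP(\tilde\psi_n\le c)|\to 0$ at the (random-in-$n$ but deterministic) point $c=q_n(\alpha)$ gives
\begin{equation*}
\bigl|\PP(\tilde\psi_n\le q_n(\alpha))-\alpha\bigr|
=\bigl|\PP(\tilde\psi_n\le q_n(\alpha))-\PP(\psi_n\le q_n(\alpha))\bigr|
\longrightarrow 0
\end{equation*}
as $n\to\infty$. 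Thus $\PP(\tilde\psi_n\le q_n(\alpha))\to \alpha$.

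Next, fix $\varepsilon=(\beta-\alpha)/2>0$. By the convergence above, there is some $N_0$ such that for all $n\ge N_0$,
\begin{equation*}
\PP(\tilde\psi_n\le q_n(\alpha))\le \alpha+\varepsilon=\frac{\alpha+\beta}{2}<\beta.
\end{equation*}
By the standard definition of the $\beta$th quantile, $\tilde q_n(\beta)=\inf\{t:\PP(\tilde\psi_n\le t)\ge \beta\}$, so the strict inequality $\PP(\tilde\psi_n\le q_n(\alpha))<\beta$ forces $q_n(\alpha)<\tilde q_n(\beta)$. Consequently $\I\{\tilde q_n(\beta)\le q_n(\alpha)\}=0$ for all $n\ge N_0$, which gives the desired convergence to zero (in fact eventual vanishing, which is stronger).

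There is no real obstacle here; the only subtlety worth flagging is the role of continuity of $\psi_n$, which upgrades the generic inequality $\PP(\psi_n\le q_n(\alpha))\ge \alpha$ to an equality and thus avoids having to buffer against a possible atom of $\psi_n$ at its own $\alpha$-quantile. Everything else is a one-line application of the uniform Kolmogorov-type bound together with the definition of a quantile.
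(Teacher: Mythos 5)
Your proof is correct and rests on the same mechanism as the paper's: plugging an $n$-dependent quantile into the uniform CDF bound and invoking the definition of the other quantile. The only cosmetic difference is that you evaluate the bound at $q_n(\alpha)$ and argue directly, whereas the paper evaluates it at $\tilde{q}_n(\beta)$ and argues by contradiction along a subsequence; your direct version in fact yields the slightly stronger conclusion that the indicator vanishes for all large $n$.
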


\begin{proof}[Proof of Lemma \ref{lemma:quant_converg}]
	From the condition and definition in Lemma \ref{lemma:quant_converg}, 
	$\PP\{ \psi_n \le q_n(\alpha) \} = \alpha$, $\PP\{ \tilde{\psi}_n \le \tilde{q}_n(\beta) \} = \beta$, 
	and 
	$
	\big| \PP \{ \psi_n \le \tilde{q}_n(\beta) \} - \PP\{ \tilde{\psi}_n \le \tilde{q}_n(\beta) \} \big|
	\le 
	\sup_{c\in \mathbb{R}}\big| \PP(\psi_n \le c ) - \PP(\tilde{\psi}_n \le c ) \big| \rightarrow 0
	$
	as $n \rightarrow \infty$. 
	These imply that $\PP \{ \psi_n \le \tilde{q}_n(\beta) \} \converge \beta  > \alpha$ as $n\rightarrow \infty$. 
	Below we prove Lemma \ref{lemma:quant_converg} by contradiction. 
	
	Suppose that $\I \{ \tilde{q}_n(\beta) \le q_n(\alpha) \}$ does not converge to zero as $n \rightarrow \infty$. 
	Then there exists a subsequence $\{n_j\}$ such that $\tilde{q}_{n_j}(\beta) \le  q_{n_j}(\alpha)$ for all $j$. 
	This implies that, for all $j$, 
	$
	\PP \{ \psi_{n_j} \le \tilde{q}_{n_j}(\beta) \}  \le \PP \{ \psi_{n_j} \le q_{n_j}(\alpha) \}
	= \alpha. 
	$
	Consequently, we must have 
	$\limsup_{j \rightarrow \infty} \PP \{ \psi_{n_j} \le \tilde{q}_{n_j}(\beta) \}  \le \alpha$. 
	However, this contradicts with the fact that 
	$\lim_{n\rightarrow \infty}\PP \{ \psi_n \le \tilde{q}_n(\beta) \} = \beta$.
	From the above, Lemma	\ref{lemma:quant_converg} holds. 
\end{proof}

\begin{lemma}\label{lemma:non_Gaussian_quantile}
	Let $\varepsilon_0 \sim \mathcal{N}(0,1)$,  and $L_{K_n, a_n}$ be the truncated Gaussian random variables defined as in Section \ref{sec:recent_result},
	where $\{K_n\}$ and $\{a_n\}$ are sequences of positive integers and thresholds, 
	and $\varepsilon_0$ is independent of $L_{K_n, a_n}$ for all $n$. 
	Let $\{A_n\}$,  $\{B_n\}$, $\{\tilde{A}_n\}$ and $\{\tilde{B}_n\}$  be sequences of nonnegative constants, 
	and for each $n$, 
	define $\psi_n = A_n^{1/2} \cdot \varepsilon_0 + B_n^{1/2} \cdot L_{K_n, a_n}$ and $\tilde{\psi}_n = \tilde{A}_n^{1/2} \cdot \varepsilon_0 + \tilde{B}_n^{1/2} \cdot L_{K_n, a_n}$. 
	For each $n$ and $\alpha \in (0,1)$, let $q_{n}(\alpha)$ and $\tilde{q}_{n}(\alpha)$ be the $\alpha$th quantile of $\psi_n$ and $\tilde{\psi}_n$, respectively. 
	If $\max\{ | \tilde{A}_n - A_n|, |\tilde{B}_n - B_n|\} = o(A_n)$, 
	then for any $0 < \alpha < \beta < 1$, as $n \rightarrow \infty$, 
	$\I \{ \tilde{q}_n(\beta) \le q_n(\alpha) \} \rightarrow 0$ 
	and 
	$\I \{ q_n(\beta) \le \tilde{q}_n(\alpha) \} \rightarrow 0$. 
\end{lemma}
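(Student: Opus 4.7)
The strategy is to reduce Lemma \ref{lemma:non_Gaussian_quantile} to Lemma \ref{lemma:quant_converg}, which converts uniform closeness of the two distribution functions into the desired quantile separation. The task therefore splits into verifying (a) that $\psi_n$ and $\tilde\psi_n$ are continuous random variables for all large $n$, and (b) that
\[
\sup_{c\in\R}\big|\PP(\psi_n\le c)-\PP(\tilde\psi_n\le c)\big|\converge 0.
\]
Once both hold, a direct invocation of Lemma \ref{lemma:quant_converg} yields $\I\{\tilde q_n(\beta)\le q_n(\alpha)\}\converge 0$, and a second invocation with the roles of $\psi_n$ and $\tilde\psi_n$ interchanged (the hypothesis being symmetric) yields the other claim. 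As a preliminary, at any index where $A_n=0$ the hypothesis $\max\{|\tilde A_n-A_n|,|\tilde B_n-B_n|\}=o(A_n)$ forces $\tilde A_n=0$ and $\tilde B_n=B_n$, so $\psi_n\equiv\tilde\psi_n$ and both conclusions hold trivially; I may therefore restrict to $A_n>0$ (which holds eventually when it holds for any subsequence), under which both $\psi_n$ and $\tilde\psi_n$ are continuous since each is a convolution of a Gaussian with positive variance against the independent random variable $B_n^{1/2}L_{K_n,a_n}$.

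For step (b), I plan to invoke Lemma \ref{lemma:diminish_inf_norm_two_rv} with $\beta_n=A_n^{1/2}$, $\zeta_n=B_n^{1/2}L_{K_n,a_n}$ (independent of $\varepsilon_0$ by construction), and $\varepsilon_0$ standard Gaussian (bounded density). The required hypothesis is $\psi_n-\tilde\psi_n=o_{\PP}(A_n^{1/2})$, which I would verify through the decomposition
\[
\psi_n-\tilde\psi_n = (A_n^{1/2}-\tilde A_n^{1/2})\,\varepsilon_0 + (B_n^{1/2}-\tilde B_n^{1/2})\,L_{K_n,a_n}.
\]
The elementary inequality $|a^{1/2}-b^{1/2}|\le|a-b|^{1/2}$ for nonnegative $a,b$, together with the hypotheses $|A_n-\tilde A_n|=o(A_n)$ and $|B_n-\tilde B_n|=o(A_n)$, yields $|A_n^{1/2}-\tilde A_n^{1/2}|=o(A_n^{1/2})$ and $|B_n^{1/2}-\tilde B_n^{1/2}|=o(A_n^{1/2})$. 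Since $\varepsilon_0=O_{\PP}(1)$ and $L_{K_n,a_n}=O_{\PP}(1)$ (discussed below), the displayed difference is $o_{\PP}(A_n^{1/2})$, as required.

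The only ingredient requiring a moment of care is the uniform tightness of $L_{K_n,a_n}$, even though both $K_n$ and $a_n$ vary with $n$. This follows from Lemma \ref{lem:varchi}: monotonicity in $a$ gives $\var(L_{K_n,a_n})\le\var(L_{K_n,\infty})=1$, so Chebyshev's inequality supplies a uniform-in-$n$ tightness bound. This observation is already implicit in the paper's discussion preceding Proposition \ref{prop:convpvar}. Beyond this, the argument is essentially bookkeeping: pass the $o(A_n)$ perturbation to the square-root scale, feed the resulting $o_{\PP}(A_n^{1/2})$ bound into Lemma \ref{lemma:diminish_inf_norm_two_rv}, and conclude via Lemma \ref{lemma:quant_converg}. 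I anticipate no serious obstacle, the potentially delicate point being only the trivial $A_n=0$ bookkeeping noted above.
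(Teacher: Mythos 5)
Your proposal is correct and follows essentially the same route as the paper's own proof: bound $\tilde{\psi}_n-\psi_n$ via $|\sqrt{b}-\sqrt{c}|\le\sqrt{|b-c|}$ and the tightness of $L_{K_n,a_n}$ (from $\var(L_{K_n,a_n})\le 1$) to get $o_{\PP}(A_n^{1/2})$, feed this into Lemma \ref{lemma:diminish_inf_norm_two_rv}, and conclude with Lemma \ref{lemma:quant_converg} plus symmetry. Your extra care about the $A_n=0$ case and the continuity hypothesis of Lemma \ref{lemma:quant_converg} is reasonable bookkeeping that the paper leaves implicit, but it does not change the argument.
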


\begin{proof}[Proof of Lemma \ref{lemma:non_Gaussian_quantile}]
	Because $\var(L_{K_n, a_n}) \le 1$, $L_{K_n, a_n} = O_{\PP}(1)$.
	Using the inequality that $|\sqrt{b}-\sqrt{c}| \le \sqrt{|b-c|}$ for any $b, c\ge 0$, we have
	\begin{align*}
		\tilde{\psi}_n - \psi_n & = 
		\big(\tilde{A}_n^{1/2} - A_n^{1/2} \big) \varepsilon_0 + \big(\tilde{B}_n^{1/2} - B_n^{1/2} \big) L_{K_n, a_n}
		= |\tilde{A}_n - A_n|^{1/2} \cdot O_{\PP}(1) + |\tilde{B}_n - B_n|^{1/2} \cdot O_{\PP}(1)
		\\
		& = A_n^{1/2} \cdot o_{\PP}(1). 
	\end{align*}
	From Lemma \ref{lemma:diminish_inf_norm_two_rv}, this then implies that 
	$\sup_{c\in \mathbb{R}} | \PP(\tilde{\psi}_n  \le c) - \PP(\psi_n \le c) | \rightarrow 0$ as $n\rightarrow \infty$. 
	From Lemma \ref{lemma:quant_converg}, this further implies that, for any $0 < \alpha < \beta < 1$, 
	$\I \{ \tilde{q}_n(\beta) \le q_n(\alpha) \} \rightarrow 0$ as $n \rightarrow \infty$. 
	By summery, we also have $\I \{ q_n(\beta) \le \tilde{q}_n(\alpha) \} \rightarrow 0$ as $n \rightarrow \infty$. 
	Therefore, Lemma \ref{lemma:non_Gaussian_quantile} holds. 
\end{proof}

\begin{proof}[\bf Proof of Theorem \ref{thm:inf}(ii)]
	For descriptive convenience, 
	let $\varepsilon_0$ and $L_{K_n, a_n}$ be two independent standard and constrained Gaussian random variables defined as in Section \ref{sec:recent_result}, which are further constructed to be independent of the treatment assignment $\bs{Z}$. 
	We then define 
	\begin{align*}
		\theta_n & = \sqrt{V_{\tau\tau} (1-R^2_n)} \cdot \varepsilon_0 + \sqrt{V_{\tau\tau} R^2_n} \cdot L_{K_n, a_n}
		\equiv A_n^{1/2} \cdot \varepsilon_0 + B_n^{1/2} \cdot L_{K_n, a_n}, 
		\\ 
		\tilde{\theta}_n & = \sqrt{V_{\tau\tau} (1-R^2_n)+n^{-1} S^2_{\tau \setminus \bs{X}}} \cdot \varepsilon_0 + \sqrt{V_{\tau\tau} R^2_n} \cdot L_{K_n, a_n}
		\equiv \tilde{A}_n^{1/2} \cdot \varepsilon_0 + \tilde{B}_n^{1/2} \cdot L_{K_n, a_n}, 
		\\
		\hat{\theta}_n & = \sqrt{\hat{V}_{\tau\tau} (1-\hat{R}^2_n)} \cdot \varepsilon_0 + \sqrt{\hat{V}_{\tau\tau} \hat{R}^2_n} \cdot L_{K_n, a_n}
		\equiv \hat{A}_n^{1/2} \cdot \varepsilon_0 + \hat{B}_n^{1/2} \cdot L_{K_n, a_n}, 
	\end{align*}
	where $A_n, \tilde{A}_n, \hat{A}_n$ and $B_n, \tilde{B}_n, \hat{B}_n$ denote the squared coefficients of the standard and constrained Gaussian random variables, respectively. 
	We introduce $q_{\alpha}(A, B, K, a)$ to denote the $\alpha$th quantile of $A^{1/2} \varepsilon_0 + B^{1/2} L_{K, a}$, 
	and further define 
	$\hat{q}_{n, \alpha} = q_{\alpha}(\hat{A}_n, \hat{B}_n, K_n, a_n)$, 
	$\tilde{q}_{n, \alpha} = q_{\alpha}(\tilde{A}_n, \tilde{B}_n, K_n, a_n)$
	and 
	$q_{n, \alpha} = q_{\alpha}(A_n, B_n, K_n, a_n)$. 
	
	First, we prove that $\lim_{n\rightarrow \infty}\PP( \hat{q}_{n, \beta}\le \tilde{q}_{n, \alpha} \mid M\le a_n) = 0$ for any $0 < \alpha < \beta < 1$. 
	From Theorem \ref{thm:inf}(i), under ReM, 
	$\max\{ |\hat{A}_n - \tilde{A}_n|, |\hat{B}_n - \tilde{B}_n| \} = o_{\PP}(\tilde{A}_n)$. 
	By \citet[][Theorem 2.3.2]{D19}, under ReM, 
	for any subsequence $\{n_j: j=1,2,\ldots\}$, 
	there exists a further subsequence $\{m_j: j=1,2,\ldots\}\subset \{n_j: j=1,2,\ldots\}$ such that 
	$|\hat{A}_{m_j} - \tilde{A}_{m_j}|/\tilde{A}_{m_j} \convergeas 0$ and 
	$|\hat{B}_{m_j} - \tilde{B}_{m_j}|/\tilde{A}_{m_j} \convergeas 0$ as $j\rightarrow \infty$. 
	From Lemma \ref{lemma:non_Gaussian_quantile}, this immediately implies that, for any $0 < \alpha < \beta < 1$, 
	$\I\{\hat{q}_{m_j, \beta} \le \tilde{q}_{m_j, \alpha}\} \convergeas 0$ as $n\rightarrow 0$. 
	From \citet[][Theorem 2.3.2]{D19}, we can know that, under ReM, for any $0 < \alpha < \beta < 1$, 
	$\I(\hat{q}_{n, \beta}\le \tilde{q}_{n, \alpha}) \convergep 0$ as $n\rightarrow 0$. 
	Consequently, under ReM, for any $0 < \alpha < \beta < 1$, as $n\rightarrow \infty$, 
	\begin{align}\label{eq:conv_quantile_est_tilde}
		\PP( \hat{q}_{n, \beta}\le \tilde{q}_{n, \alpha} \mid M\le a_n)
		& = 
		\E\{\I(\hat{q}_{n, \beta}\le \tilde{q}_{n, \alpha}) \mid M \le a_n\} 
		\rightarrow 0.
	\end{align}

	Second, we prove the asymptotic validity of the confidence interval $\hat{\mathcal{C}}_{\alpha}$ for $\alpha \in (0, 1)$. 
	For any $\alpha \in (0, 1)$ and $\eta\in (0, (1-\alpha)/2)$, the coverage probability of the confidence interval $\hat{\mathcal{C}}_{\alpha}$ can be bounded by 
	\begin{align*}
		\PP(\tau \in \hat{\mathcal{C}}_{\alpha} \mid M\le a_n) 
		& = \PP\{ | \hat{\tau}-\tau|  \le \hat{q}_{n, 1-\alpha/2} \mid M \le a_n \}
		\\
		& \ge 
		\PP\{ | \hat{\tau}-\tau| \le \hat{q}_{n, 1-\alpha/2}, \ 
		\hat{q}_{n, 1-\alpha/2} \ge \tilde{q}_{n, 1-\alpha/2-\eta} 
		\mid M \le a_n \}
		\\
		& 
		\ge 
		\PP\{ | \hat{\tau}-\tau|  \le  \tilde{q}_{n, 1-\alpha/2-\eta}, \  
		\hat{q}_{n, 1-\alpha/2} \ge \tilde{q}_{n, 1-\alpha/2-\eta} 
		\mid M \le a_n \}
		\\
		& 
		\ge 
		\PP\{ | \hat{\tau}-\tau|  \le  \tilde{q}_{n, 1-\alpha/2-\eta}
		\mid M \le a_n \}
		- 
		\PP\{  \hat{q}_{n, 1-\alpha/2} < \tilde{q}_{n, 1-\alpha/2-\eta} 
		\mid M \le a_n \}. 
	\end{align*}
	From \eqref{eq:conv_quantile_est_tilde} and Theorem \ref{thm:dim_rem}, 
	$\PP\{  \hat{q}_{n, 1-\alpha/2} < \tilde{q}_{n, 1-\alpha/2-\eta} 
	\mid M \le a_n \} = o(1)$, 
	and 
	\begin{align*}
		\PP\{ | \hat{\tau}-\tau|  \le  \tilde{q}_{n, 1-\alpha/2-\eta}
		\mid M \le a_n \}
		& = 
		\PP(|\theta_n| \le \tilde{q}_{n, 1-\alpha/2-\eta} ) + o(1)
		\ge \PP(|\tilde{\theta}_n| \le \tilde{q}_{n, 1-\alpha/2-\eta} ) + o(1)
		\\
		& = 1 - \alpha - 2\eta + o(1), 
	\end{align*}
	where the last inequality follows from \citet[][Lemma A3]{LD20}. 
	These then imply that 
	\begin{align*}
		\liminf_{n \rightarrow \infty} \PP(\tau \in \hat{\mathcal{C}}_{\alpha} \mid M\le a_n)  
		\ge 
		1 - \alpha - 2\eta. 
	\end{align*}
	Because the above inequality holds for any $\eta \in (0, (1-\alpha)/2)$, 
	we must have 
	$\liminf_{n \rightarrow \infty} \PP(\tau \in \hat{\mathcal{C}}_{\alpha} \mid M\le a_n)  
	\ge 1 - \alpha$. 
	
	From the above, Theorem \ref{thm:inf}(ii) holds. 
\end{proof}

\begin{proof}[\bf Proof of Theorem \ref{thm:inf}(iii)]
	For any $\alpha \in (0, 1)$ and $\eta \in (0, \alpha/2)$, 
	the coverage probability of the confidence interval $\hat{\mathcal{C}}_{\alpha}$ can be bounded by 
	\begin{align}\label{eq:cover_upper}
		& \quad \ \PP(\tau \in \hat{\mathcal{C}}_{\alpha} \mid M\le a_n) 
		\nonumber
		\\
		& = \PP\{ | \hat{\tau}-\tau|  \le \hat{q}_{n, 1-\alpha/2} \mid M \le a_n \}
		\nonumber
		\\
		& 
		\le \PP\{ |\hat{\tau}-\tau|  \le \hat{q}_{n, 1-\alpha/2}, \  \hat{q}_{n, 1-\alpha/2} \le \tilde{q}_{n, 1-\alpha/2 + \eta} \mid M \le a_n \}
		+ \PP( \hat{q}_{n, 1-\alpha/2} > \tilde{q}_{n, 1-\alpha/2 + \eta} \mid M \le a_n )
		\nonumber
		\\
		& \le 
		\PP\{ |\hat{\tau}-\tau|  \le \tilde{q}_{n, 1-\alpha/2 + \eta} \mid M \le a_n \}
		+ \PP( \hat{q}_{n, 1-\alpha/2} > \tilde{q}_{n, 1-\alpha/2 + \eta} \mid M \le a_n ). 
	\end{align}
	Below we consider the two terms in \eqref{eq:cover_upper}, separately. 
	
	First, from Theorem \ref{thm:dim_rem}, 
	$
	\PP\{ |\hat{\tau}-\tau|  \le \tilde{q}_{n, 1-\alpha/2 + \eta} \mid M \le a_n \} = 
	\PP(|\theta_n| \le \tilde{q}_{n, 1-\alpha/2 + \eta}) + o(1). 
	$
	Because 
	\begin{align*}
		\tilde{\theta}_n - \theta_n 
		& = 
		\big\{ \sqrt{V_{\tau\tau} (1-R^2_n)+n^{-1} S^2_{\tau \setminus \bs{X}}} 
		- 
		\sqrt{V_{\tau\tau} (1-R^2_n)} \big\} \cdot \varepsilon_0
		= 
		\sqrt{n^{-1} S^2_{\tau \setminus \bs{X}}} \cdot O_{\PP}(1)\\
		& = 	\sqrt{V_{\tau\tau} (1-R^2_n)} \cdot o_{\PP}(1), 
	\end{align*}
	from Lemma \ref{lemma:diminish_inf_norm_two_rv}, we must have $\sup_{c\in \mathbb{R}} |\PP(\theta_n \le c) - \PP(\tilde{\theta}_n\le c)| \rightarrow 0$.  
	This then implies that 
	\begin{align*}
		\PP\{ |\hat{\tau}-\tau|  \le \tilde{q}_{n, 1-\alpha/2 + \eta} \mid M \le a_n \} 
		& = 
		\PP(|\theta_n| \le \tilde{q}_{n, 1-\alpha/2 + \eta}) + o(1)
		= 
		\PP(|\tilde{\theta}_n| \le \tilde{q}_{n, 1-\alpha/2 + \eta}) + o(1)
		\\
		& = 1 - \alpha + 2\eta + o(1). 
	\end{align*}
	
	Second, by the same logic as the proof of  \eqref{eq:conv_quantile_est_tilde} in Theorem \ref{thm:inf}(ii), we can derive that, for any $0< \alpha < \beta < 1$, 
	$
	\PP( \tilde{q}_{n, \beta}\le \hat{q}_{n, \alpha} \mid M\le a_n)
	= 
	\E\{\I(\tilde{q}_{n, \beta}\le \hat{q}_{n, \alpha}) \mid M \le a_n\} 
	\rightarrow 0.
	$
	This immediately implies that 
	\begin{align*}
		\PP( \hat{q}_{n, 1-\alpha/2} > \tilde{q}_{n, 1-\alpha/2 + \eta} \mid M \le a_n )
		& = o(1). 
	\end{align*}
	
	From the above, we can know that 
	$
	\limsup_{n\rightarrow\infty}\PP(\tau \in \hat{\mathcal{C}}_{\alpha} \mid M\le a_n) 
	\le 1 - \alpha + 2\eta.
	$
	Because this inequality holds for any $\eta \in (0, \alpha/2)$, we must have 
	$
	\limsup_{n\rightarrow\infty}\PP(\tau \in \hat{\mathcal{C}}_{\alpha} \mid M\le a_n) 
	\le 1 - \alpha.
	$
	From Theorem \ref{thm:inf}(ii), we then have 
	$
	\lim_{n\rightarrow\infty}\PP(\tau \in \hat{\mathcal{C}}_{\alpha} \mid M\le a_n) 
	= 1 - \alpha.
	$
	Therefore, Theorem \ref{thm:inf}(iii) holds. 
\end{proof}

To prove Theorem \ref{thm:inf_gaussian}, we need the following lemma. 

\begin{lemma}\label{lemma:non_Gaussian_quantile_diminish}
	Let $\varepsilon_0 \sim \mathcal{N}(0,1)$,  and $L_{K_n, a_n}$ be the truncated Gaussian random variables defined as in Section \ref{sec:recent_result},
	where $\{K_n\}$ and $\{a_n\}$ are sequences of positive integers and thresholds, 
	and $\varepsilon_0$ is independent of $L_{K_n, a_n}$ for all $n$. 
	Let $\{A_n\}$,  $\{B_n\}$, $\{\tilde{A}_n\}$ and $\{\tilde{B}_n\}$  be sequences of nonnegative constants, 
	and for each $n$, 
	define $\psi_n = A_n^{1/2} \cdot \varepsilon_0 + B_n^{1/2} \cdot L_{K_n, a_n}$ and $\tilde{\psi}_n = \tilde{A}_n^{1/2} \cdot \varepsilon_0 + \tilde{B}_n^{1/2} \cdot L_{K_n, a_n}$. 
	For each $n$ and $\alpha \in (0,1)$, let $q_{n}(\alpha)$ and $\tilde{q}_{n}(\alpha)$ be the $\alpha$th quantile of $\psi_n$ and $\tilde{\psi}_n$, respectively. 
	If $L_{K_n, a_n} = o_{\PP}(1)$, $\tilde{A}_n - A_n = o(A_n)$ and 
	$\tilde{B}_n - B_n = O(A_n)$, 
	then for any $0 < \alpha < \beta < 1$, as $n \rightarrow \infty$, 
	$\I \{ \tilde{q}_n(\beta) \le q_n(\alpha) \} \rightarrow 0$ 
	and 
	$\I \{ q_n(\beta) \le \tilde{q}_n(\alpha) \} \rightarrow 0$. 
\end{lemma}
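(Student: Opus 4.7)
The plan is to follow the architecture of the proof of Lemma \ref{lemma:non_Gaussian_quantile}, but to exploit the extra hypothesis $L_{K_n, a_n} = o_{\PP}(1)$ in order to accommodate the weakened assumption $\tilde{B}_n - B_n = O(A_n)$ (rather than $o(A_n)$). The key observation is that in Lemma \ref{lemma:non_Gaussian_quantile} one used $L_{K_n, a_n} = O_{\PP}(1)$, so the coefficient difference on the constrained Gaussian term had to be $o(A_n^{1/2})$; here the constrained Gaussian is itself $o_{\PP}(1)$, so an $O(A_n^{1/2})$ coefficient difference is already enough.

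The first step is to decompose
\begin{equation*}
\tilde{\psi}_n - \psi_n = \bigl(\tilde{A}_n^{1/2} - A_n^{1/2}\bigr)\varepsilon_0 + \bigl(\tilde{B}_n^{1/2} - B_n^{1/2}\bigr) L_{K_n, a_n}
\end{equation*}
and apply the elementary bound $|\sqrt{x}-\sqrt{y}| \le \sqrt{|x-y|}$ (valid for $x,y\ge 0$) to each coefficient. From $\tilde{A}_n - A_n = o(A_n)$ we get $|\tilde{A}_n^{1/2} - A_n^{1/2}| = o(A_n^{1/2})$, so the first summand is $A_n^{1/2}\cdot o_{\PP}(1)$ since $\varepsilon_0 = O_{\PP}(1)$. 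From $\tilde{B}_n - B_n = O(A_n)$ we get $|\tilde{B}_n^{1/2} - B_n^{1/2}| = O(A_n^{1/2})$, and combined with $L_{K_n,a_n} = o_{\PP}(1)$ the second summand is also $A_n^{1/2}\cdot o_{\PP}(1)$. Hence $\tilde{\psi}_n - \psi_n = A_n^{1/2}\cdot o_{\PP}(1)$.

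The second step is to invoke Lemma \ref{lemma:diminish_inf_norm_two_rv} with $\beta_n = A_n^{1/2}$ and $\zeta_n = B_n^{1/2} L_{K_n, a_n}$; the latter is independent of $\varepsilon_0$, and $\varepsilon_0$ has bounded density, so the hypotheses are met and we conclude
\begin{equation*}
\sup_{c\in \mathbb{R}} \bigl| \PP(\tilde{\psi}_n \le c) - \PP(\psi_n \le c) \bigr| \converge 0.
\end{equation*}
(Here we may assume $A_n > 0$ for sufficiently large $n$; otherwise the conclusion is trivial by symmetry.) The third step is to apply Lemma \ref{lemma:quant_converg} to the sequences $\{\psi_n\}$ and $\{\tilde{\psi}_n\}$, noting that both are continuous random variables whenever $A_n > 0$ (as convolutions with a Gaussian), which yields $\I\{\tilde{q}_n(\beta) \le q_n(\alpha)\} \rightarrow 0$ for any $0<\alpha<\beta<1$. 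Interchanging the roles of $(\psi_n,\tilde{\psi}_n)$ gives the companion statement $\I\{q_n(\beta) \le \tilde{q}_n(\alpha)\} \rightarrow 0$.

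The only place requiring care is the second summand in the first step: because $\tilde{B}_n - B_n$ is only $O(A_n)$, one cannot conclude the difference of $\psi_n$ and $\tilde{\psi}_n$ is small in isolation; the smallness is genuinely purchased from $L_{K_n, a_n} = o_{\PP}(1)$. Apart from this, the argument is a direct adaptation of Lemma \ref{lemma:non_Gaussian_quantile}, and no further technical obstacle is anticipated.
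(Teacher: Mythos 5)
Your proposal is correct and follows essentially the same route as the paper's own proof: the same decomposition of $\tilde{\psi}_n - \psi_n$, the same elementary bound $|\sqrt{b}-\sqrt{c}|\le\sqrt{|b-c|}$, the same observation that $L_{K_n,a_n}=o_{\PP}(1)$ is what absorbs the merely $O(A_n)$ gap in the $B$-coefficients, and the same final appeal to Lemma \ref{lemma:diminish_inf_norm_two_rv} followed by Lemma \ref{lemma:quant_converg}. The only difference is that you spell out the steps in more detail than the paper does.
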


\begin{proof}[Proof of Lemma \ref{lemma:non_Gaussian_quantile_diminish}]
	Note that $L_{K_n, a_n} = o_{\PP}(1)$.
	Using the inequality that $|\sqrt{b}-\sqrt{c}| \le \sqrt{|b-c|}$ for any $b, c\ge 0$, we then have
	\begin{align*}
		\tilde{\psi}_n - \psi_n & = 
		\big(\tilde{A}_n^{1/2} - A_n^{1/2} \big) \varepsilon_0 + \big(\tilde{B}_n^{1/2} - B_n^{1/2} \big) L_{K_n, a_n}
		= |\tilde{A}_n - A_n|^{1/2} \cdot O_{\PP}(1) + |\tilde{B}_n - B_n|^{1/2} \cdot o_{\PP}(1)
		\\
		& = A_n^{1/2} \cdot o_{\PP}(1). 
	\end{align*}
	From Lemma \ref{lemma:diminish_inf_norm_two_rv}, this then implies that 
	$\sup_{c\in \mathbb{R}} | \PP(\tilde{\psi}_n  \le c) - \PP(\psi_n \le c) | \rightarrow 0$ as $n\rightarrow \infty$. 
	Lemma \ref{lemma:non_Gaussian_quantile_diminish} then follows immediately from Lemma \ref{lemma:quant_converg}. 
\end{proof}

\begin{proof}[\bf Proof of Theorem \ref{thm:inf_gaussian}]
	Following the notation in the proof of Theorem \ref{thm:inf}(ii), we define additionally
	\begin{align*}
		\check{\theta}_n & = \sqrt{\hat{V}_{\tau\tau} (1-\hat{R}^2_n)} \cdot \varepsilon_0 + 0 \cdot L_{K_n, a_n}
		\equiv \check{A}_n^{1/2} \cdot \varepsilon_0 + \check{B}_n^{1/2} \cdot L_{K_n, a_n}, 
	\end{align*}
	and $\check{q}_{n, \alpha} = q_{\alpha}(\check{A}_n, \check{B}_n, K_n, a_n)$. 
	where $\check{A}_n$ nd $\check{B}_n$ denote the squared coefficients of the standard and constrained Gaussian random variables, respectively.
	From Theorem \ref{thm:inf}(i) and Condition \ref{cond:rsup}, 
	$|\check{A}_n - \tilde{A}_n| = |\hat{A}_n - \tilde{A}_n| = o_{\PP}(\tilde{A}_n)$, 
	and, for sufficiently large $n$, 
	$
	|\check{B} - \tilde{B}_n| =  \sqrt{V_{\tau\tau} R_n^2} =  \sqrt{V_{\tau\tau}} \cdot O(1)
	= 
	\sqrt{V_{\tau\tau} (1-R_n^2)}  \cdot O(1) = O(\tilde{A}_n). 
	$
	We can then prove Theorem \ref{thm:inf_gaussian} using almost the same steps as the proof of Theorem \ref{thm:inf}, where we will replace $\hat{q}_{n, \alpha}$ by $\check{q}_{n, \alpha}$ and use Lemma \ref{lemma:non_Gaussian_quantile_diminish} instead of Lemma \ref{lemma:non_Gaussian_quantile}. 
	For conciseness, we omit the detailed proof here. 
\end{proof}

\section{Regularity Conditions and Diagnoses for Rerandomization}\label{sec:regularity_diagnosis}

To prove Proposition \ref{prop:iidrate}, we need the following two lemmas. 

\begin{lemma}%
	\label{lem:lei}
	Let $\bs{W}_1, \bs{W}_2, \ldots$ be i.i.d. random vectors in $\R^{K_n}$ with $\E[\bs{W}_i] = \bs{0}$ and $\cov(\bs{W}_i) = \bs{I}_{K_n}$. Assume that 
	\begin{align*}
		\sup_{\bs{\nu} \in \R^{K_n}: \bs{\nu}^\top \bs{\nu} = 1}\E|\nu^\top \bs{W}_i|^\delta = O(1)
		\ \ \ \text{and} \ \ \ 
		\max_{1\le i \le n} \left|\|\bs{W}_i\|_2^2 - \E\|\bs{W}_i\|_2^2 \right| = O_{\PP}(\omega(n, K_n)), 
	\end{align*}
	for some $\delta > 2$ and some function $\omega(n, K_n)$ increasing in $n$ and $K_n$. 
	If 
	$K_n = O(n^\beta)$ for some $0 < \beta < 1$, then when $n$ is sufficiently large, 
	\begin{align*}
		\|\bs{S}_{\bs{W}}^2 - \bs{I}_{K_n}\|_{\op} = O_{\PP} \left(\frac{\omega(n, K_n)}{n} + \left(\frac{K_n}{n}\right)^{\frac{\delta - 2}{\delta}}\log^4\left(\frac{n}{K_n}\right) + \left(\frac{K_n}{n}\right)^{\frac{\min\{\delta - 2, 2\}}{\min\{\delta, 4\}}}\right)
	\end{align*}
	and 
	\begin{align*}
		\max_{1 \leq i \leq n} \|(\bs{S}_{\bs{W}}^2)^{-1/2} (\bs{W}_i - \bar{\bs{W}})\|_2^2 = O_{\PP} \left(\omega(n, K_n) + \frac{K_n^{\frac{2\delta - 2}{\delta}}}{n^{\frac{\delta - 2}{\delta}}} \log^4\left(\frac{n}{K_n}\right) + n \cdot \left(\frac{K_n}{n}\right)^{\frac{\min\{2 \delta - 2, 6\}}{\min\{\delta, 4\}}} + K_n\right), 
	\end{align*}
	where $\bar{\bs{W}} = n^{-1} \sum_{i=1}^n \bs{W}_i$ and $\bs{S}^2_{\bs{W}} = (n-1)^{-1} \sum_{i=1}^n (\bs{W}_i - \bar{\bs{W}}) (\bs{W}_i - \bar{\bs{W}})^\top$. 
\end{lemma}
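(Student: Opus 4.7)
The plan is to prove both bounds by first controlling the operator-norm deviation of the sample covariance, and then bootstrapping that bound into a per-observation bound via Weyl's inequality. Writing $\bs{S}_{\bs{W}}^2 = \tfrac{n}{n-1}\big(\tfrac{1}{n}\sum_{i=1}^n \bs{W}_i \bs{W}_i^\top - \bar{\bs{W}}\bar{\bs{W}}^\top\big)$, the task splits into bounding $\|n^{-1}\sum_i \bs{W}_i\bs{W}_i^\top - \bs{I}_{K_n}\|_{\op}$ and $\|\bar{\bs{W}}\|_2^2$. For the mean, the moment assumption $\sup_{\|\bs{\nu}\|_2=1}\E|\bs{\nu}^\top \bs{W}_i|^\delta = O(1)$ gives $\E\|\bar{\bs{W}}\|_2^2 = K_n/n$, so $\|\bar{\bs{W}}\|_2^2 = O_{\PP}(K_n/n)$, which is dominated by the stated rate.

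For the main term I would use a truncation-plus-matrix-Bernstein argument, which is the standard route for heavy-tailed covariance concentration (as in Vershynin, Rudelson, and Srivastava--Vershynin, and used explicitly in \citet{LeiD20}). Choose a truncation level $\rho = \rho(n, K_n)$ and split $\bs{W}_i \bs{W}_i^\top = \bs{W}_i \bs{W}_i^\top \I(\|\bs{W}_i\|_2^2 \le \rho) + \bs{W}_i \bs{W}_i^\top \I(\|\bs{W}_i\|_2^2 > \rho)$. The $\delta$-moment hypothesis, applied coordinate-wise via the unit-vector supremum, gives $\E \|\bs{W}_i\|_2^\delta = O(K_n^{\delta/2})$ (for $\delta \le 4$; for $\delta > 4$ one gets a better bound), so by Markov the tail part contributes at most $O(K_n^{\delta/2}/\rho^{\delta/2 - 1})$ in expectation and appears at most $O(n K_n^{\delta/2}/\rho^{\delta/2})$ times, with high probability none when $\rho$ is large enough. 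On the truncated part, matrix Bernstein (or the Rudelson bound with the standard log-in-dimension factor) yields a deviation of order $\sqrt{\rho \log(n/K_n)/n} + \rho \log(n/K_n)/n$. Optimising $\rho$ against the tail correction produces exactly the polynomial rate $(K_n/n)^{(\delta-2)/\delta} \log^4(n/K_n)$ together with the $(K_n/n)^{\min\{\delta-2,2\}/\min\{\delta,4\}}$ term that appears in the conclusion; the $\log^4$ factor comes from the combination of the matrix-Bernstein logarithmic factor and the logarithmic cost of the truncation level. The $\omega(n,K_n)/n$ contribution comes from the $\bar{\bs{W}}\bar{\bs{W}}^\top$ term combined with the factor $n/(n-1) - 1$ acting on $\|n^{-1}\sum \bs{W}_i \bs{W}_i^\top\|_{\op}$, which is of order $(\E \|\bs{W}_i\|_2^2 + \max_i |\|\bs{W}_i\|_2^2 - \E\|\bs{W}_i\|_2^2|)/n = O_{\PP}((K_n + \omega(n,K_n))/n)$.

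For the max norm bound, the first part gives $\|\bs{S}_{\bs{W}}^2 - \bs{I}_{K_n}\|_{\op} = o_{\PP}(1)$ under $K_n = O(n^\beta)$ with $\beta < 1$ (this is where the restriction on $\beta$ enters), so by Weyl's inequality $\|(\bs{S}_{\bs{W}}^2)^{-1}\|_{\op} = O_{\PP}(1)$ for sufficiently large $n$. Then
\begin{align*}
\max_{1 \le i \le n} \|(\bs{S}_{\bs{W}}^2)^{-1/2} (\bs{W}_i - \bar{\bs{W}})\|_2^2 \le \|(\bs{S}_{\bs{W}}^2)^{-1}\|_{\op} \cdot \max_{1 \le i \le n} \|\bs{W}_i - \bar{\bs{W}}\|_2^2,
\end{align*}
and $\|\bs{W}_i - \bar{\bs{W}}\|_2^2 \le 2\|\bs{W}_i\|_2^2 + 2\|\bar{\bs{W}}\|_2^2$. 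The assumption $\max_i |\|\bs{W}_i\|_2^2 - \E\|\bs{W}_i\|_2^2| = O_{\PP}(\omega(n,K_n))$ together with $\E\|\bs{W}_i\|_2^2 = K_n$ gives $\max_i \|\bs{W}_i\|_2^2 = O_{\PP}(\omega(n,K_n) + K_n)$. However, to obtain the extra polynomial term $K_n^{(2\delta-2)/\delta}/n^{(\delta-2)/\delta} \log^4(n/K_n) + n(K_n/n)^{\min\{2\delta-2,6\}/\min\{\delta,4\}}$, the crude triangle-inequality bound must be refined: one multiplies the operator-norm error of $\bs{S}_{\bs{W}}^{-2}$ against the typical $\|\bs{W}_i\|_2^2 \sim K_n$, so the cross term contributes $\|\bs{S}_{\bs{W}}^{-2} - \bs{I}\|_{\op} \cdot \max_i \|\bs{W}_i\|_2^2$, inheriting exactly the rates from the first part multiplied by $K_n$.

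The main obstacle will be the delicate balancing in the truncation argument that yields the precise exponents $(\delta-2)/\delta$ and $\min\{\delta-2,2\}/\min\{\delta,4\}$, and tracking the $\log^4(n/K_n)$ factor through the matrix Bernstein step; the $\min\{\cdot,\cdot\}$ bifurcation reflects that for $\delta \le 4$ the rate is moment-limited, while for $\delta > 4$ the rate saturates at the value one would get from Gaussian-like tails. Because \citet[Lemma H.1]{LeiD20} already carries out precisely this calculation in full, the cleanest proof is simply to invoke their result after verifying the hypotheses (the two displayed assumptions), which I would present as the main step, deferring the truncation/Bernstein calculation to that reference.
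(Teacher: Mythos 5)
Your final recommendation --- verify the two displayed hypotheses and then invoke \citet[][Lemma H.1]{LeiD20} --- is exactly what the paper does, so on that level the approaches coincide. The issue is with the self-contained route you sketch for the \emph{second} display. You claim that the first part gives $\|\bs{S}_{\bs{W}}^2 - \bs{I}_{K_n}\|_{\op} = o_{\PP}(1)$ under $K_n = O(n^\beta)$ with $\beta<1$, and you use this (via Weyl) to get $\|(\bs{S}_{\bs{W}}^2)^{-1}\|_{\op} = O_{\PP}(1)$ before writing $\max_i \|(\bs{S}_{\bs{W}}^2)^{-1/2}(\bs{W}_i-\bar{\bs{W}})\|_2^2 \le \|(\bs{S}_{\bs{W}}^2)^{-1}\|_{\op}\cdot \max_i\|\bs{W}_i-\bar{\bs{W}}\|_2^2$. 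This does not follow from the lemma's hypotheses: the bound on $\|\bs{S}_{\bs{W}}^2 - \bs{I}_{K_n}\|_{\op}$ contains the term $\omega(n,K_n)/n$, and $\omega$ is an arbitrary increasing function that need not be $o(n)$ (in the paper's application $\omega(n,K_n)= n^{2/\delta}K_n$, so $\omega/n = K_n/n^{1-2/\delta}$, which diverges when $\beta > 1-2/\delta$). So within the lemma as stated you cannot conclude that $\bs{S}_{\bs{W}}^2$ has a uniformly bounded inverse, and both your crude bound and your ``cross term'' refinement $\|\bs{S}_{\bs{W}}^{-2}-\bs{I}\|_{\op}\cdot\max_i\|\bs{W}_i\|_2^2$ rest on that control. (The restriction to $o_{\PP}(1)$ regimes is only imposed later, in Corollary 2, not in this lemma.)

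The paper sidesteps this entirely: \citet[][Lemma H.1]{LeiD20} already bounds \emph{both} $\|n^{-1}\tilde{\bs{W}}^\top\tilde{\bs{W}} - \bs{I}_{K_n}\|_{\op}$ and $\max_{1\le i\le n}|H_{ii} - K_n/n|$, where $H_{ii}$ is the $i$th leverage score of the centered design matrix, and the exact algebraic identity
\begin{align*}
\|(\bs{S}_{\bs{W}}^2)^{-1/2}(\bs{W}_i - \bar{\bs{W}})\|_2^2 = (n-1) H_{ii}
\end{align*}
converts the leverage bound directly into the second display via $(n-1)\max_i H_{ii} \le n\max_i|H_{ii}-K_n/n| + K_n$, with no inversion or operator-norm-of-the-inverse argument needed. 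If you intend to cite Lemma H.1 only for the covariance concentration and derive the max-norm bound yourself, you need to either add a hypothesis guaranteeing $\omega(n,K_n)=o(n)$ or restructure the argument; the cleaner fix is to use the leverage-score half of Lemma H.1 as the paper does. Your first-part sketch (splitting off $\bar{\bs{W}}\bar{\bs{W}}^\top$, truncation plus matrix Bernstein) is a reasonable account of what underlies Lemma H.1, though the exponents and the $\log^4$ factor are asserted rather than derived; since you defer that calculation to the reference, as the paper also does, that part is acceptable.
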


\begin{proof}[Proof of Lemma \ref{lem:lei}]
	Lemma \ref{lem:lei} follows immediately from \citet[][Lemma H.1]{LeiD20}. 
	Let $\tilde{\bs{W}} = (\bs{W}_1 - \bar{\bs{W}}, \ldots, \bs{W}_n - \bar{\bs{W}})^\top$, and 
	$\tilde{\bs{H}} = \tilde{\bs{W}} (\tilde{\bs{W}}^\top \tilde{\bs{W}})^{-1} \tilde{\bs{W}}^\top$. 
	We can verify that 
	$\tilde{\bs{W}}^\top \tilde{\bs{W}} = \sum_{i=1}^n (\bs{W}_i - \bar{\bs{W}}) (\bs{W}_i - \bar{\bs{W}})^\top = (n-1) \bs{S}^2_{\bs{W}}$, and 
	the $i$th diagonal element of $\tilde{\bs{H}}$ has the following equivalent forms:
	\begin{align*}
		H_{ii} & = (\bs{W}_i - \bar{\bs{W}})^\top (\tilde{\bs{W}}^\top \tilde{\bs{W}})^{-1} (\bs{W}_i - \bar{\bs{W}}) = 
		(n-1)^{-1} (\bs{W}_i - \bar{\bs{W}})^\top (\bs{S}^2_{\bs{W}})^{-1} (\bs{W}_i - \bar{\bs{W}}) 
		\\
		& = (n-1)^{-1}  \|(\bs{S}_{\bs{W}}^2)^{-1/2} (\bs{W}_i - \bar{\bs{W}})\|_2^2. 
	\end{align*}
	From \citet[][Lemma H.1]{LeiD20}, we can know that 
	\begin{align*}
		\left\| n^{-1} \tilde{\bs{W}}^\top \tilde{\bs{W}}  - \bs{I}_{K_n} \right\|_{\op}
		= 
		O_{\PP} \left(\frac{\omega(n, K_n)}{n} + \left(\frac{K_n}{n}\right)^{\frac{\delta - 2}{\delta}}\log^4\left(\frac{n}{K_n}\right) + \left(\frac{K_n}{n}\right)^{\frac{\min\{\delta - 2, 2\}}{\min\{\delta, 4\}}}\right), 
	\end{align*}
	and 
	\begin{align*}
		\max_{1\le i \le n} |H_{ii} - K_n/n| & = O_{\PP} 
		\left( 
		\frac{\omega(n, K_n)}{n} + \left( \frac{K_n}{n} \right)^{\frac{2\delta-2}{\delta}} \log^4\left( \frac{n}{K_n}  \right)	+ \left( \frac{K_n}{n} \right)^{\frac{\min\{2\delta-2, 6\}}{\min\{\delta, 4\}\}}}	
		\right). 
	\end{align*}
	These  immediately imply that 
	\begin{align*}
		\|\bs{S}_{\bs{W}}^2 - \bs{I}_{K_n}\|_{\op} & = 
		\left\| \frac{n}{n-1}\left( n^{-1} \tilde{\bs{W}}^\top \tilde{\bs{W}} - \bs{I}_{K_n} \right) + \frac{1}{n-1} \bs{I}_{K_n} \right\|_{\op}
		\le \frac{n}{n-1} \left\|n^{-1} \tilde{\bs{W}}^\top \tilde{\bs{W}} - \bs{I}_{K_n}  \right\|_{\op} + \frac{1}{n-1}
		\\
		& \le 2 \left\|n^{-1} \tilde{\bs{W}}^\top \tilde{\bs{W}} - \bs{I}_{K_n}  \right\|_{\op} + \frac{2}{n}
		\\
		& = O_{\PP} \left(\frac{\omega(n, K_n)}{n} + \left(\frac{K_n}{n}\right)^{\frac{\delta - 2}{\delta}}\log^4\left(\frac{n}{K_n}\right) + \left(\frac{K_n}{n}\right)^{\frac{\min\{\delta - 2, 2\}}{\min\{\delta, 4\}}}\right), 
	\end{align*}
	where the last equality holds because $(K_n/n)^{\min\{\delta - 2, 2\}/\min\{\delta, 4\}} \ge (K_n/n)^{1/2} \ge 1/n$ when $n$ is sufficiently large, 
	and 
	\begin{align*}
		\max_{1 \leq i \leq n} \|(\bs{S}_{\bs{W}}^2)^{-1/2} (\bs{W}_i - \bar{\bs{W}})\|_2^2  & = 
		(n-1)\max_{1\le i \le n} H_{ii} \le n \max_{1\le i \le n} |H_{ii} - K_n/n| + K_n
		\\
		& = 
		O_{\PP} 
		\left( 
		\omega(n, K_n) + \frac{K_n^{\frac{2\delta-2}{\delta}} }{n^{\frac{\delta-2}{\delta}} }  \log^4\left( \frac{n}{K_n}  \right)	+ n \left( \frac{K_n}{n} \right)^{\frac{\min\{2\delta-2, 6\}}{\min\{\delta, 4\}\}}} + K_n	
		\right).
	\end{align*}
	Therefore, Lemma  \ref{lem:lei} holds. 
\end{proof}

\begin{lemma}\label{lem:max}
	Assume that $\bs{W}_1, \bs{W}_2, \ldots$ are i.i.d. random vectors in $\R^{K_n}$ with $\max_{1 \leq j \leq K_n} \E |W_{ij}|^\delta \le M$ for some absolute constants $M<\infty$ and $\delta > 2$, where $W_{ij}$ is the $j$th coordinate of $\bs{W}_i$. Then 
	\begin{align*}
		\max_{1\le i \le n} \left|\|\bs{W}_i\|_2^2 - \E\|\bs{W}_i\|_2^2 \right| = O_{\PP}(n^{2/\delta} K_n).  
	\end{align*}
\end{lemma}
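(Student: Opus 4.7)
\textbf{Proof plan for Lemma \ref{lem:max}.} The plan is to reduce the maximum over $i$ to a tail bound for a single index via a union bound, and then control the one-sample tail by Markov's inequality applied at the moment of order $\delta/2$, which is the highest moment we can guarantee for $W_{ij}^2$ from the hypothesis on $\E|W_{ij}|^\delta$.

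First, I would write
\begin{equation*}
\|\bs{W}_i\|_2^2 - \E\|\bs{W}_i\|_2^2 = \sum_{j=1}^{K_n}\bigl(W_{ij}^2 - \E W_{ij}^2\bigr),
\end{equation*}
and exploit that $\delta/2 \ge 1$ so that Minkowski's inequality applies in $L_{\delta/2}$. Since $\|W_{ij}^2 - \E W_{ij}^2\|_{L_{\delta/2}} \le 2\|W_{ij}^2\|_{L_{\delta/2}} = 2(\E|W_{ij}|^\delta)^{2/\delta} \le 2M^{2/\delta}$, Minkowski gives
\begin{equation*}
\bigl\|\|\bs{W}_i\|_2^2 - \E\|\bs{W}_i\|_2^2\bigr\|_{L_{\delta/2}} \le 2M^{2/\delta}K_n,
\end{equation*}
and hence $\E\bigl|\|\bs{W}_i\|_2^2 - \E\|\bs{W}_i\|_2^2\bigr|^{\delta/2} \le 2^{\delta/2}M\,K_n^{\delta/2}$.

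Next, I would apply Markov's inequality at the power $\delta/2$: for any $t>0$,
\begin{equation*}
\PP\bigl(\bigl|\|\bs{W}_1\|_2^2 - \E\|\bs{W}_1\|_2^2\bigr| > t\bigr)\le \frac{2^{\delta/2}M\,K_n^{\delta/2}}{t^{\delta/2}}.
\end{equation*}
Then by the union bound over the $n$ i.i.d.\ copies,
\begin{equation*}
\PP\Bigl(\max_{1\le i\le n}\bigl|\|\bs{W}_i\|_2^2 - \E\|\bs{W}_i\|_2^2\bigr| > t\Bigr) \le \frac{2^{\delta/2}M\,nK_n^{\delta/2}}{t^{\delta/2}}.
\end{equation*}
Setting $t = c\,n^{2/\delta}K_n$ balances $t^{\delta/2}$ against $nK_n^{\delta/2}$ and yields $2^{\delta/2}M/c^{\delta/2}$ on the right-hand side, which can be made arbitrarily small by choosing $c$ large. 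This is precisely the definition of $O_{\PP}(n^{2/\delta}K_n)$ and completes the argument.

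There is no real obstacle; the only point requiring care is to work with the moment order $\delta/2$ throughout (rather than, say, $\delta$ or $2$), because this is the largest exponent for which Minkowski's inequality applies directly to $\sum_j (W_{ij}^2 - \E W_{ij}^2)$ given only $\delta$-th moments on the coordinates $W_{ij}$, and matching powers through Markov and the union bound is what produces the exact rate $n^{2/\delta}K_n$.
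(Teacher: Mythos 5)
Your proof is correct and follows essentially the same route as the paper: both reduce the problem to the bound $\E\bigl|\|\bs{W}_i\|_2^2 - \E\|\bs{W}_i\|_2^2\bigr|^{\delta/2} \le 2^{\delta/2} M K_n^{\delta/2}$ (you via Minkowski in $L_{\delta/2}$, the paper via two applications of the power-mean inequality, which give the identical constant) and then conclude by Markov at order $\delta/2$ combined with a union bound over $i$ (the paper equivalently bounds $\E\max_i$ by the sum of expectations before applying Markov). The differences are purely presentational.
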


\begin{proof}[Proof of Lemma \ref{lem:max}]
	By H\"older's inequality, for $1\le i \le n$ and $1\le j \le K_n$, $\{ \E(W_{ij}^2) \} ^{\delta/2} \le \E |W_{ij}|^{\delta}$, 
	\begin{align*}
		\frac{1}{K_n} \left|\|\bs{W}_i\|_2^2 - \E\|\bs{W}_i\|_2^2 \right| & 
		\le 
		\frac{1}{K_n}
		\sum_{j=1}^{K_n} \left|   W_{ij}^2 - \E(W_{ij}^2) \right|
		\le 
		\left(\frac{1}{K_n} \sum_{j=1}^{K_n} \left|   W_{ij}^2 - \E(W_{ij}^2) \right|^{\delta/2} \right)^{2/\delta}, 
	\end{align*}
	and 
	\begin{align*}
		\frac{1}{2}\left|   W_{ij}^2 - \E(W_{ij}^2) \right| 
		\le 
		\frac{1}{2} \left( W_{ij}^2  + \E(W_{ij}^2) \right)
		\le 
		\left[
		\frac{1}{2} \left\{ |W_{ij}|^{\delta}  + \left( \E(W_{ij}^2) \right) ^{\delta/2} \right\}
		\right]^{2/\delta}. 
	\end{align*}
	These imply that 
	\begin{align*}
		& \quad \ \E\left\{\left|\|\bs{W}_i\|_2^2 - \E\|\bs{W}_i\|_2^2\right|^{\delta/2} \right\} 
		\\
		& = K_n^{\delta/2}\E\left\{\frac{1}{K_n^{\delta/2}}\left|\|\bs{W}_i\|_2^2 - \E\|\bs{W}_i\|_2^2\right|^{\delta/2} \right\} 
		\le 
		K_n^{\delta/2} \E \left(\frac{1}{K_n} \sum_{j=1}^{K_n} \left|   W_{ij}^2 - \E(W_{ij}^2) \right|^{\delta/2} \right)
		\\
		& = K_n^{\delta/2-1} 2^{\delta/2} \sum_{j=1}^{K_n}  \E \left( \frac{1}{2^{\delta/2}}\left|   W_{ij}^2 - \E(W_{ij}^2) \right|^{\delta/2} \right) 
		\le  K_n^{\delta/2-1} 2^{\delta/2} \sum_{j=1}^{K_n} \E \left[
		\frac{1}{2} \left\{ |W_{ij}|^{\delta}  + \left( \E(W_{ij}^2) \right) ^{\delta/2} \right\}
		\right]\\
		& = K_n^{\delta/2-1} 2^{\delta/2-1} \sum_{j=1}^{K_n}  
		\left\{ \E |W_{ij}|^{\delta}  + \left( \E(W_{ij}^2) \right) ^{\delta/2} \right\}
		\le K_n^{\delta/2-1} 2^{\delta/2} \sum_{j=1}^{K_n}  
		\E |W_{ij}|^{\delta} 
		\\
		& 
		\le 2^{\delta/2} K_n^{\delta/2} M. 
	\end{align*}
	Consequently, 
	\begin{align*}
		\E\left\{ \max_{1\le i \le n} \left|\|\bs{W}_i\|_2^2 - \E\|\bs{W}_i\|_2^2\right|^{\delta/2} \right\} 
		\le 
		\sum_{i=1}^n 
		\E\left\{\left|\|\bs{W}_i\|_2^2 - \E\|\bs{W}_i\|_2^2\right|^{\delta/2} \right\} 
		\le 2^{\delta/2} M K_n^{\delta/2}  n. 
	\end{align*}
	By the Markov's inequality, 
	$
	\max_{1\le i \le n} \left|\|\bs{W}_i\|_2^2 - \E\|\bs{W}_i\|_2^2\right| = O_{\PP} (K_n  n^{2/\delta}), 
	$
	i.e., Lemma \ref{lem:max} holds. 
\end{proof}

\begin{proof}[\bf Proof of Proposition~\ref{prop:iidrate}]
	By the same logic as Lemma \ref{lemma:gamma_srs_bound}, we can bound $\gamma_n$ by 
	\begin{align}\label{eq:gamma_nub}
		\gamma_n & \equiv 
		\frac{(K_n+1)^{1/4}}{\sqrt{n r_1r_0}} \frac{1}{n} \sum_{i=1}^n \left\| \left(\bs{S}_{\bs{u}}^2\right)^{-1/2} (\bs{u}_i-\bar{\bs{u}}) \right\|_2^3 \nonumber 
		\\
		& \le 
		\frac{(K_n+1)^{1/4}}{\sqrt{n r_1r_0}} \frac{1}{n} \sum_{i=1}^n \left\| \left(\bs{S}_{\bs{u}}^2\right)^{-1/2} (\bs{u}_i-\bar{\bs{u}}) \right\|_2^2 \cdot \max_{1\le i \le n} \left\| \left(\bs{S}_{\bs{u}}^2\right)^{-1/2} (\bs{u}_i-\bar{\bs{u}}) \right\|_2 \nonumber\\
		& = 
		\frac{(K_n+1)^{1/4}}{\sqrt{n r_1r_0}} \frac{(n-1)(K_n+1)}{n} \cdot \max_{1\le i \le n} \left\| \left(\bs{S}_{\bs{u}}^2\right)^{-1/2} (\bs{u}_i-\bar{\bs{u}}) \right\|_2 \nonumber \\
		& = \frac{(K_n+1)^{5/4}}{\sqrt{n r_1r_0}} \frac{n-1}{n} \cdot \max_{1\le i \le n} \left\| \left(\bs{S}_{\bs{\xi}}^2\right)^{-1/2} (\bs{\xi}_i-\bar{\bs{\xi}}) \right\|_2, 
	\end{align}
	where the last equality holds because the quantity $\| \left(\bs{S}_{\bs{u}}^2\right)^{-1/2} (\bs{u}_i-\bar{\bs{u}}) \|_2$ is invariant under a non-singular linear transformation of $\bs{u}_i$'s. 
	Under Condition \ref{cond:moment} and the fact that $K_n + 1= O(n^{\beta})$ for some $\beta\in (0,1)$, from Lemmas \ref{lem:lei} and \ref{lem:max}, 
	we can know that %
	\begin{align*}
		& \quad \ \max_{1 \leq i \leq n} \|(\bs{S}_{\bs{\xi}}^2)^{-1/2} (\bs{\xi}_i - \bar{\bs{\xi}})\|_2^2 
		\\
		& = O_{\PP} \left(n^{2/\delta} (K_n+1) + \frac{(K_n+1)^{\frac{2\delta - 2}{\delta}}}{n^{\frac{\delta - 2}{\delta}}} \log^4\left(\frac{n}{K_n+1}\right) + n \cdot \left(\frac{K_n+1}{n}\right)^{\frac{\min\{2 \delta - 2, 6\}}{\min\{\delta, 4\}}} + K_n+1\right). 
	\end{align*}
	Note that as $n\rightarrow \infty$, $(K_n+1)/n = o(1)$, 
	\begin{align*}
		\frac{1}{n^{2/\delta} (K_n+1)}
		\frac{(K_n+1)^{\frac{2\delta - 2}{\delta}}}{n^{\frac{\delta - 2}{\delta}}} \log^4\left(\frac{n}{K_n+1}\right)
		& = 
		\frac{1}{(K_n+1)^{2/\delta}}
		\frac{K_n+1}{n} \log^4\left(\frac{n}{K_n+1}\right) = o(1), 
	\end{align*}
	and 
	\begin{align}\label{eq:bound_xi}
		& \quad \ \frac{1}{n^{2/\delta} (K_n+1)} n \cdot \left(\frac{K_n+1}{n}\right)^{\frac{\min\{2 \delta - 2, 6\}}{\min\{\delta, 4\}}}
		\nonumber
		\\
		& = 
		\I(\delta \le 4) \frac{1}{n^{2/\delta} (K_n+1)} n \cdot \left(\frac{K_n+1}{n}\right)^{2 - 2/\delta} 
		+ 
		\I(\delta > 4 ) \frac{1}{n^{2/\delta} (K_n+1)} n \cdot \left(\frac{K_n+1}{n}\right)^{3/2}
		\nonumber
		\\
		& = 
		\I(\delta \le 4) \frac{(K_n+1)^{1-2/\delta}}{n} + \I(\delta > 4 ) \frac{(K_n+1)^{1/2}}{n^{2/\delta+1/2}} = o(1).
	\end{align}
	Thus, we must have 
	\begin{align*}
		\max_{1 \leq i \leq n} \|(\bs{S}_{\bs{\xi}}^2)^{-1/2} (\bs{\xi}_i - \bar{\bs{\xi}})\|_2^2 = O_{\PP} \left( n^{2/\delta} (K_n+1) \right).  
	\end{align*}
	From \eqref{eq:gamma_nub}, this then implies that 
	\begin{align*}
		\gamma_n = \frac{(K_n+1)^{5/4}}{\sqrt{n r_1r_0}} \frac{n-1}{n} \cdot O_{\PP} \left( \sqrt{n^{2/\delta} (K_n+1)} \right)
		= 
		O_{\PP} \left( 
		\frac{1}{\sqrt{r_1r_0}} \frac{(K_n+1)^{7/4}}{n^{1/2-1/\delta}} \right).
	\end{align*}
	Therefore, Proposition \ref{prop:iidrate} holds. 
\end{proof}

\begin{proof}[\bf Proof of Corollary \ref{cor:reg_cond}(i)]
	When Condition \ref{cond:moment} holds,  $r_z^{-1} = O(1)$ and $K_n = o(n^{2/7-4/(7\delta)})$, 
	from Proposition \ref{prop:iidrate}, we have 
	\begin{align*}
		\gamma_n 
		= 
		O_{\PP} \left( 
		\frac{1}{\sqrt{r_1r_0}} \frac{(K_n+1)^{7/4}}{n^{1/2-1/\delta}} \right)
		= 
		\frac{\{n^{2/7-4/(7\delta)}\}^{7/4}}{n^{1/2-1/\delta}} \cdot
		o_{\PP} (1)
		= o_{\PP} (1). 
	\end{align*}
	Therefore, Corollary \ref{cor:reg_cond}(i) holds. 
\end{proof}

To prove Corollary \ref{cor:reg_cond}(ii), we need the following two lemmas.

\begin{lemma}\label{lemma:R2_equiv}
	The squared multiple correlation $R_n^2$ defined as in \eqref{eq:R2} can be equivalently written as 
	\begin{align*}
		R_n^2 = 
		\frac{\bs{S}_{r_0Y(1) + r_1 Y(0), \bs{X}} (\bs{S}_{\bs{X}}^2)^{-1} \bs{S}_{\bs{X}, r_0Y(1) + r_1 Y(0)}}{S^2_{r_0Y(1)+r_1Y(0)}}, 
	\end{align*}
	where $S^2_{r_0Y(1)+r_1Y(0)}$ denotes the finite population variance of $r_0Y(1)+r_1Y(0)$ and $\bs{S}_{r_0Y(1) + r_1 Y(0), \bs{X}}$ denotes the finite population covariance between $r_0Y(1)+r_1Y(0)$  and $\bs{X}$. 
\end{lemma}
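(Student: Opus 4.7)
The plan is to reduce Lemma~\ref{lemma:R2_equiv} to a direct algebraic identification using the structure of $\bs{V}$ already established in Section~\ref{sec:berry_rate}. Let me set $\wy_i = r_0 Y_i(1) + r_1 Y_i(0)$, so that $\bs{u}_i = (\wy_i, \bs{X}_i^\top)^\top$. The key observation, which appears in the proof of Theorem~\ref{thm:berry_esseen_clt} via equation~\eqref{eq:did_srs_u}, is that $\bs{V} = (nr_1r_0)^{-1} \bs{S}^2_{\bs{u}}$. Reading off the blocks of this identity will immediately give the three equalities I need.

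First, I would start from the middle expression in \eqref{eq:R2}, namely $R_n^2 = V_{\tau\bs{x}} \bs{V}_{\bs{xx}}^{-1} \bs{V}_{\bs{x}\tau}/V_{\tau\tau}$, rather than the right-hand expression involving the various $S^2_{z\mid \bs{X}}$ terms, since the former is cleaner. From the block structure of $\bs{V}$ shown in \eqref{eq:V}, I would verify three elementary identities:
\begin{align*}
V_{\tau\tau} &= (nr_1r_0)^{-1}\, S^2_{\wy},\\
\bs{V}_{\tau\bs{x}} &= n_1^{-1}\bs{S}_{1,\bs{X}} + n_0^{-1}\bs{S}_{0,\bs{X}} = (nr_1r_0)^{-1}\bigl(r_0\bs{S}_{1,\bs{X}} + r_1\bs{S}_{0,\bs{X}}\bigr) = (nr_1r_0)^{-1}\bs{S}_{\wy,\bs{X}},\\
\bs{V}_{\bs{xx}} &= (nr_1r_0)^{-1}\bs{S}^2_{\bs{X}}.
\end{align*}
The second identity follows by combining the common denominator, and the third is immediate from the definition. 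The first identity requires a short expansion: writing $\wy_i - \bar{\wy} = r_0(Y_i(1)-\bar{Y}(1)) + r_1(Y_i(0)-\bar{Y}(0))$ and squaring gives $S^2_{\wy} = r_0^2 S_1^2 + r_1^2 S_0^2 + 2r_0 r_1 S_{10}$, where $S_{10}$ is the finite population covariance of the two potential outcomes. Combined with the standard identity $S^2_\tau = S^2_1 + S^2_0 - 2 S_{10}$, a line of algebra confirms $n_1^{-1}S^2_1 + n_0^{-1}S^2_0 - n^{-1}S^2_\tau = (nr_1r_0)^{-1}S^2_{\wy}$.

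Next, substituting the three identities into $V_{\tau\bs{x}} \bs{V}_{\bs{xx}}^{-1} \bs{V}_{\bs{x}\tau}/V_{\tau\tau}$, the factors of $(nr_1r_0)^{-1}$ cancel (two from the numerator, one inverted from $\bs{V}_{\bs{xx}}^{-1}$, and one from $V_{\tau\tau}^{-1}$ in the denominator), yielding exactly
\[
R_n^2 = \frac{\bs{S}_{\wy,\bs{X}}\, (\bs{S}^2_{\bs{X}})^{-1}\, \bs{S}_{\bs{X},\wy}}{S^2_{\wy}},
\]
which is the asserted formula.

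There is no real obstacle here; the whole argument is a bookkeeping exercise that leverages the linearity of $\wy$ in the two potential outcomes. The only mild subtlety is making sure the three block identities are stated cleanly, since the original expression for $R_n^2$ in \eqref{eq:R2} is written in its less transparent form involving three finite-population quantities $S^2_{1\mid\bs{X}}, S^2_{0\mid\bs{X}}, S^2_{\tau\mid\bs{X}}$; working from the middle expression $V_{\tau\bs{x}} \bs{V}_{\bs{xx}}^{-1} \bs{V}_{\bs{x}\tau}/V_{\tau\tau}$ bypasses this altogether.
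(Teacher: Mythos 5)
Your proof is correct and follows essentially the same algebraic route as the paper's: both hinge on the expansion $S^2_{r_0Y(1)+r_1Y(0)} = r_0^2 S_1^2 + r_1^2 S_0^2 + 2r_1r_0 S_{10} = nr_1r_0 V_{\tau\tau}$ together with the linearity of the covariance with $\bs{X}$. The only cosmetic difference is that you work from the middle expression $V_{\tau\bs{x}}\bs{V}_{\bs{xx}}^{-1}\bs{V}_{\bs{x}\tau}/V_{\tau\tau}$ in \eqref{eq:R2} while the paper applies the same computation to the rightmost expression involving $S^2_{z\mid\bs{X}}$; both are fine.
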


\begin{proof}[Proof of Lemma \ref{lemma:R2_equiv}]
	Let $S_{10}$ be the finite population covariance between $Y(1)$ and $Y(0)$. 
	By some algebra, $S^2_{r_0Y(1)+r_1Y(0)}$ has the following equivalent forms:
	\begin{align}\label{eq:V_tautau_equiv}
		S^2_{r_0Y(1)+r_1Y(0)}
		& = 
		r_0^2 S^2_1 + r_1^2 S_0^2 + 2 r_1 r_0 S_{10}
		= 
		(r_0^2+r_1r_0) S^2_1 + (r_1^2+r_1r_0) S_0^2 - r_1r_0(S^2_1 + S_0^2 - 2S_{10})
		\nonumber
		\\
		& = r_0  S^2_1 + r_1 S_0^2 - r_1 r_0 S^2_{\tau} =
		nr_1r_0 (n_1^{-1} S^2_1 + n_0^{-1}  S_0^2 - n^{-1} S^2_{\tau}   )
		= nr_1r_0 V_{\tau\tau}. 
	\end{align}
	By the same logic, we have 
	\begin{align*}
		\bs{S}_{r_0Y(1) + r_1 Y(0), \bs{X}} (\bs{S}_{\bs{X}}^2)^{-1} \bs{S}_{\bs{X}, r_0Y(1) + r_1 Y(0)}
		& = 
		nr_1r_0 (n_1^{-1} S^2_{1\mid \bs{X}} + n_0^{-1}  S_{0\mid \bs{X}}^2 - n^{-1} S^2_{\tau\mid \bs{X}}   ). 
	\end{align*}
	Lemma \ref{lemma:R2_equiv} then follows from the definition in \eqref{eq:R2}. 
\end{proof}

\begin{lemma}\label{lemma:matrix_inv_converge}
	For any sequence of positive integers $\{K_n\}$ and any sequence of matrices $\bs{A}_n \in \mathbb{R}^{K_n}$,  
	if $\|\bs{A}_n - \bs{I}_{K_n}\|_{\op} = o_{\PP}(1)$, then $\|\bs{A}_n^{-1}- \bs{I}_{K_n}\|_{\op} = o_{\PP}(1)$.
\end{lemma}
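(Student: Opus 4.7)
The plan is to use the standard identity $\bs{A}_n^{-1} - \bs{I}_{K_n} = \bs{A}_n^{-1}(\bs{I}_{K_n} - \bs{A}_n)$ combined with a Neumann-series style bound on the operator norm of the inverse. First, I would fix an arbitrary $\eta \in (0, 1/2)$ and note that by the assumption $\|\bs{A}_n - \bs{I}_{K_n}\|_{\op} = o_{\PP}(1)$, the event $\mathcal{E}_n \equiv \{\|\bs{A}_n - \bs{I}_{K_n}\|_{\op} \le \eta\}$ satisfies $\PP(\mathcal{E}_n) \to 1$ as $n \to \infty$.

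On the event $\mathcal{E}_n$, the matrix $\bs{A}_n$ is invertible: for every unit vector $\bs{v} \in \mathbb{R}^{K_n}$, the reverse triangle inequality gives $\|\bs{A}_n \bs{v}\|_2 \ge \|\bs{v}\|_2 - \|(\bs{A}_n - \bs{I}_{K_n})\bs{v}\|_2 \ge 1 - \eta > 0$. The same bound yields $\|\bs{A}_n^{-1}\|_{\op} \le (1-\eta)^{-1}$. Combining this with the algebraic identity
\begin{align*}
\bs{A}_n^{-1} - \bs{I}_{K_n} = \bs{A}_n^{-1}(\bs{I}_{K_n} - \bs{A}_n),
\end{align*}
and using submultiplicativity of the operator norm, we obtain on $\mathcal{E}_n$:
\begin{align*}
\|\bs{A}_n^{-1} - \bs{I}_{K_n}\|_{\op} \le \|\bs{A}_n^{-1}\|_{\op} \cdot \|\bs{A}_n - \bs{I}_{K_n}\|_{\op} \le \frac{\|\bs{A}_n - \bs{I}_{K_n}\|_{\op}}{1 - \eta}.
\end{align*}

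Finally, for any fixed $\varepsilon > 0$, I would write
\begin{align*}
\PP\big(\|\bs{A}_n^{-1} - \bs{I}_{K_n}\|_{\op} > \varepsilon\big)
\le \PP(\mathcal{E}_n^{\complement}) + \PP\left(\|\bs{A}_n - \bs{I}_{K_n}\|_{\op} > (1-\eta)\varepsilon\right),
\end{align*}
and both terms tend to zero by the hypothesis. Since $\varepsilon$ was arbitrary, this shows $\|\bs{A}_n^{-1} - \bs{I}_{K_n}\|_{\op} = o_{\PP}(1)$. There is no serious obstacle here; the only subtlety is making sure the bound $\|\bs{A}_n^{-1}\|_{\op} \le (1-\eta)^{-1}$ is justified without assuming symmetry or any spectral structure on $\bs{A}_n$, which the operator-norm reverse-triangle argument above handles cleanly.
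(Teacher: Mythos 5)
Your proof is correct and follows essentially the same route as the paper's: both rest on the identity $\bs{A}_n^{-1}-\bs{I}_{K_n}=\bs{A}_n^{-1}(\bs{I}_{K_n}-\bs{A}_n)$, submultiplicativity of the operator norm, and a bound of roughly $(1-\eta)^{-1}$ on $\|\bs{A}_n^{-1}\|_{\op}$ when $\|\bs{A}_n-\bs{I}_{K_n}\|_{\op}\le\eta<1$. The only (minor) difference is that the paper obtains the bound on $\|\bs{A}_n^{-1}\|_{\op}$ via a self-referencing inequality solved for $\|\bs{A}_n^{-1}-\bs{I}_{K_n}\|_{\op}$, whereas you lower-bound the smallest singular value directly, which has the small advantage of also justifying invertibility of $\bs{A}_n$ on the high-probability event.
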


\begin{proof}[Proof of Lemma \ref{lemma:matrix_inv_converge}]
	Note that 
	\begin{align*}
		\|\bs{A}_n^{-1} - \bs{I}_{K_n}\|_{\op}
		& = 
		\|\bs{A}_n^{-1} (\bs{A}_n - \bs{I}_{K_n})\|_{\op}
		\le 
		\| \bs{A}_n^{-1} \|_{\op} \cdot \| \bs{A}_n - \bs{I}_{K_n} \|_{\op}
		\\
		& = 
		\| \bs{I}_{K_n} + (\bs{A}_n^{-1} - \bs{I}_{K_n}) \|_{\op} \cdot \| \bs{A}_n - \bs{I}_{K_n} \|_{\op}
		\\
		& \le 
		\big( \| \bs{I}_{K_n} \|_{\op} + \| \bs{A}_n^{-1} - \bs{I}_{K_n} \|_{\op} \big) \cdot \| \bs{A}_n - \bs{I}_{K_n} \|_{\op}
		\\
		& \le \| \bs{A}_n - \bs{I}_{K_n} \|_{\op} + \| \bs{A}_n - \bs{I}_{K_n} \|_{\op} \cdot \| \bs{A}_n^{-1} - \bs{I}_{K_n} \|_{\op}. 
	\end{align*}
	Thus, when $\| \bs{A}_n - \bs{I}_{K_n} \|_{\op} < 1$, we have 
	$
	\|\bs{A}_n^{-1} - \bs{I}_{K_n}\|_{\op} \le 
	\| \bs{A}_n - \bs{I}_{K_n} \|_{\op}/(1-\| \bs{A}_n - \bs{I}_{K_n} \|_{\op}). 
	$
	By the property of convergence in probability \citep[e.g.,][Theorem 2.3.2]{D19}, we can immediately derive Lemma \ref{lemma:matrix_inv_converge}. 
\end{proof}

\begin{proof}[\bf Proof of Corollary \ref{cor:reg_cond}(ii)]
	We first prove that $
	\|\bs{S}_{\bs{\xi}}^2 - \bs{I}_{K_n+1}\|_\op = o_{\PP}(1). 
	$
	Under Condition \ref{cond:moment}, from Lemmas \ref{lem:lei} and \ref{lem:max}, 
	\begin{align*}
		\|\bs{S}_{\bs{\xi}}^2 - \bs{I}_{K_n+1}\|_{\op} = O_{\PP} \left(\frac{n^{2/\delta} (K_n+1)}{n} + \left(\frac{K_n+1}{n}\right)^{\frac{\delta - 2}{\delta}}\log^4\left(\frac{n}{K_n+1}\right) + \left(\frac{K_n+1}{n}\right)^{\frac{\min\{\delta - 2, 2\}}{\min\{\delta, 4\}}}\right). 
	\end{align*}
	We can verify that $2/7 - 4/(7\delta) < 1 - 2/\delta$ for $\delta > 2$.  Thus, we must have $(K_n+1)/n^{1-2/\delta} = o(1)$, which further implies that 
	$
	\|\bs{S}_{\bs{\xi}}^2 - \bs{I}_{K_n+1}\|_\op = o_{\PP}(1). 
	$

	We then prove that $R^2_n - R^2_{\sup, n} = o_{\PP}(1)$. 
	By definition, 
	\begin{align*}
		\begin{pmatrix}
			r_0Y_i(1) + r_1 Y_i(0) - \E\{r_0Y_i(1) + r_1 Y_i(0) \}\\
			\bs{X}_i - \E (\bs{X}_i)
		\end{pmatrix}
		& = \bs{u}_i
		= 
		\cov (\bs{u})^{1/2} \bs{\xi}_i 
		= 
		\begin{pmatrix}
			(1, \bs{0}_{K_n}^\top)  \cov (\bs{u})^{1/2} \bs{\xi}_i \\
			(\bs{0}_{K_n}, \bs{I}_{K_n}) \cov (\bs{u})^{1/2} \bs{\xi}_i 
		\end{pmatrix}
		\\
		& \equiv 
		\begin{pmatrix}
			\bs{a}^\top \bs{\xi}_i \\
			\bs{B}^\top \bs{\xi}_i 
		\end{pmatrix}, 
	\end{align*}
	where $\bs{a}^\top = (1, \bs{0}_{K_n}^\top)  \cov (\bs{u})^{1/2}  \in \mathbb{R}^{1 \times (K_n+1)}$ and $\bs{B}^\top = (\bs{0}_{K_n}, \bs{I}_{K_n}) \cov (\bs{u})^{1/2} \in \mathbb{R}^{K_n \times (K_n+1)}$. 
	We can then verify that 
	\begin{align*}
		\var\{ r_0Y(1) + r_1 Y(0) \} = \bs{a}^\top \bs{a}, \ \ \
		\var(\bs{X}_i) = \bs{B}^\top \bs{B},  \ \ \ 
		\cov\{r_0Y(1) + r_1 Y(0), \bs{X}\}= \bs{a}^\top \bs{B}. 
	\end{align*}
	Consequently, the super population squared multiple correlation between $r_0Y_i(1) + r_1 Y_i(0)$ and $\bs{X}_i$ has the following equivalent forms: 
	\begin{align*}
		R^2_{\sup,n} & = 
		\frac{\cov\{r_0Y(1) + r_1 Y(0), \bs{X}_i\} \{\var(\bs{X})\}^{-1} \cov\{\bs{X}, r_0Y(1) + r_1 Y(0)\}}{\var\{ r_0Y(1) + r_1 Y(0) \}}
		= 
		\frac{\bs{a}^\top \bs{B} (\bs{B}^\top \bs{B})^{-1} \bs{B}^\top \bs{a}}{\bs{a}^\top \bs{a}}. 
	\end{align*}
	From Lemma \ref{lemma:R2_equiv},  the finite population squared multiple correlation 
	$R_n^2$
	satisfies that 
	\begin{align*}
		S^2_{r_0Y(1)+r_1Y(0)} R_n^2 = 
		\bs{S}_{r_0Y(1) + r_1 Y(0), \bs{X}} (\bs{S}_{\bs{X}}^2)^{-1} \bs{S}_{\bs{X}, r_0Y(1) + r_1 Y(0)}
		= 	
		\bs{a}^\top \bs{S}^2_{\bs{\xi}} \bs{B} \left( \bs{B}^\top \bs{S}^2_{\bs{\xi}} \bs{B} \right)^{-1} \bs{B}^\top \bs{S}^2_{\bs{\xi}}  \bs{a}. 
	\end{align*}
	Let $\tilde{\bs{a}} = \bs{a}/\sqrt{\bs{a}^\top \bs{a}}$ and $\bs{B} =\bs{Q} \bs{C} \bs{\Gamma}^\top$ be the singular value decomposition of $\bs{B}$, where $\bs{Q}\in \mathbb{R}^{(K_n+1) \times K_n}$, $\bs{Q}^\top \bs{Q} = \bs{I}_{K_n}$, $\bs{C}\in \mathbb{R}^{K_n\times K_n}$ is a diagonal matrix, and $\bs{\Gamma}\in \mathbb{R}^{K_n\times K_n}$ is an orthogonal matrix. 
	We then have 
	\begin{align*}
		& \quad \ \frac{S^2_{r_0Y(1)+r_1Y(0)} }{\var\{ r_0Y(1) + r_1 Y(0) \}} R_n^2 - R^2_{\sup} 
		\\ 
		& =
		\tilde{\bs{a}}^\top \bs{S}^2_{\bs{\xi}} \bs{B} \left( \bs{B}^\top \bs{S}^2_{\bs{\xi}} \bs{B} \right)^{-1} \bs{B}^\top \bs{S}^2_{\bs{\xi}}  \tilde{\bs{a}}
		-
		\tilde{\bs{a}}^\top \bs{B} (\bs{B}^\top \bs{B})^{-1} \bs{B}^\top \tilde{\bs{a}} 
		= 
		\tilde{\bs{a}}^\top \bs{S}^2_{\bs{\xi}} \bs{Q} \left( \bs{Q}^\top \bs{S}^2_{\bs{\xi}} \bs{Q} \right)^{-1} \bs{Q}^\top \bs{S}^2_{\bs{\xi}}  \tilde{\bs{a}}
		- 
		\tilde{\bs{a}}^\top \bs{Q} \bs{Q}^\top \tilde{\bs{a}} 
		\\
		& = 
		\tilde{\bs{a}}^\top \bs{S}^2_{\bs{\xi}} \bs{Q} 
		\left\{ \left( \bs{Q}^\top \bs{S}^2_{\bs{\xi}} \bs{Q} \right)^{-1} - \bs{I}_{K_n}
		\right\}
		\bs{Q}^\top \bs{S}^2_{\bs{\xi}}  \tilde{\bs{a}}
		+ 
		\tilde{\bs{a}}^\top \left( \bs{S}^2_{\bs{\xi}} -\bs{I}_{K_n}\right) \bs{Q} 
		\bs{Q}^\top \bs{S}^2_{\bs{\xi}}  \tilde{\bs{a}}
		+
		\tilde{\bs{a}}^\top \bs{Q} 
		\bs{Q}^\top \left( \bs{S}^2_{\bs{\xi}} - \bs{I}_{K_n}  \right) \tilde{\bs{a}}. 
	\end{align*}
	By the property of operator norm and the fact that $\tilde{\bs{a}}^\top \tilde{\bs{a}} = 1$ and $\bs{Q}^\top \bs{Q} = \bs{I}_{K_n}$, we then have 
	\begin{align*}
		\left| \frac{S^2_{r_0Y(1)+r_1Y(0)} R_n^2}{\var\{ r_0Y(1) + r_1 Y(0) \}}  - R^2_{\sup}  \right|
		\le 
		\Big\| \left( \bs{Q}^\top \bs{S}^2_{\bs{\xi}} \bs{Q} \right)^{-1} - \bs{I}_{K_n}
		\Big\|_{\op}
		\left\| \bs{S}^2_{\bs{\xi}}   \right\|_{\op}^2 
		+ 
		\left\| \bs{S}^2_{\bs{\xi}} -\bs{I}_{K_n} \right\|_{\op} \left( \left\| \bs{S}^2_{\bs{\xi}}   \right\|_{\op} + 1 \right). 
	\end{align*}
	Note that $\| \bs{S}^2_{\bs{\xi}} -\bs{I}_{K_n} \|_{\op} = o_{\PP}(1)$, 
	$\| \bs{S}^2_{\bs{\xi}}   \|_{\op} \le \| \bs{S}^2_{\bs{\xi}} -\bs{I}_{K_n} \|_{\op} + 	\| \bs{I}_{K_n} \|_{\op}
	= 1 + o_{\PP}(1),$
	and 
	$
	\| \bs{Q}^\top \bs{S}^2_{\bs{\xi}} \bs{Q} - \bs{I}_{K_n}
	\|_{\op}
	=  
	\| \bs{Q}^\top (\bs{S}^2_{\bs{\xi}} - \bs{I}_{K_n}) \bs{Q} \|_{\op} 
	\le \| \bs{S}^2_{\bs{\xi}} - \bs{I}_{K_n}  \|_{\op} = o_{\PP}(1). 
	$
	From Lemma \ref{lemma:matrix_inv_converge}, 
	we can then derive that 
	$\| ( \bs{Q}^\top \bs{S}^2_{\bs{\xi}} \bs{Q} )^{-1} - \bs{I}_{K_n}
	\|_{\op} = o_{\PP}(1)$,
	$
	S^2_{r_0Y(1)+r_1Y(0)}/\var\{ r_0Y(1) + r_1 Y(0) \} \cdot R_n^2  - R^2_{\sup} = o_{\PP}(1), 
	$
	\begin{align*}
		\left| \frac{S^2_{r_0Y(1)+r_1Y(0)} }{\var\{ r_0Y(1) + r_1 Y(0) \}} - 1 \right|
		& = \left| \tilde{\bs{a}}^\top \bs{S}^2_{\bs{\xi}} \tilde{\bs{a}} - 1 \right|
		= \left| \tilde{\bs{a}}^\top \left( \bs{S}^2_{\bs{\xi}} - \bs{I}_{K_n} \right) \tilde{\bs{a}} \right|
		\le 
		\| \bs{S}^2_{\bs{\xi}} -\bs{I}_{K_n} \|_{\op} = o_{\PP}(1).
	\end{align*}
	Consequently, 
	\begin{align*}
		R_n^2 -  R^2_{\sup,n}  = 
		\frac{S^2_{r_0Y(1)+r_1Y(0)} }{\var\{ r_0Y(1) + r_1 Y(0) \}} R_n^2  - R^2_{\sup}
		- 
		\left( \frac{S^2_{r_0Y(1)+r_1Y(0)} }{\var\{ r_0Y(1) + r_1 Y(0) \}}  - 1 \right) R_n^2
		= o_{\PP}(1). 
	\end{align*}
	
	From the above, Corollary \ref{cor:reg_cond}(ii) holds. 
\end{proof}

\begin{proof}[\bf Proof of Corollary \ref{cor:reg_cond}(iii)]
	First, from \eqref{eq:denom_inf_cond_equiv} and \eqref{eq:V_tautau_equiv}, we can know that 
	\begin{align*}
		r_0  S^2_{1\setminus \bs{X}} + r_1 S^2_{0\setminus \bs{X}}
		& = n r_1 r_0 \big(  n_1^{-1}  S^2_{1\setminus \bs{X}} + n_0^{-1}  S^2_{0\setminus \bs{X}} \big)
		= 
		n r_1 r_0 \big\{ V_{\tau\tau} (1-R^2_n)+n^{-1} S^2_{\tau \setminus \bs{X}}  \big\}
		\\
		& \ge n r_1 r_0 V_{\tau\tau} (1-R^2_n) 
		= S^2_{r_0Y(1) + r_1 Y(0)} (1-R_n^2). 
	\end{align*}
	From Corollary \ref{cor:reg_cond}(ii) and its proof, and by the conditions in Corollary \ref{cor:reg_cond}(iii), we can know that 
	$
	S^2_{r_0Y(1) + r_1 Y(0)} = \var(r_0Y_i(1) + r_1 Y_i(0)) \cdot (1+o_{\PP}(1))
	$
	and 
	$1-R_n^2 = 1 - R^2_{\sup, n} + o_{\PP}(1) = (1-R^2_{\sup, n}) \cdot (1 + o_{\PP}(1))$. 
	These imply that the quantity on the left hand side of \eqref{eq:infer_cond} satisfies 
	\begin{align*}
		& \quad \ \frac{\max_{z\in \{0,1\}}\max_{1\le i \le n}\{Y_i(z) - \bar{Y}(z)\}^2}{r_0 S^2_{1\setminus \bs{X}} + r_1 S^2_{0\setminus \bs{X}}}
		\cdot 
		\frac{\max\{K_n, 1\}}{r_1r_0}
		\cdot 
		\sqrt{ \frac{\max\{1, \log K_n, - \log p_n\} }{n} } 
		\\
		& = 
		\frac{\max_{z\in \{0,1\}}\max_{1\le i \le n}\{Y_i(z) - \bar{Y}(z)\}^2}{
			\var(r_0Y_i(1) + r_1 Y_i(0)) \cdot (1-R^2_{\sup, n}) 
		}
		\cdot 
		\frac{\max\{K_n, 1\} \cdot \sqrt{\max\{1, \log K_n, - \log p_n\} }}{n^{1/2}} \cdot O_{\PP}(1)\\
		& = 
		\frac{\max_{z\in \{0,1\}}\max_{1\le i \le n}\{Y_i(z) - \bar{Y}(z)\}^2}{
			\var(r_0Y_i(1) + r_1 Y_i(0)) 
		}
		\cdot 
		\frac{\max\{K_n, 1\} \cdot \sqrt{\max\{1, \log K_n, - \log p_n\} }}{n^{1/2}} \cdot O_{\PP}(1), 
	\end{align*}
	where the last equality follows from the condition on $R^2_{\sup, n}$. 
	
	Second, 
	by some algebra, for $1\le i \le n$, 
	\begin{align*}
		|Y_i(z) - \bar{Y}(z)|
		& \le 
		| Y_i(z) - \E(Y(z))| + | \bar{Y}(z) - \E(Y(z)) |
		\le 2 \max_{1\le i \le n} | Y_i(z) - \E(Y(z))| 
	\end{align*}
	and 
	\begin{align*}
		\max_{1\le i \le n} | Y_i(z) - \E(Y(z))|^b
		& 
		\le 
		\sum_{i=1}^n | Y_i(z) - \E(Y(z))|^{b}
		= n \cdot \E \big\{ | Y(z) - \E(Y(z))|^{b}  \big\} \cdot O_{\PP}(1)
		\\
		& = 
		n \cdot \E \left\{ \left| \frac{Y(z) - \E(Y(z))}{\sqrt{\var(Y(z))}} \right|^{b}  \right\} 
		\cdot 
		\{ \var(Y(z)) \}^{b/2}  \cdot O_{\PP}(1)\\
		& = n 	\{ \var(Y(z)) \}^{b/2}  \cdot O_{\PP}(1). 
	\end{align*}
	These imply that 
	$
	\max_{1\le i \le n} | Y_i(z) - \E(Y(z))|^2
	= n^{2/b}  \var(Y(z)) \cdot O_{\PP}(1). 
	$
	Consequently, we can further bound the quantity on the left hand side of \eqref{eq:infer_cond} by 
	\begin{align*}
		& \quad \ \frac{\max_{z\in \{0,1\}}\max_{1\le i \le n}\{Y_i(z) - \bar{Y}(z)\}^2}{r_0 S^2_{1\setminus \bs{X}} + r_1 S^2_{0\setminus \bs{X}}}
		\cdot 
		\frac{\max\{K_n, 1\}}{r_1r_0}
		\cdot 
		\sqrt{ \frac{\max\{1, \log K_n, - \log p_n\} }{n} } 
		\\
		& = 
		\frac{\max_{z\in \{0,1\}}\max_{1\le i \le n}\{Y_i(z) - \bar{Y}(z)\}^2}{
			\var(r_0Y_i(1) + r_1 Y_i(0))  
		}
		\cdot 
		\frac{\max\{K_n, 1\} \cdot \sqrt{\max\{1, \log K_n, - \log p_n\} }}{n^{1/2}} \cdot O_{\PP}(1)
		\\
		& =  
		\frac{n^{2/b} \{\var(Y(1)) + \var(Y(0))\}}{\var(r_0Y_i(1) + r_1 Y_i(0))  }
		\cdot 
		\frac{\max\{K_n, 1\} \cdot \sqrt{\max\{1, \log K_n, - \log p_n\} }}{n^{1/2}} \cdot O_{\PP}(1)
		\\
		& = 
		\frac{\max\{K_n, 1\} \cdot \sqrt{\max\{1, \log K_n, - \log p_n\} }}{n^{1/2-2/b}} \cdot O_{\PP}(1)\\
		& = 
		\frac{\max\{K_n, 1\} \cdot \sqrt{\max\{1, \log K_n\} }}{n^{1/2-2/b}} \cdot O_{\PP}(1)
		+ 
		\frac{\max\{K_n, 1\} \cdot \sqrt{ - \log p_n }}{n^{1/2-2/b}} \cdot O_{\PP}(1). 
	\end{align*}
	
	Third, 
	because $K_n = O(n^c)$ for some $c<1/2-2/b$, we have 
	\begin{align*}
		\frac{\max\{K_n, 1\} \cdot \sqrt{\max\{1, \log K_n\} }}{n^{1/2-2/b}}
		& = 
		\frac{\log n}{n^{1/2-2/b-c}} \cdot O(1) = o(1), 
	\end{align*}
	and 
	\begin{align*}
		\frac{\max\{K_n, 1\} \cdot \sqrt{ - \log p_n }}{n^{1/2-2/b}}
		& = 
		\frac{\sqrt{ - \log p_n }}{n^{1/2-2/b-c}} \cdot O(1)
		= 
		\sqrt{
			\frac{- \log p_n}{n^{1-4/b-2c}}
		}
		= o(1), 
	\end{align*}
	where the last condition holds by the condition on $p_n$. 
	
	From the above, we can know that Corollary \ref{cor:reg_cond} holds. 
\end{proof}

\begin{proof}[\bf Proof of Corollary \ref{cor:iidrate}]
	We choose $p_n \propto n^{-h}$ for some $0<h< (1/2-1/\delta)/3$. 
	Below we verify that Conditions \ref{cond:gamma_n}--\ref{cond:rsup} holds with high probability. 
	
	First, from Proposition \ref{prop:iidrate}, $\gamma_n = o_{\PP}(1)$. 
	Second, from Theorem \ref{thm:berry_esseen_clt} and Proposition \ref{prop:iidrate},  and by the construction of $p_n$, 
	\begin{align*}
		\frac{\Delta_n}{p_n} = 
		\frac{\gamma_n+\gamma_n^{1/3}}{p_n} \cdot O(1)
		= 
		\frac{(\log n)^{(7/12)}}{n^{(1/2-1/\delta)/3}} \cdot n^{h} \cdot o_{\PP}(1)
		= 
		\frac{(\log n)^{(7/12)}}{n^{(1/2-1/\delta)/3- h} } \cdot o_{\PP}(1) = o_{\PP}(1). 
	\end{align*}
	Third, 
	$K_n/\log (p_n^{-1}) = K_n/ \log(n^h) \cdot O(1) = h^{-1} K_n/\log(n) = o(1) $. 
	Fourth, from Corollary \ref{cor:reg_cond}(ii), 
	$R_n^2 = R^2_{\sup, n} + o_{\PP}(1) \le 1-c+o_{\PP}(1)$. 
	
	Therefore, by the property of convergence in probability \citep[e.g.,][Theorem 2.3.2]{D19}, 
	Corollary \ref{cor:iidrate} follows from 
	Theorem \ref{thm:rem_gaussian}. 
\end{proof}

\begin{proof}[\bf Comments on the equivalent form of $\gamma_n$ and its bounds in Section \ref{sec:implication}]
	We first prove the equivalent form of $\gamma_n$. 
	Because  $(e_i, \bs{X}_i)$ is a non-singular linear transformation of $\bs{u}_i$, 
	and the finite population covariance between $e_i$ and $\bs{X}_i$ is zero, 
	we can equivalently write $(\bs{u}_i-\bar{\bs{u}})^\top \bs{S}^{-2}_{\bs{u}} (\bs{u}_i-\bar{\bs{u}})$ as 
	\begin{align*}
		(\bs{u}_i-\bar{\bs{u}})^\top \bs{S}^{-2}_{\bs{u}} (\bs{u}_i-\bar{\bs{u}}) 
		& =  
		(e_i, (\bs{X}_i - \bar{\bs{X}})^\top) 
		\begin{pmatrix}
			S_e^{-2}  & \bs{0} \\
			\bs{0} &  \bs{S}^{-2}_{\bs{X}}
		\end{pmatrix}
		\begin{pmatrix}
			e_i\\
			\bs{X}_i - \bar{\bs{X}}
		\end{pmatrix}
		\\
		& = e_i^2 + (\bs{X}_i - \bar{\bs{X}})^\top \bs{S}^{-2}_{\bs{X}} (\bs{X}_i - \bar{\bs{X}})
		= 
		e_i^2 +(n-1) H_{ii}, 
	\end{align*}
	where the second last equality holds since the finite population variance of $e_i$,  $S^2_{e}$ equals $1$, 
	and the last equality follows from the definition of $H_{ii}$'s. 
	This then implies that 
	\begin{align*}
		\gamma_n = 
		\frac{(K_n+1)^{1/4}}{\sqrt{n r_1r_0}} \frac{1}{n} \sum_{i=1}^n \left\| \left(\bs{S}_{\bs{u}}^2\right)^{-1/2} (\bs{u}_i-\bar{\bs{u}}) \right\|_2^3
		= 
		\frac{(K_n+1)^{1/4}}{\sqrt{n r_1r_0}} \frac{1}{n} \sum_{i=1}^n 
		\left( e_i^2 +(n-1) H_{ii} \right)^{3/2}. 
	\end{align*}
	
	We then prove the bounds for $\gamma_n$. 
	By H\"{o}lder's inequality, 
	\begin{align*}
		\left( e_i^2 +(n-1) H_{ii} \right)^{3/2} 
		& = 
		2^{3/2} \left( \frac{e_i^2 +(n-1) H_{ii}}{2} \right)^{3/2} 
		\le 
		2^{3/2} 
		\frac{|e_i|^3 + (n-1)^{3/2} H_{ii}^{3/2}}{2}
		\\
		& \le 
		\sqrt{2} (|e_i|^3 + n^{3/2} H_{ii}^{3/2}). 
	\end{align*}
	This immediately implies that $\gamma_n \le \sqrt{2} \tilde{\gamma_n}.$ 
	Note that 
	\begin{align*}
		2 \left( e_i^2 +(n-1) H_{ii} \right)^{3/2} 
		& \ge 
		(e_i^2)^{3/2} + \{(n-1) H_{ii} \}^{3/2}
		\ge |e_i|^3 + \frac{n^{3/2}}{2^{3/2}} H_{ii}^{3/2} 
		\\
		& 
		\ge \frac{1}{2^{3/2}} \left( |e_i|^3 + n^{3/2} H_{ii}^{3/2}  \right). 
	\end{align*}
	This immediately implies that $\gamma_n \ge  \tilde{\gamma}_n/2^{5/2} = \tilde{\gamma}_n/(4\sqrt{2})$. 
\end{proof}

\begin{proof}[\bf Comment on $\sum_{i=1}^n H_{ii}^{3/2}$] 
	Because $H_{ii}$'s are the diagonal elements of a projection matrix of rank $K_n$, 
	we have $\sum_{i=1}^n H_{ii} = K_n$. 
	By H\"{o}lder's inequality, we then have 
	\begin{align*}
		\sum_{i=1}^n H_{ii}^{3/2} 
		& = 
		n \cdot \frac{1}{n} \sum_{i=1}^n H_{ii}^{3/2} 
		\ge n \cdot \left( \frac{1}{n} \sum_{i=1}^n H_{ii} \right)^{3/2}
		= \frac{K_n^{3/2}}{\sqrt{n}}. 
	\end{align*}
\end{proof}

\begin{proof}[\bf Comments on the first two moments of $\hat{\tau}-\tau$ under any design]
	From \eqref{eq:did_srs_u} and by definition, 
	\begin{align*}
		\hat{\tau} - \tau & = \frac{n}{n_1n_0} \sum_{i=1}^n Z_i \wy_i  - \frac{n}{n_0} \bar{Y}(0) - \bar{Y}(1) + \bar{Y}(0)
		= 
		\frac{n}{n_1n_0} \sum_{i=1}^n Z_i \wy_i  - \frac{n}{n_0} 
		\{ r_1 \bar{Y}(0) + r_0\bar{Y}(1) \}
		\\
		& 
		=
		\frac{n}{n_1n_0} \sum_{i=1}^n Z_i \wy_i  - \frac{n}{n_0} \bar{\wy}
		= 
		\frac{n}{n_1n_0} \sum_{i=1}^n Z_i \wy_i  - 	\frac{n}{n_1n_0} \sum_{i=1}^n Z_i \bar{\wy}
		= \frac{n}{n_1n_0} \sum_{i=1}^n Z_i (\wy_i  - \bar{\wy})
		\\
		& 
		= \frac{n}{n_1 n_0} \sum_{i=1}^n \left( Z_i - \frac{n_1}{n} \right) \left( \wy_i - \bar{\wy} \right)
		= 
		\frac{n}{n_1 n_0} \left(\bs{Z} - r_1 \bs{1}_n \right)^\top \tilde{\bs{\wy}}. 
	\end{align*}
	Consequently, 
	\begin{align*}
		\E_{\mathcal{D}}(\hat{\tau} - \tau ) & = 
		\frac{n}{n_1 n_0} \left(\E \bs{Z} - r_1 \bs{1}_n \right)^\top \tilde{\bs{\wy}}
		=
		\frac{1}{nr_1r_0} \left( \bs{\pi} - r_1 \bs{1}_n \right)^\top \tilde{\bs{\wy}}, 
	\end{align*}
	and 
	\begin{align}\label{eq:Z_second_moment}
		\E_{\mathcal{D}}\{(\hat{\tau} - \tau )^2\}
		& = 
		\left( \frac{n}{n_1 n_0} \right)^2 
		\tilde{\bs{\wy}}^\top 
		\E\left\{
		\left(\bs{Z} - r_1 \bs{1}_n \right)
		\left(\bs{Z} - r_1 \bs{1}_n \right)^\top \right\}
		\tilde{\bs{\wy}}
		\nonumber
		\\
		& = 
		\frac{1}{(nr_1r_0)^2}
		\tilde{\bs{\wy}}^\top 
		\left\{
		\var_{\mathcal{D}}(\bs{Z}) + (\E\bs{Z} -r_1 \bs{1}_n )(\E\bs{Z} -r_1 \bs{1}_n )^\top
		\right\}
		\tilde{\bs{\wy}}
		\nonumber
		\\
		& = 
		\frac{1}{(nr_1r_0)^2}
		\tilde{\bs{\wy}}^\top 
		\left\{
		\bs{\Omega} + (\bs{\pi} -r_1 \bs{1}_n )(\bs{\pi} -r_1 \bs{1}_n )^\top
		\right\}
		\tilde{\bs{\wy}}. 
	\end{align}
	Therefore, \eqref{eq:bias_mse} holds. 
\end{proof}

\begin{proof}[\bf Proof of Proposition \ref{prop:bias_mse}]
	From \eqref{eq:V_tautau_equiv}, we can know that 
	\begin{align*}
		V_{\tau\tau} = \frac{1}{nr_1r_0}  S^2_{r_0Y(1)+r_1Y(0)} 
		= 
		\frac{1}{nr_1r_0} \frac{1}{n-1} \tilde{\bs{\wy}}^\top \tilde{\bs{\wy}}. 
	\end{align*}
	This implies that 
	\begin{align*}
		V_{\tau\tau}^{-1/2} \E_{\mathcal{D}}(\hat{\tau} - \tau )
		=
		\sqrt{\frac{n-1}{nr_1r_0}} \cdot 
		\frac{ \left( \bs{\pi} - r_1 \bs{1}_n \right)^\top \tilde{\bs{\wy}}}{\|\tilde{\bs{\wy}}\|_2}, 
	\end{align*}
	and 
	\begin{align*}
		V_{\tau\tau}^{-1/2} \sqrt{\E_{\mathcal{D}}\{(\hat{\tau} - \tau )^2\}}
		& = 
		\sqrt{\frac{n-1}{nr_1r_0}} 
		\sqrt{
			\frac{\tilde{\bs{\wy}}^\top 
				\left\{
				\bs{\Omega} + (\bs{\pi} -r_1 \bs{1}_n )(\bs{\pi} -r_1 \bs{1}_n )^\top
				\right\}
				\tilde{\bs{\wy}}}{\tilde{\bs{\wy}}^\top \tilde{\bs{\wy}}}
		}. 
	\end{align*}
		By some matrix properties, we can know that %
	\begin{align*}
		\max_{\tilde{\bs{\wy}}\ne \bs{0}} V_{\tau\tau}^{-1/2} \E_{\mathcal{D}}(\hat{\tau} - \tau )
		=
		\sqrt{\frac{n-1}{nr_1r_0}} \cdot
		\| \bs{\pi} - r_1 \bs{1}_n \|_2 \ge 0, 
	\end{align*}
	and 
	\begin{align*}
		\max_{\tilde{\bs{\wy}}\ne \bs{0}} V_{\tau\tau}^{-1/2} \sqrt{\E_{\mathcal{D}}\{(\hat{\tau} - \tau )^2\}}
		& = 
		\sqrt{\frac{n-1}{nr_1r_0}} \cdot
		\lambda_{\max}^{1/2} \left( \bs{\Omega} + (\bs{\pi} -r_1 \bs{1}_n )(\bs{\pi} -r_1 \bs{1}_n )^\top \right). 
	\end{align*}

	Below we prove the inequality on the right hand side of \eqref{eq:max_mse}. 
	Let $\bs{\Psi} = \bs{\Omega} + (\bs{\pi} -r_1 \bs{1}_n )(\bs{\pi} -r_1 \bs{1}_n )^\top$
	From \eqref{eq:Z_second_moment}, 
	$\bs{\Psi} = \E\{
		(\bs{Z} - r_1 \bs{1}_n )
		(\bs{Z} - r_1 \bs{1}_n )^\top\}$. 
	Because 
	$\bs{1}_n^\top (\bs{Z} - r_1 \bs{1}_n ) = \sum_{i=1}^n Z_i - n_1 = 0$, 
	we must have $\bs{1}_n^\top \bs{\Psi} \bs{1}_n = 0$. 
	This implies that $\bs{\Psi}$ has at most $n-1$ positive eigenvalues. 
	Consequently, 
	\begin{align*}
	    & \quad \ (n-1) \lambda_{\max} (\bs{\Psi}) 
	    \\
	    & \ge \tr(\bs{\Psi}) 
	    = \E \left[ \tr \left( (\bs{Z} - r_1 \bs{1}_n )
		(\bs{Z} - r_1 \bs{1}_n )^\top\right) \right] 
		= 
		 \E \left[ \tr \left( (\bs{Z} - r_1 \bs{1}_n )^\top
		(\bs{Z} - r_1 \bs{1}_n )\right) \right] 
		\\
		& = \E \left\{ \tr \left( 
		\bs{Z}^\top \bs{Z} - 2 r_1 \bs{1}_n^\top\bs{Z} + r_1^2 \bs{1}_n^\top \bs{1}_n
		\right) \right\} 
		= \E (n_1 - 2 r_1 n_1+ r_1^2 n ) = n r_1 r_0, 
	\end{align*}
	i.e., $\lambda_{\max} (\bs{\Psi})  \ge n r_1 r_0/(n-1)$. 
	This immediately implies the inequality on the right hand side of \eqref{eq:max_mse}. 
	
	From the above, Proposition \ref{prop:bias_mse} holds. %
\end{proof}

\section{Asymptotic analysis of regression adjustment under rerandomization}\label{sec:proof_regrem}

To prove Theorem~\ref{thm:regrem}, we need the following four lemmas. 

\begin{lemma}\label{lem:regrem}
	Under ReM and 
    Condition \ref{cond:regrem_Delta}, 
    as $n \to \infty$,
	\begin{align*}%
		\sup_{c\in \mathbb{R}} & \bigg| \PP \big\{
		V_{\tau\tau}^{-1/2} (1-\rho^2_{n})^{-1/2}
		\{ \hat{\tau}(\tilde{\bs{\beta}}_1, \tilde{\bs{\beta}}_0) - \tau\} \le c \mid M \le a_n \big\} \\
		&  - \PP\left( \sqrt{1-R^2_n(\tilde{\bs{\beta}}_1, \tilde{\bs{\beta}}_0)}\ \varepsilon_0  + \sqrt{R^2_n(\tilde{\bs{\beta}}_1, \tilde{\bs{\beta}}_0)} \ L_{K_n, a_n} \le c \right)
		\bigg| 
		\rightarrow 0.
	\end{align*}
\end{lemma}

\begin{proof}[Proof of Lemma \ref{lem:regrem}]
Define $V_{\tau\tau}(\tilde{\bs{\beta}}_1, \tilde{\bs{\beta}}_0)$ analogously as $V_{\tau\tau}$ in \eqref{eq:V}, but using the adjusted potential outcomes with adjustment coefficients $\tilde{\bs{\beta}}_1$ and $\tilde{\bs{\beta}}_0$. 
From \citet[][Proof of Theorem 5]{LD20}, $V_{\tau\tau}(\tilde{\bs{\beta}}_1, \tilde{\bs{\beta}}_0) = V_{\tau\tau}(1-\rho_n^2)$. 
Lemma \ref{lem:regrem} then follows immediately from Theorem~\ref{thm:dim_rem}. 
\end{proof}

\begin{lemma}\label{lemma:deltaw}
	Consider the same setting as in Lemma \ref{lemma:s_uw_vector} and any event $\bs{Z} \in \mathcal{E} \subset \{0,1\}^N$ with positive probability $p \equiv \PP(\bs{Z} \in \mathcal{E})$. 
	Then 
	for any $t \ge 3 \cdot 71^2/70^2$, 
	\begin{align*}
		\PP\left( \sum_{k=1}^K \Delta_{w_k}^2 > t \frac{\max\{1, \log K, - \log p\}}{N f^2} \sum_{k=1}^K \sigma_{w_k}^2 \mid \bs{Z} \in 
		\mathcal{E} \right)
		\le 
		2 
		\exp\left(
		- \frac{1}{3} \frac{70^2}{71^2} t
		\right).
	\end{align*}
\end{lemma}

\begin{proof}[Proof of Lemma \ref{lemma:deltaw}]
    If $\sum_{k=1}^K \sigma_{w_k}^2 = 0$, then $\sum_{k=1}^K \Delta_{w_k}^2$ is constant zero, and Lemma \ref{lemma:deltaw} holds obviously. 
    Below we consider only the case in which $\sum_{k=1}^K \sigma_{w_k}^2 > 0$. 
    From Lemma \ref{lemma:s_uw_vector}, 
    for any $t>0$, 
    \begin{align*}
        & \quad \ \PP\left( \sum_{k=1}^K \Delta_{w_k}^2 > t \frac{\max\{1, \log K, - \log p\}}{N f^2} \sum_{k=1}^K \sigma_{w_k}^2 \mid \bs{Z} \in 
		\mathcal{E} \right)
		\\
		& 
		\le 
		2 \frac{K}{p} \exp\left( - \frac{70^2}{71^2} t \max\{1, \log K, - \log p\} \right)\\
		& = 
		 2 \exp\left(- \frac{70^2}{71^2} t \max\{1, \log K, - \log p\} + \log K - \log p\right). 
    \end{align*}
    From \eqref{eq:tlowerbnd}, when $t \ge 3 \cdot 71^2 / 70^2$, we then have 
    \begin{align*}
        \PP\left( \sum_{k=1}^K \Delta_{w_k}^2 > t \frac{\max\{1, \log K, - \log p\}}{N f^2} \sum_{k=1}^K \sigma_{w_k}^2 \mid \bs{Z} \in 
		\mathcal{E} \right)
		\le 
		2 \exp\left(- \frac{1}{3} \frac{70^2}{71^2} t \right).
    \end{align*}
    Therefore, Lemma \ref{lemma:deltaw} holds. 
\end{proof}

\begin{lemma}\label{lemma:beta_tauw}
	Under ReM with actual acceptance probability $\tilde{p}_n = \PP(M\le a_n)$, 
	if $\min\{n_1,n_0\}\ge 2$ when $n$ is sufficiently large, and 
	$\max\{1, \log J_n, -\log \tilde{p}_n \} = O(n r_1^2 r_0^2)$, 
	then
	\begin{align*}
	& \quad \ \big\{ r_0 (\hat{\bs{\beta}}_1 - \tilde{\bs{\beta}}_1) + r_1 (\hat{\bs{\beta}}_0 - \tilde{\bs{\beta}}_0)\big\}^\top \hat{\bs{\tau}}_{\bs{W}} \\
	& = 
    O_{\PP}\left( \max_{z \in \{0,1\}} \max_{1 \leq i \leq n} |Y_i(z) - \bar{Y}(z)| \cdot J_n \frac{\max\{1, \log J_n, -\log \tilde{p}_n\}}{nr_1^2 r_0^2} \right). 
	\end{align*}
\end{lemma}

\begin{proof}[Proof of Lemma \ref{lemma:beta_tauw}]
    First, we bound the Euclidean norms of $r_0 (\hat{\bs{\beta}}_1 - \tilde{\bs{\beta}}_1)^\top \hat{\bs{\tau}}_{\bs{W}}$ and $r_1 (\hat{\bs{\beta}}_0 - \tilde{\bs{\beta}}_0)^\top \hat{\bs{\tau}}_{\bs{W}}$. 
    By definition, 
    \begin{align*}
        \hat{\bs{\tau}}_{\bs{W}}
        & =
        \bar{\bs{W}}_1 - \bar{\bs{W}}_0 
        = \frac{n}{n_1 n_0} \sum_{i=1}^{n} Z_i (\bs{W}_i - \bar{\bs{W}}). 
    \end{align*}
    We then have  
    \begin{align*}
        \left\| r_0 (\hat{\bs{\beta}}_1 - \tilde{\bs{\beta}}_1)^\top \hat{\bs{\tau}}_{\bs{W}} \right\|_2
        & = 
        \left\| (\bs{s}_{1, \bs{W}} - \bs{S}_{1, \bs{W}})^\top (\bs{S}_{\bs{W}}^2)^{-1} \frac{1}{n_1} \sum_{i=1}^{n} Z_i (\bs{W}_i - \bar{\bs{W}}) \right\|_2
        \\
        & = 
        \left\|  (\bs{s}_{1, \bs{w}} - \bs{S}_{1, \bs{w}})^\top ( \bar{\bs{w}}_1 - \bar{\bs{w}} ) \right\|_2
        \le 
        \| \bs{s}_{1, \bs{w}} - \bs{S}_{1, \bs{w}} \|_2 \| \bar{\bs{w}}_1 - \bar{\bs{w}} \|_2, 
    \end{align*}
    and 
    \begin{align*}
        \left\| r_1 (\hat{\bs{\beta}}_0 - \tilde{\bs{\beta}}_0)^\top \hat{\bs{\tau}}_{\bs{W}} \right\|_2
        & = 
        \left\| (\bs{s}_{0, \bs{W}} - \bs{S}_{0, \bs{W}})^\top (\bs{S}_{\bs{W}}^2)^{-1} \frac{1}{n_0} \sum_{i=1}^{n} Z_i (\bs{W}_i - \bar{\bs{W}}) \right\|_2
        \\
        & = 
        \left\| (\bs{s}_{0, \bs{W}} - \bs{S}_{0, \bs{W}})^\top (\bs{S}_{\bs{W}}^2)^{-1} \frac{1}{n_0} \sum_{i=1}^{n} (1-Z_i) (\bs{W}_i - \bar{\bs{W}}) \right\|_2
        \\
        & = 
        \left\|  (\bs{s}_{0, \bs{w}} - \bs{S}_{0, \bs{w}})^\top ( \bar{\bs{w}}_0 - \bar{\bs{w}} ) \right\|_2
        \le 
        \| \bs{s}_{0, \bs{w}} - \bs{S}_{0, \bs{w}} \|_2 \| \bar{\bs{w}}_0 - \bar{\bs{w}} \|_2, 
    \end{align*}
    where $\bs{w}_i = (\bs{S}_{\bs{W}}^2)^{-1/2} (\bs{W}_i - \bar{\bs{W}})$ denotes the standardized covariate vector for $1\le i \le n$. 
    
    Second, from Lemma \ref{lemma:s_uw_re} and by the same logic as the proof of 
    Lemma \ref{lemma:V_R2_hat_bound_simp} (in particular, \eqref{eq:xizw}), we can know that, for $z=0,1$, 
    \begin{align*}
        \| \bs{s}_{z, \bs{w}} - \bs{S}_{z, \bs{w}} \|_2^2 
        & = 
        \max_{z \in \{0,1\}} \max_{1 \leq i \leq n} \{Y_i(z) - \bar{Y}(z)\}^2 \cdot J_n \frac{\max\{1, \log J_n, -\log \tilde{p}_n\}}{nr_z^2} \cdot O_{\PP}(1).
    \end{align*}
    
    Third, from Lemma \ref{lemma:deltaw}, for $z=0,1$, 
    \begin{align*}
        \| \bar{\bs{w}}_z - \bar{\bs{w}} \|_2^2 
        & = 
        J_n \frac{\max\{1, \log J_n, - \log \tilde{p}_n\}}{n r_z^2} \cdot O_{\PP}(1). 
    \end{align*}

    From the above, for $z=0,1$, 
    \begin{align*}
        \left\| r_{1-z} (\hat{\bs{\beta}}_z - \tilde{\bs{\beta}}_z)^\top \hat{\bs{\tau}}_{\bs{W}} \right\|_2 
        & \le \| \bs{s}_{z, \bs{w}} - \bs{S}_{z, \bs{w}} \|_2 \| \bar{\bs{w}}_z - \bar{\bs{w}} \|_2
        \\
        & = \max_{z \in \{0,1\}} \max_{1 \leq i \leq n} |Y_i(z) - \bar{Y}(z)| \cdot J_n \frac{\max\{1, \log J_n, -\log \tilde{p}_n\}}{nr_z^2} \cdot O_{\PP}(1)\\
        & = \max_{z \in \{0,1\}} \max_{1 \leq i \leq n} |Y_i(z) - \bar{Y}(z)| \cdot J_n \frac{\max\{1, \log J_n, -\log \tilde{p}_n\}}{nr_1^2 r_0^2} \cdot O_{\PP}(1). 
    \end{align*}
    Therefore, Lemma \ref{lemma:beta_tauw} holds. 
\end{proof}

\begin{lemma}\label{lemma:beta_tauw_cond}
	Under ReM with actual acceptance probability $\tilde{p}_n = \PP(M\le a_n)$, 
	\begin{itemize}
	    \item[(i)] if Condition \ref{cond:regrem_Delta} holds, then $\max\{1, -\log \tilde{p}_n\} = O(\max\{1, -\log p_n\})$, 
		recalling that $p_n = \PP(\chi^2_{K_n} \le a_n)$
		is the approximate acceptance probability; 
		\item[(ii)] 
		if Conditions \ref{cond:regrem_Delta} and \ref{cond:regrem} hold, 
		then,  $\max\{1, \log K_n, - \log \tilde{p}_n \} = o(nr_1^2 r_0^2)$. 
	\end{itemize}
\end{lemma}

\begin{proof}[Proof of Lemma \ref{lemma:beta_tauw_cond}]
    In Lemma \ref{lemma:beta_tauw_cond}, (i) follows by the same logic as Lemma \ref{lemma:cond_infer}, and below we focus only on the proof of (ii). 
    From the proof of Lemma \ref{lemma:cond_infer}, 
    $2\max_{z \in \{0,1\}} \max_{1 \leq i \leq n} |Y_i(z) - \bar{Y}(z)|^2 \ge r_0S_1^2 + r_1S_0^2 \ge n r_1r_0 V_{\tau\tau}$. 
    Consequently, from Condition \ref{cond:regrem}, 
    \begin{align*}
        o(1) 
        & = 
        \frac{\max_{z \in \{0,1\}} \max_{1 \leq i \leq n} |Y_i(z) - \bar{Y}(z)|}{\sqrt{V_{\tau\tau} (1-\rho^2_n) \{ 1-R^2_n(\tilde{\bs{\beta}}_1, \tilde{\bs{\beta}}_0)\}}}  \cdot J_n \cdot \frac{\max\{1, \log J_n, - \log p_n\}}{n r_1^2 r_0^2} 
        \\
        & \ge \frac{2^{-1/2} \sqrt{nr_1r_0 V_{\tau\tau}}}{\sqrt{V_{\tau\tau}}} 
        \cdot J_n \cdot \frac{\max\{1, \log J_n, - \log p_n\}}{n r_1^2 r_0^2} \\
        & \ge 2^{-1/2} \cdot \sqrt{n r_1 r_0} \cdot \frac{\max\{1, \log J_n, - \log p_n\}}{n r_1^2 r_0^2}
        \ge  \frac{2^{-1/2} }{\sqrt{n r_1 r_0}}.
    \end{align*}
    This implies that $1 = o(\sqrt{n r_1 r_0})$, and thus  $\max\{1, \log K_n, - \log \tilde{p}_n \} = o(nr_1^2 r_0^2)$.
    From the above, Lemma \ref{lemma:beta_tauw_cond} holds. 
\end{proof}

\begin{proof}[\bf Proof of Theorem \ref{thm:regrem}(i)]
Below we prove the first part of Theorem \ref{thm:regrem}. 
Define 
\begin{align*}
    \tilde{\psi}_n & = V_{\tau\tau}^{-1/2} (1-\rho^2_{n})^{-1/2}
	\{ \hat{\tau}(\tilde{\bs{\beta}}_1, \tilde{\bs{\beta}}_0) - \tau\}, 
	\qquad 
	\hat{\psi}_n = V_{\tau\tau}^{-1/2} (1-\rho^2_{n})^{-1/2}
	\{ \hat{\tau}(\hat{\bs{\beta}}_1, \hat{\bs{\beta}}_0) - \tau\}, \\
	\psi_n & = \sqrt{1-\tilde{R}^2_n}\ \varepsilon_0  + \sqrt{\tilde{R}^2_n} \ L_{K_n, a_n}. 
\end{align*}
Note that Conditions \ref{cond:regrem_Delta} and \ref{cond:regrem} hold. 
From Lemma \ref{lem:regrem}, $n\rightarrow\infty$, 
$$\sup_{c\in \mathbb{R}}
| \PP(\tilde{\psi}_n \le c \mid M\le a_n) - \PP( \psi_n \le c) | \rightarrow 0. $$ 
From Lemmas \ref{lemma:beta_tauw} and \ref{lemma:beta_tauw_cond}, under ReM, 
\begin{align*}
    \tilde{\psi}_n - \hat{\psi}_n
    & = 
    \frac{\hat{\tau}(\tilde{\bs{\beta}}_1, \tilde{\bs{\beta}}_0)
    - 
    \hat{\tau}(\hat{\bs{\beta}}_1, \hat{\bs{\beta}}_0)}{V_{\tau\tau}^{1/2} (1-\rho^2_{n})^{1/2}}
    = 
    \frac{
    \big\{ r_0 (\hat{\bs{\beta}}_1 - \tilde{\bs{\beta}}_1) + r_1 (\hat{\bs{\beta}}_0 - \tilde{\bs{\beta}}_0)\big\}^\top \hat{\bs{\tau}}_{\bs{W}}
    }{
    V_{\tau\tau}^{1/2} (1-\rho^2_{n})^{1/2}
    }
    \\
    & = \sqrt{1-\tilde{R}^2_n} \cdot o_{\PP}(1). 
\end{align*}

For any $\eta>0$, define $\delta_n = \sqrt{1-\tilde{R}^2_n} \cdot  \eta$. 
From Lemma \ref{lemma:bound_inf_norm_two_rv}, 
\begin{align*}
	& \quad \ \sup_{c\in \mathbb{R}}
	\big| \PP(\hat{\psi}_n \le c \mid M\le a_n) - \PP( \tilde{\psi}_n \le c \mid M\le a_n) \big|
	\\
	& \le 
	\PP(|\hat{\psi}_n - \tilde{\psi}_n| > \delta_n \mid M\le a_n) 
	+ 
	\sup_{b\in \mathbb{R}}\PP(b < \tilde{\psi}_n \le b+\delta_n \mid M\le a_n)
	\\
	& \le 
	\PP(|\hat{\psi}_n - \tilde{\psi}_n| > \delta_n \mid M\le a_n) 
	+ 
	\sup_{b\in \mathbb{R}}\PP(b < \psi_n \le b+\delta_n)
	\\
	& \quad \ + 2
	\sup_{c\in \mathbb{R}}
	\big| \PP(\tilde{\psi}_n \le c \mid M\le a_n) - \PP( \psi_n \le c) \big|
\end{align*}
Letting $n\rightarrow \infty$, 
from the discussion before, we have 
\begin{align*}
    \limsup_{n\rightarrow\infty}\sup_{c\in \mathbb{R}}
	\big| \PP(\hat{\psi}_n \le c \mid M\le a_n) - \PP( \tilde{\psi}_n \le c \mid M\le a_n) \big|
	\le 
	\limsup_{n\rightarrow \infty}
	\sup_{b\in \mathbb{R}}\PP(b < \psi_n \le b+\delta_n). 
\end{align*}
By definition, for any $b\in \mathbb{R}$, 
we have, with $b' = b/\sqrt{1-\tilde{R}^2_n}$, 
\begin{align*}
    & \quad \ \PP(b < \psi_n \le b+\delta_n)
    \\
    & = 
    \PP\left( b' <  \varepsilon_0  + \sqrt{ \frac{\tilde{R}^2_n}{1 - \tilde{R}^2_n}} \ L_{K_n, a_n} \le b'+ \eta \right)
    \\
    & =
    \E \left\{
    \PP\left( b' <  \varepsilon_0  + \sqrt{ \frac{\tilde{R}^2_n}{1 - \tilde{R}^2_n}} \ L_{K_n, a_n} \le b'+ \eta 
    \mid L_{K,a}
    \right)
    \right\}\\
    & \le 
    \E \left( 
    \eta/\sqrt{2\pi} 
    \right)
    = \eta/\sqrt{2\pi}, 
\end{align*}
where the last inequality holds because the density of $\varepsilon_0$ is bounded by $1/\sqrt{2\pi}$. 
This then implies that 
\begin{align*}
    \limsup_{n\rightarrow\infty}\sup_{c\in \mathbb{R}}
	\big| \PP(\hat{\psi}_n \le c \mid M\le a_n) - \PP( \tilde{\psi}_n \le c \mid M\le a_n) \big|
	\le 
	\eta/\sqrt{2\pi}.
\end{align*}
Because the above inequality holds for any $\eta>0$, we must have, as $n\rightarrow\infty$, 
\begin{align*}
    \lim_{n\rightarrow\infty}\sup_{c\in \mathbb{R}}
	\big| \PP(\hat{\psi}_n \le c \mid M\le a_n) - \PP( \tilde{\psi}_n \le c \mid M\le a_n) \big| = 0. 
\end{align*}

From the discussion before, 
as $n\rightarrow \infty$, 
\begin{align*}
    & \quad \ \sup_{c\in \mathbb{R}}
    | \PP(\hat{\psi}_n \le c \mid M\le a_n) - \PP( \psi_n \le c) |
    \\
    & \le \sup_{c\in \mathbb{R}}
    | \PP(\tilde{\psi}_n \le c \mid M\le a_n) - \PP( \psi_n \le c) |
    + 
    \sup_{c\in \mathbb{R}}
	\big| \PP(\hat{\psi}_n \le c \mid M\le a_n) - \PP( \tilde{\psi}_n \le c \mid M\le a_n) \big|
\end{align*}
Therefore, the first part of Theorem \ref{thm:regrem} holds. 
\end{proof}

\begin{proof}[\bf Proof of Theorem \ref{thm:regrem}(ii)]
Because Condition \ref{cond:k_np_n} holds and $\limsup_{n \to \infty} \tilde{R}_n^2 < 1$, 
by the same logic as the proof of Theorem \ref{thm:rem_gaussian}, we can know that, as $n\rightarrow \infty$, 
\begin{align*}
    \sup_{c\in \mathbb{R}} \left| 
        \PP\left\{ \sqrt{1-\tilde{R}^2_n}\ \varepsilon_0  + \sqrt{\tilde{R}^2_n} \ L_{K_n, a_n} \le c \right\}
		- 
		\PP\left\{ \sqrt{1-\tilde{R}^2_n}\ \varepsilon_0  \le c \right\}
		\right| 
		\rightarrow 0. 
\end{align*}
From the first part of Theorem \ref{thm:regrem}, we can immediately derive the second part of the theorem. 
\end{proof}

\section{Connection with optimal designs}\label{sec:opt_comp}

\subsection{Optimal design via minimizing Mahalanobis distance}\label{sec:opt_min_M}
In this section, we show that, under certain model assumptions, 
the optimal design that tries to minimize the mean squared error (MSE) of the difference-in-means estimator will seek the assignment minimizing the Mahalanobis distance for covariate imbalance between the two treatment groups. 
For more detailed discussion of optimally balanced designs, we refer the readers to \citet{K16} and \citet{K18}.

Suppose that the potential outcomes satisfy the following model:
\begin{align}\label{eq:linear_model}
	Y_i(z) = \alpha_z + \bs{\beta}_z^\top \bs{X}_i + e_i(z), 
	\quad (z=0,1; i=1,2,\ldots, n),
\end{align}
where $(e_i(1), e_i(0))$'s are mutually independent across all units, and $e_i(z)$'s have mean zero and the same variance $\sigma^2_z$ across all units for $z=0,1$. 
Throughout the discussion in this section, 
the covariates $\bs{X}_1, \cdots, \bs{X}_n$  are fixed constants or equivalently being conditioned on. 
Under model \eqref{eq:linear_model}, 
the expected treatment effect for each unit $i$ is then 
\begin{align*}
	\tau_i^\star = \E\{ \tau_i \} = \E\{Y_i(1) - Y_i(0)\} = \alpha_1 - \alpha_0 + (\bs{\beta}_1 - \bs{\beta}_0)^\top \bs{X}_i, 
	\quad (i=1,2,\ldots, n)
\end{align*}
and its average over all units is 
\begin{align*}
	\tau^\star = \E\{\tau\} = \frac{1}{n} \sum_{i=1}^n \tau_i^\star = \alpha_1 - \alpha_0 + (\bs{\beta}_1 - \bs{\beta}_0)^\top \bar{\bs{X}}. 
\end{align*}

We are interested in estimating the average treatment effect $\tau^\star$ using the difference-in-means estimator $\hat{\tau}$. 
Moreover, we will write the difference-in-means estimator as $\hat{\tau}(\bs{Z})$, to emphasize its dependence on the treatment assignment. 
For any fixed treatment assignment $\bs{z}$, 
the MSE of the corresponding difference-in-means estimator under model \eqref{eq:linear_model} has the following decomposition: 
\begin{align}\label{eq:mse_model}
	\E [ \{ \hat{\tau}(\bs{z}) - \tau^\star \}^2 ] 
	& = 
	[\E\{ \hat{\tau}(\bs{z}) - \tau^\star \}]^2 + 
	\var \{ \hat{\tau}(\bs{z}) - \tau^\star \}
	\\
	\nonumber
	& 
	= 
	\{ \tilde{\bs{\beta}}^\top \hat{\bs{\tau}}_{\bs{X}}(\bs{z}) \}^2
	+ 
	\frac{\sigma^2_1}{n_1} + 
	\frac{\sigma^2_0}{n_0}, 
\end{align} 
where $\tilde{\bs{\beta}} = r_0 \bs{\beta}_1 + r_1 \bs{\beta}_0$ and $\hat{\bs{\tau}}_{\bs{X}}(\bs{z})$ is the difference-in-means of covariates under the treatment assignment $\bs{z}$. 
From \eqref{eq:mse_model}, 
the optimal assignment minimizing the MSE is equivalently the one minimizing the squared bias of $\hat{\tau}$. 
Since $\tilde{\bs{\beta}}$ is unknown, 
similar to 
\citet[][Section 2.3.3]{K18}, 
we consider the worst-case squared bias after some standardization. 
Specifically, 
let $\mu_i \equiv r_0 \E \{Y_i(1)\} + r_1 \E \{ Y_i(0)\}$ be a certain weighted average of expected potential outcomes  
for each unit $i$. 
The finite population variance of $\mu_i$'s across all units 
can be equivalently written as 
$
S^2_{\mu} = \tilde{\bs{\beta}}^\top \bs{S}^2_{\bs{X}}\tilde{\bs{\beta}}
$. 
We then consider the worst-case squared bias of $\hat{\tau}$ standardized by $S^2_{\mu}$, which has the following equivalent forms:
\begin{align}\label{eq:worse_case_M}
	\sup_{\tilde{\bs{\beta}} \ne \bs{0}} \frac{
    \{ \tilde{\bs{\beta}}^\top \hat{\bs{\tau}}_{\bs{X}}(\bs{z}) \}^2		
	}{S^2_{\mu}}
	= 
	\sup_{\tilde{\bs{\beta}} \ne \bs{0}} \frac{
	\{ \tilde{\bs{\beta}}^\top \hat{\bs{\tau}}_{\bs{X}}(\bs{z}) \}^2	
	}{\tilde{\bs{\beta}}^\top \bs{S}^2_{\bs{X}}\tilde{\bs{\beta}}}
	= \hat{\bs{\tau}}_{\bs{X}}(\bs{z})^\top \bs{S}_{\bs{X}}^{-2} \hat{\bs{\tau}}_{\bs{X}}(\bs{z})
	= \frac{n}{n_1n_0} M(\bs{z}), 
\end{align}
where $M(\bs{z})$ is the Mahalanobis distance of covariate means in two treatment groups under the treatment assignment $\bs{z}$, as defined in Section \ref{sec:cov_imb}. 
Consequently, the assignment minimizing the Mahalanobis distance is equivalently the one that minimizes worst-case standardized squared bias.
Therefore, under the proposed model and criterion, minimizing the Mahalanobis distance leads to the optimal design. 

\subsection{Model-based efficiency of rerandomization}\label{sec:opt_efficiency}
We now briefly discuss the efficiency of rerandomization under the  proposed model in \eqref{eq:linear_model}. 
In short, we will show that, under ReM with properly diminishing threshold for covariate imbalance, the design can asymptotically achieve the optimal efficiency.

To gain some intuition, beyond the equal variance assumption, we further assume that $e_i(z)$'s are i.i.d.\  across all units, for $z=0,1$; the i.i.d.\ assumption will be relaxed later. 
We can verify that the difference-in-means estimator has the following decomposition:
\begin{align}\label{eq:decomp_tau_hat_tau_star}
	\hat{\tau} - \tau^\star 
	& = \tilde{\bs{\beta}}^\top \hat{\bs{\tau}}_{\bs{X}} + 
	\Big\{ \frac{1}{n_1} \sum_{i=1}^n Z_i e_i(1) - \frac{1}{n_0} \sum_{i=1}^n (1-Z_i) e_i(0) \Big\}
	\equiv \tilde{\bs{\beta}}^\top \hat{\bs{\tau}}_{\bs{X}} + \hat{\tau}_e, 
\end{align}
where $\tilde{\bs{\beta}} = r_0 \bs{\beta}_1 + r_1 \bs{\beta}_0$ is defined the same as before and 
$\hat{\tau}_e$ is the difference-in-means for the residual potential outcomes. 
For treatment assignment mechanisms depending only on the covariates, such as rerandomization based on $\bs{X}_i$'s, $\hat{\bs{\tau}}_{\bs{X}}$ and $\hat{\tau}_e$ must be mutually independent, 
with $\hat{\bs{\tau}}_{\bs{X}}$ following its randomization distribution and 
\begin{align}\label{eq:dist_tau_hat_e_iid}
	\hat{\tau}_e \sim \frac{1}{n_1} \sum_{i=1}^{n_1} e_i(1) - \frac{1}{n_0} \sum_{i=n_1+1}^n e_i(0). 
\end{align}
This is because the conditional distribution of $\hat{\tau}_e$ given $\bs{Z}$ must follow the distribution on the right hand side of \eqref{eq:dist_tau_hat_e_iid}. 
By the standard central limit theorem, when $e_i(1)$'s and $e_i(0)$'s have finite second moments, and the proportions of treated and control units $r_1$ and $r_0$ have positive limits as $n\rightarrow\infty$, 
$\sqrt{n}\hat{\tau}_e$ will asymptotically converge to a Gaussian distribution with mean zero and variance $r_1^{-1} \sigma^2_1 + r_0^{-1} \sigma^2_0$. 
By Slutsky's theorem, 
as long as $\sqrt{n} \tilde{\bs{\beta}}^\top \hat{\bs{\tau}}_{\bs{X}} = o_{\PP}(1)$, 
$\sqrt{n}(\hat{\tau} - \tau^\star )$ will converge to the same asymptotic distribution as $\sqrt{n}\hat{\tau}_e$, which is actually the optimal efficiency that we can expect, as implied by \eqref{eq:mse_model}.

Below we rigorously study the asymptotic efficiency of rerandomization under model \eqref{eq:linear_model}. 
First, we allow the residuals $e_i(z)$'s to be non-identically distributed for $z=0,1$, but require them to have bounded third absolute moments.
\begin{condition}\label{cond:error}
	There exists some finite constant $C_e$ such that, for all $n$, 
	\[
	\max_{1 \leq i \leq n} \E[|e_i(1)|^3] \leq C_e, \quad \& \quad \max_{1 \leq i \leq n} \E[|e_i(0)|^3] \leq C_e.
	\]
\end{condition}
Second, to conduct the optimal rerandomization with diminishing covariate imbalance, we invoke similar regularity conditions as Conditions \ref{cond:gamma_n}--\ref{cond:k_np_n}. 
Because here we care only the difference-in-means of covariates, 
we redefine the quantifies in the main paper by excluding the potential outcomes there. Specifically, 
analogous to $\gamma_n$ and $\Delta_n$ in \eqref{eq:gamma_n} and \eqref{eq:Delta_n},
define  
\begin{align*}
    \gamma_n^\star \equiv 
	\frac{K_n^{1/4}}{\sqrt{n r_1r_0}} \frac{1}{n} \sum_{i=1}^n \left\| \bs{S}_{\bs{X}}^{-1} (\bs{X}_i-\bar{\bs{X}}) \right\|_2^3, 
	\quad 
	\Delta_n^\star \equiv 
	\sup_{\mathcal{Q} \in \cC_{K_n}}
	\left|
	\PP
	\left(\bs{V}_{\bs{x}\bs{x}}^{-1/2} \hat{\bs{\tau}}_{\bs{X}}
	\in \mathcal{Q}
	\right)
	- 
	\PP \left( \bs{\varepsilon}^\star \in \mathcal{Q} \right)
	\right|,
\end{align*}
where $\bs{\varepsilon}^\star \sim \mathcal{N}(\bs{0}, \bs{I}_{K_n})$. 
We then invoke the following regularity condition. 
\begin{condition}\label{cond:gamma_delta_star}
    Conditions \ref{cond:gamma_n}--\ref{cond:k_np_n}, with $\gamma_n$ and $\Delta_n$ replaced by $\gamma_n^\star$ and $\Delta_n^\star$, hold. 
\end{condition}
Third, we assume the following condition on model \eqref{eq:linear_model} and proportions of treated and control units. 
\begin{condition}\label{cond:model_stable}
    As the sample size $n$ increases, 
    \begin{itemize}
        \item[(i)] the residual variances $\sigma_1^2$ and $\sigma_0^2$ do not vary, and at least one of them is positive;
        \item[(ii)] the proportions of treated and control units satisfy $\sqrt{n}\min\{r_1, r_0\} \rightarrow \infty$ as $n\rightarrow\infty$; 
        \item[(iii)] the weighted average of expected potential outcomes has bounded finite population variance, i.e., 
        $S^2_{\mu} = \tilde{\bs{\beta}}^\top \bs{S}^2_{\bs{X}}\tilde{\bs{\beta}} \le C_{\mu}$ for all $n$ and some finite constant $C_{\mu}$. 
    \end{itemize}
\end{condition}

Under the above conditions, $\sqrt{n} \tilde{\bs{\beta}}^\top \hat{\bs{\tau}}_{\bs{X}}$ will converge in probability to zero. 
This implies that, asymptotically, rerandomization will achieve the optimal efficiency (or equivalently be the optimal design) under model \eqref{eq:linear_model}. 
We summarize the results in the following theorem.

\begin{theorem}\label{thm:remmodel}
	Under ReM and Conditions \ref{cond:error}--\ref{cond:model_stable}, 
    as $n \to \infty$,
    \begin{align*}
        \frac{\hat{\tau} - \tau^\star}{\sqrt{\sigma_1^2/n_1 + \sigma_0^2/n_0}} \mid M \le a_n
        \converged 
        \mathcal{N}(0, 1). 
    \end{align*} 
\end{theorem} 

Theorem~\ref{thm:remmodel} shows that, under model~\eqref{eq:linear_model}, ReM with properly diminishing threshold can asymptotically achieve the optimal efficiency as implied by \eqref{eq:mse_model}.
Both Theorem~\ref{thm:rem_gaussian} and Theorem~\ref{thm:remmodel} show the optimality of ReM with properly diminishing covariate imbalance threshold.
However, their justification is quite different. 
First, the two theorems rely on different sources of randomness. 
Theorem~\ref{thm:rem_gaussian} views all the potential outcomes as fixed constant (or equivalently conditioning on all the potential outcomes), and the randomness comes solely from the treatment assignment; 
while Theorem~\ref{thm:remmodel} assumes additionally that the potential outcomes are random following model \eqref{eq:mse_model}. 
Second, due to the aforementioned difference, 
the estimands for average treatment effects have different forms in the two theorems. 
Theorem~\ref{thm:rem_gaussian} focuses on $\tau = n^{-1} \sum_{i=1}^n \{Y_i(1) - Y_i(0)\}$, 
while Theorem~\ref{thm:remmodel} focuses on $\tau^\star = \E(\tau) = n^{-1} \sum_{i=1}^n \E\{Y_i(1) - Y_i(0)\}$ under model \eqref{eq:mse_model}.

\subsection{Technical details}\label{sec:optimal_tech}

\begin{proof}[Additional details for \eqref{eq:mse_model}--\eqref{eq:decomp_tau_hat_tau_star}]
    First, we prove the decomposition of $\hat{\tau} - \tau^\star$ in \eqref{eq:decomp_tau_hat_tau_star}. 
    By some algebra, 
    \begin{align*}
        \hat{\tau} - \tau^\star 
        & = \frac{1}{n_1} Z_i Y_i(1) - \frac{1}{n_0} Z_i Y_i(0)
        - 
        \{ \alpha_1 - \alpha_0 + (\bs{\beta}_1 - \bs{\beta}_0)^\top \bar{\bs{X}} \}
        \\
        & = 
        \alpha_1 + \bs{\beta}_1^\top \bar{\bs{X}}_1 + \bar{e}_1 - \alpha_0 - \bs{\beta}_0^\top \bar{\bs{X}}_0 - \bar{e}_0 - \{ \alpha_1 - \alpha_0 + (\bs{\beta}_1 - \bs{\beta}_0)^\top (r_1 \bar{\bs{X}}_1 + r_0 \bar{\bs{X}}_0) \}
        \\
        & =  \bs{\beta}_1^\top r_0 (\bar{\bs{X}}_1 - \bar{\bs{X}}_0) 
        + \bs{\beta}_0^\top r_1 (\bar{\bs{X}}_1 - \bar{\bs{X}}_0) + \bar{e}_1 - \bar{e}_0\\
        & = ( r_0 \bs{\beta}_1 + r_1 \bs{\beta}_0 )^\top \hat{\bs{\tau}}_{\bs{X}} + \hat{\tau}_e
        = \tilde{\bs{\beta}}^\top \hat{\bs{\tau}}_{\bs{X}} + \hat{\tau}_e, 
    \end{align*}
    where $\bar{e}_z$ denotes the average of residual potential outcomes $e_i(z)$'s for units under treatment arm $z$.  

    Second, we prove the decomposition of the model-based MSE in \eqref{eq:mse_model}. 
    From the decomposition in \eqref{eq:decomp_tau_hat_tau_star} and the property of model \eqref{eq:linear_model}, 
    $\E\{\hat{\tau}(\bs{z}) - \tau^\star \} = \tilde{\bs{\beta}}^\top \hat{\bs{\tau}}_{\bs{X}}$, and 
    $
    \var\{\hat{\tau}(\bs{z}) - \tau^\star \} = \var\{ \hat{\tau}_e(\bs{z}) \}
        = \sigma_1^2/n_1 + \sigma_0^2/n_0, 
    $
    where we use $\hat{\tau}_e(\bs{z})$ to emphasize that it is the difference-in-means of residual potential outcomes under the treatment assignment $\bs{z}$. 
    These then immediately imply the decomposition in \eqref{eq:mse_model}. 
    
    Third, we prove \eqref{eq:worse_case_M}. By the definition of matrix norm,
    letting $\check{\bs{\beta}} = \bs{S}_{\bs{X}}\tilde{\bs{\beta}}$, 
    we have 
    \begin{align*}
        \sup_{\tilde{\bs{\beta}} \ne \bs{0}} \frac{
    	\{ \tilde{\bs{\beta}}^\top \hat{\bs{\tau}}_{\bs{X}}(\bs{z}) \}^2	
    	}{\tilde{\bs{\beta}}^\top \bs{S}^2_{\bs{X}}\tilde{\bs{\beta}}}
    	& = 
    	\sup_{\check{\bs{\beta}} \ne \bs{0}} \frac{
    	\|  \hat{\bs{\tau}}_{\bs{X}}(\bs{z})^\top \bs{S}_{\bs{X}}^{-1}\check{\bs{\beta}} \|_2^2	
    	}{\check{\bs{\beta}}^\top \check{\bs{\beta}}}
    	= 
    	\| \hat{\bs{\tau}}_{\bs{X}}(\bs{z})^\top \bs{S}_{\bs{X}}^{-1} \|_2^2 
    	= \hat{\bs{\tau}}_{\bs{X}}(\bs{z})^\top \bs{S}_{\bs{X}}^{-2} \hat{\bs{\tau}}_{\bs{X}}(\bs{z}).
    \end{align*}
    This immediately implies \eqref{eq:worse_case_M}. 
\end{proof}

\begin{proof}[Proof of Theorem~\ref{thm:remmodel}]
Below we consider the two terms in the decomposition \eqref{eq:decomp_tau_hat_tau_star} separately.  

First, we consider the limiting distribution of $\tilde{\bs{\beta}}^\top \hat{\bs{\tau}}_{\bs{X}}$. 
By the same logic as the proof of Theorem~\ref{thm:berry_esseen_clt}, under Condition \ref{cond:gamma_delta_star}, 
as $n\rightarrow \infty$, 
\begin{align*}
    \sup_{c \in \R} \left|\PP\left( \sqrt{n} \tilde{\bs{\beta}}^\top \hat{\bs{\tau}}_{\bs{X}} \leq c \mid M \leq a_n\right) - \PP\left(\sqrt{n} \tilde{\bs{\beta}}^\top \tilde{\bs{\tau}}_{\bs{X}}  \leq c \mid \tilde{M} \leq a_n\right)\right| \rightarrow 0,
\end{align*}
By the same logic as the proof of \citet[][Theorem 1]{LDR18}, 
$$
\sqrt{n} \tilde{\bs{\beta}}^\top \tilde{\bs{\tau}}_{\bs{X}} \mid \tilde{M} \leq a_n 
\ \sim \  
\sqrt{n} \tilde{\bs{\beta}}^\top \bs{V}_{\bs{xx}}^{1/2} \bs{\varepsilon}^\star \mid (\bs{\varepsilon}^\star)^\top \bs{\varepsilon}^\star \leq a_n
\ \sim \ 
\sqrt{n} \| \bs{V}_{\bs{xx}}^{1/2} \tilde{\bs{\beta}} \|_2 L_{K_n, a_n}, 
$$
recalling that $\bs{\varepsilon}^\star \sim \mathcal{N}(\bs{0}, \bs{I}_{K_n})$. 
From Conditions \ref{cond:gamma_delta_star} and \ref{cond:model_stable} and using Theorem \ref{thm:v_Ka}(i) and Proposition \ref{prop:convpvar},
$$
\sqrt{n} \| \bs{V}_{\bs{xx}}^{1/2} \tilde{\bs{\beta}} \|_2 L_{K_n, a_n}
= 
\sqrt{
(r_1r_0)^{-1} \tilde{\bs{\beta}}^\top \bs{S}^2_{\bs{X}} \tilde{\bs{\beta}}
}
\ L_{K_n, a_n}
= O(1) \cdot o_{\PP}(1) =  o_{\PP}(1). 
$$
From the above, we can derive that 
$
\sqrt{n} \tilde{\bs{\beta}}^\top \hat{\bs{\tau}}_{\bs{X}} = o_{\PP}(1). 
$

Second, we consider the limiting of $\hat{\tau}_e$. 
For any fixed acceptable assignment $\bs{z}$ under ReM, 
from the standard
univariate Berry--Esseen theorem \citep{esseen1942liapunov}, 
there exists a universal constant $C$ such that
\begin{align*}
& \quad \ \sup_{c \in \R} |\PP\{ \var(\hat{\tau}_e)^{-1/2} \hat{\tau}_e \leq c \mid \bs{Z} \equiv \bs{z}\} - \PP(\varepsilon_0 \leq c)| 
\\
& \leq 
C
\frac{n_1^{-2} \E|e_i^3(1)| + n_0^{-2} \E|e_i^3(0)|}{(n_1^{-1}\sigma_1^2 + n_0^{-1}\sigma_0^2)^{3/2}}
\le 
\frac{C}{\sqrt{n}} \frac{(1/r_1+1/r_0) C_e}{(\sigma_1^2/r_1 + \sigma_0^2/r_0)^{3/2}}
\le 
\frac{C}{\sqrt{n}} \frac{2C_e /\min\{r_1, r_0\}}{(\sigma_1^2 + \sigma_0^2)^{3/2}}
\\
& \le 
\frac{2CC_e (\sigma_1^2 + \sigma_0^2)^{-3/2}}{\sqrt{n}\min\{r_1, r_0\}}. 
\end{align*}
where $\varepsilon_0\sim \mathcal{N}(0,1)$.
This then implies that, for any $c\in \mathbb{R}$, 
\begin{align*}
    & \quad \ |\PP\{ \var(\hat{\tau}_e)^{-1/2}  \hat{\tau}_e \leq c \mid M \leq a_n\} - \PP(\varepsilon_0 \leq c)|
    \\
    & = | \E[ \PP\{ \var(\hat{\tau}_e)^{-1/2}  \hat{\tau}_e \leq c \mid \bs{Z}\}\mid M \leq a_n ]  - \PP(\varepsilon_0 \leq c)|
    \\
    & \le 
    \E[ 
    | \PP\{ \var(\hat{\tau}_e)^{-1/2}  \hat{\tau}_e \leq c \mid \bs{Z}\} - 
    \PP(\varepsilon_0 \leq c)
    |
    \mid M \leq a_n ] 
    \\
    & \le 
    \frac{2CC_e (\sigma_1^2 + \sigma_0^2)^{-3/2}}{\sqrt{n}\min\{r_1, r_0\}}, 
\end{align*}
i.e., 
\begin{align}\label{eq:berry_tau_hat_e}
    \sup_{c \in \R} |\PP\{ \var(\hat{\tau}_e)^{-1/2}  \hat{\tau}_e \leq c \mid M \leq a_n\} - \PP(\varepsilon_0 \leq c)| 
    \le \frac{2CC_e (\sigma_1^2 + \sigma_0^2)^{-3/2}}{\sqrt{n}\min\{r_1, r_0\}}. 
\end{align}

Finally, we study the limiting distribution of $\tilde{\bs{\beta}}^\top \hat{\bs{\tau}}_{\bs{X}} + \hat{\tau}_e$. 
From Lemma \ref{lemma:bound_inf_norm_two_rv}, for any constant $\delta > 0$, 
\begin{align}\label{eq:berry_bound_model}
    & \quad \ \sup_{c\in \mathbb{R}}\left| \PP \left\{
    \frac{\tilde{\bs{\beta}}^\top \hat{\bs{\tau}}_{\bs{X}}+ \hat{\tau}_e}{\var(\hat{\tau}_e)^{1/2}} \le c
    \mid M\le a_n
    \right\}
    - 
    \PP \left\{
    \frac{ \hat{\tau}_e}{\var(\hat{\tau}_e)^{1/2}} \le c
    \mid M\le a_n
    \right\} \right|
    \nonumber
    \\
    & \le 
    \PP  \left\{
    \left|\frac{\tilde{\bs{\beta}}^\top \hat{\bs{\tau}}_{\bs{X}}}{\var(\hat{\tau}_e)^{1/2}} \right| > \delta
    \mid M\le a_n
    \right\} + 
    \sup_{b\in \mathbb{R}} \PP \left\{
    b < \frac{ \hat{\tau}_e}{\var(\hat{\tau}_e)^{1/2}} \le b+\delta
    \mid M\le a_n
    \right\}. 
\end{align}
Because $\sqrt{n}\tilde{\bs{\beta}}^\top \hat{\bs{\tau}}_{\bs{X}} = o_{\PP}(1)$
and $n \var(\hat{\tau}_e) = \sigma_1^2/r_1 + \sigma_0^2/r_0 \ge \sigma_1^2 + \sigma_0^2 > 0$, 
$\tilde{\bs{\beta}}^\top \hat{\bs{\tau}}_{\bs{X}}/\var(\hat{\tau}_e)^{1/2} = o_{\PP}(1)$, and thus the first term in \eqref{eq:berry_bound_model} converges to zero as $n\rightarrow \infty$. 
From \eqref{eq:berry_tau_hat_e}, 
\begin{align*}
    \sup_{b\in \mathbb{R}} \PP \left\{
    b < \frac{ \hat{\tau}_e}{\var(\hat{\tau}_e)^{1/2}} \le b+\delta
    \mid M\le a_n
    \right\}
    & \le 
    \sup_{b\in \mathbb{R}} 
    \PP(b < \varepsilon_0 \le b+\delta) + \frac{4CC_e (\sigma_1^2 + \sigma_0^2)^{-3/2}}{\sqrt{n}\min\{r_1, r_0\}}\\
    & \le \frac{\delta}{2\pi} + \frac{4CC_e (\sigma_1^2 + \sigma_0^2)^{-3/2}}{\sqrt{n}\min\{r_1, r_0\}},
\end{align*}
where, from Condition \ref{cond:model_stable}, the upper bound converges to $\delta/(2\pi)$ as $n\rightarrow \infty$. 
From the above, for any constant $\delta>0$, 
\begin{align*}
    \limsup_{n\rightarrow \infty}\sup_{c\in \mathbb{R}}\left| \PP \left\{
    \frac{\tilde{\bs{\beta}}^\top \hat{\bs{\tau}}_{\bs{X}}+ \hat{\tau}_e}{\var(\hat{\tau}_e)^{1/2}} \le c
    \mid M\le a_n
    \right\}
    - 
    \PP \left\{
    \frac{ \hat{\tau}_e}{\var(\hat{\tau}_e)^{1/2}} \le c
    \mid M\le a_n
    \right\} \right|
    \le \frac{\delta}{2\pi}. 
\end{align*}
This immediately implies that, as $n\rightarrow \infty$, 
\begin{align*}
    \sup_{c\in \mathbb{R}}\left| \PP \left\{
    \frac{\tilde{\bs{\beta}}^\top \hat{\bs{\tau}}_{\bs{X}}+ \hat{\tau}_e}{\var(\hat{\tau}_e)^{1/2}} \le c
    \mid M\le a_n
    \right\}
    - 
    \PP \left\{
    \frac{ \hat{\tau}_e}{\var(\hat{\tau}_e)^{1/2}} \le c
    \mid M\le a_n
    \right\} \right| \rightarrow 0. 
\end{align*}
From \eqref{eq:berry_tau_hat_e} and Condition \ref{cond:model_stable}, we further have 
\begin{align*}
    \sup_{c\in \mathbb{R}}\left| \PP \left\{
    \frac{\tilde{\bs{\beta}}^\top \hat{\bs{\tau}}_{\bs{X}}+ \hat{\tau}_e}{\var(\hat{\tau}_e)^{1/2}} \le c
    \mid M\le a_n
    \right\}
    - 
    \PP \left\{
   \varepsilon_0 \le c
    \right\} \right| \rightarrow 0, 
\end{align*}
i.e., 
\begin{align*}
    \frac{\hat{\tau} - \tau^\star}{\sqrt{\sigma_1^2/n_1 + \sigma_0^2/n_0}} \mid M \le a_n
        \converged 
        \mathcal{N}(0, 1).
\end{align*}
Therefore, Theorem \ref{thm:remmodel} holds. 
\end{proof}

\end{document}